\providecommand{\U}[1]{\protect\rule{.1in}{.1in}}
\newcommand{\fg}{\mathfrak{g}}
\newcommand{\fv}{\mathfrak{v}}
\newtheorem*{theorem*}{Theorem}
\newtheorem*{lemma*}{Lemma}
\newtheorem{lemma}{Lemma}[subsection]
\newtheorem{proposition}[lemma]{Proposition}
\newtheorem{remark}[lemma]{Remark}
\newtheorem{example}[lemma]{Example}
\newtheorem{theorem}[lemma]{Theorem}
\newtheorem{corollary}[lemma]{Corollary}
\newtheorem*{conjecture*}{Conjecture}
\newtheorem*{remark*}{Remark}
\newtheorem{thm}[lemma]{Theorem}
\newtheorem{prop}[lemma]{Proposition}
\newtheorem{lem}[lemma]{Lemma}
\newtheorem{cor}[lemma]{Corollary}
\newtheorem{introtheorem}{Theorem}
\newtheorem{introthm}[introtheorem]{Theorem}
\sloppy \theoremstyle{plain}
\newcommand{\eps}{\varepsilon}
\newcommand{\Z}{{\mathbb Z}}
\newcommand{\C}{{\mathbb C}}
\newcommand{\g}{{\mathfrak{g}}}
\newcommand{\fk}{{\mathfrak{k}}}
\newcommand{\fa}{{\mathfrak{a}}}
\newcommand{\cO}{{\mathcal{O}}}
\newcommand{\onto}{\twoheadrightarrow}
\newcommand{\cN}{\mathcal{N}}
\newcommand{\cV}{\mathcal{V}}
\newcommand{\fl}{\mathfrak{l}}
\begin{document}

\author{Dmitry Gourevitch}
\address{Dmitry Gourevitch, Faculty of Mathematics and Computer Science, Weizmann
Institute of Science, POB 26, Rehovot 76100, Israel }
\email{dimagur@weizmann.ac.il}
\urladdr{\url{http://www.wisdom.weizmann.ac.il/~dimagur}}
\author{Siddhartha Sahi}
\address{Siddhartha Sahi, Department of Mathematics, Rutgers University, Hill Center -
Busch Campus, 110 Frelinghuysen Road Piscataway, NJ 08854-8019, USA}
\email{sahi@math.rugers.edu}
\date{\today}
\title{Degenerate Whittaker functionals for real reductive groups}
\keywords{Wave-front set, associated variety,
D-module, Jacquet functor, Bernstein-Zelevinsky derivative. \\
\indent 2010 MS Classification: 20G05, 20G20, 	22E47, 17B08, 14F10.}

\begin{abstract}
In this paper we establish a connection between the associated variety of a
representation and the existence of certain \emph{degenerate} Whittaker
functionals, for both smooth and K-finite vectors, for all quasi-split real
reductive groups, thereby generalizing results of Kostant, Matumoto and others.

\end{abstract}
\maketitle

\section{Introduction}


Let $G$ be a real reductive group with Cartan involution $\theta$ and maximal
compact subgroup $K=G^{\theta}$. We denote the Lie algebras of $G,K$ by
$\mathfrak{g}_{0},{\mathfrak{k}}_{0}$ and their complexifications by
$\mathfrak{g},{\mathfrak{k,}}$ and analogous notation will be applied without
comment to Lie algebras of other groups below. Let $\mathcal{M=M}(G)$ be the
category of smooth admissible {Fr\'{e}chet }$G$-representations of moderate
growth, and let $\mathcal{HC}=\mathcal{HC}\left(  G\right)  =\mathcal{HC}%
(\mathfrak{g},K)$ be the category of Harish-Chandra modules (finitely
generated admissible $(\mathfrak{g},K)$-modules). We will denote a typical
representation in $\mathcal{M}(G)$ by $\left(  \pi,W\right)  $ (or $\pi$ or
$W$) and a representation in $\mathcal{HC}\left(  G\right)  $ by $\left(
\sigma,M\right)  $ (or $\sigma$ or $M$). By \cite[Chapter 11]{Wal2} or
\cite{CasGlob} or \cite{BerKr} we have an equivalence of categories
\[
\left(  \pi,W\right)  \mapsto\left(  \pi^{HC},W^{HC}\right)  :\mathcal{M}%
\rightarrow\mathcal{HC}%
\]
where $\left(  \pi^{HC},W^{HC}\right)  $ denotes the Harish-Chandra module of
$K$-finite vectors in $\left(  \pi,W\right)  .$

We assume throughout this paper that $G$ is \textbf{quasisplit}. We fix a
Borel subgroup $B$ with nilradical $N$ and $\theta$-stable maximally split
Cartan subgroup $H=TA$, and we define%
\begin{equation}
\mathfrak{n}^{\prime}=\left[  \mathfrak{n},\mathfrak{n}\right]
,\mathfrak{v=n/n}^{\prime},\Psi=\mathfrak{v}^{\ast}\subset\mathfrak{n}^{\ast
},\Psi_{0}=\left\{  \psi\in\Psi:\psi\left(  x\right)  \in i\mathbb{R}\text{
for }x\in\mathfrak{n}_{0}\right\}  . \label{=v}%
\end{equation}
Thus $\Psi$ is the space of Lie algebra characters of $\mathfrak{n}$ or
equivalently, via the exponential map, group characters of $N$, while
$\Psi_{0}$ corresponds to unitary characters of $N$.
Note that $\fv$ is the direct sum of simple root spaces, and thus $H_{\mathbb{C}}$ has finitely many orbits on $\fv$ and on
 $\Psi=\fv^*$ (see \S \ref{subsec:ResRoots} for more details).
We say that $\psi$ is \emph{non-degenerate }if its $H_{\mathbb{C}}$-orbit is
open in $\Psi$. We define $\Psi^{\times}$ to be the set of non-degenerate
characters, and set $\Psi_{0}^{\times}=\Psi^{\times}\cap\Psi_{0}.$

For $\psi\in\Psi,\, \pi\in\mathcal{M}(G)$ and $\sigma\in\mathcal{HC}(G)$ we
define the corresponding Whittaker spaces as follows%
\begin{align}
Wh_{\psi}^{\ast}(\pi)  &  :=\operatorname{Hom}_{N}^{ct}(\pi,\psi),\Psi
(\pi):=\{\psi\in\Psi:Wh_{\psi}^{\ast}(\pi)\neq0\},\\
Wh_{\psi}^{\prime}\left(  \sigma\right)   &  :=\operatorname{Hom}%
_{\mathfrak{n}}(\sigma,\psi),\Psi(\sigma):=\{\psi\in\Psi:Wh_{\psi}^{\prime
}\left(  \sigma\right)  \neq0\},
\end{align}
where $\operatorname{Hom}_{N}^{ct}\left(  \cdot\right)  $ denotes the space of
\emph{continuous} $N$-homomorphisms (functionals). We also define%
\begin{align*}
\Psi^{\times}\left(  \pi\right)   &  =\Psi(\pi)\cap\Psi^{\times},\Psi
_{0}\left(  \pi\right)  =\Psi(\pi)\cap\Psi_{0},\Psi_{0}^{\times}\left(
\pi\right)  =\Psi(\pi)\cap\Psi_{0}^{\times}\\
\Psi^{\times}\left(  \sigma\right)   &  =\Psi(\sigma)\cap\Psi^{\times}%
,\Psi_{0}\left(  \pi\right)  =\Psi(\sigma)\cap\Psi_{0},\Psi_{0}^{\times
}\left(  \sigma\right)  =\Psi(\sigma)\cap\Psi_{0}^{\times}%
\end{align*}
If $\left(  \pi,W\right)  \in\mathcal{M}(G)$ then $W^{HC}$ is dense in $W$ and
thus
\[
Wh_{\psi}^{\ast}(\pi)\subset Wh_{\psi}^{\prime}\left(  \pi^{HC}\right)  \text{
and }\Psi(\pi)\subset\Psi(\pi^{HC}) .
\]

We say that $\pi$ (resp.  $\sigma$) is \emph{generic }if $\Psi^{\times}\left(
\pi\right)  $ (resp. $\Psi^{\times}\left(
\sigma\right)  $) is not empty. By \cite[Theorem 8.2]{CHM} we have $\Psi
^{\times}\left(  \pi\right)  =\Psi_{0}^{\times}\left(  \pi\right)  $. In fact
using the same argument one can show $\Psi\left(  \pi\right)  =\Psi_{0}\left(
\pi\right)  $, but we will not use this result.

Let $\mathcal{N}\subset\mathfrak{g}^{\ast}$ denote the nilpotent cone, and
define%
\[
\mathcal{N}_{\theta}=\mathcal{N}\cap\mathfrak{k}^{\perp},\mathcal{N}%
_{0}=\mathcal{N}\cap\mathfrak{g}_{0}^{\ast}\text{.}%
\]
To a representation $\pi$ or $\sigma$ one can attach invariants such as the
annihilator variety, associated variety and wavefront set (see
\S \ref{subsec:ASVars} below)
\[
\mathrm{An}\mathcal{V}\left(  \cdot\right)  \subset\mathcal{N}\text{,
}\mathrm{As}\mathcal{V}\left(  \cdot\right)  \subset\mathcal{N}_{\theta
}\text{, }\mathrm{WF}\left(  \cdot\right)  \subset i\mathcal{N}_{0}%
\]
The dimension of these invariants determines the size (Gelfand-Kirillov
dimension) of the representation. We say that $\pi$ or $\sigma$ is
\emph{large} if $\mathrm{its}$ annihilator variety is all of $\mathcal{N}$. A
key result of Kostant \cite{Kos} proves that a representation is large if and
only if it is generic. More precisely for $\pi\in\mathcal{M}(G)$ one has
\begin{align*}
\mathrm{An}\mathcal{V}\left(  \pi\right)   &  =\mathrm{An}\mathcal{V}\left(
\pi^{HC}\right)  =\mathcal{N}\iff\mathrm{As}\mathcal{V}\left(  \pi
^{HC}\right) \text{ is open in }  \mathcal{N}_{\theta}\iff\mathrm{WF}\left(  \pi\right)
\text{ is open in }i\mathcal{N}_{0}\\
&  \iff\Psi_{0}^{\times}\left(  \pi\right)  \neq\emptyset\iff\Psi^{\times
}\left(  \pi\right)  \neq\emptyset\iff\Psi^{\times}\left(  \pi^{HC}\right)
\neq\emptyset
\end{align*}

A number of papers (e.g. \cite{GW,Mat,MatDuke,MatActa}) provide certain
generalizations of \cite{Kos} to non-generic representations; namely, they
consider functionals equivariant with respect to non-degenerate characters of
nilradicals of other parabolic subgroups, often referred to as
\emph{generalized} Whittaker functionals. In this paper we study a different
type of analog: we consider functionals equivariant with respect to possibly
degenerate characters of the nilradical of the standard Borel subgroup.
Following Zelevinsky \cite[\S 8.3]{Zl} we refer to these as
\emph{degenerate }Whittaker functionals.

\subsection{Main results}


\begin{introtheorem}
\label{thm:Main} Let $pr_{\mathfrak{n}^{\ast}}:\mathfrak{g}^{\ast}%
\rightarrow\mathfrak{n}^{\ast}$ denote the restriction to $\mathfrak{n}$, then
for $\sigma\in\mathcal{HC}$ we have%
\[
\Psi(\sigma)=pr_{\mathfrak{n}^{\ast}}(\mathrm{As}\mathcal{V}\left(
\sigma\right)  )\cap\Psi\text{.}%
\]

\end{introtheorem}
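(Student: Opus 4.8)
The plan is to prove the two inclusions separately, using the associated variety as the bridge. For the inclusion $\Psi(\sigma) \subseteq pr_{\mathfrak{n}^{\ast}}(\mathrm{As}\mathcal{V}(\sigma)) \cap \Psi$, I would start from a nonzero functional $\lambda \in Wh_{\psi}'(\sigma) = \operatorname{Hom}_{\mathfrak{n}}(\sigma,\psi)$ and use it to produce a point of $\mathrm{As}\mathcal{V}(\sigma)$ lying over $\psi$. The key observation is that $\psi \in \Psi = \mathfrak{v}^{\ast} = (\mathfrak{n}/[\mathfrak{n},\mathfrak{n}])^{\ast}$ already defines a point of $\mathfrak{n}^{\ast}$, and by the standard $\mathfrak{sl}_2$/nilpotent-orbit theory this $\psi$ extends to a nilpotent element of $\mathfrak{g}^{\ast}$ (lift via a parabolic or via the Kostant section). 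Concretely: a nonzero $\mathfrak{n}$-equivariant functional transverse to $\psi$ gives, after taking the associated graded with respect to a good filtration on $\sigma$ compatible with the $\mathfrak{n}$-action, a nonzero element in a geometric fiber, so that $\psi$ (suitably extended to $\mathfrak{g}^{\ast}$) lies in the support of $\mathrm{gr}\,\sigma$, which is $\mathrm{As}\mathcal{V}(\sigma)$. I expect this to follow from the Harish-Chandra module version of a Casselman–Osborne / Kostant–Lynch type argument: the existence of a degenerate Whittaker functional for $\psi$ forces the $\mathfrak{n}$-homology $H_0(\mathfrak{n},\sigma)_{\psi}$ to be nonzero, and one relates the support of this homology to $pr_{\mathfrak{n}^{\ast}}(\mathrm{As}\mathcal{V}(\sigma))$.

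For the reverse inclusion $pr_{\mathfrak{n}^{\ast}}(\mathrm{As}\mathcal{V}(\sigma)) \cap \Psi \subseteq \Psi(\sigma)$, I would take $\psi \in \Psi$ in the image of the projection, so there is a nilpotent $\xi \in \mathrm{As}\mathcal{V}(\sigma) \subset \mathcal{N}_{\theta}$ with $pr_{\mathfrak{n}^{\ast}}(\xi) = \psi$. The strategy here is to deform/degenerate: using the torus $A$ (or a one-parameter subgroup coming from an $\mathfrak{sl}_2$-triple attached to $\xi$) one flows $\xi$ into the "stable" part of $\mathfrak{n}^{\ast}$, and since $\mathrm{As}\mathcal{V}(\sigma)$ is a closed $K$-invariant (indeed $B \cap K$-invariant, or at least $T$-invariant) cone, one controls how $\psi$ sits in the support of $\mathrm{gr}\,\sigma$. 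Then one invokes the relationship between the associated variety and $\mathfrak{n}$-homology: if the $N$-support (in the sense of the support of $\mathrm{gr}\,\sigma$ as a module over $\mathrm{Sym}(\mathfrak{n})$, or equivalently the support of $\mathfrak{n}$-homology) contains a point lying over $\psi$, then $H_{\ast}(\mathfrak{n},\sigma)$ has a nonzero $\psi$-generalized eigenspace, and a dimension/Euler-characteristic count (as in Casselman–Hecht–Miličić or Matumoto) upgrades this to nonvanishing of $H_0(\mathfrak{n},\sigma)_{\psi} = (\sigma)_{\mathfrak{n},\psi}$, hence $Wh_{\psi}'(\sigma) \neq 0$.

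The main obstacle I anticipate is the passage, in both directions, between the geometric object $\mathrm{As}\mathcal{V}(\sigma)$ — the support of $\mathrm{gr}\,\sigma$ in $\mathcal{N}_{\theta} \subset \mathfrak{g}^{\ast}$ with respect to a $(\mathfrak{g},K)$-compatible good filtration — and the "partial" object governing $\mathfrak{n}$-equivariant functionals, namely the support of $\sigma$ as an $\mathrm{Sym}(\mathfrak{n})$-module (or the characteristic variety of $H_{\ast}(\mathfrak{n},\sigma)$). One has to show that projecting $\mathrm{As}\mathcal{V}(\sigma)$ to $\mathfrak{n}^{\ast}$ and intersecting with $\Psi$ recovers exactly this partial support, which requires a base-change/flatness argument for the filtration restricted to $\mathfrak{n}$, together with the fact that characters $\psi \in \Psi$ are precisely the points of $\mathfrak{n}^{\ast}$ that are fixed by $[\mathfrak{n},\mathfrak{n}]$ and on which the relevant Poisson/geometric conditions simplify. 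A secondary technical point is ensuring the nonvanishing of $H_0$ (not merely of some $H_i$) of $\mathfrak{n}$-homology — here I would either use that for $\psi$ in general position in its stratum the higher homology is forced to vanish by a weight argument, or reduce to the generic Kostant case on a Levi/parabolic and bootstrap.
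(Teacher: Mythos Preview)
Your decomposition into two inclusions is correct, and the easy inclusion $\Psi(\sigma)\subset pr_{\mathfrak{n}^*}(\mathrm{As}\mathcal{V}(\sigma))\cap\Psi$ goes essentially as you say: once one knows (via the Osborne lemma) that a good $\mathfrak{g}$-filtration on $\sigma$ is already a good $\mathfrak{n}$-filtration, one has $\mathrm{As}\mathcal{V}_{\mathfrak{n}}(\sigma)=pr_{\mathfrak{n}^*}(\mathrm{As}\mathcal{V}(\sigma))$, and then the inclusion $\mathcal{V}_{\mathfrak{v}}(\sigma/\mathfrak{n}'\sigma)\subset\mathrm{As}\mathcal{V}_{\mathfrak{n}}(\sigma)\cap\Psi$ together with Nakayama (identifying $\Psi(\sigma)$ with the support of $\sigma/\mathfrak{n}'\sigma$ over $S(\mathfrak{v})$) gives the result.

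The hard inclusion, however, has a genuine gap in your plan. You write that the $N$-support ``in the sense of the support of $\mathrm{gr}\,\sigma$ as a module over $\mathrm{Sym}(\mathfrak{n})$, or equivalently the support of $\mathfrak{n}$-homology'' contains $\psi$ --- but these two objects are \emph{not} a priori the same, and showing that the former (intersected with $\Psi$) is contained in the latter is exactly the content of the theorem. For a general finitely generated $\mathfrak{n}$-module this fails: take $\mathfrak{n}$ the Heisenberg algebra acting on $\mathbb{C}[x]$, where $\mathfrak{n}'$-coinvariants vanish entirely. So one must use something specific to Harish-Chandra modules. Your proposed fixes do not close the gap: an Euler-characteristic argument over $\mathfrak{n}$ does not give nonvanishing of $H_0$ at a prescribed $\psi$, and ``reduce to Kostant on a Levi'' is the mechanism the paper uses to deduce Theorem~\ref{thm:OurMat} \emph{from} Theorem~\ref{thm:Main}, not a route to Theorem~\ref{thm:Main} itself.

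The paper's actual argument for the hard inclusion is quite different from what you sketch. It passes to the Casselman--Jacquet module $J(\sigma)=(\widehat{\sigma}_{\mathfrak{n}})^{\mathfrak{h}\text{-finite}}$ and proves three things: (i) $C_{\mathfrak{n}'}$ and $J$ commute, so $\mathcal{V}_{\mathfrak{v}}(C\sigma)=\mathcal{V}_{\mathfrak{v}}(C(J\sigma))$; (ii) because $J\sigma$ has a finite-dimensional $\mathfrak{h}$-stable generating space, a direct weight argument gives $\mathcal{V}_{\mathfrak{v}}(C(J\sigma))=\mathrm{As}\mathcal{V}_{\mathfrak{n}}(J\sigma)\cap\Psi$; and (iii) $\mathrm{As}\mathcal{V}_{\mathfrak{n}}(J\sigma)\supset\mathrm{As}\mathcal{V}_{\mathfrak{n}}(\sigma)\cap\Psi$. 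Step (iii) is the deep one: it uses Beilinson--Bernstein localization, the description of $J$ as a nearby-cycles functor along a $\rho^\vee$-degeneration due to Emerton--Nadler--Vilonen, and Ginzburg's theorem that characteristic cycles are preserved in the limit. Your ``deform using the torus $A$'' is a gesture in this direction, but the argument genuinely requires the $D$-module machinery; there is no known elementary substitute.
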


This is proved in section \ref{sec:PfMain} below. We now describe the
connection between $\Psi_{0}(\pi)$, $\Psi_{0}(\pi^{HC})$ and the wavefront set
$\mathrm{WF}(\pi)$. Let $H=TA$ be the maximally split Cartan subgroup of $G$
as above, and define
\begin{equation}
F=F_{G}:=\left\{  ad(x) \mid x \in\exp i{\mathfrak{a}}_{0} \text{ and } ad\left(  x\right)
^{2}=1\right\} \subset Int(\g_{\C}) \label{=FG}%
\end{equation}
It is easy to see that $F_{G}$ is a finite group of order $2^{r_0}$, where $r_0$ is the real rank of $G$ (see Lemma \ref{lem:FGIso}). Moreover, it commutes with the Cartan involution and complex conjugation and therefore preserves $\g_0$ and $\fk$ (see  \cite[\S I.1]{KR}).

\begin{introtheorem}
\label{thm:OurMat} Let $\pi\in\mathcal{M}$ and write $\sigma=\pi^{HC}$; then
we have%
\begin{equation}
\Psi_{0}(\pi)\subset\operatorname{WF}(\pi)\cap\Psi\subset F_G \cdot \Psi_{0}\left(
\pi\right)  =\Psi_{0}(\sigma)\text{.} \label{=ourmat1}%
\end{equation}
Moreover if $G=GL_{n}\left(  {\mathbb{R}}\right)  $ or if $G$ is a complex
group then 
we have
\begin{equation}
\Psi_{0}(\pi)=\operatorname{WF}(\pi)\cap\Psi=\Psi_{0}(\sigma)=\mathrm{An}%
\mathcal{V}(\sigma)\cap\Psi_{0}. \label{=ourmat2}%
\end{equation}

\end{introtheorem}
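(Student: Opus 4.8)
The plan is to deduce Theorem \ref{thm:OurMat} from Theorem \ref{thm:Main} together with the standard comparison between the associated variety, the wavefront set, and the annihilator variety. First I would establish the chain \eqref{=ourmat1}. The containment $\Psi(\pi)\subset\Psi(\pi^{HC})$ was already noted in the text, since $W^{HC}$ is dense in $W$; intersecting with $\Psi_0$ gives $\Psi_0(\pi)\subset\Psi_0(\sigma)$. By Theorem \ref{thm:Main}, $\Psi_0(\sigma)=pr_{\mathfrak{n}^\ast}(\mathrm{As}\mathcal{V}(\sigma))\cap\Psi_0$. The middle inclusion $\Psi_0(\pi)\subset\operatorname{WF}(\pi)\cap\Psi$ should come from the basic fact that a continuous $\psi$-Whittaker functional forces $i\psi$ (suitably interpreted via $pr_{\mathfrak n^*}$ applied to $\mathcal N_0$) to lie in the wavefront set of $\pi$; this is the real-analytic input, essentially the statement that $\operatorname{WF}(\pi)$ controls the $N$-spectrum of the restriction of $\pi$ to $N$, and is the place where one genuinely uses smoothness rather than $K$-finiteness. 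For the inclusion $\operatorname{WF}(\pi)\cap\Psi\subset\Psi_0(\widetilde\pi)$ I would use the known relation between the wavefront set and the associated variety: $\mathrm{As}\mathcal{V}(\sigma)$ and $\operatorname{WF}(\pi)$ have the same underlying set of $G_{\mathbb C}$-orbits (Rossmann, Schmid--Vilonen), but $\operatorname{WF}(\pi)\subset i\mathcal N_0$ may only see some of the real forms of a complex nilpotent orbit, and passing through all the real forms is exactly what the $F_G$-twist $\widetilde\pi$ repairs --- this is the content of the identity $\Psi_0(\widetilde\pi)=F_G\cdot\Psi_0(\pi)$ recorded before the theorem. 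So $\operatorname{WF}(\pi)\cap\Psi\subset F_G\cdot(pr_{\mathfrak n^*}(\mathrm{As}\mathcal V(\sigma))\cap\Psi_0)=pr_{\mathfrak n^*}(\mathrm{As}\mathcal V(\sigma))\cap\Psi_0=\Psi_0(\sigma)$, using $F_G$-invariance of $\mathrm{As}\mathcal V(\sigma)$ and of $\Psi_0$, together with Theorem \ref{thm:Main}, and the last equals sign is again Theorem \ref{thm:Main} applied to $\sigma$; this simultaneously gives $\Psi_0(\widetilde\pi)=\Psi_0(\sigma)$.

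For the second part \eqref{=ourmat2}, the key observation is that when $G=GL_n(\mathbb R)$ or $G$ is complex, $F_G$ acts trivially on representations --- for $GL_n(\mathbb R)$ because $\exp i\mathfrak a_0$ already lies in (the connected component issues are mild and) the conjugation by $F_G$ is inner or trivial on isomorphism classes, and for complex $G$ because $\mathfrak a_0$ is trivial or $F_G$ is central --- so $\widetilde\pi=\pi$. Then \eqref{=ourmat1} collapses to $\Psi_0(\pi)=\operatorname{WF}(\pi)\cap\Psi=\Psi_0(\sigma)$. It remains to identify these with $\mathrm{An}\mathcal V(\sigma)\cap\Psi_0$. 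For this I would invoke that for these two classes of groups the associated variety and the annihilator variety coincide: for complex groups this is classical (the $\theta$-stable and full nilpotent cones agree up to the identification $\mathfrak g=\mathfrak k\oplus i\mathfrak k$, and $\mathrm{As}\mathcal V=\mathrm{An}\mathcal V$ by work of Joseph / Vogan), and for $GL_n(\mathbb R)$ the real and complex nilpotent orbits are in bijection via the same Jordan-type invariant so $pr_{\mathfrak n^*}(\mathrm{As}\mathcal V(\sigma))\cap\Psi_0=pr_{\mathfrak n^*}(\mathrm{An}\mathcal V(\sigma))\cap\Psi_0$. Combining with Theorem \ref{thm:Main} gives the final equality.

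The main obstacle I anticipate is the middle inclusion $\Psi_0(\pi)\subset\operatorname{WF}(\pi)\cap\Psi$ and, dually, the passage from $\operatorname{WF}(\pi)$ back to $\mathrm{As}\mathcal V(\sigma)$ in a way that correctly accounts for which real forms of a given complex nilpotent orbit actually carry Whittaker data; it is precisely here that the twisting by $F_G$ is forced, and getting the bookkeeping right --- that $F_G$ permutes the real forms transitively enough that $\Psi_0(\widetilde\pi)$ captures the whole complex-orbit picture encoded in $\mathrm{As}\mathcal V(\sigma)$ --- is the delicate point. The identification $\widetilde\pi=\pi$ in the $GL_n(\mathbb R)$ and complex cases, and the equality $\mathrm{As}\mathcal V=\mathrm{An}\mathcal V$ there, are then comparatively routine once the general theorem is in hand, but they do require citing the appropriate structural results about nilpotent orbits for those groups.
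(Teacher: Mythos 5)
Your high-level framing is right in that the theorem is deduced from Theorem~A together with Kostant--Matumoto and the Kostant--Sekiguchi/Schmid--Vilonen dictionary, and your intuition about the role of $F_G$ (permuting real forms of a complex orbit) is essentially the paper's. But you are missing the central mechanism: \emph{reduction of degenerate characters to the nondegenerate case via parabolic Jacquet restriction}. The paper's proof of both the inclusion $\Psi_{0}(\pi)\subset\operatorname{WF}(\pi)\cap\Psi$ and the equality $\Psi_{0}(\widetilde{\pi})=\Psi_{0}(\sigma)$ runs through Lemma~\ref{lem:non-deg-par} (each $\psi\in\Psi_0$ is nondegenerate on the Levi of some standard parabolic $P=LU$), Lemma~\ref{lem:WFLevi} (the decomposition $\Psi_0(\cdot)=\bigcup_P\Psi_0^\times((\cdot)_P)$, and $\operatorname{WF}(\pi_P)\subset\operatorname{WF}(\pi)\cap\mathfrak l^*$), and then applies Matumoto's Theorem~\ref{thm:MatComp} and Kostant's Theorem~\ref{thm:KosCHM}(\ref{it:KosT}) orbit-by-orbit at the $L$-level. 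You have labeled the first inclusion merely ``the real-analytic input'' and the equality $\Psi_0(\widetilde\pi)=\Psi_0(\sigma)$ as ``simultaneously given'' by Theorem~A, but neither follows from what you wrote: Matumoto's theorem addresses only nondegenerate characters, and relating $\Psi_0(\widetilde\pi)$ (smooth vectors, all $F_G$-twists) to $\Psi_0(\sigma)$ ($K$-finite vectors) genuinely needs Kostant's Theorem~K on the Levi factors; it does not follow from manipulating Theorem~A and $F_G$-invariance of $\mathrm{As}\mathcal{V}$.

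Two further problems. First, for $\operatorname{WF}(\pi)\cap\Psi\subset\Psi_0(\sigma)$ the paper uses Lemma~\ref{lem:KS}, a geometric statement ($i\lambda\in\Psi_0\Rightarrow\lambda\in pr_{\mathfrak n^*}(\overline{\operatorname{KS}(G\cdot\lambda)})$), itself relying on Corollary~\ref{cor:PrOnto} (finiteness and surjectivity of $pr_{\mathfrak n^*}$ on $\mathcal{N}_\theta$) and Proposition~\ref{prop:KSL}; this ingredient is absent from your sketch, and without it the passage from the real wavefront set to the $K_{\mathbb{C}}$-side associated variety and back through $pr_{\mathfrak n^*}$ is unjustified. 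Second, your assertion that $\mathrm{As}\mathcal{V}=\mathrm{An}\mathcal{V}$ for complex groups is false: $\mathrm{As}\mathcal{V}(\sigma)\subset\mathcal{N}_\theta$ has half the dimension of $\mathrm{An}\mathcal{V}(\sigma)\subset\mathcal{N}$. The correct statement (and what the paper uses) is $\mathrm{An}\mathcal{V}(\pi)=G_{\mathbb{C}}\cdot\operatorname{WF}(\pi)$ from~(\ref{=GCorb}), combined with the fact that for $GL_n(\mathbb{R})$ and complex groups each complex nilpotent orbit contains a \emph{unique} real orbit, which yields both $\operatorname{WF}(\pi)\cap\Psi=\mathrm{An}\mathcal{V}(\pi)\cap\Psi_0$ and the triviality of the $F_G$-action on orbits.
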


If $\pi$ is generic, then Theorem \ref{thm:OurMat} follows immediately from
Theorem \ref{thm:Main} and \cite[Theorem A]{MatComp}. We prove the general
result by reduction to the generic case using the Kostant-Sekiguchi
correspondence, the coinvariants functor $C_{\mathfrak{u}}$, where
$\mathfrak{u}$ is the nilradical of a suitable parabolic subalgebra, \cite{SV} and Theorem \ref{thm:Main}.

Theorem \ref{thm:OurMat} implies $\mathrm{An}\mathcal{V}(\sigma)\cap\Psi
_{0}\supset\Psi_{0}(\sigma)$ though the reverse inclusion can fail, as shown
in section \ref{sec:RemOurMat} for the group $U\left(  2,2\right)  $. We
conjecture however that for all quasi-split groups one has the equality
\begin{equation}
\Psi_{0}(\pi)=\mathrm{WF}(\pi)\cap\Psi, \label{=intersect}%
\end{equation}
although the proof probably requires additional arguments of an analytic nature.

We prove a stronger result if $G=GL_{n}\left(  {\mathbb{R}}\right)  $ or if
$G$ is a complex classical group, \emph{i.e.} one of the groups%
\begin{equation}
GL_{n}\left(  {\mathbb{C}}\right)  ,SL_{n}\left(  {\mathbb{C}}\right)
,O_{n}\left(  {\mathbb{C}}\right)  ,SO_{n}\left(  {\mathbb{C}}\right)
,Sp_{n}\left(  {\mathbb{C}}\right)  \text{.} \label{=ccg}%
\end{equation}

\begin{introthm}
\label{thm:determine}Let $\pi\in\mathcal{M}\left(  G\right)  $ and suppose one
of the following holds:

\begin{enumerate}[(a)]
\item \label{it:detGL}$G=GL_{n}\left(  {\mathbb{R}}\right)  ,GL_{n}\left(  {\mathbb{C}%
}\right)  $ or $SL_{n}\left(  {\mathbb{C}}\right)  $;

\item \label{it:detC} $G=O_{n}\left(  {\mathbb{C}}\right)  ,SO_{n}\left(  {\mathbb{C}}\right)
$, or $Sp_{n}\left(  {\mathbb{C}}\right)  $ and $\pi$ is \emph{irreducible;}
\end{enumerate}

then $\Psi_{0}\left(  \pi\right)  $ and $\mathrm{WF}\left(  \pi\right)  $
determine each other uniquely.
\end{introthm}

Part \eqref{it:detGL} of Theorem \ref{thm:determine} follows easily from Theorem
\ref{thm:OurMat}, since for the groups in this case, every nilpotent orbit
intersects $\Psi_{0}$. This enables us to strengthen several results from
\cite{AGS}. We note that for \emph{unitarizable} $\pi$, a weaker version of
this theorem follows from \cite[Theorem A]{GS}.

For the groups in part \eqref{it:detC} of Theorem \ref{thm:determine} not every
nilpotent orbit intersects $\Psi_{0}$, however if $\pi$ is irreducible then
$\mathrm{An}\mathcal{V}\left(  \pi\right)  $ is the closure of a single
nilpotent orbit (see \cite{JosVar}), and this allows us to deduce  part \eqref{it:detC}  from the following result that may be of independent interest.

\begin{introthm}
\label{thm:ClassGroup} Every nilpotent orbit ${\mathcal{O}}$ for a complex
classical group is uniquely determined by $\overline{{\mathcal{O}}}\cap\Psi$.
\end{introthm}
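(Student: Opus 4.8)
The plan is to reduce the problem to an explicit combinatorial statement about partitions, using the classification of nilpotent orbits for complex classical groups in terms of partitions (satisfying the appropriate parity constraints for types $B$, $C$, $D$). Concretely, for $G = GL_n(\C)$, $O_n(\C)$, $SO_n(\C)$ or $Sp_n(\C)$, a nilpotent orbit $\mathcal O$ corresponds to a partition $\lambda$ of $n$, and the Borel $B$ is the upper-triangular Borel with $\mathfrak n$ its nilradical; the space $\Psi = \mathfrak v^* = (\mathfrak n/[\mathfrak n,\mathfrak n])^*$ is identified with the span of the simple (sub)root spaces, i.e. with the superdiagonal. Under the identification $\mathfrak g^* \cong \mathfrak g$ via the trace form, $\Psi$ corresponds to the affine subspace of matrices whose only possibly-nonzero entries lie on the first superdiagonal (intersected with $\mathfrak g$ in types $B,C,D$), and $\overline{\mathcal O}\cap\Psi$ is the set of such matrices of rank at most $n - \ell(\lambda)$ obeying the symmetry constraint. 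The first step is therefore to make this identification precise and to compute, for a given partition $\lambda$, exactly which ``superdiagonal'' nilpotent matrices lie in $\overline{\mathcal O_\lambda}$: I expect this to come down to the statement that the maximal Jordan type achievable by a superdiagonal (resp. skew/symmetric-superdiagonal) nilpotent matrix of a prescribed ``rank pattern'' is governed by the lengths of the runs of nonzero entries.

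Granting that description, the second step is the purely combinatorial core: show that the multiset of ``run-length data'' determined by $\overline{\mathcal O_\lambda}\cap\Psi$ recovers $\lambda$. For $GL_n$ this is essentially the observation that $\overline{\mathcal O_\lambda}\cap\Psi$ consists of superdiagonal $\{0,\ast\}$-patterns which, read as a composition of $n$ into runs of $1$'s separated by $0$'s, have the property that the sorted run lengths are dominated by $\lambda$ in the dominance order; the maximal elements of this set are exactly the superdiagonal matrices whose runs are a permutation of the parts of $\lambda$, so $\lambda$ is read off as the sorted run-length vector of any maximal stratum. For types $B$, $C$, $D$ one does the same with the extra bookkeeping imposed by the symmetry of the matrix (the superdiagonal entries $x_i$ are coupled to $x_{n-i}$ by $\pm$), which changes which partitions are ``attainable'' but, crucially, still lets one extract $\lambda$ from the geometry of $\overline{\mathcal O_\lambda}\cap\Psi$ — this is where the parity constraints defining the orbits in each type enter.

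The reduction strategy is: (i) handle $GL_n(\C)$ directly as above; (ii) for $SO_n(\C)$ and $Sp_n(\C)$, note that two orbits with the same partition but different (when $\lambda$ is ``very even'' in type $D$, the orbit splits into two) have the same closure intersection with $\Psi$ only up to the outer automorphism, so one must check that $\Psi$ is stable under the relevant diagram automorphism and that the two components of a very even orbit are swapped by it — hence ``for $O_n(\C)$'' the statement is clean, and for $SO_n$ one either works up to $O_n/SO_n$ or checks the very-even case by hand; (iii) for $O_n(\C)$ versus $SO_n(\C)$ the partition data is literally the same, so nothing new is needed. Throughout, one reduces from a general matrix in $\Psi$ to its Jordan type inside $\mathfrak g$, invoking the standard description of orbit closures via partitions and, for $B,C,D$, the collapse operation that sends an arbitrary partition to the largest $B/C/D$-partition it dominates.

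The main obstacle is step one — the precise determination of which superdiagonal (and symmetric-superdiagonal) nilpotent matrices lie in a given orbit closure. For $GL_n$ this is classical and easy: a superdiagonal nilpotent is a direct sum of single Jordan blocks according to its runs of nonzero entries, so its Jordan type is literally the run-length partition, and orbit closures are given by dominance; one then just has to identify the maximal strata. The genuinely delicate part is the symmetric case, where a superdiagonal matrix that is skew (type $C$) or symmetric (types $B$, $D$) with respect to the anti-diagonal form no longer decomposes as a plain sum of Jordan blocks, and one must compute its type inside $\mathfrak{so}_n$ or $\mathfrak{sp}_n$ — I expect the Jordan type to again be a run-length partition but with the parity of the runs constrained, and with ``very even'' ambiguities resolved by the choice of matrix, so the key lemma will be a careful case analysis of symmetric tridiagonal nilpotent Jordan forms. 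Once that lemma is in place, extracting $\lambda$ from $\overline{\mathcal O_\lambda}\cap\Psi$ is routine bookkeeping with the dominance order and the $B/C/D$-collapse.
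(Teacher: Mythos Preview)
Your approach is essentially the paper's, recast in matrix language. Your observation that a superdiagonal nilpotent decomposes into Jordan blocks according to its runs of nonzero entries is exactly the paper's lemma that every element of $\Psi_0$ is principal nilpotent in some standard Levi $\mathfrak{l}$; the ``run-length partition'' is the Jordan type of that principal element. The paper then reformulates: $\overline{\mathcal{O}}\cap\Psi_0$ determines and is determined by the set $PL(\mathcal{O})$ of orbits $\mathcal{O}'\leq\mathcal{O}$ that are principal in some Levi, and for $O(d)$ and $Sp(d)$ it identifies the PL-partitions as precisely those with at most one part $>1$ of odd multiplicity (each $GL(\kappa_i)$ factor of a Levi contributes a \emph{doubled} part $\kappa_i,\kappa_i$, and the remaining $O(d')$ or $Sp(d')$ factor contributes at most one singleton).

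Where you underestimate the work is what you call ``routine bookkeeping with the dominance order and the $B/C/D$-collapse''. Since not every $\lambda\in\mathcal{P}(G)$ is a PL-partition in types $B,C,D$, one cannot simply read $\lambda$ off as a maximal element of $PL(\mathcal{O}_\lambda)$. The paper's key lemma is that for every $k$ there exists $\mu\in PL(\mathcal{O}_\lambda)$ with $\mu_1+\cdots+\mu_k=\lambda_1+\cdots+\lambda_k$, proved by an averaging trick (replace two odd-multiplicity parts $p,q$ among $\lambda_1,\dots,\lambda_j$ by $(p+q)/2,(p+q)/2$, then flatten the tail to $1$'s); the $B/C/D$-collapse does not obviously produce such a $\mu$, so this step deserves an actual argument. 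Your treatment of the very even $SO_n$ case is also slightly off: the diagram automorphism swaps $\mathcal{O}_\lambda^{I}$ and $\mathcal{O}_\lambda^{II}$ but also acts nontrivially on $\Psi$ (it permutes two simple root spaces), so $\overline{\mathcal{O}_\lambda^{I}}\cap\Psi$ and $\overline{\mathcal{O}_\lambda^{II}}\cap\Psi$ are genuinely different. The paper's argument is simply that a very even partition has $|OM(\lambda)|=0$, hence each of $\mathcal{O}_\lambda^{I},\mathcal{O}_\lambda^{II}$ is itself a PL-orbit and meets $\Psi_0$, so $\mathcal{O}$ is the orbit of any maximal point of $\overline{\mathcal{O}}\cap\Psi_0$.
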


If $\pi$ is not irreducible then $\mathrm{WF}(\pi)$ might be the union of
several orbit closures, and as shown in (\ref{CounterClass}) such a union is
not determined by its intersection with $\Psi_{0}$. We also note that Theorem
\ref{thm:ClassGroup} does not hold for \emph{any} exceptional Lie group and we describe
all the counterexamples in section \ref{subsec:ExcGroups}.
This shows that Theorem \ref{thm:determine} cannot be strengthened since if
$G$ is a complex semi-simple group then any coadjoint nilpotent orbit in
${\mathfrak{k}}^{\bot}$ is the associated variety of a Harish-Chandra module
(see \cite[Theorem 10.3.4]{CM}).

Let $P=LU\subset G$ be a standard parabolic subgroup of $G$ and $\pi_{P}$
denote the Jacquet restriction of $\pi$ to $P$. Then it is easy to see that
$\Psi(\pi_{P})=\Psi(\pi)\cap\mathfrak{l}^{*}$. In \S \ref{subsec:WFJac} we use
this fact and Theorems \ref{thm:OurMat} and \ref{thm:determine} to show that
under certain conditions
\begin{equation}\label{=JacRes}
WF(\pi_{P})=WF(\pi)\cap\mathfrak{l}^{*}.
\end{equation}
It would be interesting  to know whether  this equality holds in general.

\begin{remark*}
Over p-adic fields, the associated and annihilator varieties are not defined
but the notion of wave front set still makes sense (see \cite{Hch,HowGL,Rod}).
In \cite{MW}, the authors give a very general definition of degenerate
Whittaker spaces and prove that the dimensions of \textquotedblleft minimally
degenerate\textquotedblright\ Whittaker spaces equal the multiplicities of
corresponding coadjoint nilpotent orbits in the wave front set.
The technique of \cite{MW} relies on approximation of unipotent subgroups by
open compact subgroups and thus is not applicable in the archimedean case.

\end{remark*}

\subsection{Structure of the paper}

In section \ref{sec:Prel} we give several necessary definitions and
preliminary results on
filtrations, associated/annihilator varieties, Whittaker functionals, and
discuss a version of the Casselman-Jacquet functor.

In section \ref{sec:PfMain} we prove Theorem \ref{thm:Main}. Let $\left(
\sigma,M\right)  $ be a Harish-Chandra module for $G,$ then every good
${\mathfrak{g}}$-filtration on $M$ is good as an $\mathfrak{n}$-filtration.
This implies that $\mathrm{As}\mathcal{V}_{\mathfrak{n}}(M)=pr_{\mathfrak{n}%
^{\ast}}\left[  \mathrm{As}\mathcal{V}_{\mathfrak{g}}(M)\right]  $ where
$\mathrm{As}\mathcal{V}_{\mathfrak{n}}(M)$ denotes the associated variety of
$M$ as an $\mathfrak{n}$-module (by restriction). We next pass to the
commutative Lie algebra $\mathfrak{v=n}/\mathfrak{n}^{\prime}$ by considering
the module of coinvariants
\[
CM=C\left(  M\right)  =C_{\mathfrak{n}^{\prime}}\left(  M\right)
:=M/\mathfrak{n}^{\prime}M.
\]
Since $\mathfrak{v}$ is commutative, $\mathrm{As}\mathcal{V}_{\mathfrak{v}%
}\left(  CM\right)  =\mathrm{An}\mathcal{V}_{\mathfrak{v}}\left(  CM\right)  $
and we denote both by $\mathcal{V}_{\mathfrak{v}}\left(  CM\right)  $. Then as
shown in Lemma \ref{lem:AVVSupp},$\mathcal{V}_{\mathfrak{v}}\left(  CM\right)
=\operatorname*{Supp}\left(  CM\right)  $, which further coincides with
$\Psi(M)$ by the Nakayama Lemma (see \S \ref{subsec:Nakayama}).

If $M$ is any finitely generated $\mathfrak{n}$-module, $\mathcal{V}%
_{\mathfrak{v}}\left(  CM\right)  \subset\mathrm{As}\mathcal{V}_{\mathfrak{n}%
}(M)\cap\Psi$, and our task is to prove that%
\[
\mathcal{V}_{\mathfrak{v}}\left(  CM\right)  \supset\mathrm{As}\mathcal{V}%
_{\mathfrak{n}}(M)\cap\Psi.
\]
This is \emph{not} true for a general finitely-generated $\mathfrak{n}$-module
$V$, indeed $C(V)$ could even vanish (see \S \ref{subsubsec:CEx}). However, if
$M$ is a Harish-Chandra module it was proven by Casselman that even
$M/\mathfrak{n}M$ is non-zero, indeed he proved that $\cap\mathfrak{n}^{i}%
M=0$. This implies that $M$ imbeds (densely) into its $\mathfrak{n}$-adic
completion $\widehat{M}:=\lim\limits_{\leftarrow}M/\mathfrak{n}^{i}M$.
Following \cite{ENV} we let%
\[
JM=J\left(  M\right)  :=\widehat{M}^{\mathfrak{h}\text{-finite}}%
\]
denote the submodule of $\mathfrak{h}$-finite vectors. The functor $J$ can be
applied to both $M$ and $C(M)$ and we prove that%
\[
\mathcal{V}_{\mathfrak{v}}\left(  CM\right)  =\mathcal{V}_{\mathfrak{v}%
}\left(  J\left(  CM\right)  \right)  =\mathcal{V}_{\mathfrak{v}}\left(
C\left(  JM\right)  \right)  .
\]
The first equality follows from the fact that $CM$ and $J\left(  CM\right)  $
are both dense in $\left(  CM\right)  _{\left[  \mathfrak{n}\right]  }$ and
hence have the same annihilator, while the second follows from the isomorphism
$J\left(  CM\right)  \simeq C\left(  JM\right)  $ proved in Lemma
\ref{lem:QuotJac}. Moreover $JM$ is finitely generated over $\mathfrak{n}$ and
glued from lowest weight modules, and hence we get (by Lemma \ref{lem:JacQuot}%
)
\[
\mathcal{V}_{\mathfrak{v}}(C\left(  JM\right)  )=\mathcal{V}_{\mathfrak{v}%
}(JM)\cap\Psi
\]

This reduces the problem to showing
\[
\mathcal{V}_{\mathfrak{v}}(JM)\cap\Psi\supset\mathrm{As}\mathcal{V}%
_{\mathfrak{n}}(M)\cap\Psi,
\]
which we prove in section \ref{subsec:PfAVJ}, using the main result of
\cite{ENV} that describes $J(M)$ as a deformation of $M$. The description is
in the language of $D$-modules, using the Beilinson-Bernstein localization.
In this language, the above-mentioned deformation is a certain near-by cycle.
While it is not true in general that the operation of taking associated
variety commutes with limits, but this was proven to be true for holonomic
$D$-modules with regular singularities in \cite{Gin}. This implies the above
containment and finishes the proof of Theorem \ref{thm:Main}.

In section \ref{sec:PfOurMat} we first prove Theorem \ref{thm:OurMat}. The
special case of large representations follows from \cite{MatActa,MatComp}. To
reduce to this case, we note in \S \ref{subsec:JacRes} that any unitary character $\psi$ of $N$ defines a
parabolic subgroup $P=LU$ such that $\psi$ is trivial on $N\cap U$ and
non-degenerate on $N\cap L$. Thus we consider the $U$-coinvariants of $\pi$,
and we need to know when this space is large as a representation of the Levi
subgroup $L$. For that purpose we use Theorem \ref{thm:Main}. We also use
\cite{SV} that shows that the wave-front set corresponds to the associated
variety via the Kostant-Sekiguchi bijection. We next use Theorem
\ref{thm:OurMat} to reduce the proof of Theorem \ref{thm:determine} to Theorem
\ref{thm:ClassGroup}. In subsection \ref{subsec:WFJac} we deduce from Theorem
\ref{thm:OurMat} the formula \eqref{=JacRes} for the wave front set of Jacquet restriction.

In section \ref{sec:GL} we give several consequences of Theorem
\ref{thm:determine}\eqref{it:detGL}, including applications to the theory of derivatives of
representations of $GL(n)$.
More precisely, we give a formula for the annihilator variety of the
derivative $B^{k}(\pi),$ defined in \cite{AGS}, in terms of the annihilator
variety of $\pi$. Over non-archimedean fields, where derivatives were
originally defined by Bernstein and Zelevinsky, an analogous formula is not
possible since cuspidal representations have full wave-front set, while all
their derivatives except the last one vanish.

In section \ref{sec:Orb} we prove Theorem \ref{thm:ClassGroup}, using basic
results on nilpotent orbits from \cite{CM,Car}.

\subsection{Acknowledgements}

We are grateful to Avraham Aizenbud, Dan Barbasch, Joseph Bernstein, Victor
Ginzburg, Anthony Joseph, Maxim Leyenson, Kari Vilonen, and David Vogan for
fruitful discussions. We thank the referee for useful remarks. Part of the work on this paper was done during the
program \textquotedblleft Analysis on Lie Groups" at the Max Planck Institute
for Mathematics (MPIM) in Bonn, and we thank the organizers of the program -
Bernhard Kroetz, Eitan Sayag and Henrik Schlichtkrull,
and the administration of the MPIM for their hospitality.
The second-named author also thanks the Weizmann Institute for its hospitality.

D.G. was partially supported by  ERC grant 291612,  ISF grant 756/12 and a Minerva foundation grant.

\section{Preliminaries}

\label{sec:Prel}

\setcounter{lemma}{0}

\subsection{The Nakayama lemma}

\label{subsec:Nakayama}

The classical Nakayama lemma is a commutative analog of the problems
considered in this paper, and in this section we explain this point of view.

Let $A$ be a commutative algebra, finitely generated over $\C$. The
characters of $A$ are ring homomorphisms $A\rightarrow{\mathbb{C}}$. Such a
homomorphism is uniquely described by its kernel, which is a maximal ideal in
$A$. Conversely, by Hilbert's Nullstellensatz, every maximal ideal
$\mathfrak{m}\in\operatorname*{Max}A$ is the kernel of a (unique) ring
homomorphism $\phi_{\mathfrak{m}}:A\rightarrow{\mathbb{C}}$. For an $A$-module
$M$ and $\mathfrak{m}\in\operatorname{Max}A$, we have $\operatorname{Hom}%
_{A}(M,\phi_{\mathfrak{m}})\cong M/\mathfrak{m}M$. Thus, we can define
\[
\Psi(M):=\{\mathfrak{m}\in\operatorname{Max}A\mid M\neq\mathfrak{m}M\}.
\]
On the other hand, the support of $M$ is defined to be
\[
\mathrm{Supp}(M):=\operatorname{Var}(\operatorname{Ann}M),
\]
where $\operatorname{Ann}M=\{a\in A\mid aM=0\}$ is the annihilator ideal of $M$ and $\operatorname{Var}$ denotes the variety of zeroes.

\begin{lem}
[Nakayama]\label{lem:Nak} If $M$ is finitely generated over $A$ then
$\mathrm{Supp}(M)=\Psi(M)$.
\end{lem}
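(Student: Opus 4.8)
The plan is to prove the two inclusions $\mathrm{Supp}(M) \subseteq \Psi(M)$ and $\Psi(M) \subseteq \mathrm{Supp}(M)$ separately, noting that the first holds for any $A$-module while the second is where finite generation is essential. Throughout I will use the dictionary, established in the paragraph preceding the statement, between maximal ideals $\mathfrak{m} \in \operatorname{Max} A$ and characters $\phi_{\mathfrak{m}} : A \to \mathbb{C}$, together with the identification $\operatorname{Hom}_A(M, \phi_{\mathfrak{m}}) \cong M/\mathfrak{m}M$, so that $\mathfrak{m} \in \Psi(M)$ exactly when $M/\mathfrak{m}M \neq 0$, and the fact that $\mathfrak{m} \in \mathrm{Supp}(M)$ exactly when $\operatorname{Ann}M \subseteq \mathfrak{m}$.

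For the inclusion $\Psi(M) \subseteq \mathrm{Supp}(M)$ I would argue by contrapositive: suppose $\mathfrak{m} \notin \mathrm{Supp}(M)$, so there exists $a \in \operatorname{Ann}M$ with $a \notin \mathfrak{m}$. Then $a$ acts as zero on $M$, hence on $M/\mathfrak{m}M$, but $a$ also acts on the one-dimensional module $M/\mathfrak{m}M \cong \operatorname{Hom}_A(M,\phi_{\mathfrak{m}})^* $ (more directly: on $M/\mathfrak{m}M$ every element of $A$ acts by its image in $A/\mathfrak{m} \cong \mathbb{C}$) by the nonzero scalar $\phi_{\mathfrak{m}}(a)$. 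A nonzero scalar that is simultaneously the zero operator forces $M/\mathfrak{m}M = 0$, i.e. $\mathfrak{m} \notin \Psi(M)$. Notice this direction does not use finite generation.

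For the reverse inclusion $\mathrm{Supp}(M) \subseteq \Psi(M)$, again by contrapositive: suppose $\mathfrak{m} \notin \Psi(M)$, i.e. $M = \mathfrak{m}M$. This is precisely the hypothesis of the classical Nakayama lemma applied to the finitely generated module $M$ over the ideal $\mathfrak{m}$ — but rather than localizing I would give the direct determinant-trick argument: pick generators $m_1, \dots, m_k$ of $M$, write each $m_i = \sum_j a_{ij} m_j$ with $a_{ij} \in \mathfrak{m}$, so that $(\mathrm{Id} - (a_{ij})) \mathbf{m} = 0$ as a vector equation over $A$; multiplying by the adjugate matrix shows $d \cdot M = 0$ where $d = \det(\mathrm{Id} - (a_{ij})) \in 1 + \mathfrak{m}$. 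Thus $d \in \operatorname{Ann}M$ but $d \notin \mathfrak{m}$ (since $d \equiv 1 \bmod \mathfrak{m}$), so $\operatorname{Ann}M \not\subseteq \mathfrak{m}$, i.e. $\mathfrak{m} \notin \mathrm{Supp}(M)$. Combining the two inclusions gives the equality.

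The main obstacle, such as it is, is purely expository: one must be careful that the "determinant trick" is stated over the ring $A$ itself (not over a localization or a local ring), since $A$ here is a global finitely generated $\mathbb{C}$-algebra and $\mathfrak{m}$ is a fixed maximal ideal rather than the maximal ideal of a local ring; the Cayley–Hamilton/adjugate computation is valid over any commutative ring, so this causes no real difficulty. The only other point worth a sentence is that finite generation is genuinely needed for $\mathrm{Supp}(M) \subseteq \Psi(M)$ — and indeed the Heisenberg example $V = \mathbb{C}[x]$ mentioned later in the introduction, where $CV$ can vanish despite a nonzero support, illustrates that the naive non-commutative analogue of this inclusion fails without an extra hypothesis, which is exactly why the bulk of the paper is needed to handle Harish-Chandra modules.
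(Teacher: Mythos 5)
Your proof is correct and follows essentially the same route as the paper: the paper reduces the substantive inclusion to \cite[Corollary 2.5]{AM}, which is exactly the determinant-trick statement you spell out inline, and the easy inclusion is the same observation in both. One small slip: $M/\mathfrak{m}M$ need not be one-dimensional (it is only a finite-dimensional vector space over $A/\mathfrak{m}\cong\mathbb{C}$, and the claimed isomorphism with $\operatorname{Hom}_A(M,\phi_{\mathfrak{m}})^{*}$ is just the standard duality, not a statement about dimension), but your parenthetical correction --- that $A$ acts on $M/\mathfrak{m}M$ through $A/\mathfrak{m}$, so $a$ acts by the nonzero scalar $\phi_{\mathfrak{m}}(a)$ --- is what the argument actually uses, so the conclusion stands.
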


For completeness, we deduce this result from the version in \cite{AM}.

\begin{proof}

The lemma follows from the following chain of equivalences which hold  for any $\mathfrak{m}\in\operatorname{Max}A$:
\begin{eqnarray*}
\mathfrak{m}M=M & \Leftrightarrow&  \exists x \in \mathfrak{m}
\text{ that acts by }1 \text{ on } M \quad \text{\cite[Corollary 2.5]{AM}} \\
&\Leftrightarrow&
1\in\mathfrak{m}+\operatorname{Ann}M \Leftrightarrow
\operatorname{Ann}M\nsubseteq\mathfrak{m}
\end{eqnarray*}
\end{proof}

Let us now describe what we consider a commutative analog of Theorem
\ref{thm:Main}. Let $B\subset A$ be a subalgebra. Then we have a natural map
$\phi:\operatorname{Spec} A \twoheadrightarrow\operatorname{Spec} B$, and the
support of $M$ considered as a $B$-module is the image of the support of $M$
in $\operatorname{Spec} A$. Further, if $I\subset B$ is a (radical) ideal then
we have a natural embedding $\operatorname{Spec}(B/I) \subset
\operatorname{Spec}(B)$ and $\Psi_B(M/IM)=\Psi_{B}(M)\cap\operatorname{Spec}%
(B/I)$, where we consider $M$ and $M/IM$ as $B$-modules. By Lemma
\ref{lem:Nak} we get $\Psi_{B}(M)=\phi(\Psi(M))$ and thus
\begin{equation}
\Psi(M/IM)=\phi(\Psi(M))\cap\operatorname{Spec}(B/I).
\end{equation}

\subsection{Associated variety and annihilator variety}

\label{subsec:ASVars}

In this section we let $\mathfrak{q}$ be an arbitrary finite dimensional
complex Lie algebra, and let $U=U\left(  \mathfrak{q}\right)  $ be its
universal enveloping algebra with the usual increasing filtration $U^{i}%
,i\geq0$. By the PBW theorem the associated graded algebra $\overline{U}$ is
isomorphic to the symmetric algebra $S\left(  \mathfrak{q}\right)  $. For a
$\mathfrak{q}$-module $V$, we define its\emph{ annihilator} and
\emph{annihilator variety} as follows
\[
\operatorname*{Ann}\left(  V\right)  =\left\{  u\in U:\forall x\in V\text{
}ux=0\right\}  \text{, }\mathrm{An}\mathcal{V}(V):=\operatorname*{Var}\left(
\overline{\operatorname*{Ann}V}\right)  \subset\mathfrak{q}^{\ast}%
\]
Here $\overline{\operatorname*{Ann}V}\subset S\left(  \mathfrak{q}\right)  $
denotes the associated graded space of $\operatorname*{Ann}V$ under the
filtration inherited from $U$.

For the rest of the section we assume that $V$ is generated by a \emph{finite}
dimensional subspace $V^{0}$.

In this case we get a filtration $V^{i}=U^{i}V^{0}$. The associated graded
space $\overline{V}$ is then an $S\left(  \mathfrak{q}\right)  $ module and we
define the \emph{associated variety} to be
\[
\mathrm{As}\mathcal{V}(V):=\operatorname*{Var}(\operatorname*{Ann}\overline
{V})=\operatorname*{Supp}\left(  \overline{V}\right)  \subset\mathfrak{q}%
^{\ast}%
\]
It is standard that $\mathrm{As}\mathcal{V}(V)$ does not depend on the choice
of the generating subspace. More generally a filtration on $V$ is called a
\emph{good filtration} if the associated graded space is a finitely generated
$S\left(  \mathfrak{q}\right)  $-module, and any two good filtrations lead to
the same associated variety. For a submodule $W\subset V$, a good filtration
on $V$ induces good filtrations on $W$ and on $V/W$ by $W^{i}=W\cap V^{i}$ and
$(V/W)^{i}=V^{i}/W^{i}$.

\begin{lemma}
\label{lem:AVBer} \cite{Ber} $\mathrm{As}\mathcal{V}(V)=\operatorname*{Var}%
\left(  \overline{\operatorname*{Ann}V^{0}}\right)  $ where
$\operatorname*{Ann}V^{0}=\left\{  u\in U:\forall x\in V^{0}\text{
}ux=0\right\}  .$
\end{lemma}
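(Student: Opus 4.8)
The plan is to compare the two $S(\mathfrak{q})$-modules $\overline V$ and $S(\mathfrak q)/\overline{\operatorname{Ann} V^0}$ via the surjection $U \onto V$, $u \mapsto u V^0$ (or rather the family of maps indexed by a basis of $V^0$), and to show that although these graded modules need not be isomorphic, their supports coincide. First I would set up the notation: let $R = S(\mathfrak q) = \operatorname{gr} U$, fix a basis $e_1,\dots,e_d$ of $V^0$, and consider the $U$-module map $\phi\colon U^{\oplus d}\to V$ sending the $j$-th generator to $e_j$. This is surjective because $V^0$ generates $V$. Filtering $U^{\oplus d}$ by $(U^i)^{\oplus d}$ and $V$ by $V^i = U^i V^0$, the map $\phi$ is filtered and strict by construction (the image filtration on $V$ is exactly the given one), so $\operatorname{gr}\phi\colon R^{\oplus d}\to \overline V$ is a surjection of graded $R$-modules. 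Hence $\operatorname{Ann}_R(\overline V) = \operatorname{Ann}_R(\operatorname{gr}\phi) \supseteq \bigcap_{j}\operatorname{Ann}_R(\operatorname{gr}\phi(\bar e_j))$, and more usefully $\operatorname{Supp}(\overline V) = \bigcup_j \operatorname{Supp}(R\cdot \overline{e_j}) = \bigcup_j \operatorname{Var}(\operatorname{Ann}_R \overline{e_j})$ where $\overline{e_j}\in \overline V^0$.

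Next I would relate $\operatorname{Ann}_R(\overline{e_j})$ to $\overline{\operatorname{Ann}_U(e_j)}$, i.e. to the symbol ideal of the annihilator in $U$ of the single vector $e_j$. The key point is the elementary inclusion $\overline{\operatorname{Ann}_U(e_j)} \subseteq \operatorname{Ann}_R(\overline{e_j})$: if $u\in U^i$ kills $e_j$ and has nonzero symbol $\operatorname{gr}_i u \in R_i$, then $\operatorname{gr}_i u$ acting on $\overline{e_j}\in \overline V_0$ lands in $\overline V_i = V^i/V^{i-1}$ and equals the class of $u e_j = 0$, so it is zero. This gives $\operatorname{Var}(\operatorname{Ann}_R \overline{e_j}) \subseteq \operatorname{Var}(\overline{\operatorname{Ann}_U e_j})$. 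For the reverse direction I would argue that a homogeneous $r\in R_i$ annihilating $\overline{e_j}$ lifts to some $u\in U^i$ with symbol $r$ and $u e_j \in V^{i-1}$; one cannot in general correct $u$ to kill $e_j$ outright, so the ideals themselves differ — but this is where one passes from ideals to varieties. Using that $\operatorname{Ann}_U(e_j) \supseteq \operatorname{Ann}_U(V)$ on one side, and on the other side iterating: $ue_j\in V^{i-1}=U^{i-1}V^0$ means $u e_j = \sum_k v_k e_k$ with $v_k\in U^{i-1}$, so $(u - \sum_k v_k)$... this does not close up either. The clean way is to observe instead that $\overline{\operatorname{Ann}_U V^0}$, where $\operatorname{Ann}_U V^0 = \bigcap_j \operatorname{Ann}_U(e_j)$, satisfies $\operatorname{Var}(\overline{\operatorname{Ann}_U V^0}) = \operatorname{As}\mathcal V(V)$ directly: this is essentially the statement in \cite{Ber} and is what I would cite or reprove, so the lemma reduces to the set-equality of varieties rather than any ideal equality.

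Concretely, then, the proof reduces to two containments of the radicals. The inclusion $\overline{\operatorname{Ann}_U V^0}\subseteq \operatorname{Ann}_R(\overline V)$ (hence $\operatorname{As}\mathcal V(V)=\operatorname{Supp}\overline V \subseteq \operatorname{Var}(\overline{\operatorname{Ann}_U V^0})$) follows from the symbol computation above applied to all of $V^0$ simultaneously, since $\overline V$ is generated over $R$ by $\overline V^0$ and any $r\in \overline{\operatorname{Ann}_U V^0}$ kills every generator. For the reverse inclusion $\operatorname{Var}(\overline{\operatorname{Ann}_U V^0})\subseteq \operatorname{As}\mathcal V(V)$, I would use that $\operatorname{Ann}_U(V^0)$ is a left ideal containing $\operatorname{Ann}_U(V)$ (a two-sided ideal), so $\operatorname{Var}(\overline{\operatorname{Ann}_U V^0}) \subseteq \operatorname{Var}(\overline{\operatorname{Ann}_U V}) = \operatorname{An}\mathcal V(V)$ — but that gives $\operatorname{An}\mathcal V$, not $\operatorname{As}\mathcal V$, and these can differ, so this is too crude. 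The main obstacle, and the step to get right, is precisely this reverse inclusion: one must show that any prime $\mathfrak p\supseteq \overline{\operatorname{Ann}_U V^0}$ lies in $\operatorname{Supp}\overline V$. I expect the right argument is via Lemma \ref{lem:AVVSupp}-style reasoning or a Nakayama/localization argument: localize $\overline V$ at $\mathfrak p$; if $\overline V_{\mathfrak p}=0$ then by finite generation some $s\notin \mathfrak p$ kills all of $\overline V^0$, hence lies in $\overline{\operatorname{Ann}_U V^0}$ after adjusting degrees — contradicting $\overline{\operatorname{Ann}_U V^0}\subseteq \mathfrak p$. This forces $\mathfrak p\in \operatorname{Supp}\overline V$, completing the equality. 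The degree-bookkeeping in lifting $s$ back to a symbol that genuinely kills $\overline V^0$ (rather than merely annihilating it modulo lower filtration) is the delicate part; handling it cleanly likely uses that $\overline V$ is a \emph{graded} module finitely generated in degree $0$ together with $V^i=U^iV^0$, so that $R\cdot \overline V^0=\overline V$ exactly, making $\operatorname{Ann}_R(\overline V)=\operatorname{Ann}_R(\overline V^0)$ and $\overline{\operatorname{Ann}_U V^0}$ coincide up to radical. With that identification in hand the lemma follows.
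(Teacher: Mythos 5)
The paper offers no proof of this lemma at all: it is stated with a citation to Bernstein \cite{Ber}, so there is nothing in the source to compare against directly. Your proposal must therefore be judged on its own merits.

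Your easy inclusion $\overline{\operatorname{Ann}_U V^0}\subseteq\operatorname{Ann}_R(\overline V)$ (hence $\mathrm{As}\mathcal V(V)\subseteq\operatorname{Var}(\overline{\operatorname{Ann}_U V^0})$) is correct, and the symbol computation you give for it is the standard one. The problem is the reverse inclusion, and here there is a genuine gap. Your localization argument is circular: you take $\mathfrak p\supseteq\overline{\operatorname{Ann}_U V^0}$, suppose $\overline V_{\mathfrak p}=0$, extract $s\notin\mathfrak p$ with $s\cdot\overline V^0=0$, and then assert that $s$ ``lies in $\overline{\operatorname{Ann}_U V^0}$ after adjusting degrees.'' But the passage from ``$s$ kills $\overline V^0$ in $\overline V$'' (i.e., any lift $u$ of $s$ sends each $e_j$ into lower filtration) to ``some power of $s$ is the symbol of an element of $U$ that genuinely kills every $e_j$'' \emph{is} the hard direction, and you never establish it --- you yourself flag it as ``the delicate part'' and then conclude ``with that identification in hand the lemma follows.'' Your final sentence, that $\operatorname{Ann}_R(\overline V^0)$ and $\overline{\operatorname{Ann}_U V^0}$ ``coincide up to radical,'' is precisely the statement being proved, asserted rather than argued.

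What is actually needed to close the gap is the boundedness of good filtrations (an Artin--Rees-type fact, available because $R=\operatorname{gr}U$ is commutative noetherian). One clean route: put $E=(e_1,\dots,e_d)\in V^{\oplus d}$, so that $\operatorname{Ann}_U(E)=\operatorname{Ann}_U(V^0)$ and $UE\cong U/\operatorname{Ann}_U V^0$ is cyclic. The filtration on $UE$ induced from $(V^i)^{\oplus d}$ and the intrinsic filtration $U^iE$ are both good, hence equivalent up to a shift $c$. If $s\in R_i$ vanishes on $\mathrm{As}\mathcal V(V)$ then, after replacing $s$ by a power, $s\overline V=0$; lifting to $u\in U^i$ one gets $u^m E\in (V^{m(i-1)})^{\oplus d}\cap UE\subseteq U^{m(i-1)+c}E$, so $u^m-w\in\operatorname{Ann}_U V^0$ for some $w$ of strictly lower order once $m>c$, and hence $s^m=\operatorname{gr}(u^m-w)\in\overline{\operatorname{Ann}_U V^0}$. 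That degree drop of $m$ versus the constant $c$ is exactly the ``degree-bookkeeping'' your argument never supplies. Without it, the proposal does not prove the lemma.
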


\begin{corollary}
We have $\mathrm{As}\mathcal{V}(V)\subseteq\mathrm{An}\mathcal{V}(V)$, and
equality holds if $\mathfrak{q}$ is commutative.
\end{corollary}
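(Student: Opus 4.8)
The plan is to derive both assertions from Lemma~\ref{lem:AVBer}, which presents $\mathrm{As}\mathcal{V}(V)$ as $\operatorname{Var}(\overline{\operatorname{Ann}V^0})$, rather than from the description $\mathrm{As}\mathcal{V}(V)=\operatorname{Var}(\operatorname{Ann}\overline{V})$. The reason is that $\overline{\operatorname{Ann}V}$ and $\operatorname{Ann}\overline{V}$ are genuinely different ideals of $S(\mathfrak q)$ in the noncommutative case, so comparing them directly is awkward, whereas $\operatorname{Ann}V$ and $\operatorname{Ann}V^0$ are easy to compare inside $U$.

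First, for the inclusion $\mathrm{As}\mathcal{V}(V)\subseteq\mathrm{An}\mathcal{V}(V)$: since $V^0\subseteq V$ we have $\operatorname{Ann}V\subseteq\operatorname{Ann}V^0$. An inclusion of $U$-filtered subspaces induces an inclusion of the associated graded subspaces inside $S(\mathfrak q)$ (the natural map $(\operatorname{Ann}V\cap U^i)/(\operatorname{Ann}V\cap U^{i-1})\to(\operatorname{Ann}V^0\cap U^i)/(\operatorname{Ann}V^0\cap U^{i-1})$ is injective because $S\subseteq T$ forces $(S\cap U^i)\cap(T\cap U^{i-1})=S\cap U^{i-1}$), hence $\overline{\operatorname{Ann}V}\subseteq\overline{\operatorname{Ann}V^0}$. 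Taking zero sets reverses containments, so by Lemma~\ref{lem:AVBer}
\[
\mathrm{As}\mathcal{V}(V)=\operatorname{Var}(\overline{\operatorname{Ann}V^0})\subseteq\operatorname{Var}(\overline{\operatorname{Ann}V})=\mathrm{An}\mathcal{V}(V).
\]

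Next, for equality when $\mathfrak q$ is commutative: here $U=S(\mathfrak q)$ is commutative and $V=U\,V^0$, and I would check that $\operatorname{Ann}V=\operatorname{Ann}V^0$. The inclusion $\subseteq$ is automatic, and conversely if $uV^0=0$ then for $v=\sum u_i v_i$ with $u_i\in U$ and $v_i\in V^0$ one has $uv=\sum u_i(uv_i)=0$, using commutativity of $U$ to move $u$ past $u_i$. Consequently $\overline{\operatorname{Ann}V}=\overline{\operatorname{Ann}V^0}$, and comparing the definition of $\mathrm{An}\mathcal{V}(V)$ with Lemma~\ref{lem:AVBer} gives
\[
\mathrm{An}\mathcal{V}(V)=\operatorname{Var}(\overline{\operatorname{Ann}V})=\operatorname{Var}(\overline{\operatorname{Ann}V^0})=\mathrm{As}\mathcal{V}(V).
\]

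The argument is short and there is no substantial obstacle; the one point needing care is that the identity $\operatorname{Ann}(U\,V^0)=\operatorname{Ann}(V^0)$ uses commutativity in an essential way (for noncommutative $U$, $\operatorname{Ann}V^0$ need not even be a left ideal), and the failure of this identity is exactly what makes the inclusion $\mathrm{As}\mathcal{V}(V)\subseteq\mathrm{An}\mathcal{V}(V)$ strict for many $\mathfrak q$.
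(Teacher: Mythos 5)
Your proof is correct, and since the paper leaves the corollary without a written proof (it is stated as an immediate consequence of Lemma~\ref{lem:AVBer}), your argument is essentially the intended one. Both halves correctly reduce the comparison to the inclusion $\operatorname{Ann}V\subseteq\operatorname{Ann}V^{0}$ inside $U$, pass to associated graded subspaces of $S(\mathfrak q)$, and apply $\operatorname{Var}$; you also rightly observe that it is the equality $\overline{\operatorname{Ann}V}=\overline{\operatorname{Ann}V^{0}}$ (coming from $\operatorname{Ann}V=\operatorname{Ann}V^{0}$ via commutativity of $U=S(\mathfrak q)$) that is needed, rather than any comparison of $\operatorname{Var}(\operatorname{Ann}V)$ with $\operatorname{Var}(\overline{\operatorname{Ann}V})$, which would not hold in general.
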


If there is possibility of confusion we will write $\mathrm{As}\mathcal{V}%
_{{\mathfrak{q}}}\left(  V\right)  $ etc. to emphasise dependence on
$\mathfrak{q}$.

If ${\mathfrak{w}}$ is a subalgebra of {$\mathfrak{q}$} we define the
\emph{coinvariant} \emph{space} to be the quotient
\[
C_{{\mathfrak{w}}}\left(  V\right)  =V/{\mathfrak{w}}V
\]
If {$\mathfrak{w}$} is an ideal then $C_{{\mathfrak{w}}}V$ is a {$\mathfrak{q}%
$}-module and the action descends to the quotient Lie algebra $\mathfrak{r}%
:=${$\mathfrak{q}$}$/${$\mathfrak{w}$}.

\begin{lem}
\label{lem:Quot}If $V$ is a finitely generated $\mathfrak{q}$-module then
$\mathrm{As}\mathcal{V}_{\mathfrak{q}}(C_{{\mathfrak{w}}}V)=\mathrm{As}%
\mathcal{V}_{\mathfrak{r}}(C_{{\mathfrak{w}}}V)\subset\mathrm{As}%
\mathcal{V}_{{\mathfrak{q}}}(V)\cap\mathfrak{r}^{\ast}$, where $\mathfrak{r}%
^{\ast}\subset\mathfrak{q}^{\ast}$ in the usual way.
\end{lem}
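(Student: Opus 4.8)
The plan is to reduce everything to statements about the filtrations and associated graded modules. Fix a finite-dimensional generating subspace $V^0$ of $V$ and let $V^i = U(\mathfrak{q})^i V^0$ be the induced good filtration. First I would observe that the image $\bar V^0$ of $V^0$ in $C_{\mathfrak w}V = V/\mathfrak{w}V$ is a finite-dimensional generating subspace for the quotient module, so $C_{\mathfrak w}V$ is finitely generated over $\mathfrak q$ and the induced filtration $(C_{\mathfrak w}V)^i$ is the image of $V^i$; in particular the surjection $V \onto C_{\mathfrak w}V$ is filtered. Taking associated graded modules, we get a surjection $\overline V \onto \overline{C_{\mathfrak w}V}$ of finitely generated $S(\mathfrak q)$-modules, hence $\operatorname{Supp}(\overline{C_{\mathfrak w}V}) \subset \operatorname{Supp}(\overline V)$, i.e. $\mathrm{As}\mathcal V_{\mathfrak q}(C_{\mathfrak w}V) \subset \mathrm{As}\mathcal V_{\mathfrak q}(V)$.

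Next I would explain why this support is contained in $\mathfrak r^\ast$: since $\mathfrak w$ acts as zero on $C_{\mathfrak w}V$ (as $\mathfrak w$ is an ideal, $\mathfrak w V$ is a $\mathfrak q$-submodule and the quotient action factors through $\mathfrak r = \mathfrak q/\mathfrak w$), every element of $\mathfrak w \subset \mathfrak q \subset U(\mathfrak q)^1$ lies in $\operatorname{Ann}(C_{\mathfrak w}V)$, so its symbol, a linear function on $\mathfrak q^\ast$, lies in $\overline{\operatorname{Ann}(C_{\mathfrak w}V)}$; the common zero locus of these linear functions is precisely $\mathfrak r^\ast = (\mathfrak q/\mathfrak w)^\ast \subset \mathfrak q^\ast$. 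This gives $\mathrm{As}\mathcal V_{\mathfrak q}(C_{\mathfrak w}V) \subset \mathrm{As}\mathcal V_{\mathfrak q}(V) \cap \mathfrak r^\ast$.

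Finally, for the equality $\mathrm{As}\mathcal V_{\mathfrak q}(C_{\mathfrak w}V) = \mathrm{As}\mathcal V_{\mathfrak r}(C_{\mathfrak w}V)$, I would compare the two enveloping-algebra filtrations acting on $W := C_{\mathfrak w}V$. The point is that the surjection $U(\mathfrak q) \onto U(\mathfrak r)$ is filtered and compatible with the $W$-action; choosing a vector-space splitting $\mathfrak q = \mathfrak w \oplus \mathfrak s$ with $\mathfrak s \cong \mathfrak r$, the filtration $U(\mathfrak q)^i \bar V^0$ and the filtration $U(\mathfrak r)^i \bar V^0$ coincide, because any monomial in a PBW basis of $U(\mathfrak q)$ that involves a factor from $\mathfrak w$ acts as zero on $W$. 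Hence the two induced good filtrations on $W$ agree, and the associated graded module $\overline W$ is the same whether viewed over $S(\mathfrak q)$ or over $S(\mathfrak r)$; under $S(\mathfrak q)\onto S(\mathfrak r)$ its support computed in $\mathfrak q^\ast$ is exactly the preimage of its support in $\mathfrak r^\ast$, but since it is already annihilated by the ideal generated by $\mathfrak w$, these two loci are identified via $\mathfrak r^\ast \hookrightarrow \mathfrak q^\ast$. The main obstacle is purely bookkeeping: making precise that the two filtrations on $W$ literally coincide (not merely that one refines the other) so that no spurious associated-variety components appear; once that is pinned down the rest is formal. I would invoke Lemma \ref{lem:AVBer} to phrase everything in terms of $\overline{\operatorname{Ann} \bar V^0}$, which streamlines this comparison.
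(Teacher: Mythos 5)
Your proposal is correct and takes essentially the same route as the paper: both arguments hinge on Lemma \ref{lem:AVBer}, observing that the image $\bar V^0$ of $V^0$ generates $C_{\mathfrak w}V$ and that its $U(\mathfrak q)$-annihilator contains $\operatorname{Ann}_{\mathfrak q}V^0 + \mathfrak w$, from which both the containment in $\mathrm{As}\mathcal V_{\mathfrak q}(V)\cap\mathfrak r^\ast$ and the agreement of the $\mathfrak q$- and $\mathfrak r$-associated varieties follow. The paper states this in a single line; your write-up makes the filtration/PBW bookkeeping explicit, but no new idea is needed.
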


\begin{proof}
Let $Y^{0}$ denote the image of the generating space $V^{0}$ under the
quotient map $V\rightarrow C_{{\mathfrak{w}}}\left(  V\right)  $. Then $Y^{0}$
generates $C_{{\mathfrak{w}}}\left(  V\right)  $ and $\operatorname*{Ann}%
\nolimits_{\mathfrak{q}}Y^{0}\supset\operatorname*{Ann}\nolimits_{\mathfrak{q}%
}V^{0}+${$\mathfrak{w}$}. The result now follows from Lemma \ref{lem:AVBer}.
\end{proof}

As was discussed in the introduction, the converse inclusion is not true in general.

If $G$ is a real reductive group and $\left(  \pi,W\right)  \in\mathcal{M}(G)$ then $W^{HC}$ is dense in $W$ and
we can choose a finite dimensional $K$-invariant generating subspace of
$W^{HC}$. It follows that we have
\[
\mathrm{An}\mathcal{V}(\pi)=\mathrm{An}\mathcal{V}(\pi^{HC})\subset
\mathcal{N}\text{, }\quad \mathrm{As}\mathcal{V}(\pi^{HC})\subset\mathcal{N}%
_{\theta}%
\]
$\mathrm{and}$ the two varieties are unions of $G_{\mathbb{C}}$-orbits and
$K_{\mathbb{C}}$-orbits respectively. Moreover, one has the following theorem.

\begin{thm}[{\cite[Theorem 8.4]{Vog-Unip}}]\label{thm:Vog}
Let $\sigma\in \mathcal{HC}(G)$ be irreducible and $\cO$ be the dense nilpotent coadjoint orbit in $\mathrm{An}\cV(\sigma)$. Then
\begin{enumerate}[(a)]
\item $\mathrm{As}\cV(\sigma) \subset \mathrm{An}\cV(\sigma) \cap {\fk}^{\bot}$.
\item $\cO \cap  {\fk}^{\bot}$ is the union of a finite number of $K_{\C}$-orbits $\cO_1, \dots, \cO_r$, each of which has dimension equal to half of the dimension of $\cO$.
\item Some of the $\cO_i$ are contained in $\mathrm{As}\cV(\sigma)$; they are precisely the $K_{\C}$-orbits of maximal dimension in $\mathrm{As}\cV(\sigma)$.
\end{enumerate}
\end{thm}

\subsection{Restricted roots and parabolic subgroups}

\label{subsec:ResRoots}
 The key results of this section are Proposition
\ref{prop:nondeg} and Lemma \ref{lem:non-deg-par}, which are rather straightforward for split groups. The main point of this section is to prove these results for quasi-split  groups.

Recall that $H=TA$ denotes our fixed $\theta$-stable maximally split Cartan
subgroup. Let $\Sigma$ and $\Sigma_{0}$ denote the root systems of
${\mathfrak{h}}$ in ${\mathfrak{g}}$ and ${\mathfrak{a}}_{0}$ in
${\mathfrak{g}}_{0}$ respectively, and let $\mathfrak{g}^{\alpha}%
\subset\mathfrak{g}$ and $\mathfrak{g}_{0}^{\beta}\subset\mathfrak{g}_{0}$
denote the root spaces for $\alpha\in\Sigma$ and $\beta\in\Sigma_{0}$. For
$\alpha\in\Sigma$ let $\widetilde{\alpha}$ denote the restriction of $\alpha$
to $\mathfrak{a}_{0}$ then either $\widetilde{\alpha}=0$ or else
$\widetilde{\alpha}\in\Sigma_{0}$. Moreover for any $\beta\in\Sigma_{0}$ we
have
\[
\dim_{\mathbb{R}}\left(  \mathfrak{g}_{0}^{\beta}\right)  =\left\vert \left\{
\alpha\in\Sigma:\widetilde{\alpha}=\beta\right\}  \right\vert
\]
Every $\alpha\in$ $\Sigma$ is real-valued on ${\mathfrak{a}}_{0}$ and
imaginary-valued on ${\mathfrak{t}}_{0}$. The involution $\theta$ acts
naturally on $\Sigma$ and if $\alpha^{\prime}=-\theta\alpha$ then we have%
\[
\alpha^{\prime}|_{\mathfrak{a}_{0}}=\alpha|_{\mathfrak{a}_{0}},\alpha^{\prime
}|_{\mathfrak{t}_{0}}=-\alpha|_{\mathfrak{t}_{0}}%
\]

\begin{lemma}
Let $G$ be a real reductive group then the following are equivalent:

\begin{enumerate}
\item $G$ is quasi-split.

\item For all $\alpha\in\Sigma$ we have $\widetilde{\alpha}\neq0$.

\item For all $\beta\in\Sigma_{0},$ we have $\dim_{\mathbb{R}}\left(
\mathfrak{g}_{0}^{\beta}\right)  \leq2$.
\end{enumerate}
\end{lemma}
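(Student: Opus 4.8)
The plan is to prove the chain of equivalences $(1)\Leftrightarrow(2)\Leftrightarrow(3)$ by going through $(2)$ as the central statement, since $(2)$ and $(3)$ are easily seen to be equivalent from the dimension formula already recorded, and $(1)\Leftrightarrow(2)$ is the arithmetic heart of the matter. First I would dispose of $(2)\Leftrightarrow(3)$: the displayed identity $\dim_{\mathbb{R}}(\mathfrak{g}_0^{\beta})=|\{\alpha\in\Sigma:\widetilde{\alpha}=\beta\}|$ shows that for a fixed $\beta\in\Sigma_0$ there is at least one $\alpha$ with $\widetilde{\alpha}=\beta$, and among those the involution $\alpha\mapsto\alpha'=-\theta\alpha$ acts, fixing the restriction to $\mathfrak{a}_0$. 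A root $\alpha$ with $\widetilde{\alpha}\neq 0$ is not fixed by $-\theta$ precisely when $\alpha$ is not $\theta$-stable in the relevant sense; the point is that $\dim_{\mathbb{R}}\mathfrak{g}_0^\beta\le 2$ for all $\beta$ is equivalent to saying the fibers of $\alpha\mapsto\widetilde{\alpha}$ have size at most $2$, and combined with the fact (to be invoked) that each such fiber is nonempty and, when $\widetilde{\alpha}\neq 0$, $\theta$-stable roots (i.e.\ $\alpha=\alpha'$, equivalently $\alpha|_{\mathfrak{t}_0}=0$) contribute the "restricted'' roots of the maximal split torus with multiplicity one, one sees that (2) forces every fiber to consist of a single $\theta$-orbit $\{\alpha,\alpha'\}$, hence size $\le 2$, and conversely a fiber of size $\le 2$ that is nonempty and $\theta$-invariant can only be a single root or a single $\theta$-pair, so no root restricts to $0$. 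Thus I would really only need to carefully handle the degenerate possibility that a fiber over $\beta\neq 0$ has size $1$ coming from a $\theta$-fixed root versus size $2$ from a $\theta$-pair — both are allowed in (3), and neither involves a root restricting to $0$.

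Next, the equivalence $(1)\Leftrightarrow(2)$. Recall $G$ is quasi-split by definition means it has a Borel subgroup defined over $\mathbb{R}$, equivalently the centralizer $M=Z_K(A)$ of the split torus $A$ is abelian, equivalently (Kostant) the minimal parabolic $P_0=MAN$ has abelian $M$. The cleanest route is: $G$ is quasi-split $\iff$ $\mathfrak{m}=\mathfrak{z}_{\mathfrak{g}_0}(\mathfrak{a}_0)$ has no roots of $\mathfrak{h}$, i.e.\ $\mathfrak{t}_0$ is a Cartan subalgebra of $\mathfrak{m}$ and $\mathfrak{m}$ is abelian. Now a root $\alpha\in\Sigma$ (roots of $\mathfrak{h}$ in $\mathfrak{g}$) with $\widetilde{\alpha}=\alpha|_{\mathfrak{a}_0}=0$ is exactly a root whose root space lies in $\mathfrak{m}_{\mathbb{C}}$; so such a root exists iff $\mathfrak{m}$ is nonabelian iff $G$ is not quasi-split. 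I would make this precise: $\mathfrak{g}^{\alpha}\subset\mathfrak{m}_{\mathbb{C}}=\mathfrak{z}_{\mathfrak{g}}(\mathfrak{a}_0)$ if and only if $[\mathfrak{a}_0,\mathfrak{g}^\alpha]=0$ if and only if $\alpha(\mathfrak{a}_0)=0$ if and only if $\widetilde{\alpha}=0$. Hence $(2)$ fails $\iff$ $\mathfrak{m}_{\mathbb{C}}$ contains a root space $\iff$ $\mathfrak{m}$ is nonabelian $\iff$ $G$ is not quasi-split, which is $(1)\iff(2)$.

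The main obstacle, I expect, is pinning down the precise equivalent formulation of "quasi-split'' that is being used here and justifying the step "$\mathfrak{m}$ abelian $\iff$ $\mathfrak{m}$ contains no root space of $\mathfrak{h}$''. Since $\mathfrak{h}=\mathfrak{t}\oplus\mathfrak{a}$ is a $\theta$-stable Cartan of $\mathfrak{g}$ containing $\mathfrak{t}_0\subset\mathfrak{m}_0$ as a compact Cartan of $\mathfrak{m}_0$, the reductive algebra $\mathfrak{m}$ decomposes as $\mathfrak{h}\oplus\bigoplus_{\alpha:\widetilde{\alpha}=0}\mathfrak{g}^\alpha$, so $\mathfrak{m}$ is abelian iff the sum over $\{\alpha:\widetilde{\alpha}=0\}$ is empty — this is where one must be a little careful that $\mathfrak{m}$ has $\mathfrak{h}$ (not just $\mathfrak{t}$) as Cartan and that all of $\mathfrak{a}$ is central in $\mathfrak{m}$, which is automatic since $\mathfrak{a}=\mathfrak{a}_0\otimes\mathbb{C}$ and $\mathfrak{m}=\mathfrak{z}(\mathfrak{a}_0)$. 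With that structural fact in hand, the whole lemma is immediate; I do not anticipate any genuinely hard estimate, only the bookkeeping of root restrictions and the $\theta$-action, which is the routine part I would not grind through here.
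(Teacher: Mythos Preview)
Your argument for $(1)\Leftrightarrow(2)$ is correct and more conceptual than the paper's approach: the identification of roots with $\widetilde{\alpha}=0$ as precisely the roots of $\mathfrak{m}=\mathfrak{z}_{\mathfrak{g}}(\mathfrak{a}_0)$, together with the standard fact that $G$ is quasi-split iff $\mathfrak{m}_0$ is abelian, gives this equivalence cleanly. The paper instead reduces to simple $\mathfrak{g}_0$ and checks cases via Satake diagrams, interpreting $(2)$ as ``no black dots'' and $(3)$ as ``all restricted-root multiplicities $\le 2$'', then reading both off Helgason's tables for the short list of non-split, non-complex quasi-split types.

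However, your treatment of $(2)\Leftrightarrow(3)$ has a genuine gap in each direction. For $(2)\Rightarrow(3)$ you assert that ``$(2)$ forces every fiber to consist of a single $\theta$-orbit $\{\alpha,\alpha'\}$'', but this is exactly the nontrivial point: the fiber over $\beta$ is certainly $(-\theta)$-stable, yet nothing you have said rules out its being a union of several $(-\theta)$-orbits. What is needed is the folding lemma for diagram automorphisms (once $(2)$ holds one can choose $\Sigma^+$ so that $-\theta$ permutes the simple roots): if $\sigma$ is a diagram automorphism of order $2$ and $\alpha+\sigma\alpha=\gamma+\sigma\gamma$ for roots $\alpha,\gamma$, then $\gamma\in\{\alpha,\sigma\alpha\}$. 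This is true but not a tautology, and is typically proved type by type anyway. For $(3)\Rightarrow(2)$ your sentence ``a fiber of size $\le 2$ \dots\ so no root restricts to $0$'' is a non sequitur: condition $(3)$ constrains only the fibers over $\beta\in\Sigma_0$, i.e.\ $\beta\neq 0$, and says nothing about whether imaginary roots exist. A correct conceptual argument for this direction: if $(2)$ fails then $[\mathfrak{m}_0,\mathfrak{m}_0]$ is nonzero compact semisimple; by simplicity of $\mathfrak{g}_0$ it must act nontrivially on some $\mathfrak{g}_0^\beta$, and every nontrivial real representation of a compact semisimple Lie algebra has dimension $\ge 3$, so $(3)$ fails.
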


\begin{proof}
Since $G$ is quasi-split iff $\mathfrak{g}_{0}$ has a Borel subalgebra, the
lemma depends only on the Lie algebra $\mathfrak{g}_{0}$. Moreover it suffices
to prove the lemma for simple factors of $\mathfrak{g}_{0}$. The result is
obvious for split and complex factors, and by \cite{He} the other possible
simple quasi-split factors are of the form
\[%
\begin{tabular}
[c]{|l|l|l|l|l|}\hline
$\mathfrak{g}_{0}$ & $\mathfrak{su}_{l,l}$ & $\mathfrak{su}_{l,l+1}$ &
$\mathfrak{so}_{l,l+2}$ & $\mathfrak{e}_{6\left(  2\right)  }$\\\hline
Label & $AIII\left(  r=2l-1\right)  $ & $AIII\left(  r=2l\right)  $ &
$DI\left(  r=l+1\right)  $ & $EII$\\\hline
\end{tabular}
\]
Now the lemma can be checked using Table VI of \cite{He}, where (2) means that
there are no black dots in the Satake diagram, and (3) means that each of the
multiplicities $m_{\lambda}$ and $m_{2\lambda}$ is at most $2$.
\end{proof}

Since in this paper we suppose that $G$ is quasi-split, we obtain

\begin{corollary}
\label{cor:QuasSplit} If $\alpha\in\Sigma$ satisfies $\dim_{\mathbb{R}}\left(
\mathfrak{g}_{0}^{\widetilde{\alpha}}\right)  =2$, then $\alpha|_{\mathfrak{t}%
_{0}}\neq0$ and $\mathfrak{g}^{\alpha}\cap\mathfrak{g}_{0}^{\widetilde{\alpha
}}=\left\{  0\right\}  $.
\end{corollary}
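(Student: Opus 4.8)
The plan is to study the finite set $S=\{\beta\in\Sigma:\widetilde{\beta}=\widetilde{\alpha}\}$, which by the displayed dimension formula has $|S|=\dim_{\mathbb{R}}(\mathfrak{g}_{0}^{\widetilde{\alpha}})=2$, and to track how the two natural involutions on $\Sigma$ interact with it. Write $\beta^{\prime}=-\theta\beta$ for $\beta\in\Sigma$; since $\theta$ preserves $\Sigma$ and $\Sigma=-\Sigma$, the map $\beta\mapsto\beta^{\prime}$ is an involution of $\Sigma$, and by the displayed identities $\beta^{\prime}|_{\mathfrak{a}_{0}}=\beta|_{\mathfrak{a}_{0}}$ and $\beta^{\prime}|_{\mathfrak{t}_{0}}=-\beta|_{\mathfrak{t}_{0}}$ it preserves $S$. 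I would also record the effect of complex conjugation: if $c:\mathfrak{g}\to\mathfrak{g}$ is the conjugation with respect to the real form $\mathfrak{g}_{0}$, then since every root is real-valued on $\mathfrak{a}_{0}$ and imaginary-valued on $\mathfrak{t}_{0}$, a one-line computation with $[H,c(X)]=c([H,X])$ for $H\in\mathfrak{h}_{0}$ shows $c(\mathfrak{g}^{\beta})=\mathfrak{g}^{\beta^{\prime}}$; that is, conjugation acts on root spaces the way $\beta\mapsto-\theta\beta$ acts on roots.

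Next I would deduce simultaneously that $\alpha|_{\mathfrak{t}_{0}}\neq0$ and $\alpha\neq\alpha^{\prime}$. The involution $\beta\mapsto\beta^{\prime}$ of the two-element set $S$ is either the transposition or the identity. If it fixed every element of $S$, then each $\beta\in S$ would satisfy $\beta|_{\mathfrak{t}_{0}}=-\beta|_{\mathfrak{t}_{0}}$, hence $\beta=\widetilde{\beta}=\widetilde{\alpha}$ as a functional on $\mathfrak{h}_{0}$, so $S$ would contain at most one root, contradicting $|S|=2$. Hence $\beta\mapsto\beta^{\prime}$ swaps the two elements of $S$, so $S=\{\alpha,\alpha^{\prime}\}$ with $\alpha\neq\alpha^{\prime}$, and in particular $\alpha|_{\mathfrak{t}_{0}}\neq0$ (otherwise $\alpha=\widetilde{\alpha}=\alpha^{\prime}$).

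Finally, for the intersection statement I would take $X\in\mathfrak{g}^{\alpha}\cap\mathfrak{g}_{0}^{\widetilde{\alpha}}$. Since $X\in\mathfrak{g}_{0}$ it is fixed by $c$, so $X=c(X)\in c(\mathfrak{g}^{\alpha})=\mathfrak{g}^{\alpha^{\prime}}$; thus $X$ lies in $\mathfrak{g}^{\alpha}\cap\mathfrak{g}^{\alpha^{\prime}}$, which is $\{0\}$ because $\alpha\neq\alpha^{\prime}$ are distinct weights of $\mathfrak{h}$. Therefore $X=0$, as desired.

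I do not anticipate a serious obstacle: the argument is essentially bookkeeping. The one point needing care is the identification of complex conjugation on the root spaces with the map $\beta\mapsto-\theta\beta$ on roots (which relies on roots being imaginary on $\mathfrak{t}_{0}$ and real on $\mathfrak{a}_{0}$), together with the observation that the quasi-split hypothesis enters precisely through $|S|=\dim_{\mathbb{R}}(\mathfrak{g}_{0}^{\widetilde{\alpha}})\le 2$ and so, in the case at hand, forces the degenerate possibility $\alpha=\alpha^{\prime}$ not to occur.
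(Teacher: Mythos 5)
Your argument is correct, and it takes a genuinely different route from the paper's proof, particularly for the second assertion. For $\alpha|_{\mathfrak{t}_{0}}\neq0$, the paper argues by contradiction that if $\alpha|_{\mathfrak{t}_{0}}=0$ one could exhibit three distinct roots $\alpha,\alpha_1,-\theta\alpha_1$ with restriction $\widetilde{\alpha}$, contradicting $|S|=2$; you instead observe that the involution $\beta\mapsto\beta'=-\theta\beta$ must act freely on the two-element set $S$, since any fixed point would be a root vanishing on $\mathfrak{t}_{0}$ and hence determined by $\widetilde{\alpha}$, forcing $|S|\leq1$. These are close in spirit, but your phrasing via the involution is cleaner. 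For the intersection, the two proofs diverge: the paper picks $X\in\mathfrak{t}_{0}$ with $\alpha(X)=i$ and compares $[X,v]=iv$ with the membership $[X,v]\in[\mathfrak{t}_{0},\mathfrak{g}_{0}]\subset\mathfrak{g}_{0}$ to conclude $v\in\mathfrak{g}_{0}\cap i\mathfrak{g}_{0}=0$; you use the conjugation $c$ with respect to $\mathfrak{g}_{0}$, noting $c(\mathfrak{g}^{\beta})=\mathfrak{g}^{\beta'}$ (which follows because $\overline{\beta(H)}=\beta'(H)$ for $H\in\mathfrak{h}_{0}$, exactly as you say), so any $X\in\mathfrak{g}^{\alpha}\cap\mathfrak{g}_{0}^{\widetilde{\alpha}}$ is $c$-fixed and hence lies in $\mathfrak{g}^{\alpha}\cap\mathfrak{g}^{\alpha'}=\{0\}$. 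Your approach is more structural: it makes visible that $\mathfrak{g}_{0}^{\widetilde{\alpha}}$ sits inside $\mathfrak{g}^{\alpha}\oplus\mathfrak{g}^{\alpha'}$ as the real form, whereas the paper's is a quick ad hoc computation with a single Cartan element. Both are perfectly valid; yours gives a slightly better conceptual picture at the cost of introducing $c$.
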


\begin{proof}
Suppose by way of contradiction that $\alpha|_{\mathfrak{t}_{0}}=0$. Since
$\dim_{\mathbb{R}}\left(  \mathfrak{g}_{0}^{\widetilde{\alpha}}\right)  =2$
there is a root $\alpha_{1}\neq\alpha$ such that $\alpha|_{\mathfrak{a}_{0}%
}=\alpha_{1}|_{\mathfrak{a}_{0}}$. Since $\alpha|_{\mathfrak{t}_{0}}=0$,
$\alpha_{1}$ must be nonzero on $\mathfrak{t}_{0}$, and thus $\alpha$,
$\alpha_{1}$ and $\alpha_{2}=-\theta\alpha_{1}$ are \emph{three} distinct
roots with the same restriction $\widetilde{\alpha}$, contrary to assumption.
Hence $\alpha|_{\mathfrak{t}_{0}}\neq0$.

Since $\alpha|_{\mathfrak{t}_{0}}$ is imaginary valued we may choose
$X\in\mathfrak{t}_{0}$ such that $\alpha\left(  X\right)  =i. $ Now suppose
$v\in\mathfrak{g}^{\alpha}\cap\mathfrak{g}_{0}^{\widetilde{\alpha}}$. Then we
have $\left[  X,v\right]  \in\left[  \mathfrak{t}_{0},\mathfrak{g}_{0}\right]
\subset\mathfrak{g}_{0}$ while on the other hand $\left[  X,v\right]
=\alpha\left(  X\right)  v=iv\in i\mathfrak{g}_{0}$. Thus we get $iv=0$, and
since $v$ was arbitrary we conclude that $\mathfrak{g}^{\alpha}\cap
\mathfrak{g}_{0}^{\widetilde{\alpha}}=\left\{  0\right\}  .$
\end{proof}

Our choice of $B$ determines simple roots $\Pi\subset\Sigma$ and $\Pi
_{0}\subset\Sigma_{0}$, and the restriction $\widetilde{\alpha}$ is simple if
$\alpha$ is simple. Let $\mathfrak{v},\Psi,\Psi_{0}$ be as before and define
${\mathfrak{v}}_{0}={\mathfrak{n}}_{0}/\left[  {\mathfrak{n}}_{0}%
,{\mathfrak{n}}_{0}\right]  $ so that ${\mathfrak{v}}=\left(  {\mathfrak{v}%
}_{0}\right)  _{\mathbb{C}}$ and $\Psi_{0}=i\mathfrak{v}_{0}^{\ast}$, where
$\mathfrak{v}_{0}^{\ast}$ denotes the space of $\mathbb{R}$-linear functionals
on ${\mathfrak{v}}_{0}$. The natural projection $\mathfrak{n}\rightarrow
\mathfrak{v}$ restricts to isomorphisms%
\[
\bigoplus_{\alpha\in\Pi} \left(  \mathfrak{g}^{\alpha}\right)  \cong%
\mathfrak{v},\bigoplus_{\beta\in\Pi_{0}} \left(  \mathfrak{g}_{0}^{\beta
}\right)  \cong\mathfrak{v}_{0}%
\]
We write $\mathfrak{z}\subset\mathfrak{h}$ for the center of $\mathfrak{g}$,
and $\mathfrak{z}\subset\mathfrak{s}_{\psi}\subset\mathfrak{h}$ for the
stabilizer of $\psi\in\Psi$. We recall that $\psi$ is said to be non-degenerate
if its $H_{\mathbb{C}}$-orbit is open.

\begin{lemma}
\label{lem:psi-nondeg}For $\psi\in\Psi$ the following are equivalent

\begin{enumerate}
\item $\psi$ is non-degenerate.

\item $\mathfrak{s}_{\psi}=\mathfrak{z}$.

\item $\psi|_{\mathfrak{g}^{\alpha}}\neq0$ for all $\alpha\in\Pi$.
\end{enumerate}
\end{lemma}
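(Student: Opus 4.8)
The plan is to make everything explicit via the isomorphism $\bigoplus_{\alpha\in\Pi}\mathfrak{g}^{\alpha}\xrightarrow{\ \sim\ }\mathfrak{v}$ recalled just above, which identifies $\Psi=\mathfrak{v}^{\ast}$ with $\bigoplus_{\alpha\in\Pi}(\mathfrak{g}^{\alpha})^{\ast}$, so that each $\psi\in\Psi$ is recorded by its components $\psi_{\alpha}:=\psi|_{\mathfrak{g}^{\alpha}}\in(\mathfrak{g}^{\alpha})^{\ast}$, $\alpha\in\Pi$. Each $\mathfrak{g}^{\alpha}$ is a one–dimensional $\mathfrak{h}$–weight space of weight $\alpha$, so $H_{\mathbb{C}}$ acts on $(\mathfrak{g}^{\alpha})^{\ast}$ by $-\alpha$; hence $\Psi$ decomposes as an $\mathfrak{h}$–module into the lines $(\mathfrak{g}^{\alpha})^{\ast}$ of weights $-\alpha$, $\alpha\in\Pi$, and in particular $\dim\Psi=|\Pi|$.

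Next I would compute the coadjoint action: for $X\in\mathfrak{h}$ and $v=\sum_{\alpha}v_{\alpha}\in\mathfrak{v}$ one has $(X\cdot\psi)(v)=-\psi([X,v])=-\sum_{\alpha}\alpha(X)\psi_{\alpha}(v_{\alpha})$, so $X\cdot\psi=-\sum_{\alpha}\alpha(X)\psi_{\alpha}$. Therefore $\mathfrak{s}_{\psi}=\{X\in\mathfrak{h}:\alpha(X)\psi_{\alpha}=0\text{ for all }\alpha\in\Pi\}=\bigcap_{\alpha\in\Pi,\ \psi_{\alpha}\neq0}\ker\alpha$. This gives (2)$\Leftrightarrow$(3) directly: if every $\psi_{\alpha}\neq0$ the intersection runs over all of $\Pi$ and equals $\bigcap_{\alpha\in\Pi}\ker\alpha=\mathfrak{z}$, using the standard fact that the common kernel of the simple roots is the centre (equivalently $\dim\mathfrak{h}/\mathfrak{z}=|\Pi|$, since $\Pi$ is a basis of $(\mathfrak{h}/\mathfrak{z})^{\ast}$); conversely if some $\psi_{\beta}=0$ the intersection runs over a proper subset of the linearly independent family $\Pi$, hence is strictly larger than $\mathfrak{z}$. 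Since $\mathfrak{z}$ acts trivially on $\mathfrak{n}$ one always has $\mathfrak{z}\subseteq\mathfrak{s}_{\psi}$, so $\mathfrak{s}_{\psi}=\mathfrak{z}$ precisely when all $\psi_{\alpha}$ are nonzero.

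Finally, for (1)$\Leftrightarrow$(2): the orbit $H_{\mathbb{C}}\cdot\psi$ is locally closed in the affine space $\Psi$, so it is open iff $\dim(H_{\mathbb{C}}\cdot\psi)=\dim\Psi$; and $\dim(H_{\mathbb{C}}\cdot\psi)=\dim\mathfrak{h}-\dim\mathfrak{s}_{\psi}$ while $\dim\Psi=|\Pi|=\dim\mathfrak{h}-\dim\mathfrak{z}$. Hence openness is equivalent to $\dim\mathfrak{s}_{\psi}=\dim\mathfrak{z}$, which together with $\mathfrak{z}\subseteq\mathfrak{s}_{\psi}$ is equivalent to $\mathfrak{s}_{\psi}=\mathfrak{z}$, closing the cycle. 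There is no genuine obstacle here: the only points needing care are the identification of the $\mathfrak{h}$–module structure of $\Psi$ through the isomorphism $\mathfrak{v}\cong\bigoplus_{\alpha\in\Pi}\mathfrak{g}^{\alpha}$ and the structural identity $\bigcap_{\alpha\in\Pi}\ker\alpha=\mathfrak{z}$; granting these, all three equivalences are immediate dimension counts.
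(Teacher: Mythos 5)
Your proposal is correct and follows essentially the same route as the paper: the equivalence (1)$\Leftrightarrow$(2) by comparing $\dim\Psi=\dim(\mathfrak{h}/\mathfrak{z})=|\Pi|$ with the orbit dimension $\dim(\mathfrak{h}/\mathfrak{s}_{\psi})$, and the equivalence (2)$\Leftrightarrow$(3) from the descriptions $\mathfrak{z}=\bigcap_{\alpha\in\Pi}\ker\alpha$ and $\mathfrak{s}_{\psi}=\bigcap_{\alpha\in\Pi,\ \psi|_{\mathfrak{g}^{\alpha}}\neq0}\ker\alpha$. You merely spell out the coadjoint-action computation and the linear independence of $\Pi$ that the paper leaves implicit.
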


\begin{proof}
We note that $\dim\left(  \Psi\right)  =\dim\left(  \mathfrak{h/z}\right)
=\left\vert \Pi\right\vert $ is the semisimple rank of $G$, while the
dimension of the $H_{\mathbb{C}}$-orbit of $\psi$ is $\dim\left(
\mathfrak{h/s}_{\psi}\right)  $; thus (1) is equivalent to (2). Also we have
$X\in\mathfrak{z}$ iff $\alpha\left(  X\right)  =0$ for all $\alpha\in\Pi,$
while $X\in\mathfrak{s}_{\psi}$ iff $\alpha\left(  X\right)  =0$ whenever
$\psi|_{\mathfrak{g}^{\alpha}}\neq0$; thus (2) is equivalent to (3).
\end{proof}

We now prove an analogous characterization for $\psi\in\Psi_{0},$ using the
following elementary result.

\begin{lemma}
Let $W_{0}$ be a two-dimensional real vector space with complexification $W$,
and let $\omega$ be a $\mathbb{C}$-linear functional on $W$ that is real
valued on $W_{0}$. Let $W_{1}\subset W$ be a two dimensional real subspace
such that $W_{0}\cap W_{1}=0$, then we have
\[
\omega|_{W_{0}}=0\iff\omega=0\iff\omega|_{W_{1}}=0
\]

\end{lemma}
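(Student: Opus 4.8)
The plan is to reduce the $\mathbb{C}$-linear statement to an elementary fact about real linear algebra, exploiting that $\omega$ is determined by its restriction to $W_0$.

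First I would observe that since $\omega$ is $\mathbb{C}$-linear on $W = W_0 \otimes_{\mathbb{R}} \mathbb{C} = W_0 \oplus i W_0$, it is completely determined by its restriction $\omega|_{W_0}$: indeed $\omega(x + iy) = \omega(x) + i\omega(y)$ for $x, y \in W_0$. Hence $\omega|_{W_0} = 0 \iff \omega = 0$, which is the first equivalence, and the implication $\omega = 0 \Rightarrow \omega|_{W_1} = 0$ is trivial. So the entire content is the reverse implication $\omega|_{W_1} = 0 \Rightarrow \omega = 0$, equivalently $\omega|_{W_1} = 0 \Rightarrow \omega|_{W_0} = 0$.

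For that, I would argue by contradiction: suppose $\omega|_{W_1} = 0$ but $\omega \neq 0$. Then $\ker \omega$ is a $\mathbb{C}$-hyperplane in the two-dimensional complex space $W$, hence one-dimensional over $\mathbb{C}$, i.e.\ two-dimensional over $\mathbb{R}$; call it $K$. Since $W_1 \subset K$ and both are two-dimensional real subspaces, $W_1 = K = \ker\omega$. Now $K$ is a complex line, so it is stable under multiplication by $i$; thus $W_1 = i W_1$. Pick any nonzero $x \in W_0$; since $W_0 \cap W_1 = 0$ we have $x \notin W_1$, but then I would like to derive a contradiction from the structure. The cleanest route: because $\omega$ is real-valued on $W_0$, complex conjugation $\tau$ on $W$ (fixing $W_0$, sending $iy \mapsto -iy$) satisfies $\overline{\omega(\tau v)} = \omega(v)$, so $\ker\omega$ is $\tau$-stable; hence $W_1$ is $\tau$-stable. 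A $\tau$-stable real subspace $W_1$ decomposes as $(W_1 \cap W_0) \oplus (W_1 \cap iW_0)$; since $W_1 \cap W_0 = 0$ this forces $W_1 \subset iW_0$, so $W_1 = iW_0$ (both being $2$-dimensional). But then $W_0 = \tau(iW_0) = \tau(W_1) = W_1$, contradicting $W_0 \cap W_1 = 0$. Hence $\omega = 0$, as desired.

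The main obstacle is nailing down that $\ker\omega$ (a priori just a real codimension-two subspace of the four-real-dimensional $W$) is forced to equal $W_1$, and then extracting a contradiction purely from the three hypotheses ($\mathbb{C}$-linearity, real-valuedness on $W_0$, and $W_0 \cap W_1 = 0$); the conjugation-stability argument above is what makes this work, since it converts ``$\ker\omega$ is a complex line'' plus ``$\ker\omega$ is $\tau$-stable'' into the rigid conclusion $\ker\omega = iW_0$, which visibly clashes with $W_0 \cap W_1 = 0$. An alternative, perhaps shorter, would be to pick a real basis $e_1, e_2$ of $W_0$ so that $e_1, e_2, ie_1, ie_2$ is a real basis of $W$, write $\omega(e_1) = a$, $\omega(e_2) = b$ with $a, b \in \mathbb{R}$, so $\omega(ie_j) = i\omega(e_j)$, and then note a real vector $v = \sum \alpha_j e_j + \sum \beta_j (ie_j)$ lies in $\ker\omega$ iff $\sum \alpha_j \omega(e_j) = 0$ and $\sum \beta_j \omega(e_j) = 0$; if $(a,b) \neq (0,0)$ this kernel is exactly $\{(\alpha_1,\alpha_2) \perp (a,b)\} \oplus i\{(\beta_1,\beta_2) \perp (a,b)\}$, a $\tau$-split subspace meeting $W_0$ nontrivially, so it cannot contain a complement $W_1$ of $W_0$ unless $\omega|_{W_0} = 0$. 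I would present whichever is cleaner in the writeup, likely the conjugation argument.
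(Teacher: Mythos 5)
Your overall strategy is sound and genuinely different from the paper's, but your main argument has a specific arithmetic slip in the final step. You correctly reduce to showing $\omega|_{W_1}=0\Rightarrow\omega=0$, correctly identify $W_1=\ker\omega$ as a $\mathbb{C}$-line (so $iW_1=W_1$), and correctly show that $\ker\omega$ is stable under the conjugation $\tau$ (fixing $W_0$, sending $iy\mapsto -iy$), whence $W_1=(W_1\cap W_0)\oplus(W_1\cap iW_0)=W_1\cap iW_0$ forces $W_1=iW_0$. But then you write ``$W_0=\tau(iW_0)=\tau(W_1)=W_1$'': this is false, since $\tau(iW_0)=-iW_0=iW_0$, not $W_0$ ($iW_0$ is a real subspace and $-1$ is a real scalar). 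The contradiction you want comes from the $\mathbb{C}$-stability you already established, not from $\tau$: since $W_1=iW_1$ and $W_1=iW_0$, you get $W_0=i(iW_0)=iW_1=W_1$, which contradicts $W_0\cap W_1=0$. With that one-line fix your proof is complete.

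For comparison, the paper's proof avoids conjugation entirely: it notes that $\omega|_{W_0}$ is a real-valued $\mathbb{R}$-linear functional on the two-dimensional real space $W_0$, hence $\ker\omega\cap W_0\neq 0$; combined with $W_1\subset\ker\omega$ and $W_0\cap W_1=0$, this forces $\dim_{\mathbb{R}}\ker\omega>2$, and since $\ker\omega$ is a $\mathbb{C}$-subspace of a two-dimensional $\mathbb{C}$-space this gives $\ker\omega=W$. That route is shorter because it exploits the single observation that a real-valued functional on a $2$-dimensional real space has nontrivial kernel, rather than tracking $\tau$-stability. Your ``alternative, perhaps shorter'' coordinate sketch at the end is in fact essentially the paper's argument written out in a basis (identifying the kernel as $\{\alpha\perp(a,b)\}\oplus i\{\beta\perp(a,b)\}$ and noting it meets $W_0$), so of your two proposals that one is closer to the source; the conjugation argument is the genuinely different approach, and once corrected it does work.
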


\begin{proof}
The first equivalence holds since $\omega$ is $\mathbb{C}$-linear. Also
clearly $\omega=0\implies\omega|_{W_{1}}=0$. Conversely suppose $\omega
|_{W_{1}}=0$. Since $\omega$ is real-valued on $W_{0}$ and $\dim_{\mathbb{R}%
}W_{0}=2$ we have $\ker$ $\omega\cap W_{0}$ $\neq0.$ Since $W_{0}\cap W_{1}%
=0$, this forces $\ker$ $\omega\supsetneq W_{1}$. Since $\dim_{\mathbb{C}}W=2$
we get $\omega=0$ as desired.
\end{proof}

\begin{proposition}
\label{prop:nondeg}For $\psi\in\Psi_{0}$ the following are equivalent

\begin{enumerate}
\item $\psi$ is non-degenerate.

\item $\psi|_{\mathfrak{g}_{0}^{\beta}}\neq0$ for all $\beta\in\Pi_{0}$.

\item The $H$ orbit of $\psi$ is open in $\Psi_{0}.$

\item $\mathfrak{s}_{\psi}\cap\mathfrak{h}_{0}=\mathfrak{z}\cap\mathfrak{g}%
_{0}.$
\end{enumerate}
\end{proposition}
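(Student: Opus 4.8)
The plan is to reduce the equivalences for $\psi \in \Psi_0$ to the already-established equivalences for $\psi \in \Psi$ in Lemma \ref{lem:psi-nondeg}, using the structural facts about restricted roots in the quasi-split case collected above. First I would recall that the projection $\mathfrak{n} \to \mathfrak{v}$ restricts to an isomorphism $\bigoplus_{\alpha \in \Pi} \mathfrak{g}^\alpha \approx \mathfrak{v}$ and $\bigoplus_{\beta \in \Pi_0} \mathfrak{g}_0^\beta \approx \mathfrak{v}_0$, so that a character $\psi \in \Psi$ is determined by its restrictions $\psi|_{\mathfrak{g}^\alpha}$, $\alpha \in \Pi$, while $\psi \in \Psi_0$ is determined by the restrictions $\psi|_{\mathfrak{g}_0^\beta}$, $\beta \in \Pi_0$. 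The key point is the interplay: each simple root $\alpha \in \Pi$ restricts to a simple root $\widetilde\alpha \in \Pi_0$, and for $\beta \in \Pi_0$ the set $\{\alpha \in \Pi : \widetilde\alpha = \beta\}$ has size $1$ or $2$ (by the quasi-split lemma, since $\dim_\mathbb{R}\mathfrak{g}_0^\beta \le 2$), with size $2$ occurring exactly when $\beta = \widetilde\alpha = \widetilde{\alpha'}$ for $\alpha' = -\theta\alpha \ne \alpha$.

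The heart of the proof is the equivalence (1) $\iff$ (2). Since $\psi \in \Psi_0 \subset \Psi$, Lemma \ref{lem:psi-nondeg} already gives that (1) is equivalent to: $\psi|_{\mathfrak{g}^\alpha} \ne 0$ for all $\alpha \in \Pi$. So I must show this is equivalent to (2), i.e. $\psi|_{\mathfrak{g}_0^\beta} \ne 0$ for all $\beta \in \Pi_0$. Fix $\beta \in \Pi_0$. If $\dim_\mathbb{R}\mathfrak{g}_0^\beta = 1$, there is a unique $\alpha \in \Pi$ with $\widetilde\alpha = \beta$, and $\mathfrak{g}_0^\beta$ is a real form of $\mathfrak{g}^\alpha$, so $\psi|_{\mathfrak{g}_0^\beta} = 0 \iff \psi|_{\mathfrak{g}^\alpha} = 0$ by $\mathbb{C}$-linearity (first equivalence of the elementary lemma, with $W = \mathfrak{g}^\alpha$, $W_0 = \mathfrak{g}_0^\beta$). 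If $\dim_\mathbb{R}\mathfrak{g}_0^\beta = 2$, there are two roots $\alpha, \alpha' = -\theta\alpha$ restricting to $\beta$; here I invoke Corollary \ref{cor:QuasSplit}, which gives $\alpha|_{\mathfrak{t}_0} \ne 0$ and $\mathfrak{g}^\alpha \cap \mathfrak{g}_0^\beta = \{0\}$, and similarly for $\alpha'$. Apply the elementary lemma with $W = \mathfrak{g}^\alpha \oplus \mathfrak{g}^{\alpha'} = \mathfrak{g}_\mathbb{C}^\beta$ (the complexification of $\mathfrak{g}_0^\beta$), $W_0 = \mathfrak{g}_0^\beta$, and $W_1 = \mathfrak{g}^\alpha$: the hypotheses $W_0 \cap W_1 = 0$ and $\dim_\mathbb{C} W = 2$ hold, $\omega = \psi|_W$ is real-valued on $W_0$, and the lemma yields $\psi|_{\mathfrak{g}_0^\beta} = 0 \iff \psi|_W = 0 \iff \psi|_{\mathfrak{g}^\alpha} = 0$; by symmetry the same holds with $\alpha'$. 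Thus in both cases $\psi|_{\mathfrak{g}_0^\beta} = 0$ iff $\psi$ vanishes on $\mathfrak{g}^\alpha$ for all (one or two) $\alpha$ restricting to $\beta$, and running over all $\beta \in \Pi_0$ gives (1) $\iff$ (2).

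For the remaining equivalences: (1) $\iff$ (3) follows from a dimension count. The real torus $H$ acts on $\Psi_0 = i\mathfrak{v}_0^*$ through $A$ (since $T$ is compact and acts trivially on restricted weight spaces... more precisely $\mathfrak{a}_0$ acts on $\mathfrak{v}_0$ with weights the simple restricted roots), and $\dim \Psi_0 = |\Pi_0|$ equals $\dim(\mathfrak{a}_0/(\mathfrak{z}\cap\mathfrak{a}_0))$, the split rank; the $H$-orbit of $\psi$ is open in $\Psi_0$ iff its dimension equals $|\Pi_0|$ iff the stabilizer of $\psi$ in $\mathfrak{a}_0$ is $\mathfrak{z}\cap\mathfrak{a}_0$, which — since $\mathfrak{a}_0$ acts on $\mathfrak{v}_0$ via the $\widetilde\alpha = \beta \in \Pi_0$, which are linearly independent — happens iff $\psi|_{\mathfrak{g}_0^\beta}\ne 0$ for all $\beta$, i.e. iff (2) holds. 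Finally (2) $\iff$ (4): one has $X \in \mathfrak{z}\cap\mathfrak{g}_0$ iff $\beta(X) = 0$ for all $\beta \in \Pi_0$, while $X \in \mathfrak{s}_\psi \cap \mathfrak{h}_0$ iff $\beta(X) = 0$ for every $\beta$ with $\psi|_{\mathfrak{g}_0^\beta}\ne 0$ (together with $X$ acting trivially on the relevant weight vector), so the two conditions on $\mathfrak{h}_0$ coincide exactly when no restriction $\psi|_{\mathfrak{g}_0^\beta}$ vanishes. I expect the one subtle point to be the $\dim_\mathbb{R} = 2$ case of (1) $\iff$ (2), where it is essential that the two roots $\alpha, \alpha'$ are interchanged by $-\theta$ and that Corollary \ref{cor:QuasSplit} guarantees $\mathfrak{g}^\alpha$ and $\mathfrak{g}^{\alpha'}$ each meet $\mathfrak{g}_0^\beta$ trivially, so that the elementary lemma applies with $W_1$ a genuine complementary real subspace; everything else is bookkeeping with the root-space decomposition.
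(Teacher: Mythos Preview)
Your argument for (1) $\iff$ (2) is essentially identical to the paper's: you reduce to Lemma \ref{lem:psi-nondeg}(3) and then establish $\psi|_{\mathfrak{g}^\alpha}=0 \iff \psi|_{\mathfrak{g}_0^{\widetilde\alpha}}=0$ root by root, invoking Corollary \ref{cor:QuasSplit} and the elementary two-dimensional lemma in the $\dim_\mathbb{R}\mathfrak{g}_0^\beta=2$ case, exactly as the paper does. One small slip: $\psi\in\Psi_0$ is purely \emph{imaginary} on $\mathfrak{g}_0^\beta$, so the functional that is real-valued on $W_0$ is $\omega=i\psi$, not $\psi$ itself (the paper writes $\omega=i\psi$).

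There is, however, a genuine gap in your dimension count for (3). You assert that $T$ acts trivially on restricted root spaces and that $\dim_\mathbb{R}\Psi_0=|\Pi_0|$; both are false when $G$ is quasi-split but not split. If $\dim_\mathbb{R}\mathfrak{g}_0^\beta=2$ then $\mathfrak{t}_0$ acts on $\mathfrak{g}_0^\beta$ by a nontrivial rotation (indeed $\alpha|_{\mathfrak{t}_0}\neq 0$ by Corollary \ref{cor:QuasSplit}), and $\dim_\mathbb{R}\Psi_0=\sum_{\beta\in\Pi_0}\dim_\mathbb{R}\mathfrak{g}_0^\beta=|\Pi|$, not $|\Pi_0|$. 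The fix is to run the dimension argument through all of $\mathfrak{h}_0$ rather than $\mathfrak{a}_0$: one has $\dim_\mathbb{R}\mathfrak{h}_0-\dim_\mathbb{R}(\mathfrak{z}\cap\mathfrak{g}_0)=|\Pi|=\dim_\mathbb{R}\Psi_0$, so the $H$-orbit is open iff $\mathfrak{s}_\psi\cap\mathfrak{h}_0=\mathfrak{z}\cap\mathfrak{g}_0$, i.e.\ (3) $\iff$ (4). This is how the paper organizes it, and then (4) $\iff$ Lemma \ref{lem:psi-nondeg}(2) is immediate once one observes (from the root-space analysis you already did) that for $\psi\in\Psi_0$ the set $\{\alpha\in\Pi:\psi|_{\mathfrak{g}^\alpha}\neq 0\}$ is stable under $\alpha\mapsto -\theta\alpha$, so that $\mathfrak{s}_\psi$ is defined over $\mathbb{R}$.
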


\begin{proof}
The equivalence of (3) and (4) follows from a dimension argument similar to
Lemma \ref{lem:psi-nondeg}. It suffices to show that (4) is equivalent to (2)
of Lemma \ref{lem:psi-nondeg}, which is obvious, and that (2) is equivalent to
(3) of Lemma \ref{lem:psi-nondeg}. For the latter it is enough to show that if
$\alpha\in\Pi$ and $\beta=\widetilde{\alpha}$ then
\begin{equation}
\psi|_{\mathfrak{g}^{\alpha}}=0\iff\psi|_{\mathfrak{g}_{0}^{\beta}}=0
\label{=alphabeta}%
\end{equation}

Now (\ref{=alphabeta}) is obvious if $\dim\mathfrak{g}_{0}^{\beta}=1$ for then
$\mathfrak{g}^{\alpha}$ is the complexification of $\mathfrak{g}_{0}^{\beta}$.
Otherwise by Corollary \ref{cor:QuasSplit} we have $\mathfrak{g}_{0}^{\beta
}\cap\mathfrak{g}^{\alpha}=0$, and (\ref{=alphabeta}) follows from the
previous lemma with $W_{0}=\mathfrak{g}_{0}^{\beta},W_{1}=\mathfrak{g}%
^{\alpha},\omega=i\psi$.
\end{proof}

The standard parabolic subgroups of $G$ are those that contain $B$, and these
correspond bijectively to subsets of $\Pi_{0}$. Indeed every $P\supset B$
admits a Levi decomposition $P=LU$ with $\theta$-stable Levi component
$L\supset H$, the group $B\cap L$ is a Borel subgroup of $L$ and the
corresponding simple roots for ${\mathfrak{a}}_{0}$ in $\mathfrak{l}_{0}$ give
the desired subset of $\Pi_{0}$.

\begin{lemma}
\label{lem:non-deg-par}For $\psi\in\Psi_{0}$ there exists a standard parabolic
subgroup $P=LU$ such that $\psi$ vanishes on $\mathfrak{u}$ and restricts
to a non-degenerate character of $\mathfrak{l}\cap\mathfrak{n}$.
\end{lemma}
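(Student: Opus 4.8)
The plan is to build the parabolic subgroup directly from the vanishing pattern of $\psi$ on the simple restricted root spaces. Put
\[
S:=\{\beta\in\Pi_0 : \psi|_{\mathfrak{g}_0^{\beta}}\neq 0\},
\]
and let $P=LU$ be the standard parabolic subgroup whose $\theta$-stable Levi component $L$ has $S$ as its set of simple restricted roots, so that $\mathfrak{u}_0=\bigoplus\{\mathfrak{g}_0^{\gamma}:\gamma\in\Sigma_0^{+},\ \gamma\notin\mathbb{Z}S\}$ and $\mathfrak{l}_0\cap\mathfrak{n}_0=\bigoplus\{\mathfrak{g}_0^{\gamma}:\gamma\in\Sigma_0^{+}\cap\mathbb{Z}S\}$, where $\Sigma_0^{+}$ is the set of positive restricted roots determined by $B$. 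I claim this $P$ has the required properties, and that the only thing needing care is matching $S$ correctly with the simple roots of the Levi.

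First I would show $\psi|_{\mathfrak{u}_0}=0$. A positive restricted root $\gamma$ with $\gamma\notin\mathbb{Z}S$ is either simple, in which case $\gamma\in\Pi_0\setminus S$ and $\psi|_{\mathfrak{g}_0^{\gamma}}=0$ by the definition of $S$, or non-simple, in which case I claim $\mathfrak{g}_0^{\gamma}\subset[\mathfrak{n}_0,\mathfrak{n}_0]$. Indeed, the isomorphism $\bigoplus_{\beta\in\Pi_0}\mathfrak{g}_0^{\beta}\cong\mathfrak{v}_0=\mathfrak{n}_0/[\mathfrak{n}_0,\mathfrak{n}_0]$ recorded above yields $\mathfrak{n}_0=[\mathfrak{n}_0,\mathfrak{n}_0]\oplus\bigoplus_{\beta\in\Pi_0}\mathfrak{g}_0^{\beta}$; since $[\mathfrak{n}_0,\mathfrak{n}_0]$ is stable under $\operatorname{ad}(\mathfrak{a}_0)$ it is a sum of restricted root spaces, which forces $[\mathfrak{n}_0,\mathfrak{n}_0]=\bigoplus_{\gamma\in\Sigma_0^{+}\setminus\Pi_0}\mathfrak{g}_0^{\gamma}$. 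As $\psi$ annihilates $[\mathfrak{n}_0,\mathfrak{n}_0]$ by the definition of $\Psi_0$, it annihilates every non-simple $\mathfrak{g}_0^{\gamma}$, and combining the two cases gives $\psi|_{\mathfrak{u}_0}=0$.

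Next I would check that the restriction of $\psi$ to $\mathfrak{l}_0\cap\mathfrak{n}_0$ is a nondegenerate character. Since $[\mathfrak{l}_0\cap\mathfrak{n}_0,\mathfrak{l}_0\cap\mathfrak{n}_0]\subset[\mathfrak{n}_0,\mathfrak{n}_0]\subset\ker\psi$ and $\psi$ is imaginary-valued on $\mathfrak{n}_0$, this restriction is a unitary character of $L\cap N$, i.e. an element of $\Psi_0(L)$. The point that needs attention is that Proposition \ref{prop:nondeg} must be applicable to $L$ in place of $G$; this is legitimate because $B\cap L$ is a Borel subgroup of $L$, so $L$ is again quasi-split. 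Proposition \ref{prop:nondeg}, applied to $L$, then says $\psi|_{\mathfrak{l}_0\cap\mathfrak{n}_0}$ is nondegenerate if and only if $\psi|_{\mathfrak{g}_0^{\beta}}\neq0$ for every simple restricted root $\beta$ of $L$, that is, for every $\beta\in S$, which holds by the very definition of $S$. This finishes the proof; there is no genuine obstacle, only the bookkeeping of $\mathbb{Z}$-spans of roots in identifying $\mathfrak{u}_0$ and $\mathfrak{l}_0\cap\mathfrak{n}_0$.
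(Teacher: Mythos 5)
Your proof is correct and follows the same route as the paper: choose the standard parabolic determined by $S=\{\beta\in\Pi_0:\psi|_{\mathfrak{g}_0^{\beta}}\neq 0\}$ and invoke Proposition \ref{prop:nondeg} for the Levi $L$. You merely spell out the two points the paper leaves implicit (that $\psi$ kills every non-simple restricted root space, and that $L$ is itself quasi-split), which is sound.
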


\begin{proof}
Let $P$ correspond to the set $\left\{  \beta\in\Pi_{0}:\psi|_{\mathfrak{g}%
_{0}^{\beta}}\neq0\right\}  $, then the result follows from Proposition
\ref{prop:nondeg}.
\end{proof}

\subsection{The Osborne lemma}

\label{subsec:JacFun}

Let $S({\mathfrak{g}})^{i}$ and $U({\mathfrak{g}})^{i}$ denote the usual
filtrations of the symmetric and enveloping algebras of ${\mathfrak{g}}$ and
let $I\left(  {\mathfrak{g}}\right)  =S\left(  {\mathfrak{g}}\right)
^{{\mathfrak{g}}}$ and $Z({\mathfrak{g}})=U\left(  {\mathfrak{g}}\right)
^{{\mathfrak{g}}}$ denote the subrings of ${\mathfrak{g}}$-invariants.

\begin{lemma}[{\cite[\S 3.7]{Wal1}}]
\label{prop:Osborne} There exist finite dimensional
subspaces $E\subset S({\mathfrak{g}})$, $F\subset U({\mathfrak{g}})$ such that%
\[
S({\mathfrak{g}})^{i}\subset S(\mathfrak{n})^{i}EI\left(  {\mathfrak{g}%
}\right)  S({\mathfrak{k}})\text{, }U({\mathfrak{g}})^{i}\subset
U(\mathfrak{n})^{i}FZ({\mathfrak{g}})U({\mathfrak{k}})
\]

\end{lemma}

As before let $\mathcal{N}\subset{\mathfrak{g}}^{\ast}$ be the null cone and
let $\mathcal{N}_{\theta}=\mathcal{N}\cap{\mathfrak{k}}^{\bot}$.

\begin{cor}
\label{cor:PrFinite} The projection $pr_{\mathfrak{n}^{\ast}}:\mathcal{N}%
_{\theta}\rightarrow\mathfrak{n}^{\ast}$ is a finite morphism.
\end{cor}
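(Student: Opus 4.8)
The plan is to deduce Corollary~\ref{cor:PrFinite} directly from the Osborne-type containment in Lemma~\ref{prop:Osborne}. Recall that $\mathcal{N}_{\theta}=\mathcal{N}\cap\mathfrak{k}^{\perp}$ is the zero set of the ideal generated by $\mathfrak{k}\subset S(\mathfrak{g})$ together with the augmentation ideal $I(\mathfrak{g})_{+}$ of positive-degree invariants (since $\mathcal{N}=\operatorname{Var}(I(\mathfrak{g})_{+})$ by Kostant). So first I would note that the coordinate ring $\mathbb{C}[\mathcal{N}_{\theta}]$ is a quotient of $S(\mathfrak{g})/(\mathfrak{k}\,S(\mathfrak{g})+I(\mathfrak{g})_{+}S(\mathfrak{g}))$, and that under this quotient the subalgebra $S(\mathfrak{n})$ maps onto $\mathbb{C}[\mathcal{N}_{\theta}]$ because $pr_{\mathfrak{n}^{\ast}}$ is the restriction dual to the inclusion $\mathfrak{n}\hookrightarrow\mathfrak{g}$, i.e. the comorphism $\mathbb{C}[\mathfrak{n}^{\ast}]=S(\mathfrak{n})\to\mathbb{C}[\mathcal{N}_{\theta}]$ is exactly the restriction of the quotient map $S(\mathfrak{g})\to\mathbb{C}[\mathcal{N}_{\theta}]$.

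Next, to prove finiteness of $pr_{\mathfrak{n}^{\ast}}$ it suffices to show that $\mathbb{C}[\mathcal{N}_{\theta}]$ is a finitely generated module over the image of $S(\mathfrak{n})$. For this I would take the inclusion $S(\mathfrak{g})^{i}\subset S(\mathfrak{n})^{i}\,E\,I(\mathfrak{g})\,S(\mathfrak{k})$ from Lemma~\ref{prop:Osborne} and pass to the quotient $\mathbb{C}[\mathcal{N}_{\theta}]$: there $S(\mathfrak{k})$ dies (its positive part is zero on $\mathfrak{k}^{\perp}$, constants remain), and $I(\mathfrak{g})$ dies except for constants (its positive part is zero on $\mathcal{N}$). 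Hence in $\mathbb{C}[\mathcal{N}_{\theta}]$ every element lies in $S(\mathfrak{n})\cdot\bar E$, where $\bar E$ is the (finite-dimensional) image of $E$. Therefore $\mathbb{C}[\mathcal{N}_{\theta}]$ is generated as a module over the image of $S(\mathfrak{n})$ by the finite set $\bar E$, which is precisely the statement that $pr_{\mathfrak{n}^{\ast}}\colon\mathcal{N}_{\theta}\to\mathfrak{n}^{\ast}$ is a finite morphism (it is affine, and module-finite over the target after restricting to the closed image; one also notes the image is closed since finite morphisms are closed).

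The one point requiring a little care — and what I expect to be the main obstacle — is the bookkeeping of which pieces survive in the quotient: one must be careful that $I(\mathfrak{g})$ appearing in the Osborne containment refers to $S(\mathfrak{g})^{\mathfrak{g}}$, whose augmentation ideal cuts out $\mathcal{N}$, and that $S(\mathfrak{k})$'s augmentation ideal is contained in the ideal defining $\mathfrak{k}^{\perp}$; granting these, the positive-degree parts of both factors are killed and only the constant terms contribute, leaving the finite-dimensional factor $\bar E$. A secondary subtlety is making sure the conclusion is genuinely ``finite'' (module-finite) and not merely ``quasi-finite''; this is automatic here because we have exhibited an explicit finite module generating set over $S(\mathfrak{n})$, so integrality of every coordinate function follows and the morphism is finite in the strong sense.
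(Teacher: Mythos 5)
Your argument is exactly the paper's: invoke the Osborne-type containment $S(\mathfrak{g})^{i}\subset S(\mathfrak{n})^{i}E\,I(\mathfrak{g})\,S(\mathfrak{k})$, observe that the augmentation ideals of $S(\mathfrak{k})$ and $I(\mathfrak{g})$ vanish on $\mathfrak{k}^{\perp}$ and $\mathcal{N}$ respectively (hence on $\mathcal{N}_{\theta}$), and conclude that $\mathbb{C}[\mathcal{N}_{\theta}]$ is generated as an $S(\mathfrak{n})$-module by the finite-dimensional image of $E$. You spell out the bookkeeping slightly more than the paper does, but the proof is the same.
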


\begin{proof}
The maximal ideals $S^{>0}({\mathfrak{k}})\subset S({\mathfrak{k}})$ and
$I^{>0}({\mathfrak{g}})\subset I({\mathfrak{g}})$ vanish on on ${\mathfrak{k}%
}^{\bot}$ and $\mathcal{N}$ respectively, hence both ideals vanish on
$\mathcal{N}_{\theta}$. By Lemma \ref{prop:Osborne} ${\mathbb{C}%
}[\mathcal{N}_{\theta}]$ is generated by $E$ as a module over $S(\mathfrak{n}%
)={\mathbb{C}}[\mathfrak{n}^{\ast}]$.
\end{proof}



\begin{corollary}
\label{cor:PrOnto} If $Z$ is any irreducible component of $\mathcal{N}%
_{\theta}$ then $pr_{\mathfrak{n}^{\ast}}\left(  Z\right)  =\mathfrak{n}%
^{\ast}$.
\end{corollary}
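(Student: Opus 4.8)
The plan is to deduce Corollary~\ref{cor:PrOnto} from Corollary~\ref{cor:PrFinite} together with the equidimensionality of $\mathcal{N}_{\theta}$. First I would use the standard fact that a finite morphism is closed and preserves dimension on closed subvarieties: if $Z\subseteq\mathcal{N}_{\theta}$ is an irreducible component, its coordinate ring $\mathbb{C}[Z]$ is a quotient of the finite $S(\mathfrak{n})$-module $\mathbb{C}[\mathcal{N}_{\theta}]$, hence is itself finite over $S(\mathfrak{n})=\mathbb{C}[\mathfrak{n}^{*}]$, so $pr_{\mathfrak{n}^{*}}(Z)$ is a closed irreducible subset of $\mathfrak{n}^{*}$ with $\dim pr_{\mathfrak{n}^{*}}(Z)=\dim Z$. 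Since $\mathfrak{n}^{*}$ is an affine space, hence irreducible of dimension $\dim_{\mathbb{C}}\mathfrak{n}$, a full-dimensional closed irreducible subset of it must be all of $\mathfrak{n}^{*}$; so it is enough to prove $\dim Z=\dim\mathfrak{n}$ for every component $Z$, i.e. that $\mathcal{N}_{\theta}$ is equidimensional of dimension $\dim\mathfrak{n}$.

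For the dimension count I would pass, via the Killing form, to the model $\mathcal{N}_{\theta}\cong\mathcal{N}\cap\mathfrak{p}$, where $\mathfrak{p}$ is the $(-1)$-eigenspace of $\theta$ on $\mathfrak{g}$. By the theory of Kostant and Rallis \cite{KR}, $\mathbb{C}[\mathfrak{p}]$ is free over its subring of $K_{\mathbb{C}}$-invariants, which is a polynomial ring in $\dim\mathfrak{a}$ generators ($\mathfrak{a}$ a maximal abelian subspace of $\mathfrak{p}$); hence the categorical quotient $\mathfrak{p}\to\mathfrak{p}/\!/K_{\mathbb{C}}$ is flat, and its zero fiber $\mathcal{N}\cap\mathfrak{p}$ is equidimensional of dimension $\dim\mathfrak{p}-\dim\mathfrak{a}$. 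Finally the Iwasawa decomposition gives $\dim_{\mathbb{R}}\mathfrak{p}_{0}=\dim_{\mathbb{R}}\mathfrak{a}_{0}+\dim_{\mathbb{R}}\mathfrak{n}_{0}$, and since $G$ is quasi-split every $\alpha\in\Sigma$ restricts to a nonzero element of $\Sigma_{0}$, so $\dim_{\mathbb{C}}\mathfrak{n}=|\Sigma^{+}|=\dim_{\mathbb{R}}\mathfrak{n}_{0}$; combining these, $\dim\mathfrak{p}-\dim\mathfrak{a}=\dim\mathfrak{n}$, as needed. One could instead run the dimension count through the Kostant--Sekiguchi correspondence of Theorem~\ref{thm:KS}: a component of $\mathcal{N}_{\theta}$ is $\overline{\operatorname{KS}(\mathcal{O})}$ for a maximal nilpotent $G$-orbit $\mathcal{O}$ on $\mathcal{N}_{0}$, and $\dim_{\mathbb{C}}\operatorname{KS}(\mathcal{O})=\tfrac12\dim_{\mathbb{R}}\mathcal{O}$; as $G$ is quasi-split the regular nilpotent orbit already meets $\mathfrak{n}_{0}$, which pins the maximal value to $\dim\mathfrak{n}$, and equidimensionality of $\mathcal{N}_{0}$ forces every maximal $\mathcal{O}$ to attain it.

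The only non-formal ingredient is the equidimensionality of $\mathcal{N}\cap\mathfrak{p}$; the remaining steps — finite morphisms are closed and dimension-preserving, and a full-dimensional closed irreducible subvariety of affine space is the whole space — are routine. It is worth noting that this is a place where the quasi-split hypothesis enters in an essential way: for a non-quasi-split $G$ one has $\dim_{\mathbb{C}}\mathfrak{n}>\dim_{\mathbb{R}}\mathfrak{n}_{0}=\dim\mathfrak{p}-\dim\mathfrak{a}$, and then $pr_{\mathfrak{n}^{*}}$ fails to be surjective, so some care is needed not to over-claim.
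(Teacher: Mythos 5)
Your argument has the same skeleton as the paper's: Corollary~\ref{cor:PrFinite} gives that $pr_{\mathfrak{n}^{\ast}}$ restricted to $Z$ is a finite morphism, hence has closed image of the same dimension as $Z$ in the affine space $\mathfrak{n}^{\ast}$, and what remains is a dimension count $\dim Z=\dim\mathfrak{n}$. The paper disposes of the dimension count by citing \cite[Theorem 8.4]{Vog-Unip} for the equality $\dim Z=\tfrac12\dim\mathcal{N}$; you instead derive it from the Kostant--Rallis theorem: $\mathbb{C}[\mathfrak{p}]$ is free over $\mathbb{C}[\mathfrak{p}]^{K_{\mathbb{C}}}\cong\mathbb{C}[\mathfrak{a}]^{W_{\mathfrak{a}}}$, so the quotient map is flat, its fibers are equidimensional of dimension $\dim\mathfrak{p}-\dim\mathfrak{a}$, and the Iwasawa decomposition together with the quasi-split hypothesis identifies this with $\dim\mathfrak{n}$. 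This is a correct and arguably more self-contained derivation; it also has the merit of making explicit exactly where the quasi-split hypothesis enters (for non-quasi-split $G$ one has $\dim\mathfrak{p}-\dim\mathfrak{a}<\dim\mathfrak{n}$ and the conclusion genuinely fails), which the paper's citation leaves implicit.

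One caveat: your second, alternative dimension count via Kostant--Sekiguchi silently invokes equidimensionality of the \emph{real} nilpotent cone $\mathcal{N}_0$. That fact is not obvious and is not established anywhere in the paper; indeed it is of comparable depth to the equidimensionality of $\mathcal{N}_\theta$ that you are trying to prove, so as written that route is circular unless you supply an independent argument (e.g.\ another appeal to \cite{KR}, or to the order-preserving property of Kostant--Sekiguchi applied to \emph{all} maximal orbits, which still needs the Kostant--Rallis dimension bound to pin the common value). The first argument via flatness is the one that actually carries the proof.
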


\begin{proof}
By Corollary \ref{cor:PrFinite}, $pr_{\mathfrak{n}^{\ast}}$ is a finite map
and thus its image is a closed subset of $\mathfrak{n}^{\ast}$ of the same
dimension as $Z$. By Theorem \ref{thm:Vog} $\dim Z=1/2\dim\left(
\mathcal{N}\right)  =\dim\left(  \mathfrak{n}^{\ast}\right)  $, thus
$pr_{\mathfrak{n}^{\ast}}\left(  Z\right)  $ has full dimension, so
$pr_{\mathfrak{n}^{\ast}}\left(  Z\right)  =\mathfrak{n}^{\ast}$.
\end{proof}

\begin{cor}
[Casselman-Osborne-Gabber]\label{cor:Osborne}
If $\sigma\in\mathcal{HC}(G)$ then $\sigma$ is finitely generated as a
$U(\mathfrak{n})$-module. Moreover, any good $\mathfrak{g}$-filtration on
$\sigma$ is good as an $\mathfrak{n}$-filtration, and every good
$\mathfrak{b}$-filtration on $\sigma$ is good as an $\mathfrak{n}$-filtration.
In particular, $\mathrm{As}\mathcal{V}_{\mathfrak{n}}(\sigma)=pr_{\mathfrak{n}%
^{\ast}}(\mathrm{As}\mathcal{V}(\sigma))$.
\end{cor}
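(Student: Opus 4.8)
The plan is to deduce everything from the Osborne Lemma (Lemma \ref{prop:Osborne}), exactly as Corollaries \ref{cor:PrFinite}--\ref{cor:PrOnto} were deduced, but now applied to a Harish-Chandra module rather than to the ring of functions on $\mathcal{N}_\theta$. The point is that the second inclusion in Lemma \ref{prop:Osborne}, namely $U(\mathfrak{g})^i \subset U(\mathfrak{n})^i F Z(\mathfrak{g}) U(\mathfrak{k})$, is precisely a statement that the filtration of $U(\mathfrak{g})$ is, up to the finite-dimensional ``fudge factor'' $F$, controlled by the $\mathfrak{n}$-filtration once one can absorb $Z(\mathfrak{g})$ and $U(\mathfrak{k})$ into the module.

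First I would establish finite generation over $U(\mathfrak{n})$. Let $\sigma\in\mathcal{HC}(G)$ and pick a finite-dimensional $K$-invariant generating subspace $\sigma^0$, so $\sigma = U(\mathfrak{g})\sigma^0$. Since $\sigma$ is a Harish-Chandra module it is $Z(\mathfrak{g})$-finite, so $Z(\mathfrak{g})\sigma^0$ is contained in a finite-dimensional space $Z'$; and since $\sigma^0$ is $K$-invariant, $U(\mathfrak{k})\sigma^0 = \sigma^0$. Using the inclusion $U(\mathfrak{g}) = \bigcup_i U(\mathfrak{g})^i \subset \bigcup_i U(\mathfrak{n})^i F Z(\mathfrak{g}) U(\mathfrak{k})$ we get
\[
\sigma = U(\mathfrak{g})\sigma^0 \subset U(\mathfrak{n}) F Z(\mathfrak{g}) U(\mathfrak{k}) \sigma^0 = U(\mathfrak{n})\, F Z(\mathfrak{g})\sigma^0 \subset U(\mathfrak{n})\bigl(F\cdot Z'\bigr),
\]
and $F\cdot Z'$ is finite-dimensional. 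Hence $\sigma$ is finitely generated as a $U(\mathfrak{n})$-module, with generating space $\sigma^1 := F\cdot Z'$ (which we may enlarge to contain $\sigma^0$).

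Next I would compare filtrations. Let $\sigma^i = U(\mathfrak{g})^i\sigma^0$ be a good $\mathfrak{g}$-filtration. The displayed Osborne inclusion gives $\sigma^i = U(\mathfrak{g})^i\sigma^0 \subset U(\mathfrak{n})^i F Z(\mathfrak{g})\sigma^0 \subset U(\mathfrak{n})^i\, \sigma^1$, so $\sigma^i$ is contained in the $i$-th step of the $\mathfrak{n}$-filtration generated by $\sigma^1$. Conversely $U(\mathfrak{n})^i\sigma^1 \subset U(\mathfrak{g})^{i+c}\sigma^0$ for some fixed $c$ (since $\sigma^1$ is a fixed finite-dimensional set, contained in some $U(\mathfrak{g})^c\sigma^0$, and $U(\mathfrak{n})^i U(\mathfrak{g})^c \subset U(\mathfrak{g})^{i+c}$). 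Thus the $\mathfrak{g}$-filtration $\{\sigma^i\}$ and the $\mathfrak{n}$-filtration $\{U(\mathfrak{n})^i\sigma^1\}$ are cofinal, hence equivalent, hence the $\mathfrak{g}$-filtration is a good $\mathfrak{n}$-filtration. The same argument handles a good $\mathfrak{b}$-filtration, since $\mathfrak{b} = \mathfrak{h}\oplus\mathfrak{n}$ and $U(\mathfrak{b})^i = \sum_{j\le i} U(\mathfrak{h})^{i-j}U(\mathfrak{n})^j$: one again absorbs the $U(\mathfrak{h})$ part into a finite-dimensional space after invoking $Z(\mathfrak{g})$-finiteness and the $K$-invariance of $\sigma^0$ (using $\mathfrak{h}\cap\mathfrak{k}=\mathfrak{t}$ and that the remaining abelian directions act through a finitely-generated polynomial algebra on the $Z(\mathfrak{g})$-finite, $\mathfrak{k}$-finite module — so only finitely many filtration steps of $U(\mathfrak{h})$ are needed modulo the good $\mathfrak{n}$-filtration). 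Since any two good $\mathfrak{n}$-filtrations give the same associated variety, $\mathrm{As}\mathcal{V}_{\mathfrak n}(\sigma)$ is well-defined; and computing it from the filtration $\sigma^i = U(\mathfrak g)^i\sigma^0$ shows $\mathrm{gr}_{\mathfrak n}\sigma = \mathrm{gr}_{\mathfrak g}\sigma$ as $S(\mathfrak n)$-modules via $S(\mathfrak n)\hookrightarrow S(\mathfrak g)$, whence $\mathrm{As}\mathcal{V}_{\mathfrak n}(\sigma) = pr_{\mathfrak n^*}(\mathrm{As}\mathcal{V}(\sigma))$.

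The main obstacle I expect is the handling of the good $\mathfrak{b}$-filtration and, relatedly, making the ``absorb $U(\mathfrak{h})$'' step rigorous: unlike $U(\mathfrak{k})$ (which fixes $\sigma^0$) and $Z(\mathfrak{g})$ (which acts finitely), $U(\mathfrak{h})$ is not obviously harmless. The resolution is that $\mathfrak{h} = \mathfrak{z}\oplus\mathfrak{h}'$ where the action of $\mathfrak{h}'$ on the $Z(\mathfrak{g})$-finite module $\sigma$ has only finitely many generalized eigenvalues appearing in any given $U(\mathfrak{n})^i$-step (because $\mathfrak{h}'$ normalizes $\mathfrak n$ and shifts weights by roots), so one never needs more than boundedly-many powers of $U(\mathfrak h)$ beyond the $\mathfrak n$-filtration — equivalently, $U(\mathfrak b)^i\sigma^0 \subset U(\mathfrak n)^{i}\sigma^1$ for the same enlarged finite-dimensional $\sigma^1$. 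Once this cofinality is in hand, everything else is bookkeeping, and the final equality $\mathrm{As}\mathcal{V}_{\mathfrak n}(\sigma)=pr_{\mathfrak n^*}(\mathrm{As}\mathcal{V}(\sigma))$ follows formally, giving the second ingredient needed for the proof of Theorem \ref{thm:Main} as outlined in the introduction.
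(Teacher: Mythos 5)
Your argument for finite generation over $U(\mathfrak{n})$ and for the statement about good $\mathfrak{g}$-filtrations is correct and is exactly the intended use of the Osborne Lemma (Lemma \ref{prop:Osborne}); the paper itself gives no proof of the ``moreover'' part, deferring to \cite[\S 7.8.1]{JosLect} and \cite[Appendix B]{AGS}, where the argument is run through supports of associated graded modules and the finiteness of $\mathcal{N}_{\theta}\rightarrow\mathfrak{n}^{\ast}$ (Corollary \ref{cor:PrFinite}); your sandwich of filtrations achieves the same thing, granted the standard fact that a filtration compatible with $U(\mathfrak{n})$ and equivalent to a good $\mathfrak{n}$-filtration is itself good (Noetherianity of the Rees algebra of $U(\mathfrak{n})$).

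The genuine gap is in the $\mathfrak{b}$-filtration step. Your proposed mechanism --- that $\mathfrak{h}'$ has ``only finitely many generalized eigenvalues appearing in any given $U(\mathfrak{n})^{i}$-step'' --- presupposes that $\sigma$ is locally $\mathfrak{h}$-finite, which is false: a Harish-Chandra module typically contains no nonzero $\mathfrak{h}$-finite (e.g.\ $\mathfrak{a}$-finite) vectors at all; this is precisely why Lemma \ref{lem:Sinf} must lift $\mathfrak{h}$-eigenvectors into the $\mathfrak{n}$-adic completion $\widehat{V}$ rather than finding them in $V$ itself. Likewise, the fallback estimate $\mathfrak{h}E'\subset U(\mathfrak{n})^{c}E'$ with $c>1$ only yields $U(\mathfrak{b})^{i}E'\subset U(\mathfrak{n})^{ci}E'$, a linear rescaling of the filtration rather than a bounded shift, so it does not give equivalence. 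Fortunately the gap is repairable by your own main tool: since $U(\mathfrak{b})^{i}\subset U(\mathfrak{g})^{i}\subset U(\mathfrak{n})^{i}FZ(\mathfrak{g})U(\mathfrak{k})$, and since for an \emph{arbitrary} finite-dimensional $E\subset\sigma$ (not only a $K$-invariant one) the spaces $U(\mathfrak{k})E$ and $Z(\mathfrak{g})U(\mathfrak{k})E$ are finite-dimensional ($\sigma$ being $K$-finite and $Z(\mathfrak{g})$-finite), one gets $U(\mathfrak{b})^{i}E\subset U(\mathfrak{n})^{i}E''$ with $E'':=F\cdot Z(\mathfrak{g})U(\mathfrak{k})E$ finite-dimensional, and the reverse inclusion up to a fixed shift is as before; the $\mathfrak{h}$-direction never needs to be analyzed separately. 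With that replacement your proof closes, and the final identity $\mathrm{As}\mathcal{V}_{\mathfrak{n}}(\sigma)=pr_{\mathfrak{n}^{\ast}}(\mathrm{As}\mathcal{V}(\sigma))$ follows as you say, the image being closed because $\mathrm{gr}\,\sigma$ is finite over $S(\mathfrak{n})$.
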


For proof of the \textquotedblleft moreover\textquotedblright\ part see
\cite[\S 7.8.1]{JosLect} or \cite[Appendix B]{AGS}.

\subsection{The Casselman-Jacquet Functor}

\label{subsec:CasJac}

As before let $\mathfrak{b=h+n}$ be the Borel subalgebra of $\mathfrak{g}$.
For a $\mathfrak{b}$-module $V$ we define its $\mathfrak{n}$-adic completion
and its Jacquet module as follows:%
\[
\widehat{V}=\widehat{V}_{\mathfrak{n}}:=\lim\limits_{\longleftarrow
}V/\mathfrak{n}^{i}V,\quad JV= J\left(  V\right)  =J_{\mathfrak{b}}\left(
V\right)  :=\left(  \widehat{V}_{\mathfrak{n}}\right)  ^{\mathfrak{h}%
\text{-finite}}%
\]
We note that $J\left(  V\right)  $ is different from the Casselman-Jacquet
module considered in \cite{Wal1}. However it is closely related to the geometric
Jacquet functor considered in \cite{ENV} (see Theorem \ref{th:ENV} below).

Let $\mathcal{G}\left(  \mathfrak{b}\right)  $ be the category of finitely
generated $\mathfrak{b}$-modules for which every good $\mathfrak{b}%
$-filtration is also good as an $\mathfrak{n}$-filtration. Note that
$\mathcal{G}\left(  \mathfrak{b}\right)  $ is closed under subquotients. The
following result is due to Gabber.

\begin{theorem}[{\cite[\S 7]{JosLect}}]
\label{thm:CassJacEmb}If $V\in\mathcal{G}%
(\mathfrak{b})$ then we have $\bigcap_{k\geq0}\mathfrak{n}^{k}V=0$. Hence $V$
embeds into $\widehat{V}$ with dense image.
\end{theorem}

By the Artin-Rees theorem for nilpotent Lie algebras (\cite[Theorem 4.2]{McC})
we deduce

\begin{corollary}
\label{cor:vfaith} $V\mapsto\widehat{V}$ is an exact faithful functor from
$\mathcal{G}(\mathfrak{b})$ to the category of $\mathfrak{b}$-modules.
\end{corollary}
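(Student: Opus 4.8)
The plan is to obtain both exactness and faithfulness from Gabber's theorem (giving the injection $V\hookrightarrow\widehat V$) together with the Artin--Rees property; the only genuinely non-formal input beyond these two is the observation that on a submodule the intrinsic and induced $\mathfrak{n}$-adic filtrations are cofinal.

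\textbf{Faithfulness.} The assignment $V\mapsto\widehat V=\varprojlim_k V/\mathfrak{n}^k V$ comes with a natural transformation $\iota_V\colon V\to\widehat V$, since any morphism $f\colon V\to W$ in $\mathcal{G}(\mathfrak{b})$ satisfies $f(\mathfrak{n}^kV)\subseteq\mathfrak{n}^kW$ and hence induces $\widehat f$ with $\widehat f\circ\iota_V=\iota_W\circ f$. By Gabber's theorem (applied to objects of $\mathcal{G}(\mathfrak{b})$, which it is) $\iota_V$ and $\iota_W$ are injective. Thus if $\widehat f=0$ then $\iota_W\circ f=0$, and injectivity of $\iota_W$ forces $f=0$. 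Hence $V\mapsto\widehat V$ is faithful (in particular $\widehat V\neq 0$ whenever $V\neq 0$, applying this to $\mathrm{id}_V$).

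\textbf{Exactness.} Let $0\to V'\to V\to V''\to 0$ be exact in $\mathcal{G}(\mathfrak{b})$; since $\mathcal{G}(\mathfrak{b})$ is closed under subquotients, $V'$ and $V''$ again lie in it. As $V\in\mathcal{G}(\mathfrak{b})$ is finitely generated over the Noetherian algebra $U(\mathfrak{n})$ and the filtration $\mathfrak{n}^kV$ is the adic filtration for the augmentation ideal, the Artin--Rees theorem \cite[Theorem 4.2]{McC} yields a constant $c$ with $\mathfrak{n}^kV\cap V'\subseteq\mathfrak{n}^{k-c}V'$ for all $k\geq c$; together with the obvious inclusion $\mathfrak{n}^kV'\subseteq\mathfrak{n}^kV\cap V'$ this shows the induced filtration $\{\mathfrak{n}^kV\cap V'\}_k$ and the intrinsic filtration $\{\mathfrak{n}^kV'\}_k$ on $V'$ are cofinal, so $\widehat{V'}=\varprojlim_k V'/(\mathfrak{n}^kV\cap V')$. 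For each $k$ one has the exact sequence $0\to V'/(\mathfrak{n}^kV\cap V')\to V/\mathfrak{n}^kV\to V''/\mathfrak{n}^kV''\to 0$, the left-hand term being the image of $V'$ in $V/\mathfrak{n}^kV$, namely $(V'+\mathfrak{n}^kV)/\mathfrak{n}^kV$, and the right-hand term the cokernel $V/(V'+\mathfrak{n}^kV)=V''/\mathfrak{n}^kV''$. The transition maps of the left-hand inverse system $\{V'/(\mathfrak{n}^kV\cap V')\}_k$ are surjective, so its $\varprojlim^1$ vanishes, and passing to the limit gives the exact sequence $0\to\widehat{V'}\to\widehat V\to\widehat{V''}\to 0$. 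Hence $V\mapsto\widehat V$ is exact.

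\textbf{Main obstacle.} Everything formal aside, the crux is the cofinality of the two $\mathfrak{n}$-adic filtrations on a submodule, i.e. the verification that the Artin--Rees theorem of \cite{McC} applies in the present (non-commutative) setting; once that is in hand, exactness is the standard Mittag--Leffler argument for completions, and faithfulness is immediate from Gabber's embedding.
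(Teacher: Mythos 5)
Your proof is correct and takes essentially the same route as the paper, which simply deduces the corollary from Gabber's theorem (injectivity of $V\to\widehat V$) together with McConnell's Artin--Rees theorem for nilpotent Lie algebras. You have merely written out the standard details the paper leaves implicit: faithfulness from the injectivity of the natural map $\iota_W$, and exactness from the cofinality of the induced and intrinsic $\mathfrak{n}$-adic filtrations on a submodule plus the Mittag--Leffler argument for inverse limits.
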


An analogous statement for $V\in\mathcal{HC}(G)$ was first proven by Casselman
(see \cite{Cas}). By Corollary \ref{cor:Osborne}, $\mathcal{HC}(G)$ naturally
embeds into $\mathcal{G}(\mathfrak{b})$ and thus Casselman's theorem is a
special case of Corollary \ref{cor:vfaith}.

\begin{lemma}
[{\cite[\S 3.5]{JosVar}}]\label{lem:Sinf} If $V\in\mathcal{G}(\mathfrak{b})$
then there exists a \emph{finite dimensional} $\mathfrak{h}$-invariant
subspace $S_{\infty}\subset J\left(  V\right)  $, which maps onto
$V/\mathfrak{n}V.$
\end{lemma}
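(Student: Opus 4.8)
The plan is to assemble $S_{\infty}$ out of generalized $\mathfrak{h}$-weight spaces sitting inside $\widehat{V}=\varprojlim V/\mathfrak{n}^{i}V$, exploiting that $V$ is finitely generated over $U(\mathfrak{n})$ together with the fact that $\mathfrak{n}=\bigoplus_{\alpha}\mathfrak{g}^{\alpha}$ is a sum of positive root spaces, so that passing to the successive quotients $\mathfrak{n}^{i}V/\mathfrak{n}^{i+1}V$ shifts $\mathfrak{h}$-weights by larger and larger elements of the positive root monoid.

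First, a finiteness step. Since $V\in\mathcal{G}(\mathfrak{b})$, every good $\mathfrak{b}$-filtration is good as an $\mathfrak{n}$-filtration, so $V$ is finitely generated over $U(\mathfrak{n})$; as $\mathfrak{n}$ is nilpotent, $V/\mathfrak{n}V$ is finite dimensional, and inductively so is each $\mathfrak{n}^{j}V/\mathfrak{n}^{j+1}V$, being a quotient of $\mathfrak{n}^{\otimes j}\otimes(V/\mathfrak{n}V)$ via $x_{1}\otimes\cdots\otimes x_{j}\otimes\bar v\mapsto x_{1}\cdots x_{j}v\bmod\mathfrak{n}^{j+1}V$. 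Hence every $V/\mathfrak{n}^{i}V$ is a finite dimensional $\mathfrak{h}$-module (using $[\mathfrak{h},\mathfrak{n}^{i}V]\subset\mathfrak{n}^{i}V$) and splits into generalized weight spaces $(V/\mathfrak{n}^{i}V)_{\mu}$; the transition maps are $\mathfrak{h}$-equivariant, so applying the exact functor ``take the generalized $\mu$-eigenspace'' to $0\to\mathfrak{n}^{i}V/\mathfrak{n}^{i+1}V\to V/\mathfrak{n}^{i+1}V\to V/\mathfrak{n}^{i}V\to0$ keeps it exact.

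Next, I would prove stabilization in each fixed weight. Fix a generalized weight $\mu$ of the finite dimensional module $V/\mathfrak{n}V$ (there are finitely many). The generalized weights of $\mathfrak{n}^{j}V/\mathfrak{n}^{j+1}V$ all have the form $\nu+\alpha_{1}+\cdots+\alpha_{j}$ with $\nu$ a weight of $V/\mathfrak{n}V$ and the $\alpha_{k}$ positive roots; choosing $\rho^{\vee}\in\mathfrak{h}$ with $\sprod{\rho^{\vee}}{\alpha}\geq1$ for every positive root, the $\rho^{\vee}$-value of such a weight differs from that of $\nu$ by an integer $\geq j$, and since the $\rho^{\vee}$-values of the weights of $V/\mathfrak{n}V$ form a fixed finite set of complex numbers, for $j$ large no weight of $\mathfrak{n}^{j}V/\mathfrak{n}^{j+1}V$ can equal $\mu$. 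Thus $(\mathfrak{n}^{j}V/\mathfrak{n}^{j+1}V)_{\mu}=0$ for $j\gg0$, so by the exact sequence above the maps $(V/\mathfrak{n}^{i+1}V)_{\mu}\to(V/\mathfrak{n}^{i}V)_{\mu}$ are isomorphisms for $i\gg0$, while being surjective for every $i$. Hence $W_{\mu}:=\varprojlim_{i}(V/\mathfrak{n}^{i}V)_{\mu}$ is finite dimensional and surjects onto $(V/\mathfrak{n}V)_{\mu}$. An element of $W_{\mu}$ is a compatible sequence whose $i$-th term lies in $(V/\mathfrak{n}^{i}V)_{\mu}$, hence defines an element of $\widehat{V}$; by the stabilization just shown (and finite dimensionality of the finitely many initial terms) there is a uniform $N$ with $(h-\mu(h))^{N}$ killing it for all $h\in\mathfrak{h}$, so $W_{\mu}\subset\widehat{V}^{\mathfrak{h}\text{-finite}}=J(V)$. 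I then set $S_{\infty}:=\sum_{\mu}W_{\mu}\subset J(V)$, the sum over the finitely many generalized weights $\mu$ of $V/\mathfrak{n}V$: it is finite dimensional and $\mathfrak{h}$-invariant, and under the natural projection $\widehat{V}\twoheadrightarrow V/\mathfrak{n}V$ (the first term of the inverse limit, which restricts to the quotient map on $V\subset\widehat{V}$) it maps onto $\bigoplus_{\mu}(V/\mathfrak{n}V)_{\mu}=V/\mathfrak{n}V$.

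The main obstacle is the stabilization step: one must see that the weights of $V/\mathfrak{n}V$ sit ``at the bottom,'' so that every higher graded piece $\mathfrak{n}^{j}V/\mathfrak{n}^{j+1}V$ misses any fixed weight once $j\gg0$. This is exactly where quasi-splitness is not needed but the structure $\mathfrak{n}=\bigoplus_{\alpha>0}\mathfrak{g}^{\alpha}$ (weights raised by the positive root monoid) and finite generation of $V$ over $U(\mathfrak{n})$ are essential; the only mild care needed is that we work with generalized rather than ordinary $\mathfrak{h}$-weights, whose $\rho^{\vee}$-values are complex numbers, so one argues with a fixed finite set of such values instead of with an order.
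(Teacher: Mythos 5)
Your proof is correct and follows essentially the same approach as the paper's. The paper defines $\overline{S}=\bigoplus_{\mu\in\Omega_{0}}(V/\mathfrak{n}^{i}V)_{\mu}$ for a large $i$ with $\Omega_{j}\cap\Omega_{0}=\emptyset$ for $j\geq i$ and then lifts ``by successive approximation''; you make that lifting step explicit by observing that the inverse system $(V/\mathfrak{n}^{i}V)_{\mu}$ stabilizes and setting $W_{\mu}=\varprojlim_{i}(V/\mathfrak{n}^{i}V)_{\mu}$, and you also spell out the weight-shift argument (via $\rho^{\vee}$-values and the surjection $\mathfrak{n}^{\otimes j}\otimes(V/\mathfrak{n}V)\twoheadrightarrow\mathfrak{n}^{j}V/\mathfrak{n}^{j+1}V$) that the paper only asserts.
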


Since this result plays a key role in the subsequent discussion, we include a
proof here.

\begin{proof}
Let $\Omega_{j}$ be the set of (generalized) weights of ${\mathfrak{h}}$
appearing in $\mathfrak{n}^{j}V/\mathfrak{n}^{j+1}V$. Since the action of
$\mathfrak{n}$ shifts the weights in the positive direction, there exists $i$
such that $\Omega_{j}\cap\Omega_{0}=\emptyset$ for all $j\geq i$. Let us
define
\[
\overline{S}=\bigoplus_{\mu\in\Omega_{0}}(V/\mathfrak{n}^{i}V)^{\mu}.
\]
Then any (generalized) ${\mathfrak{h}}$ -eigenvector of $\overline{S}$ can be
lifted by successive approximation to a (generalized) ${\mathfrak{h}}$
-eigenvector of the same weight in $\hat{V}$. In this way we find an
${\mathfrak{h}}$-invariant finite dimensional subspace $S_{\infty}\subset
\hat{V}$ that maps bijectively to $\overline{S}$ and thus onto $V/\mathfrak{n}%
V.$
\end{proof}

\begin{lemma}
\label{lem:Wdense}If $V\in\mathcal{G}(\mathfrak{b})$ and $W\subset J\left(
V\right)  $ is a dense $\mathfrak{h}$-submodule of $\widehat{V}$ then
$W=J\left(  V\right)  .$
\end{lemma}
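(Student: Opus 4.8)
The statement asserts that the $\mathfrak{h}$-finite part $J(V)$ of $\widehat{V}$ admits no proper dense $\mathfrak{h}$-submodule. My plan is to show that $W = J(V)$ by proving that $W$ already contains a set of elements spanning $J(V)$; the natural candidate is the finite-dimensional generating space $S_\infty$ provided by Lemma \ref{lem:Sinf}, together with enough of its $\mathfrak{n}$-translates. First I would recall that $V$ (and hence $J(V)$, which maps onto $V/\mathfrak{n}V$ via $S_\infty$) is finitely generated over $U(\mathfrak{n})$, so it suffices to show $W \supseteq S_\infty$: indeed, once $S_\infty \subseteq W$ and $W$ is an $\mathfrak{n}$-submodule, we get $U(\mathfrak{n}) S_\infty \subseteq W$, and one checks that $U(\mathfrak{n})S_\infty$ is dense in $\widehat V$ and $\mathfrak{h}$-stable, hence equals $J(V)$ by the density characterization already used in the paper (or by Nakayama, since $U(\mathfrak{n})S_\infty + \mathfrak{n}\widehat V = \widehat V$ forces $U(\mathfrak{n})S_\infty = \widehat V^{\mathrm{f.g.}}$-piece $= J(V)$ after taking $\mathfrak{h}$-finite vectors).

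The core of the argument is therefore the containment $S_\infty \subseteq W$, and this is where I expect the main obstacle to lie. The point is to exploit the weight decomposition: $\widehat V = \prod_{\mu}\widehat V_{\mu}$ where $\widehat V_\mu$ is the generalized $\mu$-weight space (a product, not a sum, since $\widehat V$ is a completion), while $J(V) = \bigoplus_\mu \widehat V_\mu$ is the direct sum. A dense $\mathfrak{h}$-submodule $W$ of $\widehat V$ must, by $\mathfrak{h}$-stability, be a direct sum $W = \bigoplus_\mu W_\mu$ with $W_\mu \subseteq \widehat V_\mu$, and density in $\widehat V = \varprojlim V/\mathfrak{n}^i V$ means that for every $i$ the image of $W$ in $V/\mathfrak{n}^i V$ is all of $V/\mathfrak{n}^i V$. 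Since $V/\mathfrak{n}^i V$ is finite-dimensional and the projection $\widehat V \to V/\mathfrak{n}^i V$ is $\mathfrak{h}$-equivariant, surjectivity of $W \to V/\mathfrak{n}^i V$ on each generalized weight space, combined with finite-dimensionality, forces $W_\mu$ to surject onto $(V/\mathfrak{n}^i V)_\mu$ for every $i$ and every $\mu$. For the weights $\mu \in \Omega_0$ this is exactly the statement that $W_\mu$ contains (a lift of) all of $(V/\mathfrak{n}V)_\mu$; but the construction of $S_\infty$ in Lemma \ref{lem:Sinf} was via successive approximation inside $\widehat V$, and I would argue that the specific lift lands in $W$: given $\overline s \in \overline S_\mu = (V/\mathfrak{n}^i V)_\mu$, choose $w \in W_\mu$ mapping to $\overline s$ modulo $\mathfrak{n}^i V$; then $w$ is a generalized $\mathfrak{h}$-eigenvector of weight $\mu$ in $\widehat V$ lifting $\overline s$, hence $w \in J(V)$, and one shows $w \in S_\infty$ using that $S_\infty$ was characterized as mapping \emph{bijectively} onto $\overline S$ (so any $\mathfrak{h}$-eigenvector in $J(V)$ of weight in $\Omega_0$ reducing correctly mod $\mathfrak{n}^i$ lies in $S_\infty$, up to elements of $\mathfrak{n}^i \widehat V \cap J(V)$ — and here the choice of $i$ with $\Omega_j \cap \Omega_0 = \emptyset$ for $j \ge i$ kills the ambiguity).

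To make that last step clean I would phrase it as: the composite $S_\infty \hookrightarrow J(V) \twoheadrightarrow V/\mathfrak{n}^i V$ is injective with image $\overline S = \bigoplus_{\mu \in \Omega_0}(V/\mathfrak{n}^i V)_\mu$, because weights in $\Omega_0$ do not reappear in $\mathfrak{n}^j V/\mathfrak{n}^{j+1}V$ for $j \ge i$; since $W \to V/\mathfrak{n}^i V$ is surjective and $\mathfrak{h}$-equivariant, $W$ surjects onto $\overline S$, so $W$ contains elements $w_1, \dots, w_r$ (one per basis vector of $\overline S$) whose images in $V/\mathfrak{n}^i V$ form a basis of $\overline S$; their $\mathfrak{h}$-finite, weight-$\Omega_0$ parts (obtainable since $W$ is $\mathfrak{h}$-stable and $w_k \in J(V)$ already) are then forced to equal the corresponding basis of $S_\infty$ by the bijectivity. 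Hence $S_\infty \subseteq W$, and the reduction above finishes the proof. The one technical point requiring care is the interchange of "dense submodule of $\widehat V$" with "surjects onto each finite quotient $V/\mathfrak{n}^i V$" — this is just unwinding the inverse-limit topology, but I would state it explicitly as a preliminary remark.
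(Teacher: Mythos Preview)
Your reduction step has a genuine gap: you write ``once $S_\infty \subseteq W$ and $W$ is an $\mathfrak{n}$-submodule, we get $U(\mathfrak{n})S_\infty \subseteq W$,'' but the hypothesis of the lemma gives only that $W$ is an $\mathfrak{h}$-submodule, not an $\mathfrak{n}$- or $\mathfrak{b}$-submodule. So even after you establish $S_\infty \subseteq W$, you cannot pass to $U(\mathfrak{n})S_\infty \subseteq W$, and the reduction collapses. (Indeed, the lemma is applied later in the paper to $\mathfrak{c}J(V)$, which is $\mathfrak{h}$-stable but need not be $\mathfrak{n}$-stable, so the weaker hypothesis is essential.)

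The good news is that you already have the ingredients to fix this, and the fix is simpler than your proposed detour through $S_\infty$. You correctly observe that density of $W$ in $\widehat V$ means $W$ surjects onto each $V/\mathfrak{n}^iV$, and that $\mathfrak{h}$-equivariance plus finite-dimensionality forces $W^\mu \twoheadrightarrow (V/\mathfrak{n}^iV)^\mu$ for every weight $\mu$ and every $i$. Now apply this to \emph{all} $\mu$, not just those in $\Omega_0$: for each fixed $\mu$ and all large $i$ the map $J(V)^\mu \to (V/\mathfrak{n}^iV)^\mu$ is injective (since $J(V)^\mu \cap \mathfrak{n}^i\widehat V = 0$ once the weight $\mu$ no longer occurs in $\mathfrak{n}^jV/\mathfrak{n}^{j+1}V$), so from $W^\mu \subseteq J(V)^\mu$ and the surjection $W^\mu \twoheadrightarrow (V/\mathfrak{n}^iV)^\mu$ you get $W^\mu = J(V)^\mu$ directly. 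Summing over $\mu$ gives $W = J(V)$. This is exactly the paper's argument, and it avoids both the $S_\infty$ machinery and any appeal to $\mathfrak{n}$-stability.
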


\begin{proof}
Note that for any $i$, the natural projection defines an isomorphism
$\widehat{V}/\mathfrak{n} ^{i}\widehat{V} \cong V/\mathfrak{n}^{i}V$, with the
inverse given by the natural map $V \to\widehat{V}$. Note also that the
density of $W$ implies that $W$ projects onto $V/\mathfrak{n}^{i}V$ for any
$i$.

Now let $J\left(  V\right)  ^{\mu}=\left(  \widehat{V}\right)  ^{\mu}$ be the
generalized $\mathfrak{h}$-eigenspace for some fixed weight $\mu$. Then for
all sufficiently large $i$ we have $J\left(  V\right)  ^{\mu}\cap
\mathfrak{n}^{i}\widehat{V}=0$ and thus
\begin{equation}
J\left(  V\right)  ^{\mu}\cong\left(  \widehat{V}/\mathfrak{n}^{i}%
\widehat{V}\right)  ^{\mu}\cong\left(  V/\mathfrak{n}^{i}V\right)  ^{\mu}\cong
W^{\mu} \label{=JVmu}%
\end{equation}

This implies $J\left(  V\right)  =W.$

\end{proof}

\begin{lemma}
\label{lem:Jdense}$J\left(  V\right)  $ is dense in $\widehat{V}$ for any
$V\in\mathcal{G}(\mathfrak{b})$. Moreover $V\mapsto J\left(  V\right)  $ is an
exact faithful functor from $\mathcal{G}(\mathfrak{b})$ to $\mathcal{G}%
(\mathfrak{b})$.
\end{lemma}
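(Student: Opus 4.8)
The plan is to prove the three assertions of Lemma \ref{lem:Jdense} in turn, bootstrapping from the structural results already established. First I would establish density of $J(V)$ in $\widehat{V}$. By Lemma \ref{lem:Sinf} there is a finite-dimensional $\mathfrak{h}$-invariant subspace $S_\infty\subset J(V)$ mapping onto $V/\mathfrak{n}V$. Let $W$ be the $\mathfrak{b}$-submodule (equivalently $U(\mathfrak{n})$-submodule) of $\widehat V$ generated by $S_\infty$; since $S_\infty$ is $\mathfrak{h}$-finite and $\mathfrak{h}$ normalizes $\mathfrak{n}$, the module $W$ is $\mathfrak{h}$-stable and in fact $\mathfrak{h}$-semisimple-ish (a glued union of generalized weight spaces), so $W\subseteq J(V)$. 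Now $W$ surjects onto $V/\mathfrak{n}V=\widehat V/\mathfrak{n}\widehat V$, so $W+\mathfrak{n}\widehat V=\widehat V$; iterating and using that $\mathfrak{n}$ acts topologically nilpotently on $\widehat V$ together with the fact that $W$ is itself a finitely generated $U(\mathfrak{n})$-module (hence $\mathfrak{n}^i W$ is the image of $\mathfrak{n}^i$ times generators), one gets $W+\mathfrak{n}^i\widehat V=\widehat V$ for all $i$, i.e. $W$ is dense. Then Lemma \ref{lem:Wdense} applies and forces $W=J(V)$; in particular $J(V)$ is dense in $\widehat V$ and is finitely generated over $U(\mathfrak{n})$.

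The subtle point in the paragraph above — and what I expect to be the main obstacle — is verifying that $W=U(\mathfrak{n})S_\infty$ actually lands inside the $\mathfrak{h}$-finite part of $\widehat V$, rather than merely inside $\widehat V$. This is where one uses that $V\in\mathcal{G}(\mathfrak{b})$: good $\mathfrak{b}$-filtrations are good $\mathfrak{n}$-filtrations, so the weights of $\mathfrak{h}$ on $\mathfrak{n}^j V/\mathfrak{n}^{j+1}V$ all lie (with finite multiplicity) in a set of the form $\Omega_0-(\text{sum of roots in }\Sigma(\mathfrak n))$, and acting by $\mathfrak{n}$ shifts weights by roots of $\mathfrak{n}$; hence each $U(\mathfrak{n})^k S_\infty$ is finite-dimensional with weights in a fixed finite set, and the union is a locally finite $\mathfrak{h}$-module. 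One must be slightly careful that passing to the completion does not destroy $\mathfrak{h}$-finiteness of individual vectors, but since $\mathfrak{n}^i\widehat V$ meets each generalized weight space trivially for $i$ large, a vector of $W$ that is $\mathfrak{h}$-finite mod $\mathfrak{n}^i\widehat V$ for all $i$ is genuinely $\mathfrak{h}$-finite.

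Next I would prove functoriality. Given a morphism $f\colon V_1\to V_2$ in $\mathcal{G}(\mathfrak{b})$, it induces $\widehat f\colon\widehat V_1\to\widehat V_2$ by Corollary \ref{cor:vfaith} (an exact faithful functor), and $\widehat f$ is continuous for the $\mathfrak{n}$-adic topology, hence commutes with passage to $\mathfrak{h}$-finite vectors, giving $J(f)\colon J(V_1)\to J(V_2)$. Exactness: given a short exact sequence $0\to V'\to V\to V''\to 0$ in $\mathcal{G}(\mathfrak{b})$ (the category is closed under subquotients, so all three lie in it), Corollary \ref{cor:vfaith} gives exactness of $0\to\widehat{V'}\to\widehat V\to\widehat{V''}\to 0$; taking $\mathfrak{h}$-finite vectors is left exact in general, so I only need surjectivity of $J(V)\to J(V'')$. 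This follows from density: each generalized weight space $J(V'')^\mu=\widehat{V''}^\mu$ is identified via \eqref{=JVmu} with $(V''/\mathfrak{n}^i V'')^\mu$ for large $i$, which is the image of $(V/\mathfrak{n}^i V)^\mu=\widehat V^\mu/(\mathfrak{n}^i\widehat V\cap\widehat V^\mu)=J(V)^\mu$, using exactness of $V/\mathfrak{n}^i V\to V''/\mathfrak{n}^i V''$ and stabilization of the relevant weight spaces. Summing over $\mu$ gives surjectivity of $J(V)\to J(V'')$.

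Finally I would check that $J(V)$ itself lies in $\mathcal{G}(\mathfrak{b})$, so that $J$ is an endofunctor of $\mathcal{G}(\mathfrak{b})$ as claimed. Since $J(V)$ is finitely generated over $U(\mathfrak{n})$ by the first paragraph, it is certainly finitely generated over $\mathfrak{b}$; and because $J(V)$ is dense in $\widehat V$ with $J(V)\cap\mathfrak{n}^i\widehat V$ of finite codimension, the $\mathfrak{n}$-adic filtration on $J(V)$ has associated graded pieces of the same (finite) dimensions as those of $V$, so a generating subspace of $J(V)$ over $U(\mathfrak{n})$ yields a good $\mathfrak{n}$-filtration; one then invokes Corollary \ref{cor:Osborne}-type reasoning (or directly, that $\mathrm{As}\mathcal{V}_{\mathfrak n}(J(V))=\mathrm{As}\mathcal{V}_{\mathfrak n}(V)$ is a $\mathfrak b$-stable subvariety) to conclude that any good $\mathfrak b$-filtration on $J(V)$ is good as an $\mathfrak n$-filtration. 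This completes the proof.
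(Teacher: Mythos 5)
Your proposal follows the paper's strategy closely: take $S_\infty$ from Lemma \ref{lem:Sinf}, form $W=U(\mathfrak{n})S_\infty$ inside $J(V)$, show density of $W$ by induction on the $\mathfrak{n}$-adic filtration, and then invoke Lemma \ref{lem:Wdense} to conclude $W=J(V)$; left exactness and surjectivity of $J\phi$ via $\widehat{(\cdot)}$ and Lemma \ref{lem:Wdense} also match. Your second paragraph correctly identifies (and correctly resolves) the point that $U(\mathfrak{n})S_\infty$ stays $\mathfrak{h}$-locally finite; the paper absorbs this into calling $V_\infty$ a \emph{submodule} of $J(V)$, using that $J(V)$ is a $\mathfrak{b}$-submodule of $\widehat V$ because $\mathfrak h$ normalizes $\mathfrak n$.

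The one place your argument is shakier than the paper's is the final step, showing $J(V)\in\mathcal{G}(\mathfrak{b})$. Invoking ``Corollary \ref{cor:Osborne}-type reasoning'' is not appropriate here: that corollary rests on the Osborne lemma for $U(\mathfrak{g})$ relative to $\mathfrak{n}$, $Z(\mathfrak{g})$ and $U(\mathfrak{k})$, and $J(V)$ need not carry a $(\mathfrak{g},K)$-module structure — it is only a $\mathfrak{b}$-module. Nor does ``$\mathrm{As}\mathcal{V}_{\mathfrak n}(J(V))$ being $\mathfrak b$-stable'' directly yield the needed statement about filtrations. The paper instead deduces $V_\infty\in\mathcal{G}(\mathfrak b)$ from the specific presentation $V_\infty=U(\mathfrak n)S_\infty$ with $S_\infty$ finite-dimensional and $\mathfrak h$-\emph{invariant}: the standard filtration $V_\infty^i=U(\mathfrak n)^iS_\infty$ is then simultaneously a good $\mathfrak b$-filtration (since $\mathfrak h S_\infty\subset S_\infty$ and $\mathfrak h$ normalizes $U(\mathfrak n)^i$) and a good $\mathfrak n$-filtration, and since any two good $\mathfrak b$-filtrations are comparable, every good $\mathfrak b$-filtration on $V_\infty$ is good over $\mathfrak n$. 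You should replace the appeal to Corollary \ref{cor:Osborne} with this argument, which uses the $\mathfrak h$-invariance of $S_\infty$ in an essential way.

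Also, the lemma asserts faithfulness; you never address it, though it is immediate from density of $J(V)$ in $\widehat V$ and faithfulness of $V\mapsto\widehat V$ (Corollary \ref{cor:vfaith}), combined with exactness.
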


\begin{proof}
Let $S_{\infty}$ be as in Lemma \ref{lem:Sinf} and let $V_{\infty}\subset
J\left(  V\right)  $ be the $\mathfrak{n}$-submodule generated by $S_{\infty}%
$, then it follows that $V_{\infty}\in\mathcal{G}(\mathfrak{b})$. Also arguing
by induction on $i$ we deduce that $V_{\infty}$ surjects onto each
$\mathfrak{n}^{i}V/\mathfrak{n}^{i+1}V$ and hence that $V_{\infty}$ is dense
in $\widehat{V}$. By Lemma \ref{lem:Wdense} $J\left(  V\right)  =V_{\infty}$ ,
and thus $J\left(  V\right)  $ is dense and belongs to $\mathcal{G}%
(\mathfrak{b})$.

Corollary \ref{cor:vfaith} implies that $J$ is left exact. For right
exactness, we need to show that if $\phi:V\rightarrow V^{\prime}$ is a
surjection then so is $J\phi$; since the image of $J\phi$ is dense in
$\widehat{V^{\prime}}$, this follows from Lemma \ref{lem:Wdense}. Now to prove
faithfulness it suffices to show $V\neq0$ implies $J\left(  V\right)  \neq0$,
but this follows from Corollary \ref{cor:vfaith} and the density of $J\left(
V\right)  $ in $\widehat{V}$.
\end{proof}

If $M\in\mathcal{HC}\left(  {\mathfrak{g}},K\right)  $ then $M\in
\mathcal{G}\left(  \mathfrak{b}\right)  $ by Corollary \ref{cor:Osborne} so
the above results apply to $M$, indeed in this case Corollary \ref{cor:vfaith}
is due to Casselman. However one can say more. Let $\bar{B}=\theta\left(
B\right)  $ be the opposite Borel subgroup, and let $\mathcal{C}\left(
{\mathfrak{g}},{\mathfrak{\bar{b}}}\right)  $ be the category of finitely
generated ${\mathfrak{g}}$-modules, which are ${\mathfrak{\bar{b}}}$-finite.

\begin{theorem}
\label{th:ENV}If $M\in\mathcal{HC}\left(  {\mathfrak{g}},K\right)  $ then
$\widehat{M}$ is a ${\mathfrak{g}}$-module and we have

\begin{enumerate}[(a)]
\item \label{it:Trans} $JM=\left(  \widehat{M}_{\mathfrak{n}}\right)
^{\mathfrak{\bar{n}}\text{-finite}}$.

\item \label{it:Ocat} $JM\in\mathcal{C}\left(  {\mathfrak{g}},{\mathfrak{\bar
{b}}}\right)  .$
\end{enumerate}
\end{theorem}

\begin{proof}
Part (\ref{it:Trans}) follows from \cite[Proposition 2.4]{ENV}. More precisely
\cite{ENV} proves
\[
\left(  \widehat{M}_{\mathfrak{\bar{n}}}\right)  ^{\mathfrak{n}\text{-finite}%
}=\left(  \widehat{M}_{\mathfrak{\bar{n}}}\right)  ^{\mathfrak{h}%
\text{-finite}}%
\]
and we get part (\ref{it:Trans}) upon replacing $\mathfrak{n}$ by
$\mathfrak{\bar{n}}$.

For part (\ref{it:Ocat}), note that $J(M)$ is locally ${\mathfrak{h}}$-finite
by definition, locally $\overline{\mathfrak{n}}$-finite by part
(\ref{it:Trans}) and finitely generated over $\mathfrak{g}$ by Lemma
\ref{lem:Jdense}.

\end{proof}

The theorem implies that $\mathrm{As}\mathcal{V}_{{\mathfrak{g}}%
}(JM)=\mathrm{As}\mathcal{V}_{\mathfrak{n}}(JM)$ and thus from now on we will
write just $\mathrm{As}\mathcal{V}(JM)$.

\begin{remark}
Theorem \ref{th:ENV} implies that $\mathrm{As}\mathcal{V}(JM)$ is a union of
$\overline{B}$-orbits in $\mathrm{An}\mathcal{V}(JM)\cap\overline
{\mathfrak{b}}^{\bot}$. Theorem \ref{thm:CassJacEmb} and Lemma
\ref{lem:Jdense} imply that $\mathrm{An}\mathcal{V}(JM)=\mathrm{An}%
\mathcal{V}(M) = \mathrm{An}\mathcal{V}(\widehat{M}) $ since both $JM$ and $M$
densely embed into $\widehat{M}$ and the action of $\mathfrak{g}$ on
$\widehat{M}$ is continuous.  It is also known that $\dim{\mathcal{O}}%
\cap\overline{\mathfrak{b}}^{\bot}=1/2\dim{\mathcal{O}}$, for any coadjoint
orbit ${\mathcal{O}} \subset\mathfrak{g}^{*}$, and that $\dim\mathrm{As}%
\mathcal{V}(V)\geq1/2 \dim\mathrm{An}\mathcal{V}(V)$, for any
finitely-generated module $V$ over any algebraic Lie algebra (see
\cite[Proposition 6.1.4]{JosLect}).

Altogether we obtain that for an irreducible $M$, the associated variety
$\mathrm{As}\mathcal{V}(JM)$ is a union of irreducible components of maximal
dimension in $\mathrm{An}\mathcal{V}(M)\cap\overline{\mathfrak{b}}^{\bot}$.
Unfortunately, this does not determine $\mathrm{As}\mathcal{V}(JM)$ since the
variety $\mathrm{An}\mathcal{V}(M)\cap\overline{\mathfrak{b}}^{\bot}$ has many
irreducible components.
\end{remark}


\subsection{Whittaker Functionals}

We recall that a representation in $\mathcal{M}$ or $\mathcal{HC}$ is said to
be \emph{large} if its annihilator variety is the nilpotent cone
$\mathcal{N}({\mathfrak{g}})$, and \emph{generic} if it admits a Whittaker
functional for some non-degenerate $\psi\in\Psi$.

\begin{theorem}
\label{thm:KosCHM} For $\pi\in\mathcal{M}$ the following are equivalent:
\begin{equation}
\label{=koswhit}\pi\text{ is generic} \Leftrightarrow\pi\text{ is
large}\Leftrightarrow\pi^{HC}\text{ is large} \Leftrightarrow\pi^{HC}\text{ is
generic.}%
\end{equation}

Moreover if $\pi$ is large and $\psi\in\Psi$ is non-degenerate, then

\begin{enumerate}[(a)]
\item \label{it:HCL}$Wh_{\psi}^{\prime}\left(  \pi^{HC}\right)  \neq0$.

\item \label{it:KosT} If $\psi\in\Psi_{0}$ then there exists $a\in F_{G}$ such
that $Wh_{a\cdot\psi}^{\ast}(\pi)\neq0$

\item \label{it:nonUn} If $\psi\not \in \Psi_{0}$ then $Wh_{\psi}^{\ast}%
(\pi)=0$.
\end{enumerate}
\end{theorem}

The equivalence $\pi\text{ is large}\Leftrightarrow\pi^{HC}\text{ is large}$
is obvious since $\pi^{HC}$ is dense in $\pi$ and thus they have the same
annihilator. For $\pi$ irreducible the other equivalences in (\ref{=koswhit})
are in \cite[Theorems K and L]{Kos}. Part (\ref{it:HCL}) follows from
\cite[Corollary 2.2.2]{MatActa}. Part (\ref{it:KosT}) is \cite[Theorem K]{Kos}
and Part (\ref{it:nonUn}) is in \cite[Theorem 8.2]{CHM}. The case of general
$\pi$ follows from this by exactness of the functors $Wh_{\psi}^{\ast}$ and
$Wh_{\psi}^{\prime}$ proved in \cite[Theorem 8.2]{CHM} and \cite[Theorem
4.3]{Kos} respectively.

\section{Proof of Theorem \ref{thm:Main}}

\label{sec:PfMain} \setcounter{lemma}{0} Let $\mathfrak{n}^{\prime
}=[\mathfrak{n},\mathfrak{n}]$ and $\mathfrak{v}=\mathfrak{n}/\mathfrak{n}%
^{\prime}$ be as in (\ref{=v}), and for an $\mathfrak{n}$-module $V$, we
denote the $\mathfrak{v}$-module of $\mathfrak{n}^{\prime}$-coinvariants by
\[
CV=C\left(  V\right)  =C_{\mathfrak{n}^{\prime}}\left(  V\right)
:=V/\mathfrak{n}^{\prime}V
\]
Since $\mathfrak{v}$ is commutative $\mathrm{An}\mathcal{V}_{\mathfrak{v}%
}(CV)=\mathrm{As}\mathcal{V}_{\mathfrak{v}}(CV)$ and we simply write
$\mathcal{V}_{\mathfrak{v}}(CV)$. We note that%
\[
V\in\mathcal{G}(\mathfrak{b})\implies C\left(  V\right)  \in\mathcal{G}%
(\mathfrak{b}).
\]

For the rest of this section let $M\in\mathcal{HC}\left(  G\right)  $ denote a
fixed Harish-Chandra module.

\begin{lemma}
\label{lem:AVVSupp} We have
\[
\Psi(M)=\mathrm{Supp}_{\mathfrak{v}}(CM)=\mathcal{V}_{\mathfrak{v}}\left(
CM\right)  .
\]

\end{lemma}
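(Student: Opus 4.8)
The plan is to establish the two equalities $\Psi(M)=\mathrm{Supp}_{\mathfrak{v}}(CM)$ and $\mathrm{Supp}_{\mathfrak{v}}(CM)=\mathcal{V}_{\mathfrak{v}}(CM)$ separately, and to reduce both to the Nakayama lemma (Lemma \ref{lem:Nak}) applied over a suitable finitely generated commutative algebra. The key observation is that $\mathfrak{v}=\mathfrak{n}/\mathfrak{n}'$ is a finite-dimensional \emph{commutative} Lie algebra, so its universal enveloping algebra is the symmetric algebra $S(\mathfrak{v})$, a polynomial ring; and by Corollary \ref{cor:Osborne} the Harish-Chandra module $M$ is finitely generated over $U(\mathfrak{n})$, so its coinvariant module $CM=M/\mathfrak{n}'M$ is finitely generated over $S(\mathfrak{v})$. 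Moreover the maximal ideals of $S(\mathfrak{v})$ are exactly the evaluation ideals $\mathfrak{m}_\psi$ at points $\psi\in\mathfrak{v}^{\ast}=\Psi$.

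First I would unwind the definitions to identify $\Psi(M)$ with the ``$\Psi$'' of the Nakayama setup for the $S(\mathfrak{v})$-module $CM$. By definition $\psi\in\Psi(M)$ iff $Wh_\psi'(M)=\operatorname{Hom}_{\mathfrak{n}}(M,\psi)\neq 0$. Since $\psi$ is a character of $\mathfrak{n}$ that is automatically trivial on $\mathfrak{n}'=[\mathfrak{n},\mathfrak{n}]$, an $\mathfrak{n}$-homomorphism $M\to\mathbb{C}_\psi$ factors through $CM=M/\mathfrak{n}'M$ and is the same as an $\mathfrak{v}$-homomorphism $CM\to\mathbb{C}_\psi$, i.e.\ an element of $\operatorname{Hom}_{S(\mathfrak{v})}(CM,\phi_{\mathfrak{m}_\psi})\cong CM/\mathfrak{m}_\psi\, CM$. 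Hence $\psi\in\Psi(M)$ iff $CM\neq\mathfrak{m}_\psi\, CM$, which is precisely the condition defining $\Psi(CM)$ in the Nakayama setup of \S\ref{subsec:Nakayama}, under the identification $\operatorname{Max} S(\mathfrak{v})=\Psi$. So $\Psi(M)=\Psi(CM)$ as subsets of $\Psi$.

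Next, applying Lemma \ref{lem:Nak} to the finitely generated $S(\mathfrak{v})$-module $CM$ gives $\Psi(CM)=\mathrm{Supp}_{S(\mathfrak{v})}(CM)=\operatorname{Var}(\operatorname{Ann}_{S(\mathfrak{v})}CM)=\mathrm{Supp}_{\mathfrak{v}}(CM)$, which handles the first equality. For the second equality, $\mathcal{V}_{\mathfrak{v}}(CM)$ is by definition (\S\ref{subsec:ASVars}) the common value of $\mathrm{As}\mathcal{V}_{\mathfrak{v}}(CM)$ and $\mathrm{An}\mathcal{V}_{\mathfrak{v}}(CM)$; since $\mathfrak{v}$ is commutative the associated graded of $U(\mathfrak{v})=S(\mathfrak{v})$ is $S(\mathfrak{v})$ itself with the trivial grading considerations, so $\mathrm{An}\mathcal{V}_{\mathfrak{v}}(CM)=\operatorname{Var}(\operatorname{Ann}_{S(\mathfrak{v})}CM)=\mathrm{Supp}_{\mathfrak{v}}(CM)$; alternatively one invokes the Corollary after Lemma \ref{lem:AVBer} directly. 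This gives $\mathcal{V}_{\mathfrak{v}}(CM)=\mathrm{Supp}_{\mathfrak{v}}(CM)$ and completes the chain.

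The only genuine point requiring care — the part I would treat as the ``main obstacle,'' though it is really just a matter of bookkeeping — is justifying that $CM$ is finitely generated over $S(\mathfrak{v})$ and hence that Lemma \ref{lem:Nak} applies: this is where we must cite Corollary \ref{cor:Osborne} (finite generation of $M$ over $U(\mathfrak{n})$, for which the quasi-split hypothesis via the Osborne lemma is used) and observe that finite generation passes to the quotient $M\twoheadrightarrow CM$ and then to $S(\mathfrak{v})=U(\mathfrak{n})/\mathfrak{n}'U(\mathfrak{n})$-module structure. Everything else is a formal translation between the three equivalent pictures (Whittaker functionals, support, variety of the annihilator) for a finitely generated module over a polynomial ring.
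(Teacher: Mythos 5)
Your treatment of the first equality $\Psi(M)=\mathrm{Supp}_{\mathfrak{v}}(CM)$ is correct and matches the paper's route: unwind the definitions to see that $Wh_\psi'(M)\neq 0$ iff $\mathfrak{m}_\psi CM\neq CM$, then apply Nakayama (with finite generation of $CM$ over $S(\mathfrak{v})$ coming from Corollary \ref{cor:Osborne}).

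The second equality is where you have a genuine gap. Recall the definitions in \S\ref{subsec:ASVars}: $\mathrm{Supp}_{\mathfrak{v}}(CM)=\operatorname{Var}\bigl(\operatorname{Ann}_{S(\mathfrak{v})}CM\bigr)$, whereas $\mathrm{An}\mathcal{V}_{\mathfrak{v}}(CM)=\operatorname{Var}\bigl(\overline{\operatorname{Ann}_{U(\mathfrak{v})}CM}\bigr)$ where the bar denotes the \emph{associated graded} of the ideal with respect to the degree filtration on $U(\mathfrak{v})=S(\mathfrak{v})$. Commutativity of $\mathfrak{v}$ makes $U(\mathfrak{v})=S(\mathfrak{v})$ and gives $\mathrm{As}\mathcal{V}_{\mathfrak{v}}=\mathrm{An}\mathcal{V}_{\mathfrak{v}}$ (that is what the Corollary after Lemma \ref{lem:AVBer} says), but it does \emph{not} give $\operatorname{Var}(\overline{I})=\operatorname{Var}(I)$ for an arbitrary ideal $I$ in a polynomial ring; e.g.\ $I=(x-1)\subset\mathbb{C}[x]$ has $\operatorname{Var}(I)=\{1\}$ while $\overline{I}=(x)$ has $\operatorname{Var}(\overline{I})=\{0\}$. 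Your phrase ``the associated graded of $S(\mathfrak{v})$ is $S(\mathfrak{v})$ itself with the trivial grading considerations'' elides exactly this issue: it's true for the ring, not for an arbitrary ideal.

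The paper closes this gap with an argument you omit. There is $h\in\mathfrak{h}$ with $\alpha(h)=1$ for all simple roots $\alpha$, hence $h$ acts by the scalar $1$ on $\mathfrak{v}$ and by the degree on $S(\mathfrak{v})$. Since $\mathfrak{h}$ normalizes $\mathfrak{n}$ and $\mathfrak{n}'$, the action of $\mathfrak{h}$ on $M$ descends to $CM$ and $\operatorname{Ann}_{S(\mathfrak{v})}(CM)$ is $\mathfrak{h}$-invariant; in particular it is preserved by $h$, hence is a \emph{homogeneous} ideal. For a homogeneous ideal $I$ one has $\overline{I}=I$, so $\operatorname{Var}(\overline{I})=\operatorname{Var}(I)$ and therefore $\mathcal{V}_{\mathfrak{v}}(CM)=\mathrm{Supp}_{\mathfrak{v}}(CM)$. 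Without this homogeneity step, the claimed equality $\mathrm{An}\mathcal{V}_{\mathfrak{v}}(CM)=\mathrm{Supp}_{\mathfrak{v}}(CM)$ does not follow.
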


\begin{proof}
The Lie algebra $\mathfrak{h}$ contains an element $h$ that acts by $1$ on
$\mathfrak{v}$, and by the degree on $S(\mathfrak{v})$. Since the ideal
$Ann_{S(\mathfrak{v})}(CM)$ is $\mathfrak{h}$-invariant, it is homogeneous and
consequently $\mathrm{Supp}_{\mathfrak{v}}(CM)=\mathcal{V}_{\mathfrak{v}}%
(CM)$. Finally $\Psi(M)=\mathrm{Supp}_{\mathfrak{v}}(CM)$ by Nakayama's lemma
(see \S \ref{subsec:Nakayama}).
\end{proof}

For the proof of Theorem \ref{thm:Main} we need three further results, which
are stated below and proved in sections \ref{subsec:PfQuotJac}, \ref{subsec:PfJacQuot}, \ref{subsec:PfAVJ}.

\begin{lemma}
\label{lem:QuotJac} We have a $\mathfrak{b}$-module isomorphism $C\left(
JM\right)  \approx J(CM)$
\end{lemma}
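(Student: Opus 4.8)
The plan is to compare the two functors generalized-weight-space by generalized-weight-space, the comparison map itself being soft and the whole content being a degree-shift computation. First I would record the set-up: since $M\in\mathcal{G}(\mathfrak{b})$ (Corollary \ref{cor:Osborne}) and $\mathfrak{n}'M$ is a submodule, both $\mathfrak{n}'M$ and $CM=M/\mathfrak{n}'M$ lie in $\mathcal{G}(\mathfrak{b})$ (closed under subquotients), so all results of \S\ref{subsec:CasJac} apply to them. As $\mathfrak{n}'$ acts by zero on $CM$, it acts by zero on $J(CM)$, hence the morphism $J(q)\colon J(M)\to J(CM)$ attached by functoriality (Lemma \ref{lem:Jdense}) to the quotient $q\colon M\to CM$ kills $\mathfrak{n}'J(M)$ and factors through a $\mathfrak{b}$-map
\[
\bar\varphi\colon C(JM)=J(M)/\mathfrak{n}'J(M)\longrightarrow J(CM).
\]
On completions, $\bar\varphi$ is induced by the map $\widehat{M}/\overline{\mathfrak{n}'\widehat{M}}\xrightarrow{\ \sim\ }\widehat{CM}$, so the image of $\bar\varphi$ is the image of $J(M)$ in $\widehat{CM}$, which is dense (as $J(M)$ is dense in $\widehat{M}$ by Lemma \ref{lem:Jdense}) and $\mathfrak{h}$-finite; being contained in $J(CM)=\widehat{CM}^{\,\mathfrak{h}\text{-fin}}$, Lemma \ref{lem:Wdense} applied to $CM$ forces it to equal $J(CM)$. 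Thus $\bar\varphi$ is surjective.

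For injectivity I would argue one generalized $\mathfrak{h}$-weight $\mu$ at a time; this is legitimate because $J(M)$, hence $C(JM)$, and $J(CM)$ are $\mathfrak{h}$-finite with finite-dimensional weight spaces: by (\ref{=JVmu}) one has $J(V)^\mu\cong (V/\mathfrak{n}^iV)^\mu$ for $i\gg0$, and $V/\mathfrak{n}^iV$ is finite-dimensional since $V$ is finitely generated over $U(\mathfrak{n})$. Write $p_i\colon\widehat{M}\to M/\mathfrak{n}^iM$ and $\pi_i\colon M\to M/\mathfrak{n}^iM$ for the canonical maps. Tracing the definitions, $\ker\bar\varphi$ on the $\mu$-weight space is $\{v\in J(M)^\mu: p_i(v)\in\pi_i(\mathfrak{n}'M)\ \forall i\}\big/(\mathfrak{n}'J(M))^\mu$, so, using that $p_i$ is injective on $J(M)^\mu$ for $i\gg0$, everything reduces to the identity
\[
\pi_i(\mathfrak{n}'M)^\mu=p_i\bigl((\mathfrak{n}'J(M))^\mu\bigr)\qquad\text{in }(M/\mathfrak{n}^iM)^\mu,\quad i\gg_\mu 0 .
\]

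To prove this identity I would use a degree shift. Write $\mathfrak{n}'=\bigoplus_\alpha\mathfrak{g}^\alpha$ over the positive non-simple roots, each $\mathfrak{g}^\alpha=\mathbb{C}Z_\alpha$ one-dimensional. For $Z\in\mathfrak{n}$ one has $Z\cdot\mathfrak{n}^{i+1}M\subset\mathfrak{n}^iM$, so $M\xrightarrow{Z}M\xrightarrow{\pi_i}M/\mathfrak{n}^iM$ factors as $M\xrightarrow{\pi_{i+1}}M/\mathfrak{n}^{i+1}M\xrightarrow{Z}M/\mathfrak{n}^iM$; hence $\pi_i(Z_\alpha M)=Z_\alpha(M/\mathfrak{n}^{i+1}M)$, and on the $\mathfrak{h}$-weight-$\mu$ part $\pi_i(\mathfrak{n}'M)^\mu=\sum_\alpha Z_\alpha\bigl((M/\mathfrak{n}^{i+1}M)^{\mu-\alpha}\bigr)$. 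On the other side, an element of $(\mathfrak{n}'J(M))^\mu$ is a sum of terms $Z_\alpha w$ with $w\in J(M)^{\mu-\alpha}$, and $p_i(Z_\alpha w)=Z_\alpha\,p_{i+1}(w)$; choosing $i$ large enough that $p_{i+1}\colon J(M)^{\mu-\alpha}\xrightarrow{\ \sim\ }(M/\mathfrak{n}^{i+1}M)^{\mu-\alpha}$ for each of the finitely many $\alpha$ (again (\ref{=JVmu})), we get $p_i((\mathfrak{n}'J(M))^\mu)=\sum_\alpha Z_\alpha\bigl((M/\mathfrak{n}^{i+1}M)^{\mu-\alpha}\bigr)$, which is exactly $\pi_i(\mathfrak{n}'M)^\mu$. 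This gives injectivity, and together with the surjectivity above, $\bar\varphi$ is the desired isomorphism.

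The one genuine subtlety — and the step I expect to be the main obstacle — is this last degree-shift bookkeeping: one must arrange that the weight-space isomorphisms of (\ref{=JVmu}) hold simultaneously at level $i$ for $\mu$ and at level $i+1$ for the finitely many weights $\mu-\alpha$, and keep the index shift in the action of $Z_\alpha$ between the finite quotients consistent. Everything else — the construction of $\bar\varphi$, surjectivity via density and Lemma \ref{lem:Wdense}, and the finite-dimensionality of weight spaces — is routine.
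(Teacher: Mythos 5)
Your proof is correct, and the mathematics checks out (including the degree-shift bookkeeping: the key identity $\pi_i(\mathfrak{n}'M)^\mu = p_i\bigl((\mathfrak{n}'J(M))^\mu\bigr)$ indeed holds once $i$ is large enough that $p_{i+1}\colon J(M)^{\mu-\alpha}\to (M/\mathfrak{n}^{i+1}M)^{\mu-\alpha}$ is an isomorphism for all of the finitely many non-simple positive roots $\alpha$, and the projection identity $P_\mu Z_\alpha = Z_\alpha P_{\mu-\alpha}$ for generalized weight spaces is valid since $[\mathfrak{h},Z_\alpha]=\alpha(\cdot)Z_\alpha$ exactly). However, your route is genuinely different from the paper's. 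The paper proves the lemma for an arbitrary $\mathfrak{h}$-invariant ideal $\mathfrak{c}\subset\mathfrak{n}$ by (i) invoking exactness of $J$ (Lemma \ref{lem:Jdense}) to reduce the whole statement to the single submodule identity $\mathfrak{c}J(V)=J(\mathfrak{c}V)$ inside $J(V)$, and then (ii) deducing that identity from density considerations: the noncommutative Artin--Rees theorem gives $\overline{\mathfrak{c}V}=\widehat{\mathfrak{c}V}$ in $\widehat{V}$, whence $\mathfrak{c}J(V)$ is dense in $\widehat{\mathfrak{c}V}$, and Lemma \ref{lem:Wdense} finishes. You instead build the comparison map $\bar\varphi$ explicitly and verify that it is bijective. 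Your surjectivity step is essentially the paper's density argument in disguise, but your injectivity step replaces the ``exactness of $J$ plus Artin--Rees'' combination with a direct generalized-weight-space computation, using only the elementary observation $Z\,\mathfrak{n}^{i+1}M\subset\mathfrak{n}^iM$ and the finite-dimensionality isomorphisms from the paper's display (\ref{=JVmu}). The trade-off: the paper's version is shorter, more functorial, and immediately covers every $\mathfrak{h}$-invariant ideal $\mathfrak{c}$, but leans on the Artin--Rees theorem for nilpotent Lie algebras as a black box; your version is more hands-on and self-contained (it does not invoke Artin--Rees or the exactness of $J$), but involves more bookkeeping and is written out only for $\mathfrak{c}=\mathfrak{n}'$, which happens to be all that is needed. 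Note also that, once exactness of $J$ is granted, your map $\bar\varphi$ is simply the natural surjection $J(M)/\mathfrak{n}'J(M)\twoheadrightarrow J(M)/J(\mathfrak{n}'M)$, so your injectivity argument is in effect a weight-graded reproof of $\mathfrak{n}'J(M)=J(\mathfrak{n}'M)$ --- the same content as the paper's Artin--Rees step, reached by a more elementary path.
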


\begin{lemma}
\label{lem:JacQuot} $\mathcal{V}_{\mathfrak{v}}(C\left(  JM\right)
)=\mathrm{As}\mathcal{V}_{\mathfrak{n}}(JM)\cap\Psi$.
\end{lemma}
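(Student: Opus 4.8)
The plan is to exploit the structure of $JM$ as a $\mathfrak{b}$-module, namely that it is finitely generated over $\mathfrak{n}$ and, by Theorem \ref{th:ENV}(\ref{it:Ocat}), lies in $\mathcal{C}(\mathfrak{g},\bar{\mathfrak{b}})$, so it is $\bar{\mathfrak{n}}$-finite; in particular it is a union of lowest weight $\mathfrak{h}$-modules (more precisely glued from such, since the $\mathfrak{h}$-action need not be semisimple). First I would reduce, using Lemma \ref{lem:Quot} applied to $\mathfrak{q}=\mathfrak{n}$, $\mathfrak{w}=\mathfrak{n}'$, $\mathfrak{r}=\mathfrak{v}$, to the inclusion $\mathcal{V}_{\mathfrak{v}}(C(JM)) \subset \mathrm{As}\mathcal{V}_{\mathfrak{n}}(JM)\cap\Psi$, which is automatic; the content is the reverse inclusion. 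So the real task is: if $\psi \in \Psi = \mathfrak{v}^\ast$ lies in the associated variety of $JM$ as an $\mathfrak{n}$-module, then $\psi$ lies in the support of the $S(\mathfrak{v})$-module $C(JM) = JM/\mathfrak{n}'(JM)$.

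The key step is a Nakayama-type argument for $\mathfrak{n}$ rather than $\mathfrak{v}$. Because $JM$ is $\bar{\mathfrak{n}}$-finite, it is generated over $\mathfrak{n}$ by the finite-dimensional $\mathfrak{h}$-invariant space $S_\infty$ supplied by Lemma \ref{lem:Sinf} (applied with $V=M$, since $J(JM)=JM$ by Lemma \ref{lem:Jdense} combined with Theorem \ref{th:ENV}, or directly because $JM$ is already a union of lowest weight modules). I would then invoke the analogue of Nakayama's lemma for the nilpotent Lie algebra $\mathfrak{n}$ acting on a module glued from lowest weight modules: for such a module $N$ and a character $\psi$ of $\mathfrak{n}$, the twisted coinvariants $N/(\mathfrak{n}-\psi(\mathfrak{n}))N$ are non-zero exactly when $\psi \in \mathrm{As}\mathcal{V}_\mathfrak{n}(N)$. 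Concretely: filter $N$ by the $\mathfrak{n}$-adic (equivalently weight) filtration, pass to $\overline{N}$ over $S(\mathfrak{n})$, and observe that because $\psi$ vanishes on $\mathfrak{n}'=[\mathfrak{n},\mathfrak{n}]$, twisting by $\psi$ only affects the $\mathfrak{v}$-directions, so $N/(\mathfrak{n}-\psi)N = C(N)/(\mathfrak{v}-\psi)C(N)$, and the homogeneity/$\mathfrak{h}$-invariance of the annihilator (as in Lemma \ref{lem:AVVSupp}) lets one move between $\mathrm{As}\mathcal{V}_\mathfrak{n}(N)$ and $\mathrm{Supp}_{\mathfrak{v}}(C(N))$. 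Taking $N=JM$ then yields $\mathrm{As}\mathcal{V}_\mathfrak{n}(JM)\cap\Psi \subset \mathrm{Supp}_{\mathfrak v}(C(JM)) = \mathcal{V}_{\mathfrak v}(C(JM))$.

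The main obstacle, I expect, is handling the non-semisimplicity of the $\mathfrak{h}$-action carefully — "glued from lowest weight modules" means there is a finite filtration with lowest-weight subquotients, and one must check that the Nakayama argument survives taking the associated graded and that no cancellation occurs when forming $C(JM)$. The cleanest route is probably to use that $JM$ is finitely generated over $\mathfrak{n}$ with a weight-bounded-below generating space, so that $C(JM)$ is a finitely generated $S(\mathfrak{v})$-module whose support can be read off the top weights, and then to compare the $\mathfrak{n}$-adic good filtration on $JM$ with the induced filtration on $C(JM)$, checking that $\overline{C(JM)} = C(\overline{JM})$ up to the relations coming from $\mathfrak{n}'$ acting nilpotently — the point being precisely that on a lowest weight module $\mathfrak{n}$ acts locally nilpotently, so the associated graded over $S(\mathfrak{n})$ is supported on $\Psi = \mathfrak{v}^\ast$ together with possibly lower-dimensional pieces, and the $\Psi$-part is exactly $\mathrm{Spec}$ of $S(\mathfrak v)$ acting on $\overline{C(JM)}$.
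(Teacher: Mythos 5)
Your reduction is the right one, and you correctly diagnose why the statement has a chance of being true: the generating space is finite-dimensional and $\mathfrak{h}$-invariant, which is what rules out pathologies like the Heisenberg module $\mathbb{C}[x]$ mentioned earlier in the paper. The easy inclusion via Lemma \ref{lem:Quot} is also right, and the identification $\mathrm{Supp}_{\mathfrak v}(C(JM))=\mathcal{V}_{\mathfrak v}(C(JM))$ by $\mathfrak{h}$-homogeneity (as in Lemma \ref{lem:AVVSupp}) is fine.

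The genuine gap is the step you call the ``Nakayama-type argument for $\mathfrak{n}$.'' You state it as something to ``invoke,'' namely: for $N$ glued from lowest weight modules and a character $\psi$ of $\mathfrak{n}$, $N/(\mathfrak{n}-\psi)N\neq 0$ iff $\psi\in\mathrm{As}\mathcal{V}_{\mathfrak n}(N)$. But since $\psi$ vanishes on $\mathfrak{n}'$, the nonvanishing of $N/(\mathfrak{n}-\psi)N$ is equivalent, by commutative Nakayama, to $\psi\in\mathrm{Supp}_{\mathfrak v}(C(N))=\mathcal{V}_{\mathfrak v}(C(N))$, so your invoked fact \emph{is} the statement $\mathcal{V}_{\mathfrak v}(C(N))=\mathrm{As}\mathcal{V}_{\mathfrak n}(N)\cap\Psi$ --- i.e.\ exactly the Lemma, and it cannot be cited, it must be proved. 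The sketch that follows (``filter by the $\mathfrak{n}$-adic filtration, pass to $\overline N$, check $\overline{C(JM)}=C(\overline{JM})$ up to the relations coming from $\mathfrak{n}'$ acting nilpotently'') does not close this. The operations of taking coinvariants and taking associated graded do not commute in general, and quantifying the discrepancy ``up to relations from $\mathfrak{n}'$'' is precisely the unresolved content, not a reduction of it.

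The paper's proof (Lemma \ref{lem:LWQuot}) does not compare associated graded modules at all. Instead it picks a weight basis $v_1,\dots,v_m$ of the finite-dimensional $\mathfrak{h}$-invariant generating space $E$, ordered so that $[v_i]\ge[v_j]$ for $i\ge j$ (using $\rho^\vee$-weights), filters $V$ by the submodules $L^t$ generated by $v_1,\dots,v_t$, and for $u$ in the $U(\mathfrak n)$-annihilator of the image of $E$ in $C(V)$ it shows via an explicit weight estimate that $uv_t\in L^{t-1}+V^{d-1}$ where $d$ is the filtration degree of $u$. This is what forces $\overline u$ to vanish on $\mathrm{As}\mathcal{V}_{\mathfrak n}(L^t/L^{t-1})\cap\Psi$ for each $t$, hence on $\mathrm{As}\mathcal{V}_{\mathfrak n}(V)\cap\Psi$. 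The estimate uses in an essential way that the $\mathfrak{n}'$-factor $X_{ij}$ has weight at least $2$, that $u$ has weight at most $d$ (the case $[u]>d$ being dispatched separately since then $u\in\mathfrak{n}'U^{d-1}$), and that the ordering of generators by weight controls the terms with $i\ge t$. Your proposal would need to be completed with some argument of this kind; the route through $\overline{C(JM)}\stackrel{?}{=}C(\overline{JM})$ is not the one the paper takes and is not obviously salvageable as stated.
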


\begin{lemma}
\label{lem:AVJ}\textrm{ }$\mathrm{As}\mathcal{V}_{\mathfrak{n}}(JM)\supset
\mathrm{As}\mathcal{V}_{\mathfrak{n}}(M)\cap\Psi$.
\end{lemma}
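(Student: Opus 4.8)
The plan is to transport the statement to the flag variety, identify the Jacquet functor $J$ with a geometric degeneration via \cite{ENV}, invoke \cite{Gin} to the effect that associated varieties commute with such degenerations for regular holonomic $D$-modules, and finish with a short weight computation. I would begin with two reductions. Since $M\in\mathcal{HC}(G)\subset\mathcal{G}(\mathfrak{b})$, Corollary~\ref{cor:Osborne} gives $\mathrm{As}\mathcal{V}_{\mathfrak{n}}(M)=pr_{\mathfrak{n}^{\ast}}(\mathrm{As}\mathcal{V}_{\mathfrak{g}}(M))$; and by Theorem~\ref{th:ENV} we have $J(M)\in\mathcal{C}(\mathfrak{g},\overline{\mathfrak{b}})$, so $\mathrm{As}\mathcal{V}_{\mathfrak{g}}(JM)\subset\overline{\mathfrak{b}}^{\bot}=\mathfrak{n}^{\ast}$ and hence $\mathrm{As}\mathcal{V}_{\mathfrak{n}}(JM)=\mathrm{As}\mathcal{V}_{\mathfrak{g}}(JM)$. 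Thus it is enough to prove $\mathrm{As}\mathcal{V}_{\mathfrak{g}}(JM)\supset pr_{\mathfrak{n}^{\ast}}(\mathrm{As}\mathcal{V}_{\mathfrak{g}}(M))\cap\Psi$. I would identify $\mathfrak{g}^{\ast}\cong\mathfrak{g}$ by the Killing form and grade $\mathfrak{g}=\bigoplus_{j\in\mathbb{Z}}\mathfrak{g}[j]$ by height of roots ($\mathfrak{g}[0]=\mathfrak{h}$), which up to rescaling is the grading attached to a regular dominant cocharacter $\nu$ of $H$; then $\overline{\mathfrak{n}}=\bigoplus_{j<0}\mathfrak{g}[j]$, $\mathfrak{n}=\bigoplus_{j>0}\mathfrak{g}[j]$, the map $pr_{\mathfrak{n}^{\ast}}$ becomes the projection $\mathfrak{g}=\overline{\mathfrak{n}}\oplus\mathfrak{b}\onto\overline{\mathfrak{n}}$, and $\Psi=\bigoplus_{\alpha\in\Pi}\mathfrak{g}^{-\alpha}=\mathfrak{g}[-1]$ is the lowest nonzero graded piece of $\overline{\mathfrak{n}}$.

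Next I would localize. Decomposing $M$ according to its finitely many generalized infinitesimal characters (everything here being additive), I may assume $M$ has a single one and, by Beilinson--Bernstein, write $M=\Gamma(X,\mathcal{M})$ on $X=G_{\mathbb{C}}/B_{\mathbb{C}}$ with $\mathcal{M}$ a $K_{\mathbb{C}}$-equivariant, hence regular holonomic, twisted $D_{X}$-module, so that $\mathrm{As}\mathcal{V}_{\mathfrak{g}}(M)=\overline{\mu(\operatorname{Ch}(\mathcal{M}))}$ for the moment map $\mu\colon T^{\ast}X\to\mathcal{N}\subset\mathfrak{g}^{\ast}$ (see \cite{Gin}). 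By \cite{ENV}, $J$ is geometric: the action of $\nu$ on $X$ produces a one-parameter degeneration of $\mathcal{M}$ to a regular holonomic sheaf $\mathcal{M}_{0}$ with $\Gamma(X,\mathcal{M}_{0})\cong J(M)$, and by the main theorem of \cite{Gin} the characteristic cycle commutes with nearby-cycles/specialization for regular holonomic modules, so $\operatorname{Ch}(\mathcal{M}_{0})$ is the specialization of $\operatorname{Ch}(\mathcal{M})$ under the induced $\nu$-action on $T^{\ast}X$. Since $\mu$ is proper and $\nu$-equivariant, cycle specialization in $T^{\ast}X$ pushes forward to cycle specialization in $\mathcal{N}$, giving
\[
\mathrm{As}\mathcal{V}_{\mathfrak{g}}(JM)=\operatorname{Sp}_{\nu}\bigl(\mathrm{As}\mathcal{V}_{\mathfrak{g}}(M)\bigr),
\]
where $\operatorname{Sp}_{\nu}(Z)$ denotes the flat degeneration of a cone $Z\subset\mathfrak{g}$ under $\operatorname{Ad}(\nu(t))$, i.e. the support of $\operatorname{gr}\mathbb{C}[Z]$ for the filtration by height.

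Finally I would extract the inclusion by a weight computation. For any cone $Z\subset\mathfrak{g}$ and any $z=\sum_{j}z_{j}\in Z$ with $z_{j}\in\mathfrak{g}[j]$, the degeneration $\operatorname{Sp}_{\nu}(Z)$ contains the leading term $\mathrm{in}(z):=z_{j_{0}}$, where $j_{0}$ is the least index with $z_{j_{0}}\neq0$ (initial forms vanish on initial points). Let now $\psi\in pr_{\mathfrak{n}^{\ast}}(\mathrm{As}\mathcal{V}_{\mathfrak{g}}(M))\cap\Psi$, say $\psi=pr_{\mathfrak{n}^{\ast}}(z)$ with $z\in\mathrm{As}\mathcal{V}_{\mathfrak{g}}(M)\subset\mathcal{N}_{\theta}$; under the identifications above $pr_{\mathfrak{n}^{\ast}}(z)$ is the $\overline{\mathfrak{n}}$-component of $z$. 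We may assume $\psi\neq0$, since $0$ lies in the associated variety of any nonzero module and $JM\neq0$ by Lemma~\ref{lem:Jdense}. The condition $pr_{\mathfrak{n}^{\ast}}(z)\in\Psi=\mathfrak{g}[-1]$ means precisely that $z$ has no component in $\mathfrak{g}[j]$ for $j\le-2$, so $z=\psi+z_{0}+z_{+}$ with $z_{0}\in\mathfrak{h}$ and $z_{+}\in\mathfrak{n}$; hence the least weight occurring in $z$ is $-1$ and $\mathrm{in}(z)=z_{-1}=\psi$. Therefore $\psi=\mathrm{in}(z)\in\operatorname{Sp}_{\nu}(\mathrm{As}\mathcal{V}_{\mathfrak{g}}(M))=\mathrm{As}\mathcal{V}_{\mathfrak{g}}(JM)=\mathrm{As}\mathcal{V}_{\mathfrak{n}}(JM)$, which is the desired containment.

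I expect the middle step to be the crux and the place requiring real care: pinning down, from \cite{ENV} and \cite{Gin}, the precise identity $\mathrm{As}\mathcal{V}_{\mathfrak{g}}(JM)=\operatorname{Sp}_{\nu}(\mathrm{As}\mathcal{V}_{\mathfrak{g}}(M))$ --- that is, matching the degeneration of \cite{ENV} with the $\operatorname{Ad}(\nu(t))$-action on $\mathfrak{g}^{\ast}$ (including the effect on associated varieties of the twist of the $\mathfrak{g}$-action on $\Gamma(X,\mathcal{M}_{0})$), verifying the equivariance and properness of $\mu$ needed to push characteristic-cycle specialization forward to cycle specialization on $\mathcal{N}$, and handling singular infinitesimal characters. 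The two reductions in the first paragraph and the weight computation in the third are then routine.
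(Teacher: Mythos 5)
Your proposal follows the paper's strategy very closely: both localize via Beilinson--Bernstein, interpret $J$ through the geometric Jacquet functor of \cite{ENV}, and invoke \cite[Theorem~5.5]{Gin} to control the characteristic variety under the nearby-cycle degeneration. The difference is in the final step. The paper stays in $T^{\ast}X$: it introduces the map $p_{\Psi}$ (intersect $CV(\mathcal{M})$ with the constant bundle $X\times[\mathfrak{n},\mathfrak{n}]^{\bot}$, then project), observes that $p_{\Psi}(\rho^{\vee}(t)\,CV(\mathcal{M}))$ is independent of $t$ because the characteristic variety is conical, and uses completeness of $X$ to pass to the limit $t\to 0$. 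You instead push the specialization forward to $\mathfrak{g}^{\ast}$ by the moment map $\mu$ and finish with an initial-form weight computation in $\mathfrak{g}\cong\mathfrak{g}^{\ast}$: if $z\in\mathrm{As}\mathcal{V}_{\mathfrak{g}}(M)$ has $\overline{\mathfrak{n}}$-component equal to $\psi\in\mathfrak{g}[-1]$, then $\psi$ is the initial form of $z$ for the height grading and hence lies in the specialization. This is a perfectly workable and arguably more transparent wrap-up. Two remarks. First, the displayed equality $\mathrm{As}\mathcal{V}_{\mathfrak{g}}(JM)=\operatorname{Sp}_{\nu}(\mathrm{As}\mathcal{V}_{\mathfrak{g}}(M))$ overstates what is available: properness and equivariance of $\mu$ give $\mu(\operatorname{Sp}_{\nu}Z)\supset\operatorname{Sp}_{\nu}(\mu(Z))$ for a closed $\nu$-conical $Z\subset T^{\ast}X$, which is the containment you actually use, but the reverse inclusion (and a fortiori the cycle-level statement) needs justification or should simply be dropped. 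Second, as you yourself flag, the sign bookkeeping between the anti-dominant $\rho^{\vee}$ used in \cite{ENV} for $J_{\mathfrak{n}}$, the coadjoint action under $\mu$, and the Killing-form identification must be tracked carefully: your use of a dominant $\nu$ is consistent with the requirement $\mathrm{As}\mathcal{V}_{\mathfrak{g}}(JM)\subset\overline{\mathfrak{b}}^{\bot}\cong\overline{\mathfrak{n}}$, which is a useful sanity check, but the identification with the \cite{ENV} degeneration deserves one explicit line. With those two points tightened, your argument is a valid alternative finish to the same framework the paper uses.
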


We now prove Theorem \ref{thm:Main}.

\begin{proof}
[Proof of Theorem \ref{thm:Main}]By Lemma \ref{lem:AVVSupp} and Corollary
\ref{cor:Osborne} we have
\[
\Psi(M)=\mathcal{V}_{\mathfrak{v}}(CM),\mathrm{As}\mathcal{V}_{\mathfrak{n}%
}(M)=pr_{\mathfrak{n}^{\ast}}(\mathrm{As}\mathcal{V}_{{\mathfrak{g}}}(M))
\]
By Lemma \ref{lem:Quot} we have$\mathcal{V}_{\mathfrak{v}}(CM)\subset
\mathrm{As}\mathcal{V}_{\mathfrak{n}}(M)\cap\Psi$, and it remains only to
prove%
\begin{equation}
\mathcal{V}_{\mathfrak{v}}(CM)\supset\mathrm{As}\mathcal{V}_{\mathfrak{n}%
}(M)\cap\Psi\label{=remain}%
\end{equation}

By Corollary \ref{cor:Osborne} $M\in\mathcal{G}(\mathfrak{b})$ and hence
$CM\in\mathcal{G}(\mathfrak{b})$ as well. By Lemma \ref{lem:Jdense} $J\left(
CM\right)  $ is dense in $\widehat{CM}$, since $CM$ is also dense in
$\widehat{CM}$ we get
\[
\mathcal{V}_{\mathfrak{v}}\left(  CM\right)  =\mathrm{An}\mathcal{V}%
_{\mathfrak{v}}\left(  \widehat{CM}\right)  =\mathcal{V}_{\mathfrak{v}}\left(
J\left(  CM\right)  \right)
\]
Now by Lemmas \ref{lem:QuotJac}, \ref{lem:JacQuot} and \ref{lem:AVJ} we get
\[
\mathcal{V}_{\mathfrak{v}}(J\left(  CM\right)  )=\mathcal{V}_{\mathfrak{v}%
}\left(  C\left(  JM\right)  \right)  =\mathrm{As}\mathcal{V}_{\mathfrak{n}%
}(JM)\cap\Psi\supset\mathrm{As}\mathcal{V}_{\mathfrak{n}}(M)\cap\Psi\text{ }%
\]

This proves (\ref{=remain}) and finishes the proof of Theorem \ref{thm:Main}.
\end{proof}

\subsection{Proof of Lemma \ref{lem:QuotJac}}

\label{subsec:PfQuotJac}

For $V\in\mathcal{G}(\mathfrak{b})$ we let $\widehat{V}$ denote its
$\mathfrak{n}$-adic completion and let $J(V)=\left(  \widehat{V}\right)
^{{\mathfrak{h}}\text{-finite}}$ denote the associated Jacquet functor as
before. In this section we prove Lemma \ref{lem:QuotJac} in a more general
setting. Let $\mathfrak{c}\subset\mathfrak{n}$ be any ${\mathfrak{h}}%
$-invariant ideal, and define $C_{{\mathfrak{c}}}${$\left(  V\right)  =$}%
$V/${$\mathfrak{c}$}$V$.

\begin{lemma}
For $V\in\mathcal{G}(\mathfrak{b})$ we have $C_{{\mathfrak{c}}}J(V)\approx
J(C_{{\mathfrak{c}}}V)$.
\end{lemma}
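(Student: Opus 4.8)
The plan is to prove the isomorphism $C_{\mathfrak{c}}J(V) \approx J(C_{\mathfrak{c}}V)$ by exploiting the fact that the $\mathfrak{n}$-adic and $\mathfrak{c}$-adic topologies interact well for modules in $\mathcal{G}(\mathfrak{b})$, together with the exactness and density properties established in Lemma \ref{lem:Jdense}. First I would apply the functor $V \mapsto \widehat{V}$ (the $\mathfrak{n}$-adic completion) to the short exact sequence $0 \to \mathfrak{c}V \to V \to C_{\mathfrak{c}}V \to 0$. Since $\mathfrak{c}$ is an $\mathfrak{h}$-invariant ideal of $\mathfrak{n}$, the submodule $\mathfrak{c}V$ and the quotient $C_{\mathfrak{c}}V$ both lie in $\mathcal{G}(\mathfrak{b})$ (this closure property is noted before Lemma \ref{lem:AVVSupp}, and $\mathcal{G}(\mathfrak{b})$ is closed under subquotients), so by Corollary \ref{cor:vfaith} completion is exact on this sequence. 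Thus $\widehat{C_{\mathfrak{c}}V} \approx \widehat{V}/\widehat{\mathfrak{c}V}$, and I would identify $\widehat{\mathfrak{c}V}$ with the closure of $\mathfrak{c}\widehat{V}$ inside $\widehat{V}$ using the Artin--Rees lemma for nilpotent Lie algebras (\cite[Theorem 4.2]{McC}), which guarantees that the $\mathfrak{n}$-adic topology on $\mathfrak{c}V$ agrees with the subspace topology from $\widehat{V}$.

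Next I would pass to $\mathfrak{h}$-finite vectors. The point is that taking $\mathfrak{h}$-finite vectors commutes with the relevant quotient because the $\mathfrak{h}$-action is semisimple-enough on the completions: concretely, $\widehat{V}$ decomposes as a product over generalized $\mathfrak{h}$-weights of the corresponding generalized weight spaces, $J(V) = \bigoplus_\mu \widehat{V}^\mu$ is the direct sum of those weight spaces (each of which is finite-dimensional, as in the proof of Lemma \ref{lem:Sinf}), and the quotient map $\widehat{V} \to \widehat{C_{\mathfrak{c}}V}$ is $\mathfrak{h}$-equivariant, hence respects the generalized weight-space decomposition. Therefore $J(V) \to J(C_{\mathfrak{c}}V)$ is surjective with kernel $J(V) \cap \widehat{\mathfrak{c}V}$. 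It then remains to show this kernel is exactly $\mathfrak{c}J(V)$, i.e. $J(V) \cap \overline{\mathfrak{c}\widehat{V}} = \mathfrak{c}J(V)$. One containment is trivial; for the other, given an $\mathfrak{h}$-finite vector $w$ lying in the closure of $\mathfrak{c}\widehat{V}$, I would use that $\mathfrak{c}$ is finite-dimensional and $\mathfrak{h}$-stable to write, weight-space by weight-space, $w$ as a finite $\mathfrak{c}$-combination of $\mathfrak{h}$-finite elements of $\widehat{V}$, invoking that each generalized weight space of $\widehat{V}$ already lies in $J(V)$ (again using the finite-dimensionality of weight spaces).

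The step I expect to be the main obstacle is the last one: establishing $J(V) \cap \overline{\mathfrak{c}\widehat{V}} = \mathfrak{c}J(V)$ rather than merely the weaker $\supseteq$. The subtlety is that an $\mathfrak{h}$-finite vector in the closed submodule $\widehat{\mathfrak{c}V}$ need not a priori be a \emph{finite} $\mathfrak{c}$-combination of $\mathfrak{h}$-finite vectors — one must rule out that it is only an infinite convergent sum. I would handle this by working within a single generalized $\mathfrak{h}$-weight space: since $\mathfrak{c}$ is $\mathfrak{h}$-stable and finite-dimensional, $\mathfrak{c}\widehat{V} \cap \widehat{V}^\mu$ is spanned by $x \cdot v$ with $x$ ranging over an $\mathfrak{h}$-weight basis of $\mathfrak{c}$ and $v$ in finitely many weight spaces $\widehat{V}^{\mu'}$, each of which is finite-dimensional and contained in $J(V)$ — so the sum is automatically finite and $\mathfrak{c}J(V) \cap \widehat{V}^\mu$ is already closed in $\widehat{V}^\mu$, giving the desired equality upon intersecting with $J(V)$. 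Assembling these identifications yields the $\mathfrak{b}$-module isomorphism $C_{\mathfrak{c}}J(V) \approx J(C_{\mathfrak{c}}V)$; Lemma \ref{lem:QuotJac} is then the special case $\mathfrak{c} = \mathfrak{n}'$.
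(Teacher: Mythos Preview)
Your proposal is correct and follows essentially the same route as the paper: both reduce to showing $\mathfrak{c}J(V)=J(\mathfrak{c}V)$ inside $J(V)$ via Artin--Rees and the finite-dimensionality of generalized $\mathfrak{h}$-weight spaces. The paper packages your final weight-space computation as a one-line appeal to Lemma~\ref{lem:Wdense}: once one observes that $\mathfrak{c}J(V)$ is a dense $\mathfrak{h}$-submodule of $\widehat{\mathfrak{c}V}$ (density coming from $J(V)$ dense in $\widehat{V}$ and $\mathfrak{c}\widehat{V}$ dense in $\widehat{\mathfrak{c}V}$), that lemma gives $\mathfrak{c}J(V)=J(\mathfrak{c}V)$ immediately.
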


\begin{proof}
By Lemma \ref{lem:Jdense} $J$ is exact hence it is enough to show that
{$\mathfrak{c}$}$J(V)=J({\mathfrak{c}}V)$ as submodules of $J(V)$. Since $V$
dense in $\widehat{V}$, ${\mathfrak{c}}\widehat{V}$ is contained in the
closure of ${\mathfrak{c}}V$ in $\widehat{V}$, which by the Artin-Rees theorem
(\cite[Theorem 4.2]{McC}) coincides with $\widehat{{\mathfrak{c}}V}$. Since
${\mathfrak{c}}\widehat{V}$ contains ${\mathfrak{c}}V$ we see that
${\mathfrak{c}}\widehat{V}$ is dense $\widehat{{\mathfrak{c}}V}$, and since
$J(V)$ is dense in $\widehat{V}$ it follows that ${\mathfrak{c}}J(V)$ is dense
in $\widehat{{\mathfrak{c}}V}$. Evidently ${\mathfrak{c}}J(V)\subset\left(
\widehat{{\mathfrak{c}}V}\right)  ^{{\mathfrak{h}}\text{-finite}%
}=J({\mathfrak{c}}V)$, hence {$\mathfrak{c}$}$J(V)=J({\mathfrak{c}}V)$ by
Lemma \ref{lem:Wdense}.
\end{proof}

\subsection{Proof of Lemma \ref{lem:JacQuot}}

\label{subsec:PfJacQuot}

We prove Lemma \ref{lem:JacQuot} for a more general class of modules. As
before, let
\[
\mathfrak{b=h+n,n}^{\prime}=\left[  \mathfrak{n},\mathfrak{n}\right]
,\mathfrak{v}=\mathfrak{n/n}^{\prime},\Psi=\mathfrak{v}^{\ast}.
\]
Let $\mathcal{J}\left(  \mathfrak{b}\right)  $ be the category of
$\mathfrak{b}$-modules with a finite dimensional $\mathfrak{h}$-invariant
generating subspace. Evidently if $M$ is a Harish-Chandra module, or even from
category $\mathcal{G}\left(  \mathfrak{b}\right)  $, then $JM\in$
$\mathcal{J}\left(  \mathfrak{b}\right)  $. Therefore Lemma \ref{lem:JacQuot}
follows from the next result.

\begin{lem}
\label{lem:LWQuot} If $V\in\mathcal{J}\left(  \mathfrak{b}\right)  $ then we
have $\mathcal{V}_{\mathfrak{v}}(C_{\mathfrak{n}^{\prime}}V)=\mathrm{As}%
\mathcal{V}_{\mathfrak{n}}(V)\cap\Psi$.
\end{lem}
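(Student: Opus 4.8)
The statement to prove is that for $V\in\mathcal{J}(\mathfrak{b})$ — a $\mathfrak{b}$-module with a finite-dimensional $\mathfrak{h}$-invariant generating subspace — one has $\mathcal{V}_{\mathfrak{v}}(C_{\mathfrak{n}'}V)=\mathrm{As}\mathcal{V}_{\mathfrak{n}}(V)\cap\Psi$. The containment $\subset$ is already recorded: it is Lemma \ref{lem:Quot} applied with $\mathfrak{q}=\mathfrak{n}$, $\mathfrak{w}=\mathfrak{n}'$, $\mathfrak{r}=\mathfrak{v}$, since $C_{\mathfrak{n}'}V$ is visibly finitely generated over $\mathfrak{n}$ (indeed over $\mathfrak{v}$). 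So the heart of the matter is the reverse inclusion $\mathrm{As}\mathcal{V}_{\mathfrak{n}}(V)\cap\Psi\subset\mathcal{V}_{\mathfrak{v}}(C_{\mathfrak{n}'}V)$.

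\textbf{Key idea: reduce to lowest-weight modules and use that $\Psi$ consists of characters vanishing on $\mathfrak{n}'$.} First I would note that $V$ is "glued from lowest weight modules": because the generating space $V^0$ is finite-dimensional and $\mathfrak{h}$-invariant, $V=U(\mathfrak{n})V^0$, and filtering $V^0$ by $\mathfrak{h}$-weight lines we may build a finite filtration of $V$ whose successive quotients are cyclic $U(\mathfrak{n})$-modules generated by a single $\mathfrak{h}$-weight vector — i.e. highest/lowest weight modules for the $\mathfrak{h}$-action. Since both $\mathrm{As}\mathcal{V}_{\mathfrak{n}}$ and $\mathcal{V}_{\mathfrak{v}}\circ C_{\mathfrak{n}'}$ are additive over short exact sequences (the associated variety of an extension is the union of those of the sub and quotient — here using that $C_{\mathfrak{n}'}$ is right exact, and for the sub one argues via good filtrations as in the remarks after Lemma \ref{lem:AVBer}), it suffices to treat a single cyclic module $V=U(\mathfrak{n})v_0$ with $v_0$ an $\mathfrak{h}$-weight vector. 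Now pick a point $\psi\in\mathrm{As}\mathcal{V}_{\mathfrak{n}}(V)\cap\Psi$. By Lemma \ref{lem:AVBer}, $\mathrm{As}\mathcal{V}_{\mathfrak{n}}(V)=\operatorname{Var}(\overline{\operatorname{Ann}_{U(\mathfrak{n})}v_0})\subset\mathfrak{n}^*$; since $\psi\in\Psi=\mathfrak{v}^*$ annihilates $\mathfrak{n}'$, I want to show $\psi$ lies in the support of $C_{\mathfrak{n}'}V=V/\mathfrak{n}'V$ as an $S(\mathfrak{v})$-module, equivalently $\psi\notin\operatorname{Var}(\overline{\operatorname{Ann}_{S(\mathfrak{v})}(\bar v_0)})$... no: equivalently $\overline{\operatorname{Ann}_{S(\mathfrak{v})}(\bar v_0)}\subset\mathfrak{m}_\psi$ where $\bar v_0$ is the image of $v_0$. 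The point is that the annihilator of $\bar v_0$ in $S(\mathfrak{v})$ is the image of the annihilator of $v_0$ in $S(\mathfrak{n})$ under $S(\mathfrak{n})\twoheadrightarrow S(\mathfrak{v})$; and $\psi$, viewed in $\mathfrak{n}^*$, vanishes on the pullback ideal because it vanishes on the $S(\mathfrak{n})$-annihilator of $v_0$ (that's exactly the hypothesis $\psi\in\mathrm{As}\mathcal{V}_{\mathfrak{n}}(V)$), hence its image vanishes on the quotient ideal. The one thing requiring care is the passage from $\operatorname{Ann}$ in $U$ to $\overline{\operatorname{Ann}}$ in $S$ and the graded/non-graded distinction — here the presence of the grading element $h\in\mathfrak{h}$ acting by degree on $S(\mathfrak{v})$ (as used in Lemma \ref{lem:AVVSupp}) makes all relevant ideals homogeneous, so $\operatorname{Var}=\operatorname{Supp}$ and one can work directly with cones.

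\textbf{Main obstacle.} The subtle step is the \emph{exactness/additivity} of the operation $V\mapsto\mathcal{V}_{\mathfrak{v}}(C_{\mathfrak{n}'}V)$ along the filtration of $V$ by cyclic lowest-weight pieces: $C_{\mathfrak{n}'}$ is only right exact, so a priori one loses control over the sub-object. The resolution is that for $V\in\mathcal{J}(\mathfrak{b})$ one has $C_{\mathfrak{n}'}V\in\mathcal{G}(\mathfrak{b})$ and moreover, by the same argument as in Lemma \ref{lem:AVVSupp} (homogeneity of $\operatorname{Ann}_{S(\mathfrak{v})}$ under $h$), $\mathcal{V}_{\mathfrak{v}}(C_{\mathfrak{n}'}V)=\operatorname{Supp}_{\mathfrak v}(C_{\mathfrak{n}'}V)$; and $\operatorname{Supp}$ \emph{is} additive over arbitrary short exact sequences of finitely generated $S(\mathfrak{v})$-modules. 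So I would run the dévissage at the level of supports rather than associated varieties, reconstituting the associated-variety statement only at the end. Once the reduction to a single cyclic $\mathfrak{h}$-weight-generated module is in place, the remaining claim is a direct ideal-theoretic computation with the projection $S(\mathfrak{n})\to S(\mathfrak{v})$, which I expect to be routine. The lowest-weight structure is also exactly what makes Nakayama (Lemma \ref{lem:Nak}) applicable to identify $\operatorname{Supp}_{\mathfrak v}(C_{\mathfrak n'}V)$ with $\Psi(V)$-type data, closing the loop with the statement of the lemma.
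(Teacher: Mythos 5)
The $\subset$ inclusion via Lemma \ref{lem:Quot} and the identification of the $\rho^{\vee}$-grading as the key mechanism are both correct, and the paper does filter $V$ by the cyclic submodules $L^{t}=U(\mathfrak{n})\langle v_{1},\dots,v_{t}\rangle$ associated to an ordered weight basis of the generating space $E$. But the d\'evissage you propose has a genuine gap. To conclude for $V$ from the cyclic pieces $L^{t}/L^{t-1}$ you need, for every short exact sequence $0\to A\to V\to B\to 0$ of $\mathfrak{n}$-modules, the inclusion $\mathcal{V}_{\mathfrak{v}}(C_{\mathfrak{n}'}A)\subset\mathcal{V}_{\mathfrak{v}}(C_{\mathfrak{n}'}V)$. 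This can fail: $C_{\mathfrak{n}'}$ is only right exact, so $C_{\mathfrak{n}'}A\to C_{\mathfrak{n}'}V$ need not be injective, and its image can have strictly smaller support than $C_{\mathfrak{n}'}A$. Your proposed fix --- that $\operatorname{Supp}$ is additive over short exact sequences of finitely generated $S(\mathfrak{v})$-modules --- addresses the wrong step: the sequence $C_{\mathfrak{n}'}A\to C_{\mathfrak{n}'}V\to C_{\mathfrak{n}'}B\to 0$ is not left exact, so there is no short exact sequence of $S(\mathfrak{v})$-modules to which additivity of $\operatorname{Supp}$ could be applied. Knowing $\psi\in\mathcal{V}_{\mathfrak{v}}(C_{\mathfrak{n}'}(L^{t}/L^{t-1}))$, which is what the cyclic case would give, does not on its own imply $\psi\in\mathcal{V}_{\mathfrak{v}}(C_{\mathfrak{n}'}V)$.

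The paper's proof avoids this by never applying $C_{\mathfrak{n}'}$ to a proper subquotient. It fixes a weight-homogeneous $u\in U(\mathfrak{n})^{d}$ of $\rho^{\vee}$-weight $l\leq d$ lying in $J:=\operatorname{Ann}_{U(\mathfrak{n})}(F)$, where $F$ is the image of $E$ in $C_{\mathfrak{n}'}V$ --- a hypothesis about all of $V$ at once --- and shows via the weight estimate ($[X_{ij}]\geq 2$ and $[v_{t}]\leq[v_{i}]$ for $i\geq t$) that $uv_{t}\in L^{t-1}+V^{d-1}$ for every $t$, so the symbol of $u$ vanishes on each $\mathrm{As}\mathcal{V}_{\mathfrak{n}}(L^{t}/L^{t-1})\cap\Psi$. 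The expansion $uv_{t}=\sum_{i,j}X_{ij}b_{ij}v_{i}$, available because $uE\subset\mathfrak{n}'V$, genuinely mixes all the generators, with the $i<t$ contributions absorbed into $L^{t-1}$; this cross-coupling between generators is exactly what a cyclic-by-cyclic d\'evissage cannot see, and it is where the stronger hypothesis $u\in J$ is used. Incidentally, even the cyclic case is not a routine ideal-theoretic matter separate from the grading: from $uv_{0}\in\mathfrak{n}'V$ alone one has no bound on the filtration degree of $uv_{0}$, and it is precisely the comparison of $\rho^{\vee}$-weight with $U(\mathfrak{n})$-degree (each root space has weight $\geq 1$, and $\mathfrak{n}'$ has weight $\geq 2$) that forces $uv_{0}\in V^{d-1}$.
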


\begin{proof}
By Lemma \ref{lem:Quot}, we have $\mathcal{V}_{\mathfrak{v}}(C_{\mathfrak{n}%
^{\prime}}V)=\mathrm{As}\mathcal{V}_{\mathfrak{n}}(C_{\mathfrak{n}^{\prime}%
}V)\subset\mathrm{As}\mathcal{V}_{\mathfrak{n}}(V)\cap\Psi$, and so it
suffices to prove the reverse containment. Let $E$ be a finite dimensional
$\mathfrak{h}$-invariant generating subspace of $V$, and let $F$ be its image
in $C_{\mathfrak{n}^{\prime}}\left(  V\right)  $. By Lemma \ref{lem:AVBer} we
have
\[
\mathrm{As}\mathcal{V}_{\mathfrak{n}}(C_{\mathfrak{n}^{\prime}}%
V)=\operatorname*{Var}\left(  \overline{J}\right)  ,
\]
where $J$ is the annihilator of $F$ in $U=U\left(  \mathfrak{n}\right)  $ and
$\overline{J}\subset S\left(  \mathfrak{n}\right)  $ is its associated graded
space under the usual filtration $U^{i}$ of $U$. Therefore it is enough to
prove that $\overline{J}$ vanishes on $\mathrm{As}\mathcal{V}_{\mathfrak{n}%
}(V)\cap\Psi$, i.e. that if $u\in J$ then $\overline{u}$ vanishes on
$\mathrm{As}\mathcal{V}_{\mathfrak{n}}(V)\cap\Psi$. To prove this we need some
additional notation.

We fix $\rho^{\vee}\in\mathfrak{h}$ satisfying $\alpha\left(  \rho^{\vee
}\right)  =1$ for every simple root $\alpha$, and for an $\mathfrak{h}$-module
$X$ we consider generalized $\rho^{\vee}$-weights, which we refer to simply as
\emph{weights}. We write $X_{\mu}$ for the $\mu$-weight space for $\mu
\in\mathbb{C}$, and if $x$ is a weight vector we write $\left[  x\right]  $
for the real part of its weight; thus $\left[  x\right]  =\operatorname{Re}%
\left(  \mu\right)  $ for $x\in X_{\mu}$. This notation will be applied to
$U,V$ and to the filtrands $U^{i}$ and $V^{i}=U^{i}E$. We also fix a weight
basis $v_{1},\dots,v_{m}$ of $E$, ordered so that $\left[  v_{i}\right]
\geq\left[  v_{j}\right]  $ if $i\geq j.$

If $u\in J$, then $u\in J\cap U^{d}$ for some $d$, and since
$J$ is $ad\left(  \mathfrak{h}\right)  $-stable we may assume $u\in U_{l}^{d}$
for some integer $l$. If $l>d$ then $u\in\mathfrak{n}^{\prime}U^{d-1}$ and
$\overline{u}=0$ on all of $\Psi$, therefore we may assume that $l\leq d$. For
$1\leq t\leq m$ let $L^{t}\subset V$ denote the submodule generated by
$v_{1},\dots,v_{t}$. Since $V$ is glued from the subquotients $L^{t}/L^{t-1}$
we have%
\[
\mathrm{As}\mathcal{V}_{\mathfrak{n}}(V)=\bigcup_{t}\mathrm{As}\mathcal{V}%
_{\mathfrak{n}}(L^{t}/L^{t-1})\text{.}%
\]
Thus it suffices to show that $u$ vanishes on $\mathrm{As}\mathcal{V}%
_{\mathfrak{n}}(L^{t}/L^{t-1})$ for each $t$, i.e. that%
\[
uv_{t}\in L^{t-1}+V^{d-1}.
\]

Now we may write $uv_{t}=\sum_{i=1}^{m}\left(  \sum_{j}X_{ij}b_{ij}%
v_{i}\right)  $, where $X_{ij}\in\mathfrak{n}^{\prime}$ and $b_{ij}\in U$ are
weight vectors satisfying
\[
\lbrack uv_{t}]=\left[  X_{ij}\right]  +[b_{ij}]+[v_{i}].
\]
We have $\left[  X_{ij}\right]  \geq2,$ $\left[  u\right]  =l\leq d$, and
$[v_{t}]\leq\lbrack v_{i}]$ for $i\geq t$. Thus we get
\[
\lbrack b_{ij}]=\left[  u\right]  -\left[  X_{ij}\right]  +[v_{t}]-[v_{i}]\leq
d-2\text{ for }i\geq t\text{.}%
\]
It follows that for $i\geq t$ we have $b_{ij}\in U^{d-2}$ and $X_{ij}b_{ij}\in
U^{d-1}$. Hence we get
\[
uv_{t}=\sum_{i=1}^{t-1}\sum_{j}X_{ij}b_{ij}v_{i}+\sum_{i=t}^{m}\sum_{j}%
X_{ij}b_{ij}v_{i}\in L^{t-1}+V^{d-1}\text{.}%
\]

\end{proof}

\subsection{Proof of Lemma \ref{lem:AVJ}}

\label{subsec:PfAVJ} \setcounter{lemma}{0}

We will use Beilinson-Bernstein localization \cite{BB}, the paper \cite{ENV}
that describes the Casselman-Jacquet functor in geometric terms, and the paper
\cite{Gin} that describes the behavior of the singular support of $D$-modules
under the nearby cycle functor. Let us describe the setting in detail.

Let $M$ be an admissible $\left(  \mathfrak{g},K\right)  $ module with
infinitesimal character $\chi_{\lambda}$, with parameter $\lambda$ chosen to
be dominant. We note that the action of $K$ can be complexified since it is locally finite. Then $M$ is a $\left(  U_{\lambda},K_{\C}\right)  $-module, where
$U_{\lambda}$ is the quotient of $U\left(  \mathfrak{g}\right)  $ by the
two-sided ideal generated by $z-\chi_{\lambda}\left(  z\right)  $.  Let
$\mathcal{D}_{\lambda}$ denote the $\lambda$-twisted sheaf of differential
operators on the flag variety $X$, then $U_{\lambda}=\Gamma\left(
X,\mathcal{D}_{\lambda}\right)  $. By a $\left(  \mathcal{D}_{\lambda
},K_{\C}\right)  $-module we mean a coherent $\mathcal{D}_{\lambda}$-module that is
$K_{\C}$-equivariant. Such a module is necessarily holonomic with regular
singularities. By Beilinson-Bernstein (\cite{BB}) the global sections functor
\[
\Gamma:\left\{  \left(  \mathcal{D}_{\lambda},K_{\C}\right)  \text{-modules}%
\right\}  \rightarrow\left\{  \left(  U_{\lambda},K_{\C}\right)  \text{-modules}%
\right\}
\]
is exact and essentially surjective, a section of $\Gamma$ is given by the
localization functor $\mathcal{D}_{\lambda}\otimes_{U_{\lambda}}\left(
\cdot\right)  $. Moreover if $\lambda$ is regular then $\Gamma$ is an
equivalence of categories. Let $X_{1},\ldots,X_{n}$ be the $K_{\C}$-orbits on $X$,
and let $T_{X_{i}}^{\ast}X$ denote the corresponding conormal bundles. If
$\mathcal{M}$ is a $\left(  \mathcal{D}_{\lambda},K_{\C}\right)  $-module, then its
characteristic cycle (see \cite{Gin}) is of the form%
\[
SS\left(  \mathcal{M}\right)  =\sum_{i=1}^{n}m_{i}T_{X_{i}}^{\ast}X.
\]
for some nonnegative integers $m_{i}$. The characteristic variety $CV\left(
\mathcal{M}\right)  $ is the union of $T_{X_{i}}^{\ast}X$ for which $m_{i}>0$.
Let us describe the connection between the characteristic cycle of a
$\mathcal{D}_{\lambda}$-module $\mathcal{M}$ and the associated cycle of the
Harish-Chandra module $M:=\Gamma(\mathcal{M})$. Any point $x\in X$ defines a
Borel subalgebra $\mathfrak{b}_{x}\subset{\mathfrak{g}}$. The tangent space
$T_{x}X$ can be identified with ${\mathfrak{g}}/\mathfrak{b}_{x}$ and the
cotangent space with $({\mathfrak{g}}/\mathfrak{b}_{x})^{\ast}=(\mathfrak{b}%
_{x})^{\bot}\subset{\mathfrak{g}}^{\ast}$. This gives a natural embedding of
the cotangent bundle $T^{\ast}X$ into the trivial bundle $X\times
{\mathfrak{g}}^{\ast}$. The composition of this map with the projection on the
second coordinate is called the moment map, denoted by $\mu$. By a result of
Borho and Brylinski (\cite{BorBry}) we have
\begin{equation}
\mu(CV(\mathcal{M}))=\mathrm{As}\mathcal{V}_{{\mathfrak{g}}}(M)
\label{eq:CVAV}%
\end{equation}

By Corollary \ref{cor:Osborne} we have%

\begin{equation}
\mathrm{As}\mathcal{V}_{\mathfrak{n}}(M)=pr_{\mathfrak{n}^{*}}(\mathrm{As}%
\mathcal{V}_{{\mathfrak{g}}}(M)) \label{eq:AVng}%
\end{equation}

The paper \cite{ENV} gives a precise geometric description of $J\left(
M\right)  $, which we now recall briefly. Actually \cite{ENV} deals with
$J_{\mathfrak{\bar{n}}}\left(  M\right)  $, so the description below is a
trivial modification of \cite{ENV}. Let $H$ be the maximally split torus of
$G$ and let $\rho^{\vee}:\mathbb{G}_{m}\rightarrow H_{\C}$ be the cocharacter such
that $\alpha\circ d\rho^{\vee}=-Id_{\mathbb{C}}$ for every simple root
$\alpha$. By composing $\rho^{\vee}$ with the action of $G_{\C}$ on $X,$ we get an
action map $a:\mathbb{G}_{m}\times X\rightarrow X$. Consider now the following
diagram%
\[
X\overset{a}{\longleftarrow}\mathbb{G}_{m}\times X\overset{j}{\longrightarrow
}\mathbb{A}^{1}\times X\overset{i}{\longleftarrow}\left\{  0\right\}  \times
X\approx X
\]
For a $\left(  \mathcal{D}_{\lambda},K_{\C}\right)  $ module $\mathcal{M}$, let
$\Phi\left(  \mathcal{M}\right)  $ be the $\mathcal{D}_{\lambda}$-module
obtained by applying the nearby cycles functor to $j_{\ast}a^{\ast}\left(
\mathcal{M}\right)  $ along $\left\{  0\right\}  \times X\approx X.$

\begin{theorem}
\label{thm:GeoJac} \cite{ENV} $\Phi\left(  \mathcal{M}\right)  $ is a $\left(
\mathcal{D}_{\lambda},\bar{N_{\C}}\right)  $-module and one has%
\[
\Gamma\left(  \Phi\left(  \mathcal{M}\right)  \right)  =J_{\mathfrak{n}%
}\left(  \Gamma\left(  \mathcal{M}\right)  \right)  .
\]

\end{theorem}

In view of this theorem $\Phi\left(  \mathcal{M}\right)  $ can be regarded as
the geometric Casselman-Jacquet functor.

The paper \cite{Gin} describes the behavior of the characteristic cycle under
the nearby cycle functor in the following way. For an algebraic variety $Z$,
and a regular function $f:Z \to{\mathbb{C}}$ let $U:=f^{-1}({\mathbb{C}}
\setminus\{0\})$. Suppose we have an algebraic family $S_{t}$ of subvarieties
of $Z_{t}:=f^{-1}(t)$ parameterized by $t \in{\mathbb{C}} \setminus\{0\}$. Let
$S \subset U$ denote the total space of this family and let $\overline{S}$
denote the closure of $S$ in $Z$. Denote by $\lim_{t \to0} S_{t}$ the
algebraic cycle corresponding to the scheme-theoretic intersection
$\overline{S} \cap f^{-1}(0)$ (cf. \cite[1.4]{Gin}).

\begin{thm}
[\cite{Gin}, Theorem 5.5]\label{thm:CVnearby} Let $\mathcal{M}$ be a holonomic
$\mathcal{D}_{\lambda}$-module with over $Z$ with regular singularities. Let
$\Phi_{f}\mathcal{M}$ denote the nearby cycle functor and let $i_{t}$ denote
the embedding of $f^{-1}(t)$ into $Z$. Then
\[
SS(\Phi_{f}(\mathcal{M}))=\lim_{t\rightarrow0}SS((i_{t})_{\ast}(i_{t})^{\ast
}\mathcal{M}).
\]

\end{thm}

\begin{proof}
[Proof of Lemma \ref{lem:AVJ}]From Theorem \ref{thm:CVnearby} we obtain
\[
SS(\Phi_{f}(\mathcal{M}))=\lim_{t\rightarrow0}\rho^{\vee}(t)SS(\mathcal{M})
\]
and passing to characteristic varieties we get%
\begin{equation}
CV(\Phi_{f}(\mathcal{M}))=\lim_{t\rightarrow0}\rho^{\vee}(t)CV(\mathcal{M})
\label{eq:CVGJ}%
\end{equation}

Identify $\mathfrak{n}^{\ast}$ with the subspace of ${\mathfrak{g}}^{\ast}$
consisting of vectors having negative weights under the action of $d\rho
^{\vee}(1)$, $[\mathfrak{n},\mathfrak{n}]^{\bot}$ with vectors having weights
at least $-1$ and $\Psi$ with those having weight $-1$. Then by
\eqref{eq:CVAV} and \eqref{eq:AVng}, $\mathrm{As}\mathcal{V}_{\mathfrak{n}%
}(M)\cap\Psi$ is obtained by intersecting $CV(\mathcal{M})$ with the constant
bundle $X\times\lbrack\mathfrak{n},\mathfrak{n}]^{\bot}$, projecting to the
second coordinate and then further projecting to $\Psi$. Denote this operation
on subvarieties of $T^{\ast}X$ by $p_{\Psi}$. Since the characteristic variety
is a conical set (in cotangent directions), $p_{\Psi}(\rho^{\vee
}(t)CV(\mathcal{M}))$ does not depend on $t$. Since $X$ is complete we get
\[
p_{\Psi}(\lim_{t\rightarrow0}\rho^{\vee}(t)CV(\mathcal{M}))\supset p_{\Psi
}(CV(\mathcal{M}))
\]
Thus we get
\[
p_{\Psi}(CV(\Phi_{f}(\mathcal{M})))\supset p_{\Psi}(CV(\mathcal{M}))
\]
Lemma \ref{lem:AVJ} follows now from Theorem \ref{thm:GeoJac}.
\end{proof}

\subsubsection{Counterexamples to stronger statements}

\label{subsubsec:CEx}

First of all, Lemma \ref{lem:AVJ} does not generalize to arbitrary
finitely-generated $\mathfrak{n}$-modules. Indeed, let $G=GL(3,{\mathbb{R}})$
and consider the identification of $\mathfrak{n}$ with the Heisenberg Lie
algebra $\left\langle x,\frac{d}{dx},1\right\rangle $ acting on $V=\mathbb{C}%
\left[  x\right]  $. Then $C(V)$  vanishes.

Next one might ask whether for a Harish-Chandra module $M$ the inclusion in
Lemma \ref{lem:AVJ} holds without the intersection with $\Psi$, i.e.
$\mathrm{As}\mathcal{V}_{\mathfrak{n}}(M)\subset\mathrm{As}\mathcal{V}(JM)$.
The answer is no, as shown by the following example.

Let $G=GL(3,\mathbb{R})$ and let $\mathfrak{g}$ be its complexified Lie
algebra. Let $\mathfrak{b}$ be the Borel subalgebra of upper-triangular
matrices, let $\mathfrak{n}$ be its nilradical, and let $\mathfrak{s}$ be the
space of symmetric matrices. Using the trace form, we identify $\mathfrak{g}$
with $\mathfrak{g}^{\ast}$ and $\mathfrak{\bar{n}}$ with $\mathfrak{n}^{\ast}%
$. Let $M$ be a degenerate principal series representation corresponding to
the $\left(  2,1\right)  $ parabolic. Then we have
\[
\mathrm{AnV}_{\mathfrak{g}}(M)=\mathcal{R},\quad\mathrm{AsV}_{\mathfrak{g}%
}(M)=\mathcal{R}\cap\mathfrak{s}%
\]
where $\mathcal{R}$ is the set of nilpotent matrices of rank $\leq1$.

For a lower triangular matrix let $a,b,c$ denote its entries as shown
\[
\left[
\begin{array}
[c]{ccc}%
0 & 0 & 0\\
a & 0 & 0\\
b & c & 0
\end{array}
\right]
\]
Then we get
\begin{align*}
\mathrm{AsV}_{\mathfrak{n}}(M)  &  =pr_{\mathfrak{\bar{n}}}\left(
\mathcal{R}\cap\mathfrak{s}\right)  =\left\{  a^{2}b^{2}+a^{2}c^{2}+b^{2}%
c^{2}=0\right\} \\
\mathrm{AsV}_{\mathfrak{n}}(JM)  &  \subset\mathcal{R}\cap\mathfrak{\bar{n}%
}=\left\{  ac=0\right\}
\end{align*}

\section{Proof of Theorems \ref{thm:OurMat}, \ref{thm:determine}}

\label{sec:PfOurMat}

We start with two preliminary subsections.

\subsection{Nilpotent orbits and wavefront sets}

\label{subsec:PrelComp}


Let $G$ be a real reductive group.
Let
$\mathcal{N\subset}$ ${\mathfrak{g}}^{\ast}$ denote the null cone, with
$\mathcal{N}_{\theta}=\mathcal{N\cap}{\mathfrak{k}}^{\perp}$ and
$\mathcal{N}_{0}=\mathcal{N\cap}{\mathfrak{g}}_{0}^{\ast}$ as before. The
groups $G_{\mathbb{C}},K_{\mathbb{C}}$ and $G$ act with finitely many orbits
on $\mathcal{N}$, $\mathcal{N}_{\theta}$ and $\mathcal{N}_{0}$ respectively.
We write ${\mathcal{O}}^{\prime}\leq{\mathcal{O}}$ if ${\mathcal{O}}^{\prime}$
is contained in the closure $\overline{{\mathcal{O}}}$ of ${\mathcal{O}}$, and
we refer to $\leq$ as the closure order.

The  Kostant-Sekiguchi correspondence (\cite{Sek}) provides  a bijection between $G$-orbits on
$\mathcal{N}_{0}$ and $K_{\mathbb{C}}$-orbits on $\mathcal{N}_{\theta}.$ Let us briefly recall its construction. Let $\cO\subset \cN_0$ be a nilpotent $G$-orbit. Then one can choose an element $X \in \cO$ and an $sl(2)$-triple $(H,X,Y)$  satisfying the  \emph{Cayley} property
$$\theta(H)=-H, \, \theta(X)=-Y, \, \theta(Y)=-X.$$
%
The Kostant-Sekiguchi correspondence attaches to $\cO$  the $K_{\C}$-orbit $KS(\cO)$ of $X'=\frac{1}{2}(X+Y+iH)\in \cN_{\theta}.$

\begin{thm}\label{thm:KS}  
\begin{enumerate}[(a)]
\item \label{it:KSDef} The map $\cO \mapsto KS(\cO) $ gives a well-defined bijection between $G$-orbits on
$\mathcal{N}_{0}$ and $K_{\mathbb{C}}$-orbits on $\mathcal{N}_{\theta}$.
\item \label{it:KSOrb} The orbits ${\mathcal{O}}$ and
$KS({\mathcal{O}})$ lie in the same complex coadjoint orbit.

\item \label{it:KSFG}For $F_G$ as in formula \eqref{=FG} we have
$
F_{G}\cdot\operatorname{KS}({\mathcal{O}})=\operatorname{KS}(F_{G}%
\cdot{\mathcal{O}})$

\item \label{it:KSLevi} If  $L\subset G$ is a
standard Levi subgroup, then we have $\operatorname{KS}_{L}({\mathcal{O}}\cap\mathfrak{l}^{\ast
})\subset\operatorname{KS}_{G}({\mathcal{O}}).$

\item \label{it:KSOrder} The correspondence $KS$ preserves
the closure order on nilpotent orbits.

\end{enumerate}
\end{thm}
\begin{proof}
Part \eqref{it:KSDef} is the main result of  \cite{Sek}.
For part \eqref{it:KSOrb} note that $X'$ is the Cayley transform of $X$, obtained via conjugation  by $\exp(-\frac{\pi i}{4}(X+Y))$. For part \eqref{it:KSFG}  note that the action of $F_G$ commutes with $\theta$ and with the complex conjugation, thus it maps Cayley triples to Cayley triples and commutes with the map $KS$. For part \eqref{it:KSLevi} note that $L$ is $\theta$-stable and $\theta|_{L}$ is a Cartan involution of $L$ and hence a Cayley triple  in $\fl_0$ is a Cayley triple in $\fg_0$.
Part \eqref{it:KSOrder} is the main result of \cite{BS}.
\end{proof}

%
%
%
%
%
%
%

In addition to the associated variety,
there is a further invariant of $\pi$ called the $\emph{wavefront\ set}$,
which was defined in \cite{HowWF} in terms of the global character of $\pi$.
This is a $G$-invariant set
\[
\mathrm{WF}\left(  \pi\right)  \subset i\mathcal{N}_{0}%
\]
which, by \cite{Ros1,Ros2}, coincides with the \emph{asymptotic support} of
$\pi$ introduced in \cite{BV-Unip}. As conjectured in \cite{BV-Unip} and
proved in \cite{SV} one also has

\begin{theorem}
\label{thm:SV} If $\left(  \pi,W\right)  \in\mathcal{M}(G)$ then
$\mathrm{WF}(\pi)=i\operatorname{KS}\left(  \mathrm{As}\mathcal{V}\left(
\pi^{HC}\right)  \right)  .$
\end{theorem}

Using Theorem \ref{thm:Vog} it follows that for all $\pi\in\mathcal{M}(G)$ we have%
\begin{equation}
\mathrm{An}\mathcal{V}\left(  \pi\right)  =G_{\mathbb{C}}\cdot\mathrm{As}%
\mathcal{V}\left(  \pi\right)  =G_{\mathbb{C}}\cdot\mathrm{WF}\left(
\pi\right)  \label{=GCorb}%
\end{equation}

We will also need the following result.
\begin{thm}
[{\cite[Theorem A]{MatComp}}]\label{thm:MatComp} For $\pi\in\mathcal{M}(G),$
we have
\[
\operatorname{WF}(\pi)\cap\Psi_{0}^{\times}=\Psi_{0}^{\times}(\pi).
\]

\end{thm}

Now suppose that $G$ is a complex reductive group, regarded as a real group.
Then the real Lie algebra ${\mathfrak{g}}_{0}$ is already a complex Lie
algebra, and we have ${\mathfrak{g}}\cong{\mathfrak{g}}_{0}\times
{\mathfrak{g}}_{0}$, and ${\mathfrak{g}}_{0}$ is diagonally embedded into
${\mathfrak{g}}$. The Lie algebra ${\mathfrak{k}}$ is also isomorphic to
${\mathfrak{g}}_{0}$, and is embedded into ${\mathfrak{g}}$ by $X\mapsto
(X,\theta\left(  X\right)  )$. For a nilpotent orbit ${\mathcal{O}}%
\subset\mathcal{N}({\mathfrak{g}})$ we have ${\mathcal{O}}={\mathcal{O}}%
_{1}\times{\mathcal{O}}_{2}$ where ${\mathcal{O}}_{i}\subset\mathcal{N}%
({\mathfrak{g}}_{0})$. However, if ${\mathcal{O}}$ intersects $i{\mathfrak{g}%
}_{0}^{*}\subset{\mathfrak{g}^{*}}$ or ${\mathfrak{k}}^{\bot}\subset
{\mathfrak{g}^{*}}$ then ${\mathcal{O}}_{1}={\mathcal{O}}_{2}$, and thus
${\mathcal{O}}$ is defined by a single nilpotent orbit in ${\mathfrak{g}}_{0}%
$. By Theorem \ref{thm:Vog}, only orbits intersecting ${\mathfrak{k}%
}^{\bot}$ can be open orbits in the annihilator variety of an admissible
representation $\pi\in\mathcal{M}(G)$, and thus we will be only interested in
such orbits.

\subsection{The Jacquet restriction functor}\label{subsec:JacRes}

As before let $B$ be the fixed Borel subgroup of $G$. Let $P\supset B$ be a
standard parabolic subgroup, fix a Levi decomposition $P=LU$ and let
$\mathfrak{u}$ be the complexified Lie algebra of $U$. For $\left(
\pi,W\right)  \in\mathcal{M}(G)$ we have a natural representation of $L$ on
the space $W/\overline{\mathfrak{u}W}$ where $\overline{\mathfrak{u}W}$
denotes the closure of $\mathfrak{u}W$ in $W$ (by an unpublished result of Casselman, $\mathfrak{u}W$
is already closed in $W$, but we will not need this fact).
This representation is usually
denoted by $r_{P}\left(  \pi\right)  $ and referred to as the Jacquet
restriction functor, for simplicity we will write $r_{P}\left(  \pi\right)
=\pi_{P}$. Its main properties are summarized below.

\begin{thm}
\label{thm:Jac}
\cite[3.8.2 and 5.2.3]{Wal1}

\begin{enumerate}[(a)]
\item $\pi_{P}\in\mathcal{M}(L)$

\item $r_{P}$ is left adjoint to the parabolic induction functor
$\mathcal{M}(L)\rightarrow\mathcal{M}(G)$.

\item $(\pi_{P})^{HC}=C_{\mathfrak{u}}(\pi^{HC}):=\pi^{HC}/\mathfrak{u}%
\pi^{HC}$.
\end{enumerate}
\end{thm}

Recalling the definition of $\Psi=\left(  \mathfrak{n/n}^{\prime}\right)
^{\ast}$ etc. from (\ref{=v}) we write $\Psi_{G}$ to denote its dependence on
$G$. For each standard parabolic $P=LU$ we can regard $\Psi_{L}$ as a subset
of $\Psi_{G}$ as follows: $\Psi_{L}\approx\left\{  \psi\in\Psi_{G}%
:\psi|_{\mathfrak{u}}=0\right\}  $. It follows immediately that for $\pi
\in\mathcal{M}(G)$ we have%
\[
\Psi\left(  \pi_{P}\right)  =\Psi\left(  \pi\right)  \cap\Psi_{L}%
\]


Next, recall the definition of $F_{G}\subset Int(\g_{\mathbb{C}})$ from formula \eqref{=FG} and denote $\widetilde {\Psi_0}(\pi):=F_G \cdot \Psi_0(\pi)$.

\begin{lem}\label{lem:FGIso}
Let $\Pi_0(G)$ denote the set of simple restricted roots. Then we have a natural isomorphism $$a \mapsto \eps_a:F_G \simeq (\Z/2\Z)^{\Pi_0(G)},$$
where for $\beta\in \Pi_0(G)$, $\eps_a(\beta)$ is the sign $\pm 1$ by which  $a\in F_G$ acts on $\fg_{\beta}$.\end{lem}
\begin{proof}
For $x \in \fa_0$, the operator $a=ad(\exp(ix))$ acts on each restricted root space $\fg_{\beta}$ by the scalar $\exp(i\beta(x))$. Thus $a \in F_G$ if and only if $x\in \pi \Lambda$, where $\Lambda$ is the (restricted) coroot lattice. Then  $x \mapsto ad(\exp(i\pi x))$ defines a surjection $\Lambda \onto F_G$ with kernel equal to $2\Lambda$. Thus we get $F_G \simeq \Lambda/2\Lambda \simeq (\Z/2\Z)^{\Pi_0(G)}$.
\end{proof}

\begin{cor}\label{cor:FGsur}
Let $L \subset G$ be a standard Levi subgroup. Then $\fl$ is $F_G$-invariant and the restriction map $a \mapsto a|_{\fl}$ gives a surjection $F_G \onto F_L$.
\end{cor}
\begin{proof}
Since $L$ is a standard Levi subgroup, we have ${\mathfrak{a}}_{0}\subset{\mathfrak{l}_0}$, and hence $\fl$ is invariant under $F_G\subset ad(\exp(i\fa_0))$.
By Lemma \ref{lem:FGIso} we have $F_G \simeq (\Z/2\Z)^{\Pi_0(G)}$ and $F_L \simeq (\Z/2\Z)^{\Pi_0(L)}$, and the map $a \mapsto a_{\fl}$ corresponds to the restriction from $\Pi_0(G)$ to $\Pi_0(L)\subset \Pi_0(G)$, and therefore it is a surjection.
\end{proof}

\begin{lemma}
\label{lem:WFLevi} For $\pi\in\mathcal{M}(G)$ and $P=LU\supset B$ we have%
$$ (a) \quad
\mathrm{As}\mathcal{V}\left(  \pi_{P}\right)  \subset\mathrm{As}%
\mathcal{V}\left(  \pi\right)  \cap\mathfrak{l}^{\ast}, \quad (b) \quad \mathrm{WF}\left(
\pi_{P}\right)  \subset\mathrm{WF}\left(  \pi\right)  \cap\mathfrak{l}^{\ast
}$$
\end{lemma}

\begin{proof}
Part (a)  follows from Lemma \ref{lem:Quot}, and this implies part (b)
by Theorems \ref{thm:KS} and \ref{thm:SV} \end{proof}

\begin{lemma}
\label{lem:PsiLevi} For $\pi\in\mathcal{M}(G)$ we have
$$
(a) \quad \Psi_{0}\left(  \pi\right)  =\bigcup_{P\supset B}\Psi_{0}^{\times}\left(  \pi
_{P}\right)  , \quad (b) \quad \Psi_{0}\left(  \pi^{HC}\right)  =\bigcup_{P\supset B}\Psi_{0}^{\times
}\left(  \pi_{P}^{HC}\right)  ,\quad (c) \quad  \widetilde{\Psi_{0}}\left(  {\pi}\right)
=\bigcup_{P\supset B}\widetilde{\Psi_{0}^{\times}}\left(  {\pi_{P}}\right)
$$

\end{lemma}

\begin{proof}
Parts (a) and (b) follow from the definition of
$\pi_{P}$ and Lemma \ref{lem:non-deg-par}. For part (c) we note that by Corollary \ref{cor:FGsur} we have
\[
{\widetilde\Psi_{0}}\left(  {\pi}\right)  =\bigcup_{a\in F_{G}}a\cdot{\Psi}%
_{0}\left(  \pi\right)  =\bigcup_{a\in F_G}\bigcup_{P}a\cdot{\Psi_{0}}^{\times}\left( \pi_{P}\right)  =\bigcup_{P}\bigcup_{a\in F_{L_P}}a\cdot{\Psi_{0}}^{\times}\left(
\pi_{P}\right)  =\bigcup_{P}{\widetilde\Psi_{0}}^{\times}\left(
{\pi_{P}}\right),
\]
where $L_P$ denotes the Levi subgroup of $P$.
\end{proof}

\begin{lemma}
\label{lem:KS} If $i\lambda\in\Psi_{0}$ then $\lambda\in pr_{\mathfrak{n}%
^{\ast}}(\overline{\mathrm{KS}(G\cdot\lambda)})$.
\end{lemma}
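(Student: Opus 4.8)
The statement relates a unitary character $i\lambda\in\Psi_0$ of $\mathfrak n_0$ with the real nilpotent orbit $G\cdot\lambda$ through its Kostant--Sekiguchi partner $\mathrm{KS}(G\cdot\lambda)\subset\mathcal N_\theta$ and the restriction map $pr_{\mathfrak n^\ast}$. Since $\lambda$ itself lies in $\mathfrak n^\ast$ (being a character of $\mathfrak n$, extended by zero), the real content is that $\lambda$ is in the image of the projection of $\overline{\mathrm{KS}(G\cdot\lambda)}$; equivalently, there is a point of $\overline{\mathrm{KS}(G\cdot\lambda)}$ restricting to $\lambda$. The first thing I would do is reduce to the case where $\lambda$ is a \emph{nondegenerate} character of the Levi $L$ attached to $\lambda$ by Lemma \ref{lem:non-deg-par}: choose the standard parabolic $P=LU$ with $\lambda|_{\mathfrak u_0}=0$ and $\lambda|_{\mathfrak l_0\cap\mathfrak n_0}$ nondegenerate. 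By Proposition \ref{prop:KSL} we have $\mathrm{KS}_L(G\cdot\lambda\cap\mathfrak l^\ast)\subset\mathrm{KS}_G(G\cdot\lambda)$, and $pr_{\mathfrak n^\ast}$ factors through $pr_{(\mathfrak l\cap\mathfrak n)^\ast}$, so it suffices to produce a point of $\overline{\mathrm{KS}_L(L\cdot\lambda)}$ in $\mathfrak l\cap\mathfrak n$ whose restriction to $\mathfrak l\cap\mathfrak n$ is $\lambda$. This replaces $G$ by $L$ and $\lambda$ by a nondegenerate (``principal-type'') character.

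So the heart of the matter is: for $\lambda$ a nondegenerate element of $\Psi_0$ (a principal nilpotent direction), show $\lambda\in pr_{\mathfrak n^\ast}(\overline{\mathrm{KS}(G\cdot\lambda)})$. Here $G\cdot\lambda$ is (an open orbit in) the principal real nilpotent orbit, and $\mathrm{KS}(G\cdot\lambda)$ is the corresponding $K_{\mathbb C}$-orbit in $\mathcal N_\theta$, which is a component of $\mathcal N_\theta$ of dimension $\tfrac12\dim\mathcal N$. By Corollary \ref{cor:PrOnto}, $pr_{\mathfrak n^\ast}(\overline{\mathrm{KS}(G\cdot\lambda)})=\mathfrak n^\ast$ (it is an irreducible component of $\mathcal N_\theta$, being a closed $K_{\mathbb C}$-orbit closure of the right dimension, hence surjects onto $\mathfrak n^\ast$); in particular $\lambda$, which lies in $\mathfrak n^\ast\subset\Psi\subset\mathfrak n^\ast$, is in the image. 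Thus the nondegenerate case follows immediately from the Osborne-lemma consequences in \S\ref{subsec:JacFun}, once one checks that $\mathrm{KS}(G\cdot\lambda)$ really is a component of $\mathcal N_\theta$ of top dimension --- this is exactly the statement that the principal nilpotent $K_{\mathbb C}$-orbit closure has dimension $\tfrac12\dim\mathcal N$, which is \cite[8.4]{Vog-Unip}, the same input used for Corollary \ref{cor:PrOnto}.

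Putting the two halves together: given $i\lambda\in\Psi_0$, pass to $L$ via Lemma \ref{lem:non-deg-par}; inside $L$, $\lambda$ is nondegenerate, so $\overline{\mathrm{KS}_L(L\cdot\lambda)}$ is a top-dimensional component of $\mathcal N_\theta(L)$ and by Corollary \ref{cor:PrOnto} (applied to $L$) surjects onto $(\mathfrak l\cap\mathfrak n)^\ast$; hence some $\xi\in\overline{\mathrm{KS}_L(L\cdot\lambda)}$ has $pr_{(\mathfrak l\cap\mathfrak n)^\ast}(\xi)=\lambda|_{\mathfrak l\cap\mathfrak n}$. By Proposition \ref{prop:KSL}, $\xi\in\overline{\mathrm{KS}_G(G\cdot\lambda)}$, and since $\lambda$ vanishes on $\mathfrak u$ while $\xi\in\mathfrak l^\ast$ we get $pr_{\mathfrak n^\ast}(\xi)=\lambda$, as desired.

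The step I expect to be the main obstacle is the bookkeeping around Proposition \ref{prop:KSL} and the compatibility of the various restriction maps: one must be careful that $\mathrm{KS}_L$ of the \emph{real} orbit $G\cdot\lambda\cap\mathfrak l^\ast$ lands in the closure of $\mathrm{KS}_G(G\cdot\lambda)$ and that the $\theta$-stable Levi $L$ coming from the standard parabolic is exactly the one to which Proposition \ref{prop:KSL} applies. A secondary subtlety is that $G\cdot\lambda$ need not be the full principal orbit of $L$ --- one should argue that, $\lambda$ being nondegenerate on $\mathfrak l_0\cap\mathfrak n_0$, its $L$-orbit is dense in $\mathcal N(\mathfrak l_0)$, so that its Kostant--Sekiguchi partner is the distinguished top-dimensional component; alternatively one works with $\overline{\mathrm{KS}(L\cdot\lambda)}$ directly and invokes the dimension count $\dim\overline{\mathrm{KS}(L\cdot\lambda)}=\tfrac12\dim\mathcal N(\mathfrak l)$ together with Corollary \ref{cor:PrFinite} for $L$, which forces surjectivity onto $(\mathfrak l\cap\mathfrak n)^\ast$ exactly as in Corollary \ref{cor:PrOnto}.
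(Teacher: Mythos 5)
Your proposal is correct and takes essentially the same route as the paper: handle the nondegenerate (principal) case via Corollary \ref{cor:PrOnto}, and reduce the general case to it by choosing the standard parabolic $P=LU$ from Lemma \ref{lem:non-deg-par} and invoking Theorem \ref{thm:KS} and Proposition \ref{prop:KSL}. You simply spell out the bookkeeping (that $\xi\in\mathfrak l^\ast$ restricts to $\lambda$ on $\mathfrak n$ because both vanish on $\mathfrak u$) that the paper leaves implicit.
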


\begin{proof}
By Corollary \ref{cor:PrOnto} $pr_{\mathfrak{n}^{\ast}}$ projects any
irreducible component of $\mathcal{N}_{\theta}$ onto $\mathfrak{n}^{\ast}$.
This implies the result if $\lambda$ is principal nilpotent. For general
$\lambda$, we choose a standard parabolic $LU$ such that $\lambda$ vanishes on
$\mathfrak{u}$ and is principal nilpotent on $\mathfrak{l\cap n}$. The result
now follows from Theorem \ref{thm:KS}.
\end{proof}


\subsection{Proofs of the theorems}

We first prove Theorem \ref{thm:OurMat}.

\begin{proof}
[Proof of Theorem \ref{thm:OurMat}]By Theorems \ref{thm:KosCHM} and
\ref{thm:MatComp}, for all $P$ we have
\[
\Psi_{0}^{\times}(\pi_{P})\subset\mathrm{WF}\left(  \pi_{P}\right)  \cap
\Psi,\quad\Psi_{0}^{\times}\left(  \pi_{P}^{HC}\right)  =\widetilde\Psi_{0}^{\times
}\left(  \pi_{P}\right)  .
\]
Taking the union over all $P\supset B$ and using Lemmas \ref{lem:WFLevi} and \ref{lem:PsiLevi} we
get
\begin{equation}
\Psi_{0}(\pi)\subset\mathrm{WF}\left(  \pi\right)  \cap\Psi,\quad\Psi_{0}%
(\pi^{HC})=\widetilde\Psi_{0}\left(  {\pi}\right ) =F_G \cdot \Psi_0(\pi) . \label{=p1}%
\end{equation}

By Lemma \ref{lem:KS}, Theorem \ref{thm:SV}, and Theorem \ref{thm:Main}, we
get%
\[
\mathrm{WF}\left(  \pi\right)  \cap\Psi\subset pr_{\mathfrak{n}^{\ast}}\left(
\mathrm{KS}(\mathrm{WF}\pi)\right)  =pr_{\mathfrak{n}^{\ast}}\left(
\mathrm{As}\mathcal{V}(\pi^{HC})\right)  =\Psi(\pi^{HC})
\]
Since $\mathrm{WF}\left(  \pi\right)  \subset\Psi_{0}$ , it follows that
\begin{equation}
\mathrm{WF}\left(  \pi\right)  \cap\Psi\subset\Psi_{0}\left(  \pi^{HC}\right)
\label{=p2}%
\end{equation}

Combining (\ref{=p1}) and (\ref{=p2}) we obtain (\ref{=ourmat1}).


Finally if $G$ is a complex group or if $G=GL\left(  n,\mathbb{R}\right)  $
then each complex nilpotent orbit contains at most one real orbit. This has
two consequences. First by (\ref{=GCorb}) it follows that
\[
\mathrm{WF}\left(  \pi\right)  \cap\Psi=\mathrm{An}\mathcal{V}\left(
\pi\right)  \cap\Psi_{0}%
\]
Second, since the group $F_{G}$ permutes the real forms of a complex nilpotent
orbit, it acts trivially on orbits and we get $\widetilde\Psi({\pi})=\Psi
(\pi)$. Thus (\ref{=ourmat2}) follows from (\ref{=ourmat1}).
\end{proof}

We next prove Theorem \ref{thm:determine} using Theorem \ref{thm:ClassGroup}.

\begin{proof}
[Proof of Theorem \ref{thm:determine}]In view of (\ref{=GCorb}) it suffices to
show that $\mathrm{WF}(\pi)$ is determined by $\Psi_{0}\left(  \pi\right)
=\mathrm{WF}(\pi)\cap\Psi_{0}$.
For part \eqref{it:detGL} of Theorem \ref{thm:determine} this is straightforward since every $G$-orbit in
$\mathrm{WF}(\pi)$ intersects $\Psi_{0}$. For part \eqref{it:detC}, note that since $\pi$ is
irreducible then by \cite{JosVar} there is a complex nilpotent orbit
${\mathcal{O}}$ such that $\mathrm{An}\mathcal{V}\left(  \pi\right)
=\overline{{\mathcal{O}}}$. Since $G$ is itself a complex group,
${\mathcal{O}}_{0}={\mathcal{O}}\cap{\mathfrak{g}}_{0}^{\ast}$ is a single
$G$-orbit and we have $WF\left(  \pi\right)  =i\overline{{\mathcal{O}}_{0}}$.
Thus it suffices to show that ${\mathcal{O}}_{0}$ is determined by
$\overline{{\mathcal{O}}_{0}}$ $\cap\Psi_{0}$, which follows from Theorem
\ref{thm:ClassGroup}.
\end{proof}

\subsection{Some remarks on Theorem \ref{thm:OurMat}}

\label{sec:RemOurMat}

The action of $F_{G}$ is not very significant in Theorem \ref{thm:OurMat}. For
instance, let $\psi\in\mathrm{WF}(\pi)\cap\Psi_{0}$ and choose a parabolic
subgroup $P=LU$ such that $\psi$ is a principal nilpotent element in
$\mathfrak{l}^{\ast}$. Then we have shown that $\mathrm{WF}(\pi_{P})$ contains
some (real) principal nilpotent orbit. The action of $F_{L}\subset F_{G}$ is
used to permute the real principal nilpotent orbits of $L$, but if $G$ is
classical then there are only 2 such orbits (since $L$ is then a product of a
classical group with $GL_{n_{i}}$).

We next give an example to show that $\widetilde{\Psi}_{0}\left(  {\pi}\right)
=\Psi_{0}(\pi^{HC})$ can be a \emph{proper} subset of $\mathrm{An}%
\mathcal{V}(\pi)\cap\Psi_{0}$. Let $P\approx GL\left(  n,{\mathbb{C}}\right)
\ltimes Herm_{n}$ be the Siegel-Shilov parabolic subgroup of $U\left(
n,n\right)  $ where $Herm_{n}$ is the space of $n\times n$ Hermitian matrices.
Let $\pi$ be the corresponding unitary degenerate principal series
representation considered by Kashiwara and Vergne in \cite{KVShi,KVWF}. As
shown in \cite{KVShi} $\pi$ decomposes into $n+1$ constituents $\pi_{0}%
,\ldots,\pi_{n}$, all of the same Gelfand-Kirillov dimension ($\pi_{i}$ is
denoted $\pi_{n-i,i}$ in \cite{KVShi}). On the other hand, the complex
Richardson orbit ${\mathcal{O}}_{{\mathbb{C}}}$ for $P$ contains $n+1$ real
orbits ${\mathcal{O}}_{0},{\mathcal{\ldots}},{\mathcal{O}}_{n}$ as well, and
by a result of Barbasch (see \cite{MatTra}) the associated wavefront cycle of
$\pi$ is $\sum\left[  \overline{{\mathcal{O}}_{i}}\right]  $, i.e. all
multiplicities are $1.$ It follows that the wavefront cycle of each $\pi_{i}$
is the closure of a \emph{single} real orbit, which after relabeling we may
assume to be $\overline{{\mathcal{O}}_{i}}$.

For the group $U(2,2)$, ${\mathcal{O}}_{{\mathbb{C}}}$ consists of matrices
with rank 2 and square 0. It contains three real nilpotent orbits
${\mathcal{O}}_{0},{\mathcal{O}}_{1},{\mathcal{O}}_{2}$ whose representatives
are the respective block matrices
\[
\left[
\begin{array}
[c]{cc}%
0 & I_{2}\\
0 & 0
\end{array}
\right]  \quad\left[
\begin{array}
[c]{cc}%
0 & \operatorname{diag}(1,-1)\\
0 & 0
\end{array}
\right]  ,\quad\left[
\begin{array}
[c]{cc}%
0 & -I_{2}\\
0 & 0
\end{array}
\right]
\]
The group $F_{G}$ preserves ${\mathcal{O}}_{1}$, and permutes ${\mathcal{O}%
}_{0}$ with ${\mathcal{O}}_{2}$. It is easy to see that ${\mathcal{O}}_{1}$
intersects $\Psi$, while ${\mathcal{O}}_{0}$ and ${\mathcal{O}}_{2}$ do not.
Thus we get that
\[
\Psi(\pi_{0}^{HC})\cap\Psi_{0}=\Psi_{0}(\pi_{0}^{HC})=\left(  \overline
{{\mathcal{O}}_{0}}\cup\overline{{\mathcal{O}}_{2}}\right)  \cap\Psi_{0},
\]
which is not equal to $\overline{{\mathcal{O}}_{{\mathbb{C}}}}\cap\Psi
_{0}=\mathrm{An}\mathcal{V}(\pi_{0})\cap\Psi_{0}$. Hence $\Psi_{0}(\pi
_{0}^{HC})$ is a proper subset of $\mathrm{An}\mathcal{V}(\pi_{0})\cap\Psi
_{0}$.

Theorem \ref{thm:OurMat} determines only $\Psi_{0}(\pi^{HC})$, not $\Psi
(\pi^{HC})$. From Lemma \ref{lem:AVVSupp} we see that $\Psi(\pi^{HC})$ is
Zariski closed, so one might ask whether $\Psi(\pi^{HC})$ is the Zariski
closure of $\Psi_{0}(\pi^{HC})$. Using the arguments of the above two
subsections it can be easily proven for split groups. However, this statement
does not generalize to all quasi-split groups. The representation $\pi_{1}$
provides a counter-example for $U(2,2)$, and  degenerate principle series
representations (\textit{i.e.} sections of a line bundle on the projective space
$\mathbb{CP}^{2}$) provide a counter-example for $\mathfrak{sl}_{3}%
({\mathbb{C}})$. This can be shown using Theorem \ref{thm:Main}.


\subsection{Wave-front set of the Jacquet restriction}

\label{subsec:WFJac}


Theorem \ref{thm:OurMat} implies the following proposition.

\begin{prop}
\label{prop:WFLevi} Let $\pi\in\mathcal{M}(G)$ and $P=LU$ be a standard
parabolic subgroup. Suppose that every maximal orbit of $L$ in $\mathrm{WF}%
(\pi)\cap\mathfrak{l}^{\ast}$ intersects $\Psi$. Then
\[
\mathrm{WF}(\pi_{P})\subset\mathrm{WF}(\pi)\cap\mathfrak{l}^{\ast} \subset
F_{L} \cdot\mathrm{WF}(\pi_{P})
\]

\end{prop}

\begin{proof}
The first containment is part of Lemma \ref{lem:WFLevi}. For the second one we
get from Theorem \ref{thm:OurMat}
\[
\mathrm{WF}(\pi)\cap\mathfrak{l}^{\ast} \cap\Psi\subset\Psi_{0}(\pi^{HC}%
)\cap\mathfrak{l}^{\ast}\subset\Psi_{0}(\pi_{P}^{HC})\subset F_{L}
\cdot\mathrm{WF}(\pi_{P})
\]
Since $\mathrm{WF}(\pi_{P})$ and $\mathrm{WF}(\pi)\cap\mathfrak{l}^{\ast}$ are
$L$-stable, and every maximal orbit of $L$ in $\mathrm{WF}(\pi)\cap
\mathfrak{l}^{\ast}$ intersects $\Psi$, we conclude that $\mathrm{WF}(\pi
)\cap\mathfrak{l}^{\ast} \subset F_{L} \cdot\mathrm{WF}(\pi_{P})$.
\end{proof}

\begin{cor}
\label{cor:WFLevi} Let $\pi\in\mathcal{M}(G)$ and suppose $P=LU$ is a standard
parabolic subgroup such that $L$ is a product of several $GL_{n_{i}}$ factors.
Then
\[
\mathrm{WF}(\pi_{P})=\mathrm{WF}(\pi)\cap\mathfrak{l}^{\ast}\text{ for all
}\pi\in\mathcal{M}\left(  G\right)
\]

\end{cor}

It is very interesting for us to know whether $WF(\pi_{P})=WF(\pi
)\cap\mathfrak{l}^{*}$ without any assumption on $L$. Obviously, Proposition
\ref{prop:WFLevi} and Corollary \ref{cor:WFLevi} are false for  the Jacquet functor for p-adic groups, e.g. for cuspidal $\pi$.


\section{Applications to $GL\left(  n\right)  $\label{sec:GL}}

Let $G_{n}=GL(n,F)$ with $F=\mathbb{R}$ or $\mathbb{C}$, and suppose $\pi
\in\mathcal{M}(G_{n})$. \cite{AGS} gives several definitions for the
derivative of $\pi$, inspired by the p-adic notion defined in
\cite{BZ-Induced}. Here we will use the following definition. Let
$P_{n}\subset G_{n}$ be the mirabolic subgroup, consisting of matrices with
last row $(0,\dots,0,1)$, then $P_{n}\approx G_{n-1}\ltimes V_{n}$ where
$V_{n}\approx F^{n-1}\subset P_{n}$ is imbedded as the last column. If
$\left(  \tau,V\right)  $ is a representation of ${\mathfrak{p}}_{n}$ and
$\xi$ is a character of $\mathfrak{v}_{n}$ we can consider the coinvariants%
\[
C_{\xi}\left(  \tau\right)  =V/{\operatorname{Span}}\{\tau\left(  {X}\right)
v-\xi({X})v:v\in V,\,{X}\in\mathfrak{v}_{n}\}.
\]

Let $\xi_{0}$ be the trivial character of $\mathfrak{v}_{n}$ and let $\xi_{1}$
be the character given by
\[
\xi_{1}(x_{1},\dots,x_{n-1}):=\sqrt{-1}\operatorname{Re}x_{n-1}.
\]
The normalizers of $\xi_{0}$ and $\xi_{1}$ in $G_{n-1}$ are $G_{n-1}$ and
$P_{n-1}$ respectively, and hence $C_{\xi_{0}}\left(  \tau\right)  $ and
$C_{\xi_{1}}\left(  \tau\right)  $ are representations of $\mathfrak{g}_{n-1}$
and ${\mathfrak{p}}_{n-1}$, respectively. We write $\Phi(\tau)=|\det
|^{-1/2}\otimes C_{\xi_{1}}\left(  \tau\right)  $ and we define the $k$-th
derivative of $\tau$ to be the following representation of $\mathfrak{g}%
_{n-k}$%
\[
B^{k}(\tau)=C_{\xi_{0}}\Phi^{k-1}(\tau).
\]
By \cite[Proposition 3.0.3]{AGS} if $\sigma\in\mathcal{HC}(G_{n})$ then
$B^{k}(\sigma)=B^{k}(\sigma|_{{\mathfrak{p}}_{n}})\in\mathcal{HC}(G_{n-k})$,
i.e. $B^{k}(\sigma)$ is admissible.

Theorem \ref{thm:determine} allows one to calculate the annihilator variety
$\mathrm{An}\mathcal{V}\left(  B^{k}(\sigma)\right)  $ in terms of
$\mathrm{An}\mathcal{V}\left(  \sigma\right)  $. For simplicity we consider
the case of $G_{n}=GL(n,\mathbb{R})$, since the case of $GL(n,\mathbb{C})$ is
very similar. Note that $\mathrm{An}\mathcal{V}\left(  \sigma\right)  $ is a
union of complex nilpotent orbits $\mathcal{O}_{\lambda}\subset\mathfrak{g}%
_{n}^{\ast}=\mathfrak{gl}\left(  n,\mathbb{C}\right)  ^{\ast}$, which are
indexed by partitions $\lambda$ of $n$ as in section \ref{subsec:PrelComp}.
Also the nilradical $\mathfrak{n}$ consists of upper triangular matrices and
for each partition $\lambda$ we consider the character%

\[
\psi_{\lambda}\left(  X\right)  =\sqrt{-1}\left(  \sum\nolimits_{j\notin
S_{\lambda}}X_{j,j+1}\right)
\]
where $S_{\lambda}$ is the index set of partial sums $\left\{  \lambda
_{k}+\cdots+\lambda_{l}:1\leq k<l\right\}  $ with $l=\mathrm{length}\left(
\lambda\right)  $; then we have $\psi_{\lambda}\in\mathcal{O}_{\lambda}$.

\begin{lemma}
\label{lem:PsiBk}Let $\mu$ be a partition of $n-k$ and let $\mu\cup k$ be the
partition of $n$ obtained by inserting the part $k$ in the appropriate place
of $\mu$. Then we have%
\[
\psi_{\mu}\in\Psi\left(  B^{k}\sigma\right)  \iff\psi_{\mu\cup k}\in
\Psi\left(  \sigma\right)
\]

\end{lemma}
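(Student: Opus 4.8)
The plan is to unwind the definition $B^k(\sigma)=C_{\xi_0}\Phi^{k-1}(\sigma)$ as an iterated coinvariants construction and to match it, step by step, with a degenerate Whittaker coinvariant of $\sigma$ along a suitable nilradical. The key point is that each of the building blocks $C_{\xi_0}$ and $\Phi=|\det|^{-1/2}\otimes C_{\xi_1}$ is a coinvariants functor with respect to a one-dimensional unipotent subgroup inside a mirabolic, and that iterating them amounts to taking coinvariants along a unipotent radical $U$ of a parabolic $P=LU\subset G_n$ whose Levi is $GL_{n-k}\times GL_1\times\cdots\times GL_1$ ($k$ copies of $GL_1$), twisted by a character. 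First I would make this precise: show that for $\sigma\in\mathcal{HC}(G_n)$ the module $B^k(\sigma)$, as a $\mathfrak{gl}_{n-k}$-module, is obtained from $\sigma$ by (a) restricting to the standard parabolic $P$ with the above Levi, (b) passing to $C_{\mathfrak u}$, and (c) further passing to coinvariants with respect to the character $\eta$ of $\mathfrak u/[\mathfrak u,\mathfrak u]$ (or of an intermediate nilradical) that picks out the "$\xi_1$ on the $\Phi$-steps, $\xi_0$ on the last step" pattern, together with the harmless $|\det|$-twists, which do not affect membership of a character in $\Psi$.

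Next I would apply Theorem \ref{thm:Jac}(3) and Lemma \ref{lem:Quot}-style bookkeeping, or more directly Lemma \ref{lem:WFLevi} / the description $\Psi(\pi_P)=\Psi(\pi)\cap\Psi_L$, to translate membership $\psi_\mu\in\Psi(B^k\sigma)$ into a statement about $\Psi(\sigma)$. Concretely: a functional $\psi_\mu$ on the nilradical $\mathfrak n_{L'}$ of the Levi $L'=GL_{n-k}$ extends, using the character $\eta$ on the remaining root spaces coming from the $\Phi$- and $C_{\xi_0}$-steps, to a functional $\widetilde\psi$ on the full nilradical $\mathfrak n$ of $G_n$; and by construction of the $S_\lambda$ index set, this $\widetilde\psi$ is exactly $\psi_{\mu\cup k}$ — inserting the part $k$ corresponds precisely to making the chain of "$\Phi$" simple roots non-degenerate and the one "$C_{\xi_0}$" simple root degenerate, matched with the gap pattern defining $S_{\mu\cup k}$. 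Then membership of $\psi_\mu$ in the Whittaker set of the coinvariant module is equivalent, via Nakayama (Lemma \ref{lem:AVVSupp}) applied to the iterated coinvariants, to membership of $\widetilde\psi=\psi_{\mu\cup k}$ in $\Psi(\sigma)$. This gives both directions of the iff simultaneously.

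The step I expect to be the main obstacle is the purely combinatorial identification showing that the character on $\mathfrak n$ assembled from $k-1$ copies of $\xi_1$ (the $\Phi$-steps), one copy of $\xi_0$ (the final $C_{\xi_0}$-step), and $\psi_\mu$ on the surviving $GL_{n-k}$-block is \emph{exactly} $\psi_{\mu\cup k}$ in the normalization fixed before the lemma — i.e. that the set of "missing" superdiagonal entries is $S_{\mu\cup k}$. This requires carefully tracking how the mirabolic/column subgroups $V_n, V_{n-1},\dots$ sit inside $\mathfrak n$ after the successive restrictions, and checking the bookkeeping of which simple roots become the internal breaks of the partition; the part "$k$" is peeled off one box at a time by the $\Phi$'s, so the single $\xi_0$ produces the one internal break separating the block of size $k$ from the rest of $\mu$. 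A secondary point to check is that the functor $\Phi$ is exact and commutes appropriately with the $|\det|$-twist so that one may freely interchange the order of "restrict to $P$" and "take coinvariants", for which one can cite \cite[Proposition 3.0.3]{AGS} and Theorem \ref{thm:Jac}; the twists by $|\det|^{\pm1/2}$ change neither the nilradical nor $\Psi$, so they can be ignored throughout.
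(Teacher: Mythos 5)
Your reduction framework is essentially the one the paper uses: unwind $B^{k}(\sigma)=C_{\xi_{0}}\Phi^{k-1}(\sigma)$ as iterated twisted coinvariants and translate $Wh'_{\psi_\mu}(B^k\sigma)$ into a degenerate Whittaker space of $\sigma$ for an extended character $\widetilde\psi$ on all of $\mathfrak n$. But there is a genuine gap in the step you yourself flagged as the main worry: the assembled character $\widetilde\psi$ is \emph{not} $\psi_{\mu\cup k}$. The $\Phi$-steps and the final $C_{\xi_0}$-step act on the last $k$ superdiagonal positions in a fixed order, so the extension produces the character $\psi_{\alpha}$ attached to the \emph{composition} $\alpha=(\mu_1,\dots,\mu_{l},k)$ obtained by appending $k$ at the end, rather than to the sorted partition $\mu\cup k$. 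These two characters of $\mathfrak n$ have different break-sets and lie in different $H_{\mathbb C}$-orbits whenever $k$ does not land at the end after sorting; for example with $\mu=(4,1)$ and $k=2$, $\widetilde\psi$ has breaks corresponding to the blocks $(1,4,2)$ while $\psi_{\mu\cup k}$ has breaks corresponding to $(1,2,4)$. So the identity $\widetilde\psi=\psi_{\mu\cup k}$ that you rely on is false in general, and Nakayama (Lemma~\ref{lem:AVVSupp}) cannot rescue it because that lemma says nothing about comparing two different characters of $\mathfrak n$.

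What is missing is the bridge from $\psi_\alpha$ to $\psi_{\mu\cup k}$. The paper observes that $\psi_\alpha$ and $\psi_{\mu\cup k}$ lie in the \emph{same nilpotent $G_{\mathbb C}$-orbit} (since the Jordan type of such a block-superdiagonal nilpotent depends only on the multiset of block sizes), and then uses the fact -- already proved in the form $\Psi_0(\sigma)=\mathrm{An}\mathcal V(\sigma)\cap\Psi_0$ for $GL_n$ in Theorem~\ref{thm:OurMat} (see \eqref{=ourmat2}), and packaged in Theorem~\ref{thm:determine} -- that membership of a character in $\Psi_0$ of $\sigma$ depends only on its nilpotent orbit. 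That orbit-invariance is precisely the nontrivial input you need to pass from the composition $\alpha$ to the partition $\mu\cup k$. Once you insert this step, your argument aligns with the paper's: (i) the definitional isomorphism $Wh'_{\psi_\mu}(B^k\sigma)\cong Wh'_{\psi_\alpha}(\sigma)$, and (ii) $\psi_\alpha\in\Psi(\sigma)\iff\psi_{\mu\cup k}\in\Psi(\sigma)$ by orbit-invariance. Without (ii) the equivalence simply does not close.
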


\begin{proof}
Let $\alpha$ be the composition of $n$ obtained by inserting the part $k$ in
the end of $\mu$. It is a reordering of $\mu\cup k$ and thus $\psi_{{\alpha}}$
and $\psi_{\mu\cup k}$ belong to the same nilpotent orbit. The composition
with the natural projection $B^{k}\sigma\twoheadrightarrow\sigma$ defines an
isomorphism $Wh^{\prime}_{\psi_{\mu}}(B^{k}\sigma) \cong Wh^{\prime}%
_{\psi_{\alpha}}(\sigma)$. Thus
\[
\psi_{\mu}\in\Psi\left(  B^{k}\sigma\right)  \iff\psi_{{\alpha}} \in
\Psi\left(  \sigma\right)  \iff\psi_{\mu\cup k}\in\Psi\left(  \sigma\right)
\]

\end{proof}

If $\lambda$ is a partition and $k\leq\lambda_{1}$ then there is a unique $i$
such that $\lambda_{i}\geq k>\lambda_{i+1}$, and we define
\[
B^{k}(\lambda):=(\lambda_{1},\dots,\lambda_{i-1},\lambda_{i+2},\dots
,\lambda_{l})\cup(\lambda_{i}+\lambda_{i+1}-k),B^{k}({\mathcal{O}}_{\lambda
})={\mathcal{O}}_{B^{k}(\lambda)}.
\]
We extend this definition to unions of orbits, setting $B^{k}({\mathcal{O}%
}_{\lambda})=\emptyset$ if $k>\lambda_{1}.$

\begin{lem}
\label{lem:CombPart} Let $\lambda$ be a partition of $n$ and let $\mu$ be a
partition of $n-k$, then $\mu\cup k\leq\lambda$ if and only if $k\leq
\lambda_{1}$ and $\mu\leq B^{k}(\lambda)$.
\end{lem}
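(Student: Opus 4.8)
The statement is a purely combinatorial fact about the dominance order on partitions, so the approach is elementary: compare partial sums of $\mu \cup k$ and $\lambda$, and separately compare partial sums of $\mu$ and $B^k(\lambda)$, matching them up. Fix the index $i$ with $\lambda_i \ge k > \lambda_{i+1}$, so that $B^k(\lambda)$ is obtained from $\lambda$ by deleting the parts $\lambda_i, \lambda_{i+1}$ and inserting a single part $\lambda_i + \lambda_{i+1} - k$ in its correct (sorted) position. The key point is that $\mu \cup k$ and $B^k(\lambda)$ both have total size $n - $ (something), and the operation of ``fusing two parts and removing $k$'' affects partial sums in a very controlled way. I would first establish the easy direction, and then the converse.

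\textbf{Forward direction.} Assume $\mu \cup k \le \lambda$. First, $k \le \lambda_1$ is immediate: the largest part of $\mu \cup k$ is at least $k$, so by dominance $\lambda_1 \ge (\mu\cup k)_1 \ge k$. For the inequality $\mu \le B^k(\lambda)$, I would argue by comparing $\sum_{j \le t}\mu_j$ with $\sum_{j\le t} B^k(\lambda)_j$ for each $t$. The cleanest bookkeeping: write $a_t = \sum_{j\le t}(\mu\cup k)_j$ and $b_t = \sum_{j\le t}\lambda_j$; we know $a_t \le b_t$ for all $t$. Now the partial sums of $\mu$ are obtained from those of $\mu\cup k$ by ``skipping'' the position where $k$ was inserted and subtracting $k$ after that position; similarly the partial sums of $B^k(\lambda)$ are obtained from those of $\lambda$ by a corresponding surgery at positions $i, i+1$. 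One checks position by position that each partial sum of $\mu$ is dominated by the appropriate partial sum of $B^k(\lambda)$; the worst cases to handle carefully are the ``boundary'' indices near where $k$ was inserted into $\mu$ versus where $\lambda_i,\lambda_{i+1}$ sat. Here one uses $\lambda_i \ge k$ and $\lambda_{i+1} < k$ to control the sign of the correction terms.

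\textbf{Converse direction.} Assume $k \le \lambda_1$ and $\mu \le B^k(\lambda)$; show $\mu \cup k \le \lambda$. This is the reverse of the same surgery. Given a partial sum $\sum_{j\le t}(\mu\cup k)_j$, split into cases according to whether $t$ is before, at, or after the position of the inserted part $k$ in $\mu\cup k$, and compare with $\sum_{j\le t}\lambda_j$ using the hypothesis on $\mu$ versus $B^k(\lambda)$ together with $k \le \lambda_1$ (to handle small $t$) and the definition of $i$. The main obstacle in both directions is the same: the part $k$ may be inserted into $\mu$ at a position that does not line up with the positions $i, i+1$ in $\lambda$, and the fused part $\lambda_i + \lambda_{i+1} - k$ may land anywhere in the sorted order of $B^k(\lambda)$; so one cannot simply subtract the inequalities term by term. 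The resolution is to note that for dominance it suffices to check partial sums, and the sorted insertion of a part only ever \emph{helps} (the partial sums of a partition with a part inserted are $\ge$ those obtained by appending the part at the end and not re-sorting, up to the point of insertion, and equal thereafter), so one can reduce to comparing the ``unsorted'' versions $\mu \sqcup (k)$ and $\lambda$ with the two parts $\lambda_i,\lambda_{i+1}$ pulled to designated slots — a routine but slightly fiddly partial-sum chase. I would present this reduction as a short lemma (``inserting a part in sorted position dominates appending it'') and then the main comparison becomes a handful of cases.
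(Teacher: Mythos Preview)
Your approach is correct in principle: dominance is a statement about partial sums, and a careful case analysis tracking where $k$ lands in $\mu\cup k$ versus where the surgery happens in $\lambda$ will eventually pin everything down. You have correctly identified the delicate point (the insertion index for $k$ in $\mu$ need not match the indices $i,i+1$ in $\lambda$), and your suggested auxiliary lemma about sorted insertion is a reasonable device.

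That said, the paper's proof takes a genuinely different and much shorter route: it passes to \emph{transposed} partitions. The three observations used are
\begin{enumerate}
\item conjugation $\lambda\mapsto\lambda^{t}$ reverses dominance;
\item $(\mu\cup k)^{t}$ is obtained from $\mu^{t}$ by adding $1$ to each of the first $k$ parts;
\item $(B^{k}(\lambda))^{t}$ is obtained from $\lambda^{t}$ by subtracting $1$ from each of the first $k$ parts.
\end{enumerate}
Under transposition the two operations $\mu\mapsto\mu\cup k$ and $\lambda\mapsto B^{k}(\lambda)$ thus become the same ``shift by $e_{k}=(1^{k},0,0,\dots)$'' in opposite directions, and the equivalence $\mu\cup k\le\lambda \iff \mu\le B^{k}(\lambda)$ becomes the tautology $\lambda^{t}\le\mu^{t}+e_{k}\iff\lambda^{t}-e_{k}\le\mu^{t}$. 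The condition $k\le\lambda_{1}$ is exactly what guarantees $\lambda^{t}_{k}\ge 1$ so that the subtraction is meaningful.

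The payoff of the transposition trick is that it completely eliminates the ``fiddly partial-sum chase'' you anticipate: there are no cases, no tracking of insertion positions, and no auxiliary lemma about sorted insertion. Your direct approach would certainly work, but it is worth knowing this shortcut, since the same device (conjugation linearises row-insertion into column-addition) recurs throughout the combinatorics of nilpotent orbits.
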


\begin{proof}
We use the notion of transposed partition $(\lambda^{t})_{i}=\max
\{j|\lambda_{j}\geq i\}$ and note that

\begin{enumerate}[(a)]
\item transposition is order-reversing.

\item $(\mu\cup k)^{t}$ is obtained from $\mu^{t}$ by adding 1 to each of the
first $k$ parts.

\item $(B^{k}(\lambda))^{t}$ is obtained from $\lambda^{t}$ by subtracting 1
from each of the first $k$ parts.
\end{enumerate}

The lemma follows.
\end{proof}

\begin{thm}
\label{thm:VarDer} If $\sigma\in\mathcal{HC}(G_{n})$ then $\mathrm{An}%
\mathcal{V}(B^{k}\sigma)=B^{k}\left(  \mathrm{An}\mathcal{V}(\sigma)\right)  $.
\end{thm}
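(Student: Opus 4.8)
The plan is to combine Theorem \ref{thm:determine} (which for $GL_n(\mathbb{R})$ says $\Psi_0(\pi)$ determines $\mathrm{WF}(\pi)$, equivalently $\mathrm{An}\mathcal{V}(\sigma)$) with the purely combinatorial description of derivatives encoded in Lemmas \ref{lem:PsiBk} and \ref{lem:CombPart}. Concretely, since every complex nilpotent orbit for $GL_n$ meets $\Psi_0$, Theorem \ref{thm:OurMat} gives $\Psi(\sigma)=pr_{\mathfrak{n}^\ast}(\mathrm{An}\mathcal{V}(\sigma))\cap\Psi$ and moreover $\mathrm{An}\mathcal{V}(\sigma)$ is recovered from the set of partitions $\mu$ with $\psi_\mu\in\Psi(\sigma)$. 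So it suffices to prove the equality of sets of partitions: for a partition $\mu$ of $n-k$,
\[
\psi_\mu\in\Psi(B^k\sigma)\iff \mathcal{O}_\mu\subseteq B^k(\mathrm{An}\mathcal{V}(\sigma)).
\]

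The key steps, in order. First, I would record that $\mathrm{An}\mathcal{V}(\sigma)=\bigcup_{\lambda\in T}\overline{\mathcal{O}_\lambda}$ for a finite set $T$ of partitions of $n$ closed downward in the dominance order, and that by Theorem \ref{thm:Main} (via $\psi_\lambda\in\mathcal{O}_\lambda$) one has $\psi_\mu\in\Psi(\sigma)\iff\mu\le\lambda$ for some $\lambda\in T$. Second, apply Lemma \ref{lem:PsiBk}: $\psi_\mu\in\Psi(B^k\sigma)\iff\psi_{\mu\cup k}\in\Psi(\sigma)\iff\mu\cup k\le\lambda$ for some $\lambda\in T$. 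Third, invoke Lemma \ref{lem:CombPart}: $\mu\cup k\le\lambda$ iff $k\le\lambda_1$ and $\mu\le B^k(\lambda)$. Putting these together, $\psi_\mu\in\Psi(B^k\sigma)\iff\mu\le B^k(\lambda)$ for some $\lambda\in T$ with $\lambda_1\ge k$, i.e. $\mathcal{O}_\mu\subseteq\bigcup_{\lambda\in T,\ \lambda_1\ge k}\overline{\mathcal{O}_{B^k(\lambda)}}=B^k(\mathrm{An}\mathcal{V}(\sigma))$ by the definition of $B^k$ on unions of orbits. Fourth, since $B^k\sigma\in\mathcal{HC}(G_{n-k})$ by \cite[Proposition 3.0.3]{AGS}, the same Theorem \ref{thm:determine}/\ref{thm:OurMat} package applied to $B^k\sigma$ shows that $\mathrm{An}\mathcal{V}(B^k\sigma)$ is the downward closure of $\{\mu:\psi_\mu\in\Psi(B^k\sigma)\}$, and we are done.

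One bookkeeping point deserves care: the operation $B^k$ on a \emph{union} of orbit closures, as defined in the text, is $\bigcup_\lambda B^k(\mathcal{O}_\lambda)$ with $B^k(\mathcal{O}_\lambda)=\emptyset$ when $k>\lambda_1$; I should check that $\overline{B^k(\mathrm{An}\mathcal{V}(\sigma))}$ (the Zariski closure of this union of orbits) equals $\bigcup_{\lambda\in T,\lambda_1\ge k}\overline{\mathcal{O}_{B^k(\lambda)}}$, which follows because $\lambda\mapsto B^k(\lambda)$ is order-preserving on the set $\{\lambda:\lambda_1\ge k\}$ — itself a consequence of Lemma \ref{lem:CombPart} (if $\lambda\le\lambda'$ then for any $\mu$, $\mu\le B^k(\lambda)\Rightarrow\mu\cup k\le\lambda\le\lambda'\Rightarrow\mu\le B^k(\lambda')$). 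The main (very mild) obstacle is thus not conceptual but making sure the convention for $B^k$ on non-irreducible varieties is handled consistently, and that $\Psi(B^k\sigma)$ as an abstract invariant indeed recaptures $\mathrm{An}\mathcal{V}(B^k\sigma)$ — which is exactly the content of Theorem \ref{thm:determine} for $GL_{n-k}(\mathbb{R})$ already in hand. Everything else is a formal chain of the three lemmas.
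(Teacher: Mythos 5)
Your proof matches the paper's own: both proceed through the chain
\[
\mathcal{O}_\mu\subset\mathrm{An}\mathcal{V}(B^k\sigma)\iff\psi_\mu\in\Psi(B^k\sigma)\iff\psi_{\mu\cup k}\in\Psi(\sigma)\iff\mathcal{O}_{\mu\cup k}\subset\mathrm{An}\mathcal{V}(\sigma),
\]
obtained from Theorem \ref{thm:determine} (together with $\psi_\lambda\in\mathcal{O}_\lambda$) and Lemma \ref{lem:PsiBk}, followed by Lemma \ref{lem:CombPart} to translate $\mu\cup k\leq\lambda$ into $k\leq\lambda_1$ and $\mu\leq B^k(\lambda)$. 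The ``bookkeeping point'' you flag --- that $\lambda\mapsto B^k(\lambda)$ is order-preserving, so that $B^k$ applied to a union of orbit closures is again a union of orbit closures --- is a genuine detail the paper's one-line conclusion leaves implicit, and you resolve it correctly.

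One minor point of attribution: the identity $\Psi(\sigma)=pr_{\mathfrak{n}^{\ast}}(\mathrm{An}\mathcal{V}(\sigma))\cap\Psi$ that you credit to Theorem \ref{thm:OurMat} is not what that theorem asserts. Theorem \ref{thm:Main} gives $\Psi(\sigma)=pr_{\mathfrak{n}^{\ast}}(\mathrm{As}\mathcal{V}(\sigma))\cap\Psi$ (with $\mathrm{As}$, not $\mathrm{An}$), and for $GL_n(\mathbb{R})$ Theorem \ref{thm:OurMat} gives the direct intersection $\Psi_0(\sigma)=\mathrm{An}\mathcal{V}(\sigma)\cap\Psi_0$. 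Fortunately the only consequence you actually invoke downstream --- that $\psi_\mu\in\Psi(\sigma)\iff\mathcal{O}_\mu\subset\mathrm{An}\mathcal{V}(\sigma)$ for a partition $\mu$ --- follows at once from $\Psi_0(\sigma)=\mathrm{An}\mathcal{V}(\sigma)\cap\Psi_0$ together with $\psi_\mu\in\Psi_0\cap\mathcal{O}_\mu$, so the argument is unaffected.
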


\begin{proof}
Since $\psi_{\lambda}\in{\mathcal{O}}_{\lambda}$, by Theorem
\ref{thm:determine} and Lemma \ref{lem:PsiBk} we have
\[
{\mathcal{O}}_{\mu}\subset\mathrm{An}\mathcal{V}(B^{k}\sigma)\Leftrightarrow
\psi_{\mu}\in\Psi\left(  B^{k}\sigma\right)  \iff\psi_{\mu\cup k}\in
\Psi\left(  \sigma\right)  \iff{\mathcal{O}}_{\mu\cup k}\subset\mathrm{An}%
\mathcal{V}(\sigma)
\]
Thus it suffices to show that for any ${\mathcal{O}}$, ${\mathcal{O}}_{\mu\cup
k}\leq{\mathcal{O}}\Leftrightarrow{\mathcal{O}}_{\mu}\leq B^{k}({\mathcal{O}%
})$; this follows from Lemma \ref{lem:CombPart}.
\end{proof}


In \cite{AGS} the \textit{depth} of $\sigma$ is defined to be the maximal rank
of a matrix $A\in\mathrm{An}\mathcal{V}(\sigma)\subset\mathfrak{gl}%
_{n}(\mathbb{C}),$ which is identified with $Mat_{n\times n}(\mathbb{C})$ via
the trace form. Note that if $\mathrm{An}\mathcal{V}(\sigma)=\overline
{{\mathcal{O}}_{\lambda}}$, then $\mathrm{depth}(\sigma)=\lambda_{1}$.

\begin{corollary}
If $\sigma\in\mathcal{HC}(G_{n})$ then $B^{k}(\sigma)=0$ if and only if
$k>\mathrm{depth}(\sigma)$.
\end{corollary}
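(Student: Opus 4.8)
The plan is to deduce the corollary from Theorem~\ref{thm:VarDer} together with the elementary fact that a nonzero Harish-Chandra module has nonempty annihilator variety, plus the combinatorics of $B^{k}$ on partitions already set up in Lemma~\ref{lem:CombPart}. First I would record that for any $V\in\mathcal{HC}(G_{m})$ one has $V=0$ if and only if $\mathrm{An}\mathcal{V}(V)=\emptyset$: by the Nullstellensatz $\mathrm{An}\mathcal{V}(V)=\emptyset$ forces $1$ into the associated graded of $\operatorname{Ann}(V)$, hence $1\in\operatorname{Ann}(V)$, i.e.\ $V=0$; the converse is clear. Since $B^{k}(\sigma)\in\mathcal{HC}(G_{n-k})$, applying this to $V=B^{k}(\sigma)$ and invoking Theorem~\ref{thm:VarDer} yields
\[
B^{k}(\sigma)=0\ \Longleftrightarrow\ \mathrm{An}\mathcal{V}(B^{k}\sigma)=\emptyset\ \Longleftrightarrow\ B^{k}\bigl(\mathrm{An}\mathcal{V}(\sigma)\bigr)=\emptyset .
\]

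Next I would unwind the right-hand side. Writing $\mathrm{An}\mathcal{V}(\sigma)=\bigcup_{j}\overline{\mathcal{O}_{\lambda^{(j)}}}$ for the irreducible decomposition, $\mathrm{An}\mathcal{V}(\sigma)$ is the union of all $\mathcal{O}_{\mu}$ with $\mu\le\lambda^{(j)}$ for some $j$; by the definition of $B^{k}$ on orbits extended to unions (with $B^{k}(\mathcal{O}_{\mu})=\emptyset$ whenever $k>\mu_{1}$), $B^{k}(\mathrm{An}\mathcal{V}(\sigma))$ is empty precisely when no $\mathcal{O}_{\mu}\subseteq\mathrm{An}\mathcal{V}(\sigma)$ has $\mu_{1}\ge k$, i.e.\ precisely when $k>\max\{\mu_{1}\mid\mathcal{O}_{\mu}\subseteq\mathrm{An}\mathcal{V}(\sigma)\}$. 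Finally I would identify this maximum with $\mathrm{depth}(\sigma)$: the rank function on $\mathrm{An}\mathcal{V}(\sigma)$ attains its maximum over each component $\overline{\mathcal{O}_{\lambda^{(j)}}}$ at the open orbit, where by the observation recalled just above the statement it equals $\lambda^{(j)}_{1}$, so $\mathrm{depth}(\sigma)=\max_{j}\lambda^{(j)}_{1}=\max\{\mu_{1}\mid\mathcal{O}_{\mu}\subseteq\mathrm{An}\mathcal{V}(\sigma)\}$, the last equality since $\mu\le\lambda^{(j)}$ forces $\mu_{1}\le\lambda^{(j)}_{1}$. Combining the two displays gives $B^{k}(\sigma)=0\iff k>\mathrm{depth}(\sigma)$.

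I do not expect a genuine obstacle: essentially all the substance is already in Theorem~\ref{thm:VarDer}. The only steps needing a line of care are the equivalence ``nonzero $\iff$ nonempty annihilator variety'' and the bookkeeping that $B^{k}$ of a union of orbit closures is empty exactly when $k$ exceeds the largest part occurring among the indexing partitions of the components. The latter can also be extracted directly from the proof of Theorem~\ref{thm:VarDer} — which shows $\mathcal{O}_{\mu}\subseteq\mathrm{An}\mathcal{V}(B^{k}\sigma)\iff\mathcal{O}_{\mu\cup k}\subseteq\mathrm{An}\mathcal{V}(\sigma)$ — together with the standard fact, implicit in Lemma~\ref{lem:CombPart}, that splitting off a part of size $k\le\lambda_{1}$ from a partition $\lambda$ produces a partition subordinate to $\lambda$; hence some $\mathcal{O}_{\mu\cup k}$ lies in $\mathrm{An}\mathcal{V}(\sigma)$ iff some component has largest part $\ge k$, which is again the condition $k\le\mathrm{depth}(\sigma)$.
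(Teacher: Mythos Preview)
Your proof is correct and is precisely the argument the paper intends: the corollary is stated immediately after Theorem~\ref{thm:VarDer} without proof, and the implicit reasoning is exactly to combine that theorem with the equivalence $V=0\iff\mathrm{An}\mathcal{V}(V)=\emptyset$ and the observation that $B^{k}$ of a union of orbit closures is empty iff $k$ exceeds the largest first part among the indexing partitions, i.e.\ iff $k>\mathrm{depth}(\sigma)$.
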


By \cite[Corollary 4.2.2]{AGS}, Theorem \ref{thm:determine} implies

\begin{prop}
\label{prop:Monomial} Let $\chi_{i}$ be characters of $GL_{n_{i}}$ and
$n:=n_{1}+\cdots+n_{k}$. Let $\pi=\chi_{1}\times\cdots\times\chi_{k}%
\in\mathcal{M}(G_{n})$ be the corresponding induced representation. Then $\pi$
has a unique irreducible subquotient $\tau$ with
\[
\mathrm{An}\mathcal{V}(\tau)=\mathrm{An}\mathcal{V}(\pi)=\overline
{{\mathcal{O}}_{(n_{1},\dots,n_{k})}}.
\]
Moreover, $\tau$ occurs in $\pi$ with multiplicity one.
\end{prop}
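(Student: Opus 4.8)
The plan is to separate Proposition \ref{prop:Monomial} into the identification of $\mathrm{An}\mathcal{V}(\pi)$, which is classical, and a uniqueness-with-multiplicity statement coming from \cite[Corollary 4.2.2]{AGS}; the bridge between the two is Theorem \ref{thm:determine} and its consequence Theorem \ref{thm:VarDer}. First I would record that $\pi$ is parabolically induced from the one-dimensional representation $\chi_{1}\otimes\cdots\otimes\chi_{k}$ of the Levi $L$ of the standard parabolic $P=LU$ with block sizes $(n_{1},\dots,n_{k})$, and that the annihilator variety of such an induced module is the closure of the Richardson orbit attached to $P$, which in the present notation is $\overline{\mathcal{O}_{(n_{1},\dots,n_{k})}}$ --- e.g.\ via the moment-map formula \eqref{eq:CVAV} applied to the localization of $\pi^{HC}$, or directly from \cite{JosVar}. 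In particular every irreducible subquotient $\tau'$ of $\pi$ satisfies $\mathrm{An}\mathcal{V}(\tau')\subseteq\mathrm{An}\mathcal{V}(\pi)=\overline{\mathcal{O}_{(n_{1},\dots,n_{k})}}$.

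Next I would translate the statement into one about derivatives. Since $G_{n}$ is a general linear group, Theorem \ref{thm:VarDer} (itself a consequence of Theorem \ref{thm:determine}) gives $\mathrm{An}\mathcal{V}(B^{m}\sigma)=B^{m}(\mathrm{An}\mathcal{V}(\sigma))$, and the derivative functors are exact on $\mathcal{HC}(G_{n})$. Let $\Lambda=B^{n_{k}}\circ\cdots\circ B^{n_{1}}$ be the iterated derivative functor of type $(n_{1},\dots,n_{k})$, which lands in finite-dimensional spaces. By the combinatorics of Lemma \ref{lem:CombPart}, the induced operation on nilpotent cycles carries $\overline{\mathcal{O}_{(n_{1},\dots,n_{k})}}$ to a point and sends every properly smaller orbit closure to the empty set; combined with Theorem \ref{thm:VarDer} and the containment of the previous paragraph, this yields, for each constituent $\tau'$,
\[
\mathrm{An}\mathcal{V}(\tau')=\overline{\mathcal{O}_{(n_{1},\dots,n_{k})}}\iff\Lambda(\tau')\neq0 .
\]
Applying the exact functor $\Lambda$ to a Jordan--Hölder series of $\pi=\chi_{1}\times\cdots\times\chi_{k}$, the displayed equivalence reduces the proposition to the assertion that exactly one Jordan--Hölder constituent $\tau$ of $\pi$ has $\Lambda(\tau)\neq0$ and that $\tau$ occurs in $\pi$ with multiplicity one --- which is \cite[Corollary 4.2.2]{AGS}.

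The heart of the matter is this last input, and it is where I expect the real work to lie. Proved from scratch it requires, in the Archimedean category $\mathcal{M}(G_{n})$, a Leibniz-type (geometric-lemma) formula for the successive derivatives of the induced representation $\chi_{1}\times\cdots\times\chi_{k}$, together with the combinatorial bookkeeping that singles out the unique constituent carrying the top derivative with multiplicity one --- the Archimedean counterpart of the Bernstein--Zelevinsky analysis, which is carried out in \cite[\S4]{AGS}. By contrast, the identification of $\mathrm{An}\mathcal{V}(\pi)$ is routine, and once Theorems \ref{thm:determine} and \ref{thm:VarDer} are in hand the passage between derivative data and annihilator varieties is automatic.
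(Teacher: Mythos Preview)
Your proof is correct and takes essentially the same approach as the paper's: both identify $\mathrm{An}\mathcal{V}(\pi)=\overline{\mathcal{O}_\lambda}$ classically and then invoke \cite[Corollary 4.2.2]{AGS} for the uniqueness, with Theorem \ref{thm:determine} supplying the bridge. The paper is slightly more direct, using Theorem \ref{thm:determine} to phrase the maximality condition as $\psi_\lambda\in\Psi(\tau)$ rather than routing through the iterated derivative $\Lambda$ and Theorem \ref{thm:VarDer} (which is itself derived from Theorem \ref{thm:determine} via Lemma \ref{lem:PsiBk}).
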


\begin{proof}
Without loss of generality, we can suppose $n_{1}\geq\cdots\geq n_{k}$ and
write ${\lambda}:=(n_{1},\dots,n_{k})$. Then it is known that $\mathrm{An}%
\mathcal{V}(\pi)=\overline{{\mathcal{O}}_{{\lambda}}}$ and thus $\mathrm{An}%
\mathcal{V}(\tau)\subset\overline{{\mathcal{O}}_{{\lambda}}}$ for any
subquotient $\tau$ of $\pi$. Now by Theorem \ref{thm:determine},
$\mathrm{An}\mathcal{V}(\tau)\supset\overline{{\mathcal{O}}_{{\lambda}}}$ iff
$\psi_{\lambda}\in\Psi\left(  \tau\right)  ,$ and by \cite[Corollary
4.2.2]{AGS}, $\pi$ has a unique such constituent.
\end{proof}

\section{The case of complex classical groups}

\label{sec:Orb} \setcounter{lemma}{0}


In this section we prove Theorem \ref{thm:ClassGroup}.
For convenience we fix an invariant form $\left\langle x,y\right\rangle $ on
$\mathfrak{g}_{0}$ and we identify $i\mathfrak{g}_{0}^{\ast}$ with
$\mathfrak{g}_{0}$ as follows:%
\[
\psi_{x}\left(  y\right)  =i\left\langle x,y\right\rangle \text{.}%
\]

Let $H,B,\Pi_{0}$ etc. be as before, then $\Psi_{0}\subset$ $\mathfrak{g}_{0}$
can be identified with the direct sum of negative simple roots spaces%
\[
\Psi_{0}=\bigoplus\nolimits_{\beta\in\Pi_{0}}\mathfrak{g}_{0}^{-\beta}%
\]
By Proposition \ref{prop:nondeg} this identifies non-degenerate elements of
$\Psi_{0}$ with the \emph{principal nilpotent elements} in $\Psi_{0}$, namely
those for which each of the projections $p_{\beta}:\Psi_{0}\rightarrow
\mathfrak{g}_{0}^{-\beta}$ is non-zero. Lemma \ref{lem:non-deg-par} gives the
following result.

\begin{lemma}
\label{lm:PL}If $e\in\Psi_{0}$ then there is a standard Levi subalgebra
$\mathfrak{l}_{0}$ such that $e$ is a principal nilpotent element in
$\mathfrak{l}_{0}.$
\end{lemma}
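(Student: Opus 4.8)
The plan is to reduce the statement directly to Lemma~\ref{lem:non-deg-par}, which is the same assertion phrased in terms of characters of $\mathfrak{n}_0$ rather than nilpotent elements of $\mathfrak{g}_0$. Concretely, write $e\in\Psi_0$ in components $e=\sum_{\beta\in\Pi_0}e_\beta$ with $e_\beta\in\mathfrak{g}_0^{-\beta}$, using the identification $\Psi_0=\bigoplus_{\beta\in\Pi_0}\mathfrak{g}_0^{-\beta}$ recalled above; set $S:=\{\beta\in\Pi_0:e_\beta\neq0\}$ and let $P=LU$ be the standard parabolic attached to $S$, with $\theta$-stable Levi $L\supseteq H$. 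Note that $L$ is again quasi-split (its Borel subgroup is $B\cap L$), so all the results of this section apply to $L$, and the restricted simple roots of $L$ are exactly $S$, whence $\Psi_0(L)=\bigoplus_{\beta\in S}\mathfrak{g}_0^{-\beta}$. Since $e$ lies in a sum of negative root spaces it is a nilpotent element, and clearly $e\in\bigoplus_{\beta\in S}\mathfrak{g}_0^{-\beta}=\Psi_0(L)\subseteq\mathfrak{l}_0$, with every component $e_\beta$, $\beta\in S$, nonzero.

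It then remains to see that $e$ is \emph{principal} nilpotent in $\mathfrak{l}_0$. Let $\psi_e\in\Psi_0$ be the character corresponding to $e$ under $\psi_x(y)=i\langle x,y\rangle$. Because the invariant form pairs $\mathfrak{g}_0^{\beta}$ nondegenerately with $\mathfrak{g}_0^{-\beta}$ and annihilates it against $\mathfrak{h}_0$ and against every other root space, one reads off that $\psi_e|_{\mathfrak{u}_0}=0$ (the $\overline{\mathfrak{u}}_0$-component of $e$ vanishes) and that $\psi_e|_{\mathfrak{g}_0^{\beta}}\neq0$ for each $\beta\in S$. By Proposition~\ref{prop:nondeg} applied to $L$, the latter says precisely that $\psi_e$ restricts to a nondegenerate character of $\mathfrak{l}_0\cap\mathfrak{n}_0$, which, by the discussion preceding this lemma applied to $L$, is the same as saying that $e$ is a principal nilpotent element of $\mathfrak{l}_0$. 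Equivalently, one may simply invoke Lemma~\ref{lem:non-deg-par} for the character $\psi_e$ and take $\mathfrak{l}_0$ to be the Levi of the parabolic it produces.

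The proof is thus entirely formal once the dictionary between the two incarnations of $\Psi_0$ is made explicit; the one point I would write out with care is exactly this dictionary, namely that under $i\mathfrak{g}_0^{\ast}\cong\mathfrak{g}_0$ the vanishing of $\psi|_{\mathfrak{g}_0^{\beta}}$ corresponds to the vanishing of the $\mathfrak{g}_0^{-\beta}$-component, vanishing on $\mathfrak{u}_0$ corresponds to membership in $\mathfrak{l}_0\cap\overline{\mathfrak{n}}_0$, and nondegeneracy of a character of $\mathfrak{l}_0\cap\mathfrak{n}_0$ is Kostant's notion of principal nilpotency in $\mathfrak{l}_0$. There is no real obstacle here: the decomposition of $\overline{\mathfrak{n}}_0$ as $(\mathfrak{l}_0\cap\overline{\mathfrak{n}}_0)\oplus\overline{\mathfrak{u}}_0$ is orthogonal for the form, so each of these translations is immediate.
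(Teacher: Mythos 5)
Your proof is correct and follows essentially the same route as the paper: the paper deduces Lemma~\ref{lm:PL} directly from Lemma~\ref{lem:non-deg-par} after identifying $\Psi_0$ with $\bigoplus_{\beta\in\Pi_0}\mathfrak{g}_0^{-\beta}$ and using Proposition~\ref{prop:nondeg} to match nondegenerate characters with principal nilpotents. Your write-up merely makes the dictionary between the two incarnations of $\Psi_0$ explicit, which the paper leaves implicit.
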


We say that a nilpotent orbit $\mathcal{O}\subset\mathfrak{g}_{0}$ is a
PL-orbit if $\mathcal{O}\cap\mathfrak{l}_{0}$ is a principal nilpotent orbit
in some Levi subalgebra $\mathfrak{l}_{0}$. Let $PL\left(  G\right)  $ denote
the set of PL-orbits, and for an arbitrary nilpotent orbit $\mathcal{O}$ define%
\[
PL\left(  \mathcal{O}\right)  =\left\{  \mathcal{O}^{\prime}\leq\mathcal{O\mid
O}^{\prime}\in PL\left(  G\right)  \right\}  .
\]

\begin{lemma}
For each nilpotent orbit $\mathcal{O}$, the sets $\overline{\mathcal{O}}%
\cap\Psi_{0}$ and $PL\left(  \mathcal{O}\right)  $ determine each other uniquely.
\end{lemma}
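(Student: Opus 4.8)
The plan is to exhibit each of the two sets as an explicit function of the other. For brevity write $A:=\overline{\mathcal{O}}\cap\Psi_0$. I will prove the two identities
\[
PL(\mathcal{O})=\{\mathcal{O}'\in PL(G):\mathcal{O}'\cap A\neq\emptyset\},\qquad A=\bigcup_{\mathcal{O}'\in PL(\mathcal{O})}\bigl(\mathcal{O}'\cap\Psi_0\bigr),
\]
which together are precisely the asserted statement.

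The one substantive ingredient is the equivalence: \emph{a nilpotent orbit $\mathcal{O}'$ meets $\Psi_0$ if and only if $\mathcal{O}'\in PL(G)$}. One implication is Lemma \ref{lm:PL}: any $x\in\Psi_0$ is a principal nilpotent element of some standard Levi subalgebra, so $G\cdot x$ is a PL-orbit. For the converse I would start from a PL-orbit $\mathcal{O}'$, conjugate the Levi witnessing this to a standard one $\mathfrak{l}_0$ (harmless, since $\mathcal{O}'$ is a full $G$-orbit and every Levi is $G$-conjugate to a standard one), and note that with $\Pi_{\mathfrak{l}_0}\subseteq\Pi_0$ the simple roots of $\mathfrak{l}_0$ one has $\Psi_{0,L}=\bigoplus_{\beta\in\Pi_{\mathfrak{l}_0}}\mathfrak{g}_0^{-\beta}\subseteq\Psi_0$; by Proposition \ref{prop:nondeg} applied to $L$ the principal nilpotent orbit of $\mathfrak{l}_0$ meets $\Psi_{0,L}$, hence meets $\Psi_0$, so $\mathcal{O}'\cap\Psi_0\neq\emptyset$.

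Granting this equivalence, both identities are elementary bookkeeping with orbit closures, using that $\overline{\mathcal{O}}$ is $G$-stable and that $\mathcal{O}'\leq\mathcal{O}$ means $\mathcal{O}'\subseteq\overline{\mathcal{O}}$. For the first identity: if $\mathcal{O}'\in PL(\mathcal{O})$ then $\mathcal{O}'\cap\Psi_0\neq\emptyset$ and $\mathcal{O}'\subseteq\overline{\mathcal{O}}$, so $\mathcal{O}'\cap A=\mathcal{O}'\cap\Psi_0\neq\emptyset$; conversely, a PL-orbit $\mathcal{O}'$ with $\mathcal{O}'\cap A\neq\emptyset$ contains a point of $\overline{\mathcal{O}}$, hence satisfies $\mathcal{O}'\subseteq\overline{\mathcal{O}}$, i.e. $\mathcal{O}'\leq\mathcal{O}$, so $\mathcal{O}'\in PL(\mathcal{O})$. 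For the second identity: each $\mathcal{O}'\in PL(\mathcal{O})$ has $\mathcal{O}'\subseteq\overline{\mathcal{O}}$, so $\mathcal{O}'\cap\Psi_0\subseteq A$; and conversely, if $x\in A$ then the orbit $G\cdot x$ meets $\Psi_0$, hence is a PL-orbit, and lies in $\overline{\mathcal{O}}$, hence belongs to $PL(\mathcal{O})$, so $x\in(G\cdot x)\cap\Psi_0$ with $G\cdot x\in PL(\mathcal{O})$.

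I do not expect a genuine obstacle: the argument is a packaging exercise once the ``PL-orbit $\Longleftrightarrow$ meets $\Psi_0$'' equivalence is in place, and that equivalence is essentially Lemma \ref{lm:PL} together with Proposition \ref{prop:nondeg}. The only point calling for a little care is the converse half of that equivalence, namely checking that the principal nilpotent orbit of an arbitrary Levi genuinely meets $\Psi_0$ rather than merely $\mathfrak{n}_0$; I would handle this by the standard-conjugate reduction above instead of by a direct computation.
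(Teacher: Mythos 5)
Your proof is correct and follows essentially the same route as the paper's, which defines $X$ to be the union of orbits in $PL(\mathcal{O})$, sets $Y=\overline{\mathcal{O}}\cap\Psi_0$, and records $X=G\cdot Y$, $Y=X\cap\Psi_0$ as immediate from Lemma \ref{lm:PL}. The only genuine difference is that you spell out the converse half of the equivalence (every PL-orbit meets $\Psi_0$, via conjugating the witnessing Levi to a standard one and invoking Proposition \ref{prop:nondeg}), a step the paper leaves implicit; that is a small but real improvement in completeness, not a different argument.
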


\begin{proof}
Let $X$ denote the union of the orbits in $PL\left(  \mathcal{O}\right)  $ and
let $Y=\overline{\mathcal{O}}\cap\Psi_{0}$. Then by the previous lemma we get
$X=G\cdot Y$ and $Y=X\cap\Psi_{0}$.
\end{proof}

Therefore Theorem \ref{thm:ClassGroup} reduces to the following statement.

\begin{theorem}
\label{thm:PL} For a complex classical group, every nilpotent orbit
$\mathcal{O}$ is determined by $PL\left(  \mathcal{O}\right)  $.
\end{theorem}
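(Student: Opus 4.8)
The plan is to reduce Theorem \ref{thm:PL} to a purely combinatorial statement about partitions, and to prove that statement by explicitly exhibiting principal‑in‑a‑Levi orbits. For the reduction: by the discussion preceding Theorem \ref{thm:PL} it suffices to show that $\mathcal{O}\mapsto PL(\mathcal{O})$ is injective. Suppose $\mathcal{O}_1\neq\mathcal{O}_2$ are nilpotent orbits of a complex classical group $G$ with $PL(\mathcal{O}_1)=PL(\mathcal{O}_2)$; interchanging them we may assume $\mathcal{O}_1\not\leq\mathcal{O}_2$. Writing $\lambda,\mu$ for the partitions attached to $\mathcal{O}_1,\mathcal{O}_2$ by Theorem \ref{thm:orb}, the failure $\mathcal{O}_1\not\leq\mathcal{O}_2$ means $\lambda_1+\cdots+\lambda_k>\mu_1+\cdots+\mu_k$ for some $k$. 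So the theorem follows once we prove the \emph{Key Lemma}: for every nilpotent orbit of a complex classical group with partition $\lambda$ and every $k\geq 1$ there is a PL‑orbit, with partition $\nu$, such that $\nu\leq\lambda$ and $\nu_1+\cdots+\nu_k=\lambda_1+\cdots+\lambda_k$. Granting this, apply it to the index $k$ above: the resulting PL‑orbit $\mathcal{O}'$ satisfies $\mathcal{O}'\leq\mathcal{O}_1$ and $\nu_1+\cdots+\nu_k>\mu_1+\cdots+\mu_k$, hence $\nu\not\leq\mu$ and $\mathcal{O}'\not\leq\mathcal{O}_2$, contradicting $PL(\mathcal{O}_1)=PL(\mathcal{O}_2)$.

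For $G=\GL_n(\C)$ or $\SL_n(\C)$ the Key Lemma is immediate: the principal nilpotent element of $\gl_{\lambda_1}\times\cdots\times\gl_{\lambda_\ell}$ has Jordan type $\lambda$, so every orbit is a PL‑orbit and one takes $\nu=\lambda$. For the other groups I would first record which partitions are PL. If $\mathfrak{l}=\prod_i\gl_{a_i}\times\mathfrak{g}'$ is a standard Levi of a classical $\mathfrak{g}$ with $\mathfrak{g}'$ of the same type, its principal nilpotent has Jordan type $\bigcup_i(a_i,a_i)$ together with the principal type of $\mathfrak{g}'$, namely $(2m)$ for $\mathfrak{sp}_{2m}$, $(2m+1)$ for $\mathfrak{so}_{2m+1}$ and $(2m-1,1)$ for $\mathfrak{so}_{2m}$. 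Reading off multiplicities, a valid partition is PL iff it has very few parts of odd multiplicity: at most one in types $B$ and $C$ (automatically an even part in type $C$ and an odd part in type $B$), and in type $D$ either none, or exactly two — the smaller being $1$. One point peculiar to $\SO_n(\C)$: a very even partition $\lambda=2\mu$ labels two orbits, and \emph{both} are PL, realised by the two non‑conjugate Levi subgroups of type $\prod_i\gl_{\mu_i}$ coming from the two rulings of the isotropic Grassmannian; this forces the two very even orbits to have distinct $PL$‑sets. (The very even labels in type $D$ also need a little care when applying the Key Lemma, but partial sums do not see the labels, so one may always choose a label making $\mathcal{O}'\leq\mathcal{O}_1$.)

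The main obstacle is the Key Lemma for types $B$, $C$, $D$: for a valid partition $\lambda$ of the given type and a cut $k$, construct a PL‑partition $\nu\leq\lambda$ of that type whose first $k$ parts sum to $s:=\lambda_1+\cdots+\lambda_k$. The intended construction collapses $\lambda$ to principal‑in‑a‑Levi shape on the two sides of the cut separately: one chooses the $\gl$‑blocks according to the columns of the Young diagram of $\lambda$, adjusted (as in the $B/C/D$‑collapse procedure) so that the running column lengths acquire the parity forced by the type, and one absorbs the remaining even amount of mass into an $\mathfrak{sp}$‑ or $\mathfrak{so}$‑factor. One then verifies that $\nu\leq\lambda$ — each adjustment moves boxes downward and never across row $k$ — and that $\nu_1+\cdots+\nu_k=s$. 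Making this rigorous is a somewhat delicate case analysis based on the standard descriptions of the dominance order and of the collapse operation in \cite{CM} (together with the list of nilpotent orbits in \cite{Car}), and it is the heart of the proof. Finally, this picture is consistent with the failure of Theorem \ref{thm:ClassGroup} for exceptional groups, recorded in \S\ref{subsec:ExcGroups}: there the PL‑orbits are too sparse for such a construction to be available.
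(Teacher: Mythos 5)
Your high-level plan is exactly the paper's: reduce to the combinatorial statement that, for every partition $\lambda$ of the given type and every $k$, there is a $PL$-partition $\nu\leq\lambda$ of that type with $\nu_1+\cdots+\nu_k=\lambda_1+\cdots+\lambda_k$. Your "Key Lemma" is literally the paper's Lemma \ref{lem:RecPart}; the paper then recovers each partial sum $\lambda_1+\cdots+\lambda_k$ from $PL(\lambda)$ as $\max\{\mu_1+\cdots+\mu_k:\mu\in PL(\lambda)\}$, which is the contrapositive form of your argument. Your characterization of $PL(G)$ for types $B,C,D$ agrees with the paper's $\mathcal{X}(G)$ (with $OM$ counting only parts $>1$), and your treatment of $SO(d)$ is the same idea the paper uses: a very even orbit is itself a $PL$-orbit, hence is the maximum of its own $PL$-set.

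The gap is that you never prove the Key Lemma. You gesture at a "collapse"-based construction and concede that "making this rigorous is a somewhat delicate case analysis ... and it is the heart of the proof." It is indeed the heart, and as written your proposal does not close it. Moreover the collapse operation is the wrong tool here: it modifies the top rows of the diagram and there is no easy control over the partial sum at a prescribed cut $k$. The paper's proof is much more elementary and avoids collapse entirely. Let $j$ be the last index with $\lambda_j=\lambda_k$. Among $\lambda_1,\dots,\lambda_j$, repeatedly pick two parts $p,q$ of odd multiplicity (necessarily of the same parity by the $\mathcal{P}(G)$-constraint) and replace $(p,q)$ by $\bigl((p+q)/2,(p+q)/2\bigr)$; each such move stays in $\mathcal{P}(G)$, lowers the partition in dominance order, and fixes the sum $\lambda_1+\cdots+\lambda_k$. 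After finitely many steps at most one part $>1$ among $\lambda_1,\dots,\lambda_j$ has odd multiplicity. Then replace the tail $\lambda_{j+1},\lambda_{j+2},\dots$ by a string of $1$'s of total size $\lambda_{j+1}+\lambda_{j+2}+\cdots$. Since $\lambda_{j+1}<\lambda_j$, the multiplicities of the top $j$ parts are unchanged, so $|OM(\nu)|\leq1$ and $\nu\in\mathcal{X}(G)=PL(G)$, while $\nu\leq\lambda$ and the first $k$ partial sums are preserved. You should replace the collapse sketch by this explicit pair-merging-plus-padding construction, and verify the two easy points: that merging $(p,q)\mapsto(r,r)$ preserves membership in $\mathcal{P}_{\pm1}(d)$, and that the tail replacement also does.
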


We will prove this in \S \ref{subsec:mp} after describing the sets $PL\left(
G\right)  $ for classical groups. First we recall the classification of
nilpotent orbits in classical complex Lie algebras.

\subsection{Nilpotent orbits for complex classical groups}

If $G=GL\left(  d,\mathbb{R}\right)  $ or if $G$ is a complex classical group
as in (\ref{=ccg}), then the real nilpotent orbits of $G$ are naturally
indexed by partitions, as in \cite{CM}. A \emph{partition} $\lambda$ of $d$ of
length $l$ is a weakly decreasing integer sequence $\lambda_{1}\geq\lambda
_{2}\geq\cdots\geq\lambda_{l}>0$ such that $\sum_{j}\lambda_{j}=d$. The
$\lambda_{i}$ are called the parts of $\lambda$, and the number of parts of
size $p$ is called the multiplicity $m_{p}\left(  \lambda\right)  $ of $p$. We
write $\mathcal{P}\left(  d\right)  $ for the set of all partitions of $d$ and
$\mathcal{P}_{1}\left(  d\right)  $ (resp. $\mathcal{P}_{-1}\left(  d\right)
$ ) for the subset such that $m_{p}\left(  \lambda\right)  $ is even for all
even (resp. odd) $p$. We set $\lambda_{j}=0$ if $j$ exceeds the length of
$\lambda$, and we define a partial order on partitions as follows:%
\[
\lambda\leq\mu\text{ iff }\lambda_{1}+\cdots+\lambda_{k}\leq\mu_{1}+\cdots
+\mu_{k}\text{ for all }k\text{.}%
\]

\begin{theorem}
\label{thm:orb}There is an order-preserving bijection between nilpotent
$G$-orbits and the set $\mathcal{P}\left(  G\right)  $ below:%
\[%
\begin{tabular}
[c]{|c|c|c|c|}\hline
$G$ & $GL\left(  d,\mathbb{R}\right)  ,GL\left(  d,\mathbb{C}\right)
,SL\left(  d,\mathbb{C}\right)  $ & $O\left(  d,\mathbb{C}\right)  $ &
$Sp\left(  d,\mathbb{C}\right)  $\\\hline
$\mathcal{P}\left(  G\right)  $ & $\mathcal{P}\left(  d\right)  $ &
$\mathcal{P}_{1}\left(  d\right)  $ & $\mathcal{P}_{-1}\left(  d\right)
$\\\hline
\end{tabular}
\ \
\]

\end{theorem}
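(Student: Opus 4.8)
The statement is essentially classical, so my plan is to recall the partition parametrization of nilpotent orbits in classical Lie algebras, treating the four families in (\ref{=ccg}) in turn and then checking that the bijection is order-preserving. First, for $G=GL(d,\mathbb{C})$ I identify $\mathfrak{g}^{\ast}$ with $\mathfrak{g}=\mathfrak{gl}(d,\mathbb{C})$ via the trace form, so the coadjoint action becomes conjugation, and Jordan normal form gives a bijection between nilpotent conjugacy classes and $\mathcal{P}(d)$, recording the block sizes. For $SL(d,\mathbb{C})$ one has $GL=SL\cdot Z$ with $Z$ the center, which acts trivially in the adjoint representation, so $SL(d,\mathbb{C})$-orbits coincide with $GL(d,\mathbb{C})$-orbits. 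For $GL(d,\mathbb{R})$ the real Jordan form of a nilpotent endomorphism has blocks of the same sizes as over $\mathbb{C}$, and every complex nilpotent orbit has exactly one $GL(d,\mathbb{R})$-rational form, so again one obtains $\mathcal{P}(d)$. (Recall that when $G$ is one of the complex classical groups regarded as a real group, $\mathfrak{g}_{0}$ is itself complex, so ``real nilpotent orbits of $G$'' literally means nilpotent orbits of the complex Lie algebra $\mathfrak{g}_0$.)

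Next, for $G=Sp(d,\mathbb{C})$ or $O(d,\mathbb{C})$, a nilpotent element $e$ of the corresponding classical Lie algebra decomposes the defining representation, viewed as a module over an $\mathfrak{sl}_2$-triple through $e$, into a direct sum of strings; nondegeneracy of the invariant bilinear form then forces the classical parity constraint — odd parts occur with even multiplicity in the symplectic case and even parts with even multiplicity in the orthogonal case — which are precisely the conditions defining $\mathcal{P}_{-1}(d)$ and $\mathcal{P}_{1}(d)$, and conversely every such partition is realized. Here the use of the \emph{full} orthogonal group is essential: a ``very even'' partition (all parts even) splits into two $SO(d,\mathbb{C})$-orbits, but these are fused by the action of $O(d,\mathbb{C})\setminus SO(d,\mathbb{C})$, so over $O(d,\mathbb{C})$ the parametrization by $\mathcal{P}_{1}(d)$ is once more a genuine bijection.

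It then remains to match the closure order with the dominance order on partitions. In type $A$ this is the Gerstenhaber--Hesselink theorem: $\mathcal{O}_{\lambda}\subseteq\overline{\mathcal{O}_{\mu}}$ iff $\rk(e_{\lambda}^{k})\leq\rk(e_{\mu}^{k})$ for all $k$, which by the formula $\dim\ker(e_{\lambda}^{k})=\sum_{i}\min(\lambda_{i},k)$ is equivalent to $\lambda\leq\mu$. For $Sp$ and $O$ the analogous statement holds with the same proof, since the inclusion into $\mathfrak{gl}(d,\mathbb{C})$ is order-preserving and $\mathcal{P}_{\pm1}(d)$ is a sub-poset of $\mathcal{P}(d)$; the real cases reduce to the complex ones because the defining rank conditions are already defined over $\mathbb{R}$. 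All of this is assembled in \cite{CM}, and the closure order can also be found in \cite{Car}, so at this level of detail the proof is a citation together with the observations above.

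I do not expect a genuine obstacle, only bookkeeping: the one point that needs real care is the $O$ versus $SO$ distinction, i.e.\ ensuring that the very-even partitions are handled so that $\mathcal{P}(G)$ is in bijection with — rather than two-to-one over, or only partially onto — the set of orbits. Accordingly I would cite \cite{CM} for the partition classification and \cite{Car} for the closure order, and explicitly record the very-even remark so the reader sees why the full orthogonal group appears in the table rather than $SO(d,\mathbb{C})$.
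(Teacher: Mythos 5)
Your proposal is correct and follows essentially the same route as the paper, which states this classical result without proof and simply refers the reader to \cite{CM} (Chapters 5 and 6); your sketch of Jordan form, the parity constraints, the $O$ versus $SO$ fusion of very even orbits, and the Gerstenhaber--Hesselink description of the closure order is an accurate summary of what that reference contains. The only step stated too quickly is the claim that the closure order for $Sp$ and $O$ follows ``with the same proof'' because the inclusion into $\gl(d,\C)$ is order-preserving --- that gives only one implication, and the converse (that $\lambda\leq\mu$ in $\mathcal{P}_{\pm1}(d)$ forces $\mathcal{O}_{\lambda}\subseteq\overline{\mathcal{O}_{\mu}}$ inside the smaller algebra) is a separate theorem --- but since you defer to \cite{CM} and \cite{Car} for exactly this point, no gap remains.
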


The case of $SO\left(  d,\mathbb{C}\right)  $ is slightly different. We say
that $\lambda\in\mathcal{P}_{1}\left(  d\right)  $ is \textquotedblleft very
even\textquotedblright\ if $\lambda$ has only even parts. Note that each even
part must occur with even multiplicity, forcing $d$ to be a multiple of $4$.

\begin{theorem}
The nilpotent orbits of $SO\left(  d,\mathbb{C}\right)  $ are the same as
$O\left(  d,\mathbb{C}\right)  $ except that the very even\ orbits
$\mathcal{O}_{\lambda}$ split into two orbits for $SO\left(  d,\mathbb{C}%
\right)  $, denoted $\mathcal{O}_{\lambda}^{I}$ and $\mathcal{O}_{\lambda
}^{II}$.
\end{theorem}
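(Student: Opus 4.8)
This is the classical description of nilpotent orbits for $SO(d,\mathbb{C})$ (see \cite{CM}); here is the argument I would give. The plan is to use that $SO(d,\mathbb{C})$ is the identity component of $O(d,\mathbb{C})$, of index two, so each $O(d,\mathbb{C})$-orbit $\mathcal{O}_\lambda$ of nilpotent elements of $\mathfrak{so}(d,\mathbb{C})$ is either a single $SO(d,\mathbb{C})$-orbit or splits into two. For $X\in\mathcal{O}_\lambda$ the number of these equals the index $[O(d,\mathbb{C}):SO(d,\mathbb{C})\cdot\mathrm{Stab}_{O(d,\mathbb{C})}(X)]$, which is $2$ precisely when $\mathrm{Stab}_{O(d,\mathbb{C})}(X)\subseteq SO(d,\mathbb{C})$, i.e. when the stabilizer contains no element of determinant $-1$. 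So everything reduces to the claim that $\mathrm{Stab}_{O(d,\mathbb{C})}(X)$ contains a determinant $-1$ element if and only if $\lambda$ has at least one odd part --- equivalently, $\lambda$ is not very even.

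To establish the claim I would complete $X$ to an $\mathfrak{sl}_2$-triple $(X,H,Y)$ by Jacobson--Morozov and decompose the defining space $V\cong\mathbb{C}^d$ as an $\mathfrak{sl}_2$-module: $V=\bigoplus_p V(p)$, where $V(p)$ is the sum of the copies of the irreducible $p$-dimensional representation, so that $V(p)\cong\mathbb{C}^p\otimes\mathbb{C}^{m_p}$ with $m_p=m_p(\lambda)$. The orthogonal form restricts to a nondegenerate pairing on each $V(p)$ which, on the multiplicity space $\mathbb{C}^{m_p}$, is symmetric when $p$ is odd and symplectic when $p$ is even (since the $p$-dimensional $\mathfrak{sl}_2$-module carries an orthogonal form for $p$ odd and a symplectic one for $p$ even). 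The unipotent radical of $\mathrm{Stab}_{O(d,\mathbb{C})}(X)$ is connected and consists of determinant-one elements, so the component group of $\mathrm{Stab}_{O(d,\mathbb{C})}(X)$ coincides with that of its reductive part $Z_{O(d,\mathbb{C})}(X,H,Y)\cong\prod_{p\text{ odd}}O(m_p)\times\prod_{p\text{ even}}Sp(m_p)$, the $p$-th factor acting on the multiplicity space of $V(p)$. An element $g$ of the factor $O(m_p)$ acts on $V(p)\cong\mathbb{C}^p\otimes\mathbb{C}^{m_p}$ with determinant $(\det g)^p$, which equals $\det g$ when $p$ is odd; the symplectic factors (and the unipotent radical) contribute only determinant $1$. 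As $O(m_p)$ has a determinant $-1$ element as soon as $m_p\geq1$, we conclude that $\mathrm{Stab}_{O(d,\mathbb{C})}(X)$ meets $O(d,\mathbb{C})\setminus SO(d,\mathbb{C})$ exactly when $m_p\geq1$ for some odd $p$, which is the claim.

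It remains to record the bookkeeping. When $\lambda$ is very even the two $SO(d,\mathbb{C})$-orbits inside $\mathcal{O}_\lambda$ are the two cosets of $SO(d,\mathbb{C})\cdot\mathrm{Stab}_{O(d,\mathbb{C})}(X)=SO(d,\mathbb{C})$ in $O(d,\mathbb{C})$, so they are genuinely distinct and are interchanged by conjugation by any element of $O(d,\mathbb{C})\setminus SO(d,\mathbb{C})$; we label them $\mathcal{O}_\lambda^{I}$ and $\mathcal{O}_\lambda^{II}$. Since very-evenness means all parts of $\lambda$ are even and $\lambda\in\mathcal{P}_1(d)$ then forces all their multiplicities to be even, $d$ is a multiple of $4$, as noted before the theorem. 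Finally, the closure order on $SO(d,\mathbb{C})$-orbits agrees with the one for $O(d,\mathbb{C})$: a non-very-even $\mathcal{O}_\lambda$ equals its $O(d,\mathbb{C})$-orbit, so its closure is read off from Theorem \ref{thm:orb}, and the relations among the split orbits follow from the same description together with the $O(d,\mathbb{C})$-action permuting the labels $I, II$. The main obstacle is the middle paragraph: identifying $\mathrm{Stab}_{O(d,\mathbb{C})}(X)$ explicitly from $\lambda$ and computing the determinant character on its component group, where one has to keep careful track of the symmetric-versus-symplectic nature of the form on the multiplicity spaces and of the $(\det)^p$ factor produced by the $p$-dimensional $\mathfrak{sl}_2$-block.
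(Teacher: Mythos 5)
Your proof is correct and reproduces the standard argument: reduce the splitting question to whether $\mathrm{Stab}_{O(d,\mathbb{C})}(X)$ is contained in $SO(d,\mathbb{C})$, pass to an $\mathfrak{sl}_2$-triple, decompose $V$ into isotypic pieces $\mathbb{C}^p\otimes\mathbb{C}^{m_p}$, identify the reductive centralizer as $\prod_{p\text{ odd}}O(m_p,\mathbb{C})\times\prod_{p\text{ even}}Sp(m_p,\mathbb{C})$, and read off the determinant character $\prod_{p\text{ odd}}(\det g_p)^p$, which hits $-1$ exactly when some odd part occurs. This is precisely the reasoning in \cite{CM} (Chapters~5--6), to which the paper defers without giving an argument of its own, so your sketch is a faithful reconstruction of the intended proof; the final remarks on the closure order go beyond the theorem statement and are stated a bit loosely (Spaltenstein's result is what settles the order among the split pairs), but they do not affect the correctness of the part you actually needed.
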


For proofs we refer the reader to \cite{CM}, especially Chapters 5 and 6.

\subsection{Principal nilpotents in Levi subgroups}

\label{subsec:PrinClass}

In this section we assume that $G=GL\left(  d,\mathbb{C}\right)  ,O\left(
d,\mathbb{C}\right)  $ or $Sp\left(  d,\mathbb{C}\right)  $, and write
$GL\left(  d\right)  $ etc. for simplicity. Nilpotent orbits for $G$ are
parameterized by partitions of $d$ as in Theorem \ref{thm:orb}; we will regard
$PL\left(  G\right)  $ as a set of partitions and write $PL\left(
\lambda\right)  $ instead $PL\left(  \mathcal{O}_{\lambda}\right)  .$

\begin{lemma}
Let $\lambda_{\max}$ be the partition corresponding to a principal nilpotent
orbit; then%
\[
\lambda_{\max}=\left\{
\begin{tabular}
[c]{ll}%
$\left(  d-1,1,0,\ldots\right)  $ & if $G=O\left(  d\right)  $ with $d$ even\\
$\left(  d,0,0,\ldots\right)  $ & otherwise
\end{tabular}
\ \right.
\]

\end{lemma}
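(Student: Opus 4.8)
The plan is to identify the principal nilpotent orbit of each classical group explicitly via its partition. The guiding principle is that the principal nilpotent orbit is the unique dense orbit in $\mathcal{N}(\mathfrak{g})$, equivalently the maximal element of $\mathcal{P}(G)$ under the dominance order $\leq$ of Theorem \ref{thm:orb}. So the task reduces to a purely combinatorial one: find the largest partition in $\mathcal{P}(d)$, $\mathcal{P}_1(d)$ or $\mathcal{P}_{-1}(d)$, as appropriate. First I would dispose of the $GL(d)$ case, where $\mathcal{P}(G)=\mathcal{P}(d)$ and the unique maximal partition is obviously $(d)$; this also handles the comment ``otherwise'' insofar as $GL$ is concerned, and the regular nilpotent in $\mathfrak{gl}_d$ is the single Jordan block, as is standard.

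Next I would treat $Sp(d)$ (with $d$ necessarily even). Here $\mathcal{P}(G)=\mathcal{P}_{-1}(d)$, the partitions in which every \emph{odd} part has even multiplicity. The partition $(d)$ itself lies in $\mathcal{P}_{-1}(d)$: its only part is $d$, which is even, so there is no odd-part condition to violate. Since $(d)$ is the maximum of all of $\mathcal{P}(d)$ a fortiori it is the maximum of the subset $\mathcal{P}_{-1}(d)$, giving $\lambda_{\max}=(d,0,0,\dots)$. The same reasoning works for $O(d)$ when $d$ is \emph{odd}: then $(d)$ has a single odd part of multiplicity one, but $\mathcal{P}_1(d)$ only constrains the multiplicities of \emph{even} parts, so $(d)\in\mathcal{P}_1(d)$ and is again the maximum. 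This covers the second line of the table except for the one remaining case.

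The genuinely different case is $G=O(d)$ with $d$ even, which is the one place the statement diverges. Now $\mathcal{P}(G)=\mathcal{P}_1(d)$, and $(d)$ is \emph{not} admissible: $d$ is an even part occurring with odd multiplicity $1$. I would argue that $(d-1,1)$ is admissible — its parts are $d-1$ (odd, so unconstrained) and $1$ (odd, unconstrained), so $(d-1,1)\in\mathcal{P}_1(d)$ — and that it is the maximum of $\mathcal{P}_1(d)$. For maximality, suppose $\mu\in\mathcal{P}_1(d)$ with $\mu\geq(d-1,1)$ under dominance. Then $\mu_1\geq d-1$, so $\mu_1\in\{d-1,d\}$. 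If $\mu_1=d$ then $\mu=(d)$, which is not in $\mathcal{P}_1(d)$; hence $\mu_1=d-1$, and then $\mu_1+\mu_2\geq d$ forces $\mu_2\geq 1$, so $\mu=(d-1,1)$. Thus $\lambda_{\max}=(d-1,1,0,\dots)$ as claimed. I expect the only mild subtlety — not really an obstacle — is making sure the reader accepts that ``principal nilpotent orbit'' $=$ ``dense orbit'' $=$ ``maximal partition in $\mathcal{P}(G)$''; this is standard (see \cite{CM}, Chapters 5 and 6, cited just above) and is the point where Theorem \ref{thm:orb} is invoked. One should also recall that for $O(d)$ there are no ``very even'' complications for the top orbit (the top partition is not very even unless one is in a degenerate small-rank situation), so the distinction between $O(d)$ and $SO(d)$ does not affect $\lambda_{\max}$; I would add a one-line remark to that effect.
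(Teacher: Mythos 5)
Your proof is correct and takes essentially the same route as the paper: the paper's proof consists of the single observation that the principal orbit is maximal in closure order together with the assertion that the "easy verification" of maximality of the stated partitions in $\mathcal{P}(G)$ is left to the reader, and your write-up simply supplies that verification explicitly (together with a small sanity check about very even partitions that the paper omits).
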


\begin{proof}
The principal nilpotent orbit is maximal with respect to the closure order.
The result follows from Theorem \ref{thm:orb} and the easy verification that
$\lambda_{\max}$ is the maximal element in $\mathcal{P}\left(  G\right)  $.
\end{proof}

For a partition $\lambda$ write $OM\left(  \lambda\right)  =\left\{  p>1\mid
m_{p}\left(  \lambda\right)  \text{ is odd}\right\}  $ and define%
\[
\mathcal{X}\left(  G\right)  =\left\{
\begin{tabular}
[c]{cl}%
$\mathcal{P}\left(  G\right)  =\mathcal{P}\left(  d\right)  $ & if
$G=GL\left(  d\right)  $\\
$\left\{  \lambda\in\mathcal{P}\left(  G\right)  :\left\vert OM\left(
\lambda\right)  \right\vert \leq1\right\}  $ & otherwise
\end{tabular}
\ \ \right.
\]

\begin{proposition}
If $G=GL\left(  d\right)  ,O\left(  d\right)  $ or $Sp\left(  d\right)  $ then
$PL\left(  G\right)  =\mathcal{X}\left(  G\right)  $.
\end{proposition}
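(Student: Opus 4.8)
### Proof Proposal

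The plan is to prove both inclusions $PL(G)\subseteq\mathcal{X}(G)$ and $\mathcal{X}(G)\subseteq PL(G)$ by analyzing which partitions arise as principal nilpotents in standard Levi subalgebras of the classical groups. Recall that a standard Levi subalgebra $\mathfrak{l}_0$ of $GL(d)$ is a block-diagonal product $\prod_i GL(d_i)$, while for $O(d)$ or $Sp(d)$ it has the form $G(d_0)\times\prod_i GL(d_i)$ where $G(d_0)$ is a classical group of the same type as $G$ and the $GL(d_i)$ blocks come in ``paired'' ranks dictated by the bilinear form. The key computation is therefore: \emph{what partition of $d$ describes the orbit in $G$ induced by a principal nilpotent in such an $\mathfrak{l}_0$?} For $GL$ this is immediate — a principal nilpotent in $\prod GL(d_i)$ corresponds to the partition whose parts are $d_1,\dots,d_k$ (in decreasing order), so every partition of $d$ occurs and $PL(GL(d))=\mathcal{P}(d)=\mathcal{X}(GL(d))$. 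This disposes of the $GL$ case entirely.

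For $G=O(d)$ or $Sp(d)$, first I would record the building blocks. A principal nilpotent in a $GL(m)$ factor that pairs with another $GL(m)$ factor contributes, via the standard embedding into the classical group, the pair of parts $(m,m)$; a principal nilpotent in the ``core'' classical factor $G(d_0)$ contributes $\lambda_{\max}$ for $G(d_0)$, which by the preceding lemma is $(d_0)$ except for even orthogonal where it is $(d_0-1,1)$. Thus a PL-orbit for $G$ is obtained by taking a (possibly empty) even number of parts $m$ for each size $m>1$ — more precisely, pairs $(m,m)$ — together with the core contribution, plus an arbitrary number of parts equal to $1$ (the $GL(1)$ factors, which for $O$ and $Sp$ need not be paired, since $GL(1)$ embeds self-dually). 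Summing the multiplicities, one sees that in the resulting partition $\lambda$, every part $p>1$ other than possibly the one ``extra'' part coming from the core has even multiplicity; hence $|OM(\lambda)|\le 1$, giving $PL(G)\subseteq\mathcal{X}(G)$. (One must be slightly careful with the even-orthogonal core, where $\lambda_{\max}=(d_0-1,1)$ introduces two odd-multiplicity parts unless $d_0-1=1$, i.e. $d_0=2$; I would check that in all admissible cases the extra $1$'s get absorbed so that the count $|OM(\lambda)|\le 1$ is preserved — this is the one place needing real care.)

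For the reverse inclusion $\mathcal{X}(G)\subseteq PL(G)$, given $\lambda\in\mathcal{P}(G)$ with $|OM(\lambda)|\le 1$ I would construct an explicit Levi. Write $\lambda$ by grouping its parts: parts $>1$ of even multiplicity split into pairs $(m,m)$, each realized by a paired $GL(m)\times GL(m)$; the at most one part $q>1$ of odd multiplicity, say with multiplicity $2s+1$, contributes $s$ pairs $(q,q)$ plus one leftover $q$ which must be absorbed into the core classical factor; and all parts equal to $1$ go into unpaired $GL(1)$ factors (legitimate for $O$ and $Sp$). The leftover $q$ together with the mandatory core then must form a \emph{principal} orbit of a smaller classical group of the correct type — here one uses that $(q)$ or $(q,1)$ or similar is $\lambda_{\max}$ for the appropriate $G(q)$ or $G(q+1)$, and that the type constraints (oddness/evenness of part multiplicities forced by the form, as in Theorem \ref{thm:orb}) are exactly what the hypothesis $\lambda\in\mathcal{P}(G)$ guarantees. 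I expect the main obstacle to be the bookkeeping in this last step: verifying that for each $\lambda\in\mathcal{X}(G)$ the ``core + leftover'' piece genuinely matches a principal nilpotent of a same-type classical group, rather than merely having the right total size — this is where the parity conditions in $\mathcal{P}_1$ versus $\mathcal{P}_{-1}$ interact with the even-orthogonal exceptional value of $\lambda_{\max}$, and a short case analysis (orthogonal with $d$ even, orthogonal with $d$ odd, symplectic) will be needed to close it cleanly.
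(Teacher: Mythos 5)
Your overall strategy matches the paper's: compute the partition attached to a principal nilpotent in each standard Levi $L_\kappa = G(d')\times\prod_i GL(\kappa_i)$ and verify both inclusions $PL(G)\subseteq\mathcal{X}(G)$ and $\mathcal{X}(G)\subseteq PL(G)$. However, there is a concrete error that breaks the reverse inclusion. You assert that "$GL(1)$ embeds self-dually" and hence contributes a single part $1$, so that an arbitrary number of unpaired $1$'s is available. This is false: for $O(d)$ and $Sp(d)$ the Levi constraint is $d = d' + 2\sum_i\kappa_i$, and \emph{every} $GL(\kappa_i)$, including $\kappa_i=1$, acts on a $\kappa_i$-dimensional isotropic space together with its dual and therefore contributes the pair $(\kappa_i,\kappa_i)$. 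The only parts not occurring in pairs come from the core $G(d')$, which contributes $(d')$ or $(d'-1,1)$; hence there is at most \emph{one} isolated part of size $1$, and that only when the core is even-orthogonal or equals $O(1)$.

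This mistake is harmless for $PL(G)\subseteq\mathcal{X}(G)$ because $OM$ ignores the part $1$ by definition, but it invalidates your construction for $\mathcal{X}(G)\subseteq PL(G)$. For instance $\lambda=(3,1,1,1)\in\mathcal{P}_1(6)$ has $\lvert OM\rvert = 1$; your recipe (leftover $3$ to the core, three $1$'s to ``unpaired $GL(1)$'s'') would produce a Levi inside $O(3+3)$, but no such Levi exists in $O(6)$. The actual Levi is $O(4)\times GL(1)$, with the core contributing $(3,1)$ and $GL(1)$ contributing $(1,1)$. The paper's proof closes exactly this gap by a parity count: $\lambda$ has $0$, $1$, or $2$ parts (now including $1$) of odd multiplicity; in the last case $1$ is one of them, forcing $G=O(d)$ with $d$ even, and both odd-multiplicity parts are absorbed by the core's $(d'-1,1)$. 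You gesture at a core of the form $(q,1)$, but this is inconsistent with your prescription that all $1$'s go to $GL(1)$ factors; the argument only works once $GL(1)$ is correctly treated as paired, so that the placement of the $1$'s is dictated rather than free.
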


\begin{proof}
For $G=GL\left(  d\right)  $ the proposition asserts that every orbit is
principal in some Levi subgroup, which follows from the Jordan canonical form.

The Levi subgroups of $O\left(  d\right)  $ and $Sp\left(  d\right)  $ are
given as follows: up to conjugacy there is one for each partition $\kappa$
with $\kappa_{1}\geq\cdots\geq\kappa_{r}$ \ such that $d^{\prime}=d-2\left(
\mathfrak{\kappa}_{1}+\cdots+\mathfrak{\kappa}_{r}\right)  \geq0$. Explicitly%
\[
L_{\kappa}=\left\{
\begin{tabular}
[c]{ll}%
$O\left(  d^{\prime}\right)  \times GL\left(  \kappa_{1}\right)  \times
\cdots\times GL\left(  \kappa_{r}\right)  $ & if $G=O\left(  d\right)  $\\
$Sp\left(  d^{\prime}\right)  \times GL\left(  \kappa_{1}\right)  \times
\cdots\times GL\left(  \kappa_{r}\right)  $ & if $G=Sp\left(  d\right)  $%
\end{tabular}
\ \ \right.  \text{ }%
\]

The principal nilpotent orbit in $L_{\kappa}$ can be determined by the
previous lemma. In the partition $\lambda_{\kappa}$ for corresponding
nilpotent orbit in ${\mathfrak{g}}_{0}^{\ast}$, each $GL\left(
{\kappa}_{i}\right)  $ factor contributes $\emph{two}$ parts of size
${\kappa}_{i}$. Thus up to decreasing reordering of the parts, we
have%
\[
\lambda_{\kappa}=\left\{
\begin{tabular}
[c]{ll}%
$\left(  d^{\prime}-1,1,{\kappa}_{1},{\kappa}_{1}%
,\ldots,{\kappa}_{r},{\kappa}_{r},0,0,\cdots\right)  $ & if
$G=O\left(  d\right)  $ with $d$ even\\
$\left(  d^{\prime},{\kappa}_{1},{\kappa}_{1},\ldots
,{\kappa}_{r},{\kappa}_{r},0,0,\cdots\right)  $ & otherwise
\end{tabular}
\ \ \right.
\]
By definition, parts with even multiplicity do not contribute to $OM\left(
\lambda\right)  $, thus
\[
OM\left(  \lambda_{\kappa}\right)  =\left\{
\begin{tabular}
[c]{ll}%
$OM\left(  \left(  d^{\prime}-1,1\right)  \right)  $ & if $G=O\left(
d\right)  $ with $d$ even\\
$OM\left(  \left(  d^{\prime}\right)  \right)  $ & otherwise
\end{tabular}
\ \ \right.
\]
Moreover since the part $1$ does not contribute to $OM\left(  \lambda\right)
$,\ we get $\left\vert OM\left(  \lambda_{\kappa}\right)  \right\vert \leq1$.
Thus $PL\left(  G\right)  \subseteq\mathcal{X}\left(  G\right)  $.

Conversely suppose $\lambda\in\mathcal{P}\left(  G\right)  $ satisfies
$\left\vert OM\left(  \lambda\right)  \right\vert \leq1$. Then $\lambda$ has
$0,1,$ or $2$ parts with odd multiplicity, and in the last case the part $1$
has odd multiplicity. Thus the last case can only occur if $G=O\left(
d\right)  $, and since there are exactly two odd parts with odd multiplicity,
$d$ must be even. It follows now that $\lambda$ is of the form $\lambda
_{\kappa}$ for some $\kappa$. Thus $\mathcal{X}\left(  G\right)  \subseteq
PL\left(  G\right)  $.
\end{proof}

We prove Theorem \ref{thm:PL} in the next subsection, using the following lemma.

\begin{lemma}
\label{lem:RecPart} For each $\lambda\in\mathcal{P}\left(  G\right)  $ and
each $k$ there is a partition $\mu=\mu\left(  \lambda,k\right)  \in PL\left(
\lambda\right)  $ such that
\[
\mu_{1}+\cdots+\mu_{k}=\lambda_{1}+\cdots+\lambda_{k}%
\]

\end{lemma}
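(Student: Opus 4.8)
The plan is to treat the three families of $G$ separately. If $G=GL(d,\mathbb{C})$ or $SL(d,\mathbb{C})$ then $PL(G)=\mathcal{P}(d)$ by the Proposition above, so $\lambda\in PL(\lambda)$ and one may simply take $\mu=\lambda$. Assume henceforth $G=O(d)$ or $Sp(d)$, so that $PL(G)=\mathcal{X}(G)=\{\nu\in\mathcal{P}(G):|OM(\nu)|\le 1\}$. Write $l=\mathrm{length}(\lambda)$, $s=\lambda_1+\dots+\lambda_k$ and $t=\lambda_{k+1}+\dots+\lambda_l$.

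The first step is a \emph{gluing principle}: if $\alpha$ is a partition of $s$ with at most $k$ parts and $\alpha\le(\lambda_1,\dots,\lambda_k)$ in the dominance order, and $\beta$ is a partition of $t$ with $\beta\le(\lambda_{k+1},\dots,\lambda_l)$, then the sorted concatenation $\mu:=\alpha\cup\beta$ satisfies $\mu\le\lambda$ and $\mu_1+\dots+\mu_k=s$. To see this, subtract the $(k-1)$-st dominance inequality for $\alpha$ from the equality $\alpha_1+\dots+\alpha_k=s=\lambda_1+\dots+\lambda_k$ to get $\alpha_k\ge\lambda_k$; since $\beta_1\le\lambda_{k+1}\le\lambda_k\le\alpha_k$, the parts of $\alpha$ occupy exactly the first $k$ slots of $\mu$, and the partial-sum inequalities defining $\mu\le\lambda$ then reduce, for indices $<k$, $=k$, and $>k$, to the dominance of $\alpha$, the sum condition, and the dominance of $\beta$, respectively. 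So it suffices to produce $\alpha$ and $\beta$ as above with $\alpha\cup\beta\in PL(G)$.

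For $\alpha$ I would take the almost-rectangular partition of $s$ into at most $k$ parts: writing $s=qk+r$ with $0\le r<k$, set $\alpha=((q+1)^r,q^{k-r})$, which is the minimum in dominance among partitions of $s$ with at most $k$ parts, hence is $\le(\lambda_1,\dots,\lambda_k)$; and for $\beta$ I would take $(1^t)$ modified by a bounded number of "correction parts" equal to $q$ (only $q$ can be added, since any part $q+1$ in $\beta$ would violate $\beta_1\le\lambda_{k+1}\le q$). Because $\alpha$ uses at most two part sizes, exactly one of them even, and $\beta$ only adds parts from $\{1,q\}$, the resulting $\mu$ has at most one even part (when $G=Sp(d)$) resp.\ one odd part $>1$ (when $G=O(d)$), so the condition $|OM(\mu)|\le 1$ is automatic. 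The only remaining requirement is membership $\mu\in\mathcal{P}(G)$, i.e.\ that every odd (for $Sp$) resp.\ even (for $O$) part of $\mu$ has even multiplicity; concretely this fixes the parity of the number of $1$'s in $\beta$ together with one congruence on the multiplicity of $q$ and one on that of $q+1$ in $\mu$.

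Verifying these parities is the technical heart of the argument, and the step I expect to be the main obstacle. The key input is that $\lambda\in\mathcal{P}(G)$ severely constrains the multiplicities of $\lambda$ itself, and this is what guarantees the required corrections are available. For instance, if the almost-rectangular $\alpha$ has a part $p\in\{q,q+1\}$ whose multiplicity in $\alpha$ has the forbidden parity and $\lambda^{h}:=(\lambda_1,\dots,\lambda_k)$ equals $\alpha$, then $\lambda\in\mathcal{P}(G)$ already forces $m_p(\alpha)$ to be even unless $p=q$, in which case it forces $\lambda_{k+1}=q$; so $\beta$ may legitimately incorporate the correction part $q$ and flip $m_q(\mu)$. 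When instead $\lambda^{h}$ strictly dominates $\alpha$ there is extra slack, and one replaces $\alpha$ by a suitable partition of $s$ into at most $k$ parts lying strictly between $\alpha$ and $\lambda^{h}$, chosen to clear the defect while keeping at most one bad part. In each case a short congruence computation --- using that $d$ is even when $G=Sp(d)$, together with $s=qk+r$ --- confirms that the number of $1$'s in $\beta$ and the multiplicities of $q$ and $q+1$ in $\mu$ all fall in the correct parity classes, so that $\mu\in\mathcal{P}(G)$ and hence $\mu\in PL(G)$. The handful of degenerate configurations ($q\le 1$, or $k\ge\mathrm{length}(\lambda)$, or $\lambda_k=1$) are dealt with directly in the same spirit, taking $\alpha$ with parts in $\{1,2\}$ and checking the unique remaining parity via $s+t=d$. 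Combining these choices with the gluing principle produces $\mu\in PL(\lambda)$ with $\mu_1+\dots+\mu_k=\lambda_1+\dots+\lambda_k$, as required.
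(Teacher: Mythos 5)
Your gluing lemma is correct and is a clean reduction: it does reduce the problem to finding $\alpha$ and $\beta$ separately. But the specific choices you then make --- the almost-rectangular $\alpha=((q+1)^r,q^{k-r})$ together with $\beta$ of the form $(1^a,q^c)$ --- do \emph{not} always work, and the ``correction procedure'' you sketch to repair them has a genuine gap. Consider $G=O(10)$, $\lambda=(5,2,2,1)\in\mathcal P_1(10)$, $k=2$, so $s=7$, $q=3$, $r=1$, $\alpha=(4,3)$, $\lambda^h=(5,2)$. The default $\mu=(4,3,1,1,1)$ has $m_4=1$, violating $\mathcal P_1$. Your prescribed fix of inserting a part $q=3$ into $\beta$ is impossible, since $\beta\le(\lambda_3,\lambda_4)=(2,1)$ forces $\beta_1\le 2<q$; and there is \emph{nothing strictly between} $(4,3)$ and $(5,2)$ in dominance among partitions of $7$ with at most two parts, so your fallback (``a suitable partition of $s$ lying strictly between $\alpha$ and $\lambda^h$'') produces nothing either. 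The actual repair is to take $\alpha=\lambda^h=(5,2)$ and $\beta=(2,1)$, giving $\mu=(5,2,2,1)=\lambda$ itself --- but then the correction part is $2=\lambda_k$, not $q$, and $\alpha$ no longer has parts in $\{q,q+1\}$, so the line of argument built on ``$\alpha$ uses only parts $q,q+1$ and $\beta$ uses only parts $1,q$'' breaks down. (A symplectic analogue: $G=Sp(6)$, $\lambda=(4,1,1)$, $k=2$ gives $\alpha=(3,2)$, $\mu=(3,2,1)\notin\mathcal P_{-1}(6)$, with the same obstruction.) So the parity bookkeeping is not merely ``the technical heart'' --- the recipe for the corrections, as stated, is wrong, and fixing it requires a genuinely different construction.

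The paper's proof avoids this entirely by choosing the split point and the head more adaptively: it fixes the prefix $(\lambda_1,\dots,\lambda_j)$ up to the last index $j$ with $\lambda_j=\lambda_k$ (not the first $k$ parts), iteratively replaces pairs $(p,q)$ of odd-multiplicity parts of that prefix by the averaged pair $(r,r)$, $r=(p+q)/2$ (this preserves $\lambda_1+\dots+\lambda_k$, lowers the partition in dominance, and --- because odd-multiplicity parts of a partition in $\mathcal P(G)$ all have the same parity --- keeps the result in $\mathcal P(G)$), and only then replaces the tail by a string of $1$'s. That single averaging operation does all the parity repair uniformly, whereas your approach tries to drive $\alpha$ to the most degenerate possible shape and then patch the resulting parity defects, which as the examples show cannot be done by inserting parts of a single size $q$.
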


\begin{proof}
Let $j$ be the largest index such that $\lambda_{j}=\lambda_{k}$. If $\left(
\lambda_{1},\ldots,\lambda_{j}\right)  $ contains two or more parts $p,q$ with
odd multiplicity, then necessarily $p,q$ have the same parity and so
$r=\left(  p+q\right)  /2$ is an integer. If $\nu$ is obtained from $\lambda$
by replacing a pair $\left(  p,q\right)  $ by $\left(  r,r\right)  $, then we
have $\nu\leq\lambda$ and $\nu_{1}+\cdots+\nu_{k}=\lambda_{1}+\cdots
+\lambda_{k}$. Iterating this we may assume that $\left(  \lambda_{1}%
,\ldots,\lambda_{j}\right)  $ contains at most one part with odd multiplicity.

Now let $\mu\in\mathcal{P}\left(  G\right)  $ be obtained from $\lambda$ by
replacing the parts $\lambda_{j+1},\lambda_{j+2}\dots$ by a string of $1$'s of
length $\left(  \lambda_{j+1}+\lambda_{j+2}+\cdots\right)  $ Then $\left\vert
OM\left(  \mu\right)  \right\vert \leq1$ and hence $\mu$ satisfies the
condition of the Lemma.
\end{proof}

\subsection{Proof of Theorem \ref{thm:PL}}

\label{subsec:mp}

We now prove Theorem \ref{thm:PL} for all classical groups.

\begin{proof}
[Proof of Theorem \ref{thm:PL}]First suppose $G=GL\left(  d\right)  ,SL\left(
d\right)  ,O\left(  d\right)  $ or $Sp\left(  d\right)  $. We need to show
that each $\lambda\in\mathcal{P}\left(  G\right)  $ is determined by the set
$PL\left(  \lambda\right)  $. This is obvious for $G=GL\left(  d\right)  $,
$SL\left(  d\right)  $ and therefore we may assume that $G=O\left(  d\right)
$ or $Sp\left(  d\right)  $. By definition of the partial order, for each $k$
we have%
\[
\mu_{1}+\cdots+\mu_{k}\leq\lambda_{1}+\cdots+\lambda_{k}\text{ for all }\mu\in
PL\left(  \lambda\right)  .
\]

Moreover by Lemma \ref{lem:RecPart} equality holds for some $\mu$. Therefore
for each $k$ we can recover the sum $\lambda_{1}+\cdots+\lambda_{k}$ as the
maximum of $\mu_{1}+\cdots+\mu_{k}$ for $\mu\in PL\left(  \lambda\right)  $,
and hence we can determine $\lambda$ as well.

Finally we consider $G=SO\left(  d\right)  $. If $\mathcal{O=O}_{\lambda}$
where $\lambda$ is not very even, then $\mathcal{O}$ is a single $O\left(
d\right)  $ orbit and so the result follows by the $O\left(  d\right)  $
argument. If $\mathcal{O}=\mathcal{O}_{\lambda}^{I}$ or $\mathcal{O}_{\lambda
}^{II}$ for some very even $\lambda$, then $\mathcal{O}\cap\Psi_{0}$ is
nonempty, thus $\mathcal{O}$ can be recovered from $\overline{\mathcal{O}}%
\cap\Psi_{0}$ in this case as well.
\end{proof}

The theorem does not extend to unions of orbits.

\begin{example}
The table below lists some examples of partition triples $\left[  \lambda,\mu
,\nu\right]  $ such that $PL\left(  \lambda\right)  =PL\left(  \mu\right)
\cup PL\left(  \nu\right)  $. All orbits are special in the sense of
Lusztig-Spaltenstein (see \cite[Section 6.3]{CM}).
\begin{equation}
\begin{tabular}
[c]{|c|c|c|c|}\hline
$G$ & $\lambda$ & $\mu$ & $\nu$\\\hline
$O\left(  11\right)  $ & $(7,3,1)$ & $(5,5,1)$ & $\left(  7,2,2\right)
$\\\hline
$Sp\left(  10\right)  $ & $\left(  6,4\right)  $ & $\left(  5,5\right)  $ &
$\left(  6,2,2\right)  $\\\hline
$O\left(  8\right)  $ & $\left(  5,3\right)  $ & $\left(  4,4\right)  $ &
$\left(  5,1,1,1\right)  $\\\hline
\end{tabular}
\ \ \label{CounterClass}%
\end{equation}

\end{example}

\begin{remark}
\label{rem:rank} If $G$ is a classical group, we can regard elements of
${\mathfrak{g}_{0}}$ and ${\mathfrak{g}_{0}^{\ast}}$ as matrices. For a matrix
$X\in{\mathcal{O}}_{\lambda}$ its rank and order of nilpotence are given by
$n-length(\lambda)$ and $\lambda_{1}$ respectively; we refer to these as the
rank and depth of ${\mathcal{O}}_{\lambda}$. If $\mathcal{V}$ is a union of
orbits we define $rank(\mathcal{V})$ and $depth(\mathcal{V})$ by taking
maxima, and the arguments above show that these are uniquely determined by
$\mathcal{V}\cap\Psi_{0}$.

For $\pi\in{\mathcal{M}}\left(  G\right)  $ we define $rank(\pi
)=rank(\operatorname{WF}\pi)$ and $depth(\pi)=depth(\operatorname{WF}\pi)$. By
\cite{He} $rank(\pi)$ coincides with the Howe rank of $\pi$, and for $GL(n)$,
$depth\left(  \pi\right)  $ coincides with the notion of depth in section
\ref{sec:GL}. It would be interesting to give a representation-theoretic
characterization of depth for other classical groups. This remark shows that
for all $\pi\in{\mathcal{M}}\left(  G\right)  $ the rank and the depth are
determined by $\Psi(\pi)$.
\end{remark}

\subsection{Exceptional groups}

\label{subsec:ExcGroups}

Theorem \ref{thm:ClassGroup} is false for every exceptional complex group and
we now describe all counterexamples via the Bala-Carter classification
\cite[\S 13.4]{Car}. Let us say for simplicity that two nilpotent
$G$-orbits ${\mathcal{O}}$ and ${\mathcal{O}}^{\prime}$ are \emph{related} if
$PL\left(  {\mathcal{O}}\right)  =PL\left(  {\mathcal{O}}^{\prime}\right)  $.

\begin{prop}
\label{prop:ExcGroup} The following is a complete list of related orbits, with
special orbits underlined.%
\[%
\begin{tabular}
[c]{|c|c|c|c|}\hline
$G$ & \emph{Related orbits} & $G$ & \emph{Related orbits}\\\hline
$E_{6}$ & $\underline{E_{6}(a_{1})}$ and $\underline{D_{5}}$ & $G_{2}$ &
$\underline{G_{2}(a_{1})}$ and $\widetilde{A_{1}}$\\[1ex]\hline
$E_{6}$ & $\underline{D_{4}(a_{1})}$ and $A_{3}+A_{1}$ & $E_{8}$ &
$\underline{E_{8}(a_{1})}$, $\underline{E_{8}(a_{2})}$ and $\underline{E_{8}%
(a_{3})}$\\[1ex]\hline

$E_{7}$ & $\underline{E_{7}(a_{1})}$ and $\underline{E_{7}(a_{2})}$ & $E_{8}$
& $\underline{E_{8}(a_{4})}$, $\underline{E_{8}(b_{4})}$ and $\underline{E_{8}%
(a_{5})}$\\[1ex]\hline
$E_{7}$ & $\underline{E_{7}(a_{3})}$ and $D_{6}$ & $E_{8}$ & $\underline{E_{7}%
(a_{1})}$, $\underline{E_{8}(b_{5})}$ and $E_{7}(a_{2})$\\[1ex]\hline
$E_{7}$ & $\underline{E_{6}(a_{1})}$ and $\underline{E_{7}(a_{4})}$ & $E_{8}$
& $\underline{E_{8}(a_{6})}$ and $\underline{D_{7}(a_{1})}$\\[1ex]\hline
$F_{4}$ & $\underline{F_{4}(a_{1})}$ and $\underline{F_{4}(a_{2})}$ & $E_{8}$
& $\underline{E_{6}(a_{1})}$ and $\underline{E_{7}(a_{4})}$\\[1ex]\hline
$F_{4}$ & $\underline{F_{4}(a_{3})}$ and $C_{3}(a_{1})$ & $E_{8}$ &
$\underline{E_{8}(a_{7})}$, $E_{7}(a_{5})$, $E_{6}(a_{3})+A_{1}$, $D_{6}%
(a_{2})$\\[1ex]\hline
\end{tabular}
\]
\end{prop}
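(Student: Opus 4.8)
The proof is a finite verification based on the Bala--Carter classification and the known Hasse diagram of the closure order on nilpotent orbits in exceptional complex Lie algebras. The two directions are asymmetric: checking that each family listed in the table consists of related orbits is a direct comparison of the relevant order ideals, so the substance is \emph{completeness}, i.e. that no further coincidences $PL(\mathcal{O})=PL(\mathcal{O}')$ occur.

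The first step is to make the set $PL(G)$ explicit for $G=G_2,F_4,E_6,E_7,E_8$. By Bala--Carter \cite[\S\S 13.4]{Car}, nilpotent orbits of $G$ correspond bijectively to conjugacy classes of pairs $(\mathfrak{l}_0,\mathcal{O}_{\mathrm{dist}})$, where $\mathfrak{l}_0$ is a Levi subalgebra and $\mathcal{O}_{\mathrm{dist}}$ is a distinguished orbit of $[\mathfrak{l}_0,\mathfrak{l}_0]$, and the standard label encodes this pair. An orbit is a PL-orbit exactly when $\mathcal{O}_{\mathrm{dist}}$ may be taken principal, equivalently when its label carries no decoration $(a_i)$ or $(b_i)$, i.e. is plainly a Levi type $L$. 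Here one must keep track of the non-conjugate isomorphic Levi subalgebras (for instance $A_5'$, $A_5''$ and the two classes of type $A_3+A_1$ in $E_7$), which produce distinct PL-orbits carrying coinciding labels.

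The second step uses the Hasse diagram of the closure order, tabulated in \cite{Car} (see also \cite{CM}): for each nilpotent orbit $\mathcal{O}$ one reads off $PL(\mathcal{O})=\{\,\mathcal{O}'\in PL(G):\mathcal{O}'\le\mathcal{O}\,\}$. Since $PL(\mathcal{O})$ is an order ideal of the subposet $PL(G)$, it is determined by its (small) antichain of maximal elements, and two orbits are related exactly when these antichains coincide. An organizing remark that shortens the search: if $\mathcal{O}\in PL(G)$ then $\mathcal{O}$ is the unique maximum of $PL(\mathcal{O})$, so the Bala--Carter bijection forces any two distinct PL-orbits to be non-related; hence each related family contains at most one PL-orbit, and it remains to group the non-PL orbits (together with at most one PL-orbit) according to their maximal PL-predecessors.

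The final step is to list the fibers of $\mathcal{O}\mapsto PL(\mathcal{O})$ of cardinality at least two and match them against the table, then underline the orbits that are special in the sense of Lusztig--Spaltenstein, again using the tables in \cite{CM} and \cite{Car}. I expect the only genuine difficulty to be the sheer size and delicacy of the bookkeeping for $E_7$ and $E_8$, whose diagrams have on the order of $60$ and $70$ orbits; particular care is needed around the cluster $E_8(a_7),E_7(a_5),E_6(a_3)+A_1,D_6(a_2)$ in $E_8$, where several orbits are incomparable yet share all maximal PL-orbits below them, and where the closure order is most intricate.
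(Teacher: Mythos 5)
Your approach is the same as the paper's: identify the PL-orbits as exactly those whose Bala--Carter label is an undecorated Levi type (no $(a_i)$ or $(b_i)$), read off $PL(\mathcal{O})$ from the closure-order tables in \cite[\S\S 13.4]{Car}, and compare. The additional observations you make --- that distinct PL-orbits are never related because each is the unique maximum of its own $PL$-set, and that one must distinguish non-conjugate isomorphic Levi subalgebras such as $A_5'$, $A_5''$ in $E_7$ --- are useful bookkeeping aids but do not constitute a different method.
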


\begin{proof}
In Bala-Carter notation, the PL-orbits are labeled by the corresponding Levi
subalgebra $\mathfrak{l}$, while the other orbits have labels of the form
$\mathfrak{l}\left(  \ast\right)  $. Thus for any orbit ${\mathcal{O}}$ we can
easily compute $PL\left(  {\mathcal{O}}\right)  $ by looking at lower orbits
whose Bala-Carter labels have no parentheses. With this in mind, the table above
follows from the Bala-Carter classification tables \cite[\S 13.4]{Car}.
\end{proof}


\begin{thebibliography}{999999}                                                                                           %


\bibitem[AGS]{AGS}A. Aizenbud, D. Gourevitch, S.Sahi: \textit{Derivatives for
representations of }$GL(n,\mathbb{R})$\textit{ and }$GL(n,\mathbb{C})$, arXiv: 1109.4374[math.RT], to appear in the Israel Journal of Mathematics.

\bibitem[AM69]{AM}M. Atiyah, I. Macdonald: \textit{Introduction to Commutative
algebra}. Addison-Wesley, Reading, Massachusetts (1969).



\bibitem[BB81]{BB}A. Beilinson, J. Bernstein: \textit{Localisations de
}$\mathfrak{g}$\textit{-modules}, C. R. Acad. Sci. Paris \textbf{292}, pp 15 -
18 (1981).

\bibitem[BB85]{BorBry}W. Borho, J.-L. Brylinski: \textit{Differential
operators on homogeneous spaces III}, Invent. Math. 80 (1985), 1-68.

\bibitem[Ber72]{Ber}J. Bernstein: \textit{Modules over the ring of
differential operators; the study of fundamental solutions of equations with
constant coefficients,} Functional Analysis and its Applications \textbf{5},
No.2, 1-16 (1972).

\bibitem[BK]{BerKr}J. Bernstein, B. Kroetz: \textit{Smooth Frechet
Globalizations of Harish-Chandra Modules,} arXiv:0812.1684, to appear in
Israel Journal of Mathematics.

\bibitem[BS98]{BS}D. Barbasch, M. Sepanski, \textit{Closure ordering and the
Kostant-Sekiguchi correspondence}, Proc Amer. Math. Soc. 126 (1998), 311-317.

\bibitem[BV85]{BV-Unip}D. Barbasch, D. Vogan, \textit{Unipotent
representations of complex semisimple groups. }Ann. of Math. \textbf{121}, no.
1, pp 41--110 (1985).

\bibitem[BZ77]{BZ-Induced}I. Bernstein, A. Zelevinsky: \textit{Induced
representations of reductive p-adic groups. I,} Ann. Sci. Ec. Norm. Super.,
4$^{\text{e}}$ serie \textbf{10}, pp. 441-472 (1977).

\bibitem[Cas80]{Cas}W. Casselman: \textit{Jacquet modules for real reductive
groups. Proceedings of the International Congress of Mathematicians (Helsinki,
1978)}, pp. 557--563, Acad. Sci. Fennica, Helsinki (1980).

\bibitem[Cas89]{CasGlob}W. Casselman: \textit{Canonical extensions of
Harish-Chandra modules to representations of G,} Can. J. Math., Vol. XLI, No.
3, pp. 385-438 (1989).

\bibitem[Car85]{Car}R.W. Carter: \textit{Finite groups of Lie type. Conjugacy
classes and complex characters.} Pure and Applied Mathematics (New York). A
Wiley-Interscience Publication. John Wiley \& Sons, Inc., New York, (1985).
xii+544 pp. ISBN: 0-471-90554-2

\bibitem[CHM00]{CHM}W. Casselman; H. Hecht; D. Mili\v{c}i\'{c}: \textit{Bruhat
filtrations and Whittaker vectors for real groups}. The mathematical legacy of
Harish-Chandra (Baltimore, MD, 1998), 151-190, Proc. Sympos. Pure Math.,
\textbf{68}, Amer. Math. Soc., Providence, RI, (2000)

\bibitem[CoMG93]{CM}D. Collingwood, W. McGovern: \textit{Nilpotent orbits in
semisimple Lie algebras.} Van Nostrand Reinhold Mathematics Series. Van
Nostrand Reinhold Co., New York (1993). xiv+186 pp.

\bibitem[ENV04]{ENV}M. Emerton, D. Nadler, K. Vilonen: \textit{A geometric
Jacquet functor}, Duke Math. J. \textbf{125}, no. 2, 267--278 (2004).

\bibitem[Gin86]{Gin}V. Ginsburg: \textit{Characteristic varieties and
vanishing cycles}, Invent. Math. \textbf{84} , no. 2, pp. 327-402 (1986).

\bibitem[GK75]{GK}I.~M. Gelfand, D.~A. Kajdan, \textit{Representations of the
group }$GL(n,K)$\textit{ where }$K$\textit{ is a local field,} Lie groups and
their representations (Proc. Summer School, Bolyai J\'{a}nos Math. Soc.,
Budapest, 1971), Halsted, New York, 1975, pp.~95--118. \MR{MR0404534 (53 \#8334)}

\bibitem[GS13]{GS} D. Gourevitch, S. Sahi:
\textit{Associated varieties, derivatives, Whittaker functionals, and
rank for unitary representations of $GL(n)$}, Selecta Mathematica (New Series) \textbf{19}, 141-172 (2013).

\bibitem[GW80]{GW}R. Goodman, N. Wallach: \textit{Whittaker vectors and
conical vectors}, J. Funct. Anal. \textbf{39}, no. 2, pp. 199-279 (1980).

\bibitem[Har77]{Hart}R. Hartshorne: \textit{Algebraic geometry}. Graduate
Texts in Mathematics, No. \textbf{52}. Springer-Verlag, New York-Heidelberg
(1977) xvi+496 pp.

\bibitem[HCh78]{Hch}Harish-Chandra: \textit{Admissible distributions on
reductive p-adic groups}, Lie Theories and their applications, Queen's Papers
in Pure and Applied Mathematics, Queen's University, Kingston, Ontario, 1978,
p. 281-347.

\bibitem[He08]{He}H. He: \textit{Associated varieties and Howe's N-spectrum},
Pacific Journal of Mathematics, \textbf{237} No. 1 (2008).

\bibitem[How74]{HowGL}R. Howe: \textit{The Fourier transform and germs of
characters (case of $GL_{n}$ over a p-adic field)} . Math. Ann 208, 305-322 (1974).

\bibitem[How81]{HowWF}R. Howe: \textit{Wave front sets of representations of
Lie groups,}in: Automorphic Forms, Representation Theory and Arithmetic,
Bombay (1981).


\bibitem[Jos81]{JosLect}A. Joseph: \textit{Application de la theorie des
anneaux aux algebres enveloppantes}. Lecture Notes, Paris (1981). Available at \url{http://www.wisdom.weizmann.ac.il/~dimagur/AntonyJosephLectNotes.zip}.

\bibitem[Jos85]{JosVar}A. Joseph: \textit{On the associated variety of a
primitive ideal}, Journal of Algebra \textbf{93} , no. 2, 509--523 (1985).

\bibitem[Kos78]{Kos}B. Kostant: \textit{On Whittaker vectors and
representation theory.}, Invent. Math. \textbf{48}, 101-184 (1978).

\bibitem[KR71]{KR}B. Kostant, S. Rallis: \textit{ Orbits and representations
associated with symmetric spaces.} Amer. J. Math. \textbf{93} (1971), 753--809.

\bibitem[KV79a]{KVShi}M. Kashiwara, M.Vergne: \textit{Functions on the Shilov
boundary of the generalized half plane,} in: Non-Commutative Harmonic
Analysis, Lecture Notes in Mathematics \textbf{728}, Springer-Verlag,
Berlin-Heidelberg-New York (1979).

\bibitem[KV79b]{KVWF}M. Kashiwara, M.Vergne: \textit{$K$-types and the
singluar spectrum,} in: Non-Commutative Harmonic Analysis, Lecture Notes in
Mathematics \textbf{728}, Springer-Verlag, Berlin-Heidelberg-New York (1979).

\bibitem[Mat87]{Mat}H. Matumoto: \textit{Whittaker vectors and associated
varieties}, Invent. math. \textbf{89}, 219-224 (1987)

\bibitem[Mat88]{MatActa}H. Matumoto: \textit{Whittaker vectors and the
Goodman-Wallach operators}, Acta. Math. \textbf{161} (1988), no. 3-4, 183--241.

\bibitem[Mat90]{MatDuke}H. Matumoto: \textit{Whittaker modules associated with
highest weight modules}, Duke Math. J. \textbf{60} (1990), no. 1, 59--113.

\bibitem[Mat92]{MatComp}H. Matumoto: \textit{$C^{-\infty}$-Whittaker vectors
corresponding to a principal nilpotent orbit of a real reductive linear Lie
group, and wave front sets,} Compositio Math. \textbf{82} (1992), no. 2, 189-244.

\bibitem[MT07]{MatTra}H. Matumoto, P. Trapa: \textit{Derived functor modules
arising as large irreducible constituents of degenerate principal series,}
Compositio Math. \textbf{143} (2007), 222-256.

\bibitem[McC67]{McC}J. McConnell: \textit{The intersection theorem for a class
of non-commutative rings,} Proc. London Math. Soc. \textbf{17} 487--498 (1967).

\bibitem[MW87]{MW}C. Moeglin, J.L. Waldspurger: \textit{Modeles de Whittaker
degeneres pour des groupes p-adiques,} Math. Z. \textbf{196} (1987), no. 3, 427-452.

\bibitem[Ohta81]{Ohta}T. Ohta: \textit{The closures of nilpotent orbits in the
classical symmetric pairs and their singularities,} Tohoku Math. J.
\textbf{43} (1991), 161-211.

\bibitem[Rod75]{Rod}F. Rodier: \textit{Mod\`{e}le de Whittaker et
caract\`{e}res de repr\'{e}sentations} (French). Non-commutative harmonic
analysis (Actes Colloq., Marseille-Luminy, 1974), pp. 151-171. Lecture Notes
in Math., Vol. 466, Springer, Berlin, (1975).

\bibitem[Ros95a]{Ros1}W. Rossmann: \textit{Picard-Lefschetz theory for the
coadjoint quotient of a semisimple Lie algebra}, Inventiones Mathematicae
\textbf{121}, 531-578 (1995).

\bibitem[Ros95b]{Ros2}W. Rossmann: \textit{Picard-Lefschetz theory and
characters of a semisimple Lie group}, Inventiones Mathematicae \textbf{121},
579-611 (1995).

\bibitem[Sek87]{Sek}J. Sekiguchi: \textit{Remarks on real nilpotent orbits of
a symmetric pair}. Journal of the Mathematical Society of Japan \textbf{39},
127-138 (1987).

\bibitem[Sha74]{Sha}J. Shalika: \textit{The multiplicity one theorem for
}$GL(n)$, Ann. of Math. \textbf{100} (1974), 171-193.

\bibitem[SV00]{SV}W. Schmid, K. Vilonen: \textit{Characteristic cycles and
wave front cycles of representations of reductive Lie groups}. Annals of
Mathematics, \textbf{151} (2000), 1071-1118.


\bibitem[Vog91]{Vog-Unip}D. A. Vogan: \textit{Associated varieties and
unipotent representations}. Harmonic analysis on reductive groups (Brunswick,
ME, 1989), pp 315-388, Progr. Math., \textbf{101}, Birkhauser Boston, Boston,
MA (1991).


\bibitem[Wall88]{Wal1}N. Wallach: \textit{Real Reductive groups I}, Pure and
Applied Math. \textbf{132}, Academic Press, Boston, MA (1988).

\bibitem[Wall92]{Wal2}N. Wallach: \textit{Real Reductive groups II}, Pure and
Applied Math. \textbf{132}, Academic Press, Boston, MA (1992).

\bibitem[Zel80]{Zl}A. Zelevinsky: \textit{Induced representations of reductive
p-adic groups. II. On irreducible representations of }$GL(n)$, Ann. Sci. Ec.
Norm. Super., 4$^{\text{e}}$ serie \textbf{13}, 165-210 (1980).
\end{thebibliography}
\end{document}